\title[A measure associated to $GBD$]{A matrix-valued measure associated \\to  the derivatives of a function \\ of generalised bounded deformation}
\author{Gianni Dal Maso}
\address{SISSA, via Bonomea 265, Trieste, Italy}
\email{dalmaso@sissa.it}
\author{Davide Donati}
\address{SISSA, via Bonomea 265, Trieste, Italy}
\email{ddonati@sissa.it}
\numberwithin{equation}{section}
\theoremstyle{plain} 
\newtheorem{theorem}{Theorem}[section] \newtheorem{lemma}[theorem]{Lemma} 
\newtheorem{proposition}[theorem]{Proposition}
\newtheorem{corollary}[theorem]{Corollary}
\theoremstyle{definition} 
\newtheorem{definition}[theorem]{Definition}
 \newtheorem{remark}[theorem]{Remark}
\newcommand{\Eom}{\mathfrak{E}}
\newcommand\e{\varepsilon}
\newcommand{\N}{\mathbb{N}}
\newcommand{\Z}{\mathbb{Z}}
\newcommand\R{\mathbb{R}}
\newcommand{\B}{\mathcal{B}}
\newcommand{\E}{\mathcal{E}}
\newcommand\Rd{\mathbb{R}^d}
\newcommand\Sd{\mathbb{S}^{d-1}}
\newcommand{\Rdsym}{\mathbb{R}^{d\times d}_{\rm sym}}
\newcommand{\hd}{\mathcal{H}^{d-1}}
\newcommand\Hd[1]{\mathcal{H}^{d-1}(#1)}
\newcommand{\huno}{\mathcal{H}^1}
\newcommand{\Lb}{\mathcal{L}}
\newcommand{\Ld}{\mathcal{L}^d}
\newcommand{\hzero}{\mathcal{H}^0}
\newcommand{\mres}
{\mathbin{\vrule height 1.6ex depth 0pt width
0.13ex\vrule height 0.13ex depth 0pt width 1.3ex}}
\mathchardef\emptyset="001F
\theoremstyle{remark}
\begin{document}

\thanks{Preprint SISSA 04/2025/MATE}
\begin{abstract} 
\smallskip
We associate to every function $u\in GBD(\Omega)$ a  measure $\mu_u$ with values in the space of  symmetric matrices, which generalises the distributional symmetric gradient $Eu$ defined for  functions of bounded deformation. We show that this measure $\mu_u$ admits a decomposition as the sum of three mutually singular matrix-valued measures $\mu^a_u$, $\mu^c_u$, and $\mu^j_u$, the absolutely continuous part, the Cantor part, and the jump part, as in the case of $BD(\Omega)$ functions. We then characterise the space $GSBD(\Omega)$, originally defined only by slicing, as the space of functions $u\in GBD(\Omega)$ such that $\mu^c_u=0$. 

\vspace{0.5 cm}
\noindent {\bf MSC codes:} Primary: 49Q20, Secondary: 74A45.

\noindent {\bf Keywords:} Free discontinuity problems, functions of  generalised bounded deformation, fine properties of functions.

\end{abstract}

\maketitle

\section{Introduction}
Given a bounded open set $\Omega\subset \Rd$, with $d\geq 1$, the spaces $GBD(\Omega)$ of {\it  functions of generalised bounded deformation} and  $GSBD(\Omega)$ of {\it  special functions of generalised bounded deformation} were introduced in \cite{DalMasoJems} to provide a functional framework for variational problems related to  Griffith's energy in fracture mechanics (see  \cite{BourFrancMari,FrancfortMarigo}). The main feature of these spaces is that they avoid the unnatural $L^\infty$ {\it a priori} bounds typically required for 
compactness in the space $SBD(\Omega)$ (see \cite{BellettiniCoscia}). 

Thanks to the very weak requirements appearing in the definitions of $GBD(\Omega)$ and $GSBD(\Omega)$, it was shown in \cite{DalMasoJems} that compactness in these spaces is achieved under very mild assumptions (see also \cite{AlmiTassoComp,ChambolleCrismaleCompInd}).
For the space $GSBD(\Omega)$ compactness results under even weaker conditions have later been obtained
by Friedrich and Solombrino \cite{FriedrichSolombrino} in the planar case, and Chambolle and Crismale in the general case \cite{ChambolleCrismComp}. These results are similar to those available in the more restrictive setting of functions of bounded variation (see \cite{AmbrosioExistence,CompactnessFried,gsbvpconcentrationcompactness}). These advancements allow one to solve, in a weak sense,  minimisation problems concerning  Griffith's functional, thereby justifying the use of $GSBD(\Omega)$ for {\it brittle} models in fracture mechanics. Many more applications of the space $GSBD(\Omega)$ were also considered in the recent literature, see, for instance, \cite{AlmiDavoliFried,ChambolleContiIurlano,FriedrichStrong,ContiFocardiIUrlGriff,WeakFormulationTasso,CrismFried,AlmiTassoBrittle,DeGiorgiconj,CrismaleFriedrichSeutter}.

The study of {\it cohesive }models for fracture mechanics in the {\it anti-plane} case carried out in \cite{DalToaJConv,DalToaARMA} suggests that $GBD(\Omega)$ should be the appropriate space for the study of minimisation problems connected with these models, when the anti-plane 
hypothesis is dropped. This requires to extend to $GBD(\Omega)$ the structure theorems proved in \cite{AmbrCosciaDalM} for the space $BD(\Omega)$ of {\it functions of bounded deformation} (see \cite{AmbrCosciaDalM, Temam1,Temam2, Suquet}),  in analogy with what was done in \cite{DalToaNoDea} in the setting of functions of bounded variation.

The analysis of the {\it fine properties} of  functions in $GBD(\Omega)$ carried out in \cite{DalMasoJems} reveales that many of the key structural features of the space $BD(\Omega)$ of {\it functions of bounded deformation} (see \cite{AmbrCosciaDalM, Temam1,Temam2, Suquet}) naturally extend to $GBD(\Omega)$, albeit with suitable modifications to account for the weaker regularity of  these functions.  
The fine properties of $BD(\Omega)$ were thoroughly examined in \cite{AmbrCosciaDalM}, where the authors show for a function $u\colon\Omega\to\Rd$ in $BD(\Omega)$ that the following conditions hold:
\begin{enumerate}
    \item[(a)] $u$ admits an {\it approximate gradient} $\nabla u \in L^1(\Omega;\R^{d\times d})$, where $\R^{d\times d}$ is the space of $d\times d$ matrices with real entries, and $\E u:=(\nabla u+\nabla u^T)/2$ defines an {\it approximate symmetric gradient} of $u$ (see also \cite{PiotrHaj}); moreover, if $\hd$ is the $(d-1)$-dimensional Hausdorff measure, for every $\xi\in\Sd:=\{\xi\in\Rd:|\xi|=1\}$ and for $\hd$-a.e\ $y\in \Pi^\xi:=\{y\in\Rd:y\cdot\xi=0\}$,  the one-dimensional scalar function  $t\mapsto u^\xi_y(t):= u(y+t\xi)\cdot\xi$  has bounded variation and
     \begin{equation}\label{slicing intro}
       \hspace{2 cm} \E u(y+t\xi)\xi\cdot\xi=\nabla u^\xi_y(t)\quad \text{  for $\Lb^1$-a.e.\ $t\in\{s\in\R:y+s\xi\in\Omega\}$},
    \end{equation}
   where $\Lb^1$ is the one-dimensional Lebesgue measure and $\nabla u^\xi_y$ denotes the absolutely continuous part of the distributional derivative $Du^\xi_y$ of $u^\xi_y$;
    \item [(b)] the jump set $J_u$ (see Definition \ref{def Jump set}) is $(\hd,d-1)$-rectifiable (see \eqref{def rectifiable} and also \cite{DelNin}), with measure theoretical unit normal $\nu_u$; in addition, for every $\xi\in\Sd$ and for $\hd$-a.e.\ $y\in \Pi^\xi$, setting  $[u]:=u^+-u^-$, where $u^+$ and $u^-$ are the unilateral traces of $u$ on $J_u$, and $(J^\xi_u)^\xi_y:=\{t\in\R:x=y+t\xi\in J_u \text{ and }[u](x)\cdot\xi\neq 0\}$, we have 
    \begin{gather*}
        J_{u^\xi_y}=(J^\xi_u)^\xi_y \,\,\,\text{ and }\,\,\,[u](y+t\xi)\cdot\xi=[u^\xi_y](t) \,\,\,\text{   for every }t\in(J^\xi_u)^\xi_y;
    \end{gather*}
    \item [(c)] the distributional symmetric gradient $Eu:=(Du+Du^T)/2$, which by definition is a bounded Radon measure taking values in the space  $\Rdsym$ of $d\times d$ symmetric matrices, can be decomposed as the sum of three mutually singular measures:
    \begin{equation}\label{decomposition Eu}
        Eu=(\E u) \Lb^d+E^cu+([u]\odot\nu_u)\hd\mres J_u,
    \end{equation}
    where $\Lb^d$ is the Lebesgue measure in $\Rd$, $E^cu$ is called the {\it Cantor part} of $Eu$, $\odot$ is the symmetric tensor product, and $\hd\mres J_u$ is the restriction of $\hd$ to $J_u$; the Cantor part $E^cu$ is singular with respect to $\Ld$ and vanishes on all Borel sets that are $\sigma$-finite with respect to $\hd$.
\end{enumerate}

In $GBD(\Omega)$ property (c) would not make sense, since, in general, the symmetrised gradient $Eu$ cannot be defined in the sense of distributions. In particular, it is not clear what is the analogue of $E^cu$ for a function $u\in GBD(\Omega)$. Understanding how to generalise this term is crucial for possible applications to cohesive fracture mechanics, as shown in the corresponding problems for the anti-plane case (see \cite{DalToaJConv,DalToaARMA}). 

However, for $u\in GBD(\Omega)$ it is shown  in \cite{DalMasoJems}  that property (b) still holds and also that $u$ admits an {\it approximate symmetric gradient $\E u$} that enjoys the slicing property \eqref{slicing intro}. It is still an open question whether {\it every} $GBD(\Omega)$ function admits an approximate gradient.

In this paper we extend the analysis of the fine properties of functions in $GBD(\Omega)$ by introducing two {\it matrix-valued} measures $\mu_u$ and $\mu^c_u$, which are closely related to the measures $Eu$ and $E^cu$ when $u\in BD(\Omega)$.  To describe this result, we fix some notation.  Given  $R>0$,  let $\tau_R\colon\R\to\R$ be  the $1$-Lipschitz function defined by 
\begin{equation}\notag
    \tau_R(s):=\begin{cases}-\frac{R}{2}&\text{ if }s\leq -\frac{R}{2},\\
        s& \text{if }\frac{-R}{2}\leq s\leq \frac{R}{2},\\
        \frac{R}{2}&\text{ if }s\geq \frac{R}{2}.
        \end{cases}
\end{equation}
It follows from the definition of $GBD(\Omega)$ (see Definition \ref{def:GBD} and Remark \ref{Lipschitz}) that for every $\xi\in\Sd$ and $R>0$ the distributional derivative of $\tau_R(u\cdot\xi)$ in direction $\xi$, denoted by $D_\xi(\tau_R(u\cdot\xi))$, is a scalar-valued bounded Radon measure on $\Omega$. 

The main result of our paper is that (see Corollary \ref{main corollary}) 
    for every $u\in GBD(\Omega)$ and $r>0$, there exists a bounded Radon measure $\mu_{u,r}$ on $\Omega$  with values in $\Rdsym$ such that for every $\xi\in\Sd$ we have
\begin{equation}\label{intro theorem}
\mu_{u,r}(B)\xi\cdot\xi=\lim_{R\to +\infty}D_\xi(\tau_R(u\cdot\xi))(B)
    \end{equation}
   for every Borel set $B\subset \Omega$  such that $B\cap J^r_u=\emptyset$, where  $J^r_u:=\{x\in J_u:|[u](x)|\geq r\}$.
   
 If $u\in BD(\Omega)$ we can see that $\mu_{u,r}(B)=(Eu)(B)$ for every Borel set $B\subset \Omega$ with $B\cap J^r_u=\emptyset$ (see Remark \ref{derivative BDsecond}).
In the general case $u\in GBD(\Omega)$, the measure $\mu_{u,r}$ is not always the symmetrised distributional gradient $Eu$, and its connection with the distributional derivatives of $u$ is given only by \eqref{intro theorem}, which takes into account the directional derivatives of  suitable truncations of the scalar components of $u$. However, the measure $\mu_{u,r}$ enjoys many of the formal properties of $Eu$ and in particular an analogue of property (c) above holds for $\mu_{u,r}$.

More precisely, in Corollary \ref{main corollary}, we prove that for every $u\in GBD(\Omega)$ the measure $\mu_{u,r}$ can be decomposed as the sum of three mutually singular measures:
\begin{equation}\label{decomposition mcu} 
\mu_{u,r}=\mu_u^a+\mu_u^c+\mu_{u,r}^j,
\end{equation}
where for every Borel set $B\subset \Omega$
\begin{gather*}
    \mu^a_u(B)=\int_B\E u\,{\rm d}x,\qquad
    \mu^j_{u,r}(B)=\int_{(J_u\setminus J^r_u)\cap B}([u]\odot \nu_u)\,{\rm d}\hd,
\end{gather*}
and $\mu^c_u$ is a singular measure (with respect to the $d$-dimensional Lebesgue measure) with values in $\Rdsym$ that vanishes on all  $\sigma$-finite Borel sets with respect to $\hd$ (see Proposition \ref{prop sigma finito}). We remark that both $\mu^a_u$ and $\mu^c_u$ do not depend on $r$.

A slicing property  of the measure $\mu_u^c$,  obtained in Proposition \ref{representation}, allows us to characterise the space $GSBD(\Omega)$, introduced in  \cite[Definition 4.2]{DalMasoJems} using slicing arguments, as the set of functions  $u\in GBD(\Omega)$  such that $\mu^c_u=0$, in analogy with what happens for $SBD(\Omega)$ (see \cite[Definition 4.6]{AmbrCosciaDalM}).  Combining this result with the recent characterisation of the space $GBD(\Omega)$, proved by Chambolle and Crismale  in\cite{ChambolleCrismaleSad}, we obtain, in Theorem \ref{chambolleCrismale}, an analogous characterisation for the space $GSBD(\Omega)$. 

We now give a brief sketch of how we prove the existence of a measure $\mu_u=\mu_{u,1}$ which satisfies \eqref{intro theorem} with $r=1$.
Straightforward arguments show that the limit in the right-hand side of \eqref{intro theorem}  exists for every $\xi\in\Rd\setminus \{0\}$ (see Proposition \ref{truncation}) and  that this limit is equal to 
\begin{equation}\label{def sigma intro}
\sigma_u^\xi(B):=|\xi|\int_{\Pi^\xi}Du^\xi_y((B\setminus J^1_{u})^\xi_y)\,{\rm d}\Hd{y},
\end{equation}
where for every $E\subset \Rd$  and for every $y\in\Pi^\xi$ we set  $E^\xi_y:=\{t\in\R: y+t\xi\in E\}$.
 By the definition of $ GBD(\Omega)$ (see Definition \ref{def:GBD}) the expression above defines  a bounded Radon measure for every $\xi\in\Rd$.
To conclude, one needs to show that for every Borel set $B\subset \Omega$ there exists $\mu_u(B)\in\Rdsym$ such that 
\begin{equation}\label{eq intro}
\mu_u(B)\xi\cdot\xi=\sigma^\xi_u(B)
\end{equation}
 for every $\xi\in\Rd\setminus \{0\}$. To prove this fact, we will show that for every Borel set $B\subset \Omega$ the function $\xi\mapsto\sigma^\xi(B)$ is  2-homogeneous, lower bounded, and satisfies the  parallelogram identity, i.e.,
\begin{equation}\label{parallelogram identity}
    \sigma_u^{\xi+\eta}(B)+\sigma_u^{\xi-\eta}(B)=2\sigma_u^{\xi}(B)+2\sigma_u^{\eta}(B)
\end{equation}
for every $\xi,\eta\in\Rd$. Indeed, these three conditions imply (see Proposition \ref{character quadratic}) the existence of a symmetric matrix $\mu_u(B)$ such that \eqref{eq intro} holds. It is then easy to check that $B\mapsto \mu_u(B)$ is a bounded Radon measure. 
Since the 2-homogeneity and the lower boundedness are easily obtained, to conclude  we only need to show that \eqref{parallelogram identity} holds. 

This is done first in dimension $d=2$ by means of a discretisation argument. We fix two linearly independent vectors $\xi,\eta\in\R^2$ and  assume that $B$ is a parallelogram  with sides parallel to $\xi$ and $\eta$. For every $\zeta\in\{\xi,\eta,\xi+\eta,\xi-\eta\}$ we approximate the integral $\sigma^\zeta_u(B)$ given by \eqref{def sigma intro} by means of {\it Riemann sums} corresponding to a well-chosen grid of points $y_j$ of $\Pi^\zeta$ and write each term  $Du^\zeta_{y_j}((B\setminus J_u^1)^\zeta_{y_j})=Du^\zeta_{y_j}(B^\zeta_{y_j}\setminus (J_u^1)^\zeta_{y_j})$ as a sum over $i$ of the numbers $Du^\zeta_{y_j}(I^j_i\setminus (J_u^1)^\zeta_{y_j})$, where $I^j_i=[a_i^j,a_{i+1}^j)$ are well-chosen disjoint small intervals, whose union is the interval $B^\zeta_{y_j}$.
The points  $y_j$ and $a^j_i$ can be chosen by  projecting onto the straight lines $\Pi^\zeta$ and $\{y_j+t\zeta:t\in\R\}$ the points $x_{i,j}$ of a two-dimensional grid, constructed using discrete linear combinations of $\xi$ and $\eta$, and translated by a small vector $\omega$ to be chosen carefully. We observe that if $I^j_i\cap (J^1_u)^{\zeta}_{y_j}=\emptyset$, then \begin{equation*}
  Du^\zeta_{y_j}(I^j_i\setminus (J_u^1)^\zeta_{y_j})=u(y_j+a_{i+1}^j\zeta)\cdot\zeta-u(y_j+a_{i}^j\zeta)\cdot\zeta.
\end{equation*}This leads to an approximation of $\sigma^\zeta_u(B)$ based on the difference of the values of $u$ on neighbouring grid-points $x_{i,j}.$ Hence, if $I^j_i\cap (J^1_u)^{\zeta}_{y_j}=\emptyset$, writing carefully this approximation for every $\zeta\in\{\xi,\eta,\xi+\eta,\xi-\eta\}$ we  obtain a discrete version of the parallelogram identity \eqref{parallelogram identity}. To conclude, we have to show that in this approximation we can neglect all terms that correspond to pairs $i,j$ such that  $I^j_i\cap (J^1_u)^{\zeta}_{y_j}\neq \emptyset$.
This step constitutes the main difficulty of the proof and will be the content of Section \ref{section conclusion}. This result is obtained by regarding the sum of the contribution of these ill-behaved indices as sort of {\it Riemann sum} of arbitrarily small integrals. 

Once the planar case $d=2$ is settled, the general case $d>2$ can be obtained by a Fubini-type argument, considering a sort of two dimensional slicing in the spirit of \cite{AmbrCosciaDalM}. In our case, this is based on the properties of the restrictions of $GBD$ functions to two dimensional slices proved in \cite{DalMasoJems} (see Theorem \ref{theorem restriction} below).  

\medskip

The paper is structured as follows. In Section \ref{section preliminar} we introduce the basic notions and the necessary tools we will use throughout the paper, while in Section \ref{section Riemann} we present some technical results concerning approximation of Lebesgue integrals by means of Riemann sums. Then, we introduce in Section \ref{section auxiliary} the measures $\sigma^\xi_u$, which will be the main focus of the rest of the paper, and prove several properties of these measures. Section \ref{section main} is devoted to the proof of the quadraticity of the function $\xi\mapsto\sigma^\xi_u(B)$ in the planar case $d=2$. In Section \ref{section conclusion} we complete this proof by means of some technical arguments. This result is then extended to every dimension in Section \ref{section d larger}. In Section \ref{cantor section}, we prove the decomposition \eqref{decomposition mcu} and deduce from it several consequences. The Appendix is devoted to proving the measurability of several auxiliary functions appearing in the arguments of Section \ref{section conclusion}.  

\section{Notation and preliminary results}\label{section preliminar}
In this section we fix the notation and lay down the basic tools used in this paper. 
$\Omega$ is a bounded open set of $\Rd$ with $d\geq 1$. The scalar product in $\mathbb{R}^d$ is denoted by $\cdot$ , while the Euclidean norm of $\mathbb{R}^d$ is denoted by $|\,\,|$.
For every $\rho>0$ and $x\in\Rd$ the open ball of centre $x$ and radius $\rho$ is denoted by $B_\rho(x)$.
The unit sphere of $\Rd$ is denoted by $\mathbb{S}^{d-1}:=\{\xi\in\Rd: |\xi|=1\}$.  The vector space $\R^{d\times d}$ is identified with the space of  $d\times d$ matrices.  Given  $A\in\R^{d\times d}$, its $ij$-th component is denoted by $A_{ij}$. 
For $A\in\R^{d\times d}$ and $\xi\in\Rd$, $A\xi\in\Rd$ is defined via the standard rules of matrix multiplication.  The symbol $\Rdsym$ denotes the space of  all $d\times d$ symmetric matrices, that is, the space of those matrices $A\in\R^{d\times d}$ such that $A=A^T$, where $A^T$ is  the transpose of $A$. 
We recall that all matrices $A\in\Rdsym $ satisfy the polarisation identity, i.e.,
\begin{equation}\notag \label{polarisation identity}
     A\xi\cdot \eta=\frac{1}{4}\big(A(\xi+\eta)\cdot(\xi+\eta)-A(\xi-\eta)\cdot(\xi-\eta)\big)
\end{equation}
for every $\xi,\eta\in \Rd$. 

We recall the definition of quadratic function.
\begin{definition}\label{def quadratic}
    A function $f\colon\Rd\to \R$ is quadratic if there exists a matrix $A\in\Rdsym$ such that $f(\xi)=A\xi\cdot\xi$ for every $\xi\in\Rd$.\end{definition}
    We recall the following characterisation of quadratic functions. 
\begin{proposition}\label{character quadratic}
    A function $f\colon\Rd\to \R$ is quadratic if and only if the following conditions are satisfied:
   \begin{enumerate}
       \item [(a)]{\it 2-homogeneity: }$f(t\xi)=t^2f(\xi)$ for every $\xi\in\R^d$ and every $t\in\R$;
       \item[(b)] {\it parallelogram identity:}  for every $\xi,\eta\in\R^d$ we have \begin{equation*}
           f(\xi+\eta)+f(\xi-\eta)=2f(\xi)+2f(\eta);
       \end{equation*} 
       \item[(c)]  {\it lower bound: }there exists a constant $c>0$ such that 
    \begin{equation*}
           f(\xi)\geq -c|\xi|^2 \quad \text{for every $\xi\in\Rd$.}
       \end{equation*}
   \end{enumerate}
\end{proposition}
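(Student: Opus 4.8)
The plan is to prove the nontrivial implication: assuming (a), (b), (c) for a function $f\colon\Rd\to\R$, I will construct a symmetric matrix $A$ with $f(\xi)=A\xi\cdot\xi$. (The converse is immediate: a quadratic function obviously satisfies 2-homogeneity and the parallelogram identity by the bilinearity of $(\xi,\eta)\mapsto A\xi\cdot\eta$, and the lower bound with $c=\|A\|$.) The strategy follows the classical Jordan--von Neumann route adapted to the real scalar-valued setting. First I define the candidate symmetric bilinear form by polarisation,
\begin{equation}\notag
    B(\xi,\eta):=\tfrac14\big(f(\xi+\eta)-f(\xi-\eta)\big),
\end{equation}
which is manifestly symmetric in $\xi,\eta$ and satisfies $B(\xi,\xi)=\tfrac14(f(2\xi)-f(0))=f(\xi)$, using 2-homogeneity (which gives $f(2\xi)=4f(\xi)$ and $f(0)=0$). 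It then remains to show $B$ is bilinear; by symmetry it suffices to prove additivity and $\R$-homogeneity in the first argument.

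For additivity I would apply the parallelogram identity repeatedly. Writing the identity (b) for the pairs $(\xi+\zeta,\eta)$ and $(\xi-\zeta,\eta)$ and adding, then comparing with the identity applied to $(\xi,\eta+\zeta)$, $(\xi,\eta-\zeta)$, $(\xi,\zeta)$, one extracts the relation $B(\xi+\zeta,\eta)+B(\xi-\zeta,\eta)=2B(\xi,\eta)$; combined with $B(0,\eta)=0$ and a standard induction this yields $B(m\xi,\eta)=mB(\xi,\eta)$ for $m\in\Z$ and then, via the substitution $\xi\mapsto\xi/n$, $B(q\xi,\eta)=qB(\xi,\eta)$ for all $q\in\Q$. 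Full additivity $B(\xi+\xi',\eta)=B(\xi,\eta)+B(\xi',\eta)$ is obtained from the Cauchy-type functional equation derived the same way from (b). The one place where the hypotheses genuinely interact is the passage from $\Q$-homogeneity to $\R$-homogeneity: here I would invoke the lower bound (c). From 2-homogeneity $t\mapsto f(t\xi)=t^2 f(\xi)$ is already quadratic in $t$ on $\R$, so $t\mapsto B(t\xi,\eta)$ is the difference $\tfrac14(f(t\xi+\eta)-f(t\xi-\eta))$; one checks directly from (c) and 2-homogeneity that $t\mapsto f(t\xi+\eta)$ is bounded on bounded sets of $t$ (it lies between $-c|t\xi+\eta|^2$ and, by the parallelogram identity rearranged, an explicit quadratic polynomial in $t$), hence $t\mapsto B(t\xi,\eta)$ is locally bounded; a locally bounded additive function on $\R$ is linear, so $B(t\xi,\eta)=tB(\xi,\eta)$ for all $t\in\R$. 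Finally, bilinearity of $B$ on the finite-dimensional space $\Rd$ is the same as the existence of a matrix $A$ with $B(\xi,\eta)=A\xi\cdot\eta$, and symmetry of $B$ forces $A\in\Rdsym$; then $f(\xi)=B(\xi,\xi)=A\xi\cdot\xi$, as desired.

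The main obstacle is precisely the upgrade from rational to real homogeneity: parallelogram/polarisation identities alone only give $\Q$-linearity, and without a regularity or growth hypothesis one could build pathological (Hamel-basis) solutions. Condition (c) is exactly what rules these out, by forcing local boundedness of the relevant additive functions; I would make sure to state the elementary lemma that an additive $g\colon\R\to\R$ bounded on some interval is linear, and reduce the real-homogeneity step cleanly to it. Everything else is bookkeeping with the parallelogram identity.
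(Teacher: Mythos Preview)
Your argument is correct and self-contained. The paper takes a shorter but less elementary route: it shifts $f$ by $c|\xi|^2$ to obtain a nonnegative function $g(\xi)=f(\xi)+c|\xi|^2$, observes that $g$ still satisfies 2-homogeneity and the parallelogram identity, and then invokes an external reference (\cite[Proposition 11.9]{DalBook}) asserting that a nonnegative function with these two properties is quadratic; subtracting $cI$ from the resulting matrix gives $A$. Your approach instead carries out the Jordan--von Neumann argument explicitly and uses (c) directly at the one delicate point, namely upgrading $\Q$-homogeneity of the polarised form to $\R$-homogeneity via local boundedness. The paper's reduction is slicker if one is willing to cite the auxiliary result, while your proof has the advantage of being fully self-contained and of isolating exactly where the lower bound enters; both are entirely valid.
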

\begin{proof}
 Assume that (a)-(c) hold. By applying \cite[Proposition 11.9]{DalBook} to the function $g(\xi)=f(\xi)+c|\xi|^2$ we obtain a matrix $B\in\Rdsym$ such that $g(\xi)=B\xi\cdot\xi$ for every $\xi\in\Rd$. Setting $A:=B-cI$, where $I$  is the identity matrix, we obtain $f(\xi)=A\xi\cdot\xi$ for every $\xi\in\Rd$.

 The converse implication is trivial.
\end{proof}

Given two vectors $\xi,\eta\in\Rd$, the symmetric tensor product $\xi\odot\eta\in\Rdsym$  is defined by $(\xi\odot\eta)_{ij}:=\tfrac{1}{2}(\xi_i\eta_j+\xi_j\eta_i)$.
   Given two distinct points $x_1,x_2\in\Rd$ we set 
\begin{equation*}
[x_1,x_2]:=\{tx_1+(1-t)x_2: t\in [0,1]\}.
\end{equation*}
The notation naturally extends to $[x_1,x_2)$, $(x_1,x_2]$, and $(x_1,x_2)$  replacing $[0,1]$ by $[0,1)$, $(0,1]$, and $(0,1)$, respectively. Given a  set $A\subset \Omega$, we say that $A$ is relatively compact in $\Omega$ and write $A\subset \subset \Omega$ if there exists a compact set $K\subset \Omega$ such that $A\subset K$.  Given $k\in\N$ and $E\subset \R^k$ the characteristic function of $E$  is the function $\chi_{E}\colon\R^k\to \{0,1\}$ such that  $\chi_E(x)=1$ if $x\in E$ and $0$ otherwise.  

 Given a finite dimensional real normed vector space $X$, $\mathcal{M}_b(\Omega;X)$ is  the space of all bounded Radon measures with values in $X$; the indication of $X$ is omitted if $X=\R$. The symbol  $\mathcal{M}_{b}^+(\Omega)$ denotes the space of all positive bounded Radon measures.
 Given $\mu \in\mathcal{M}_b(\Omega;X)$ and $\lambda\in \mathcal{M}^+_b(\Omega)$, ${\rm d}\mu/{\rm d}\lambda$ is the Radon-Nikod\'ym derivative of $\mu$ with respect to $\lambda$.  Given a measure $\mu\in\mathcal{M}_b(\Omega;X)$, the total variation  measure of $\mu$ with respect to the norm $|\,\,|$ on $X$, is the Borel measure defined for Borel set $B\subset \Omega$ by 
 \begin{equation}\notag 
    |\mu|(B):=\sup\sum_{i\in I}|\mu(B_i)|,
 \end{equation}
 where the {\it supremum} is taken over all finite sets $I\subset \N$ and all Borel partitions $(B_i)_{i\in I}$ of $B$. Given a measure $\lambda\in \mathcal{M}_b^+(\Omega)$ and a Borel measurable function $f\colon\Omega\to X$, the symbol $f\lambda$ denotes the $X$-valued measure defined for every Borel set $B\subset \Omega$ by 
 \begin{equation*}
     f\mu(B):=\int_{B}f\,{\rm d}\lambda.
 \end{equation*}

 The $k$-dimensional Lebesgue measure and the  $k$-dimensional Hausdorff measure are denoted by $\Lb^k$  and $\mathcal{H}^k$, respectively.  Given a measure $\mu$ on $\Omega$ and a $\mu$-measurable set $B\subset \Omega$, we introduce the measure $\mu\mres B$ defined by $(\mu\mres B) (E):=\mu(B\cap E)$ for every Borel set  $E\subset \Omega$. 
 
 We say that $E\subset \R^d$ is $(\hd,d-1)$-{\it rectifiable} if there exist a collection of compact sets $(K_i)_{i\in\N}$, a collection of $(d-1)$-dimensional $C^1$ manifolds $M_i\subset \Rd$, with  $K_i\subset M_i$ and $\hd(M_i)<+\infty$ for every $i\in\N$, and a set $N_0$ with $\hd(N_0)=0$ such that 
\begin{equation}\label{def rectifiable}
    E=N_0\cup\big(\bigcup_{i\in\N}K_i\big).
\end{equation}
We refer the reader to \cite[Chapter 2]{AmbrosioFuscoPallara} and to \cite[Chapter 3]{Federer} for the properties of these sets.

\noindent{\textbf{Approximate limits.}} 
    Let $E$ be a Lebesgue measurable subset of  $\R^d$ and let $x\in \Rd$ be such that
    \begin{equation}\label{positive density}
        \limsup_{\rho \to 0}\frac{\Lb^d(E\cap B_\rho(x))}{\rho^d}>0.
    \end{equation}
    We say that an $\Lb^d$-measurable function $u\colon E\to \R^m$  has approximate limit $\widetilde{u}(x)\in\R^m$ at $x$, in symbols \begin{equation}\label{aplim}
\text{ap}\!\lim_{\substack{y\to x\\y\in E}}u(y)=\widetilde{u}(x),
    \end{equation}
    if for every $\e>0$ we have
    \begin{equation*}
         \lim_{\rho\to 0}\frac{\Lb^d(\{y\in E: |u(y)-\widetilde{u}(x)|>\e\}\cap B_\rho(x))}{\rho^d}=0. 
    \end{equation*}
  Throughout the paper the symbol $\widetilde{u}(x)$ is always used to denote the approximate limit of $u$ considered in \eqref{aplim}. By \eqref{positive density} the vector $\widetilde{u}(x)$ is uniquely defined. The set $S_u$ is defined as the complement of the points where the approximate limit exists. It is well-known that for an $\Lb^d$-measurable function $u\colon \R^d\to \R^m$ it holds $\Lb^d(S_u)=0$.

\medskip

\noindent{\textbf{Jump set.}}
We now give the definition of jump set of a measurable function.

    \begin{definition} \label{def Jump set} Let $U$ be an open set of $\R^d$ and $u\colon U\to \R^m$ an $\Lb^d$-measurable function. The   \textit{jump set} $J_u$ is the set of all points $x\in U$ such that there exists  $(u^+(x),u^-(x),\nu_u(x))\in \R^m\times \R^m\times \mathbb{S}^{d-1}$, with $u^+(x)\neq u^-(x)$, such that
\begin{gather}\nonumber\label{def u+}
       \text{ap}\!\!\!\!\!\!\!\!\!\lim_{\substack{y\to x\\y\in H^{\pm}(x)\cap U}}u(y)=u^\pm(x),
     \end{gather}
     where  $H^{\pm}(x):=\{y\in\Rd:\pm(y-x)\cdot\nu_u(x)>0\}$.
    The triple $(u^+(x),u^-(x),\nu_u(x))$ is uniquely defined up to swapping $u^+(x)$ and $u^-(x)$  and changing the sign of $\nu_u(x)$. Given $x\in J_u$, we set $[u](x):=u^{+}(x)-u^{-}(x)$.
    For $r>0$ we also introduce  $J^r_u$ as the set defined by
    \begin{equation}\label{def Jr}
        J^r_u:=\{x\in J_u: |[u](x)|\geq r\}.
    \end{equation}
\end{definition}

\medskip

\noindent {\textbf{Slicing}.} For every $\xi\in\Rd\setminus\{0\}$, $y\in\Rd$, and $A\subset \Omega$ we define 
\begin{equation*}
    A^{\xi}_y:=\{t\in\R:y+t\xi\in A\}.
\end{equation*}
Given a function $u\colon A\to \Rd$, we define the slice in direction $\xi$ of the $\xi$-component of $u$ as the function $u^\xi_y\colon \R\to \Rd$   defined  by
\begin{equation}\notag \label{slice in Rn}
    u^{\xi}_y(t):=\begin{cases}
        u(y+t\xi)\cdot\xi &\text{if }t\in A^\xi_y,\\
        0&\text{otherwise}.
    \end{cases}
\end{equation} 
$\Pi^\xi$ denotes the hyperplane orthogonal to $\xi$ and passing through 0, that is,  
\begin{equation}\notag 
    \Pi^\xi:=\{y\in\Rd: y\cdot \xi=0\}.
\end{equation}
The projection map onto $\Pi^\xi$ is denoted by $\pi^\xi\colon \Rd\to \Pi^\xi$.
We shall use the following estimate.
\begin{lemma}\label{h0 slices}
Let $\xi\in\R^d\setminus\{0\}$ and let $B\subset\R^d$ be a Borel set. Then the function
$y\mapsto\hzero(B^\xi_y)$
is $\hd$-measurable on $\Pi^\xi$ and
\begin{equation}\label{int hzero Bxiy less}
\int_{\Pi^\xi}\hzero(B^\xi_y)\,{\rm d}\Hd{y}\le \Hd{B}.
\end{equation}
\end{lemma}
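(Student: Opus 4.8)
The statement is a slicing/coarea-type estimate: for a Borel set $B\subset\R^d$ and a fixed direction $\xi$, the number of points of $B$ on $\mathbb{P}$-a.e.\ line parallel to $\xi$, integrated over $\Pi^\xi$, is controlled by $\Hd{B}$. The natural approach is the standard projection-onto-hyperplane argument, treating the measurability and the inequality separately.

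For the measurability of $y\mapsto\hzero(B^\xi_y)$ on $\Pi^\xi$, I would first reduce to the case of closed (or compact) sets. The map $(y,t)\mapsto y+t\xi$ identifies $\Pi^\xi\times\R$ with $\R^d$ bi-Lipschitzly, so it suffices to work with the counting function of horizontal slices of a Borel subset of a product space. For $B$ closed, $\hzero(B^\xi_y)=\sup_N\#\{t: y+t\xi\in B\}$ truncated at $N$, and each truncated counting function is lower semicontinuous in $y$ (a standard fact: if a closed set has at least $k$ points on a slice, nearby slices also do, once one separates the points by disjoint balls). Hence $\hzero(B^\xi_y)$ is a countable supremum of lower semicontinuous functions on closed sets, so Borel measurable. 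For general Borel $B$, one passes through a monotone class / Dynkin argument, or invokes the measurable projection theorem; since the paper defers such measurability points to an Appendix, I expect the authors to do the same here, so I would just indicate this reduction and cite the relevant measurable-selection / section results.

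For the inequality \eqref{int hzero Bxiy less}, I would argue by the definition of Hausdorff measure together with the fact that the orthogonal projection $\pi^\xi$ onto $\Pi^\xi$ is $1$-Lipschitz. Fix $\delta>0$ and take a countable cover $B\subset\bigcup_k E_k$ with $\operatorname{diam}(E_k)\le\delta$ and $\sum_k\operatorname{diam}(E_k)^{d-1}\le \Hd{B}+\delta$ (after first handling the case $\Hd{B}<+\infty$; the case $\Hd{B}=+\infty$ is trivial). For each $k$, the slice $(E_k)^\xi_y$ is nonempty for at most those $y\in\pi^\xi(E_k)$, and $\pi^\xi(E_k)$ has diameter $\le\operatorname{diam}(E_k)$, so it is contained in a $(d-1)$-ball of that diameter; thus $\int_{\Pi^\xi}\hzero((E_k)^\xi_y)\,{\rm d}\Hd{y}\le \Hd{\pi^\xi(E_k)}$, and I would bound the $\Hd$ of a set of diameter $\le\operatorname{diam}(E_k)$ by (a constant times) $\operatorname{diam}(E_k)^{d-1}$ — in fact, using the normalisation of $\hd$, precisely by $\operatorname{diam}(E_k)^{d-1}$ via the isodiametric-type bound built into the definition of $\mathcal H^{d-1}$. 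Summing over $k$ and using $\hzero(B^\xi_y)\le\sum_k\hzero((E_k)^\xi_y)$ (subadditivity of the counting measure under countable covers), monotone convergence gives $\int_{\Pi^\xi}\hzero(B^\xi_y)\,{\rm d}\Hd{y}\le\sum_k\operatorname{diam}(E_k)^{d-1}\le\Hd{B}+\delta$. Letting $\delta\to0$ yields \eqref{int hzero Bxiy less}.

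The main obstacle is not the inequality, which is routine once one exploits that $\pi^\xi$ is $1$-Lipschitz and that covering $B$ by small sets covers each slice by at most one point per piece; rather it is the measurability assertion for general Borel $B$, since $\hzero(B^\xi_y)$ need not be semicontinuous and one must invoke a projection/section theorem. I would state the measurability reduction carefully, handle the closed case explicitly via lower semicontinuity of the truncated counting functions, and refer to the Appendix (or to a standard reference such as \cite[Chapter 2]{AmbrosioFuscoPallara} or \cite[Chapter 3]{Federer}) for the passage to arbitrary Borel sets.
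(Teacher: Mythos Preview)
Your inequality argument contains a genuine gap. You claim
$\int_{\Pi^\xi}\hzero((E_k)^\xi_y)\,{\rm d}\Hd{y}\le \Hd{\pi^\xi(E_k)}$,
implicitly bounding $\hzero((E_k)^\xi_y)$ by the indicator $\chi_{\pi^\xi(E_k)}(y)$. This is false: a covering set $E_k$ of small diameter can meet a line parallel to $\xi$ in uncountably many points (take $E_k$ a small ball), so $\hzero((E_k)^\xi_y)=+\infty$ on a set of positive $\hd$-measure. Your remark that ``covering $B$ by small sets covers each slice by at most one point per piece'' is simply wrong. A repair is possible---replace $\hzero$ on the left by the pre-measure $\mathcal H^0_{\delta/|\xi|}$, since a $\delta$-cover of $B$ induces a $\delta/|\xi|$-cover of each $B^\xi_y$, and \emph{that} is bounded by the number of $E_k$ meeting the line; then let $\delta\to0$ via monotone convergence---but this is not what you wrote, and it also requires the isodiametric inequality to control $\Hd{\pi^\xi(E_k)}$ by the Hausdorff summand.

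The paper's proof avoids covers of $B$ altogether and is both shorter and more robust. It partitions $B$ into dyadic slabs $B_{k,i}:=\{x\in B:\,i/2^k\le x\cdot\xi<(i+1)/2^k\}$ and sets $f_k(y):=\sum_{i}\chi_{\pi^\xi(B_{k,i})}(y)$. Then $f_k\nearrow\hzero(B^\xi_y)$ pointwise (distinct points of $B^\xi_y$ eventually lie in distinct slabs), which gives measurability directly from the Projection Theorem (each $\pi^\xi(B_{k,i})$ is analytic, hence $\hd$-measurable) and the inequality from
$\int_{\Pi^\xi} f_k\,{\rm d}\hd=\sum_i\Hd{\pi^\xi(B_{k,i})}\le\sum_i\Hd{B_{k,i}}=\Hd{B}$,
using only that $\pi^\xi$ is $1$-Lipschitz. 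Note also that your monotone-class route for measurability is delicate: the family of Borel $B$ with $y\mapsto\hzero(B^\xi_y)$ Borel measurable is not obviously closed under decreasing limits, since $\hzero$ fails continuity from above on infinite sets; the paper's dyadic approximation sidesteps this entirely.
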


\begin{proof}For every $k\in\N$ and $i\in\Z$ we set
$
B_{k,i}:=\{x\in B: i/2^k\le x\cdot\xi <(i+1)/2^k\}
$
and we consider the function $f_k\colon\Pi^\xi\to[0,+\infty]$ defined by
\begin{equation}\label{fk in lemma}
f_k(y):=\vphantom{\int_j}\smash{\sum_{i\in\Z}}\chi_{\pi^\xi(B_{k,i})}(y),
\end{equation}
where $\chi_{\pi^\xi(B_{k,i})}$ is the characteristic function of the projection of $B_{k,i}$ onto $\Pi^\xi$.
We observe that $f_k\le f_{k+1}$ for every $k\in\N$.

It follows from the definition of $B^\xi_y$ that 
\begin{equation}\label{ho lim of fk}
\hzero(B^\xi_y)=\lim_{k\to+\infty}f_k(y)\quad\hbox{for every }y\in\Pi^\xi.
\end{equation}
By the Projection Theorem (see, e.g., \cite[Proposition 8.4.4]{Cohn}) for every $k\in\N$ and $i\in\Z$ the set
$\pi^\xi(B_{k,i})$ is  $\hd$-measurable on $\Pi^\xi$. By \eqref{fk in lemma} and \eqref{ho lim of fk}
this implies that the same is true for the function
$y\mapsto\hzero(B^\xi_y)$. By the Monotone Convergence Theorem we obtain from \eqref{ho lim of fk} that
\begin{equation}\label{int hzero Bxiy equal}
\int_{\Pi^\xi}\hzero(B^\xi_y)\,{\rm d}\Hd{y}=\lim_{k\to+\infty}\int_{\Pi^\xi}f_k(y)\,{\rm d}\Hd{y}.
\end{equation}
By \eqref{fk in lemma} we have
$$
\int_{\Pi^\xi}f_k(y)\,{\rm d}\Hd{y}=\sum_{i\in\Z}\Hd{\pi^\xi(B_{k,i})}\le \sum_{i\in\Z}\Hd{B_{k,i}}=\Hd{B},
$$
which, together with \eqref{int hzero Bxiy equal}, gives \eqref{int hzero Bxiy less}.
\end{proof}
\medskip

\noindent {\textbf{Functions of generalised bounded deformation.}} 
We recall the definition of the space of \textit{functions of generalised  bounded deformation}, introduced in \cite[Definition 4.1]{DalMasoJems}. This definition uses the collection $\mathcal{T}$  of regular truncation functions defined by
\begin{equation*}
\mathcal{T}:=\{\tau\in C^1(\R):-1/2\leq\tau\leq 1/2 \,\text{ and }\,0 \leq\tau'\leq 1\}.
\end{equation*}
\begin{definition}\label{def:GBD}The space $GBD(\Omega)$ is the space of all $\Lb^d$-measurable functions $u\colon \Omega\to\Rd$ for which there exists a measure $\lambda\in \mathcal{M}^+_b(\Omega)$ such that the following equivalent conditions are satisfied for every $\xi\in\Sd$:
\begin{enumerate}
    \item[(a)] for every $\tau\in\mathcal{T}$ we have $D_\xi(\tau(u\cdot\xi))\in \mathcal{M}_b(\Omega)$ and 
    \begin{equation}\label{def a gbd}|D_\xi(\tau(u\cdot\xi))|(B)\leq \lambda(B)\quad \text{for every Borel set } B\subset \Omega;\end{equation}
    \item[(b)] for $\mathcal{H}^{d-1}$-a.e.\ $y\in\Pi^\xi$ we have $u^\xi_y\in BV_{\rm loc}(\Omega^\xi_y)$ and 
    \begin{equation}\label{eq:def GBD}\int_{\Pi^\xi}\big(|Du^\xi_y|(B^\xi_y\setminus J^1_{u^{\xi}_y})+\hzero(B^\xi_y\cap J^{1}_{u^\xi_y})\big){\rm d}\Hd{y}\leq \lambda(B)\,\,\, \text{for every Borel set }B\subset\Omega
.\end{equation}
\end{enumerate}

\begin{remark}\label{remar: finiteness}
    If $u\in GBD(\Omega)$ it follows from (b) of Definition \ref{def:GBD} that for every $\xi\in\Sd$ there exists a Borel set $N_\xi\subset \Pi^\xi$, with $\hd(N_\xi)=0$, such that  $u^\xi_y\in BV_{\rm loc}(\Omega^\xi_y)$ and $|Du^\xi_y|(\Omega^\xi_y)<+\infty$  for every  $y\in\Pi^\xi\setminus N_\xi$. In particular, if $\Omega^\xi_y$ is an interval and $y\in\Pi^\xi\setminus N_\xi$, then $u^\xi_y\in BV(\Omega^\xi_y)$.
    \end{remark}
    \begin{remark}\label{1d gbd}
    The previous remark implies that, if $d=1$, then $GBD(\Omega):=\{u\in BV_{\rm loc}(\Omega): |Du|(\Omega)<+\infty\}$.
\end{remark}
\begin{remark}\label{Lipschitz}
    Condition (a) of Definition \ref{def:GBD} can be strengthened by requiring that   \eqref{def a gbd} holds also for every $\tau\in\mathcal{T}_{\text{Lip}}$, where
 \begin{equation*}
\mathcal{T}_{\rm Lip}:=\{\tau\in \text{Lip}(\R):-1/2\leq\tau\leq 1/2,\,\, 0 \leq\tau'\leq 1,\,\, \tau^\prime \text{ has compact support}\}.
\end{equation*}
Indeed, every function $\tau\in \mathcal{T}_{\rm Lip}$ can be approximated uniformly on $\R$ by  a sequence $(\tau_n)_n\subset\mathcal{T}$, so that $\tau_n(u\cdot\xi)\to \tau(u\cdot\xi)$ in $L^1(\Omega)$. Since for every $U\subset \Omega$ open, the function $v\mapsto |D_\xi v|(U)$ is lower semicontinuous with respect to the $L^1(\Omega)$ convergence, by \eqref{def a gbd} we obtain that $D_\xi(\tau(u\cdot\xi))\in \mathcal{M}_b(\Omega)$ and that
\begin{equation*}
    |D_\xi(\tau(u\cdot\xi))|(U)\leq \liminf_{n\to+\infty} |D_\xi(\tau_n(u\cdot\xi))|(U)\leq \lambda (U)
\end{equation*}
for every $U\subset \Omega$ open. Inequality \eqref{def a gbd} for a general Borel set $B\subset \Omega$ follows by approximation with open sets.
\end{remark}

\begin{remark}\label{scaling remark} Inequality \eqref{eq:def GBD} can be extended to non-unitary vectors. 
Elementary arguments show that for every $t\neq0$, $\xi\in\Rd\setminus\{0\}$, $y\in\Pi^\xi$, and $A\subset \R^d$ 
 \begin{equation}
        \label{eq:rescaled intervals}A^\xi_y=tA^{t\xi}_y.
    \end{equation}
Since $ u_y^{t\xi}(s)=t u_y^{\xi}(st)$ for every $s\in\R$, we have 
\begin{gather}\notag\label{DhatuS}
D u_y^{t\xi}(S)=|t|D u_y^{\xi}(tS)\quad\hbox{and}\quad  |D u_y^{t\xi}|(S)=|t||D u_y^{\xi}|(tS),
\\
t J_{ u_y^{t\xi}}=J_{ u_y^\xi} \quad\hbox{and}\quad t J_{ u_y^{t\xi}}^{|t|}=J_{ u_y^\xi}^1
\label{Jhatutxi}
\end{gather}
for every Borel set $S\subset\Omega_y^{t\xi}$.
In particular, if $B\subset\Omega$ is a Borel set, taking $S=B_y^{t\xi}\setminus J^{|t|}_{u^{t\xi}_y}$ and using \eqref{eq:rescaled intervals}  and \eqref{Jhatutxi} we get
\begin{equation}\label{DhatuAytxi}
  |D u_y^{t\xi}|(B_y^{t\xi}\setminus J^{|t|}_{u^{t\xi}_y})=|t||D u_y^{\xi}|(B_y^\xi\setminus J^1_{u^\xi_y}),
\end{equation}
while, taking $S=B^{t\xi}_y$ and using \eqref{eq:rescaled intervals}
\begin{equation}\label{2.13 bis}
    D u_y^{t\xi}(B_y^{t\xi})=|t|D u_y^{\xi}(B_y^\xi),
\end{equation}
which will be used later in the proof of Proposition \ref{prop: 2 homogeneity}.
Applying \eqref{Jhatutxi} and \eqref{DhatuAytxi} with $t=|\xi|$ and $\xi$ replaced by $\xi/|\xi|$, from \eqref{eq:def GBD} we obtain
    \begin{gather}\label{def:GBD xiquadro}
\int_{\Pi^\xi}|\xi||Du^\xi_y|(B^\xi_y\setminus J^{|\xi|}_{u^\xi_y}){\rm d}\Hd{y}\leq |\xi|^2\lambda(B),\\\notag
\label{disuguaglianza sul saltino}
\int_{\Pi^\xi}\hzero(B^\xi_y\cap J^{|\xi|}_{u^\xi_y}){\rm d}\Hd{y}\leq \lambda (B),
\end{gather}
    for every $\xi\in\Rd\setminus \{0\}$ and every Borel set $B\subset\Omega$. 
\end{remark}

 \begin{remark}
\label{re:RemarkAlmiTasso}  It follows from \cite[Remark 3.6]{AlmiTassoComp} that, if $u\in GBD(\Omega)$, then $\Hd{J^1_u}<+\infty$. More precisely, it is  shown that, if $\lambda\in \mathcal{M}^+_b(\Omega)$ satisfies \eqref{eq:def GBD}, then for every Borel set $B\subset \Omega$  we have the inequality
\begin{equation}\label{eq:bound on J1}
    \Hd{J^1_u\cap B}\leq 4d\lambda(B).
\end{equation}
 Since by Theorem \ref{fine propr of GBD} below for $\hd$-a.e.\ $y\in \Pi^\xi$ we have
\begin{equation*}J^{|\xi|}_{u^\xi_y}\subset (J^1_u)^\xi_y,\end{equation*}
it follows from \eqref{int hzero Bxiy less}, \eqref{def:GBD xiquadro}, and   \eqref{eq:bound on J1} 
that 
\begin{gather}\label{def:GBD con J1}
\int_{\Pi^\xi}|\xi||Du^\xi_y|((B\setminus J^1_u)^\xi_y){\rm d}\Hd{y}\leq |\xi|^2\lambda(B),\\\notag
\int_{\Pi^\xi}\hzero((B\cap J^1_u)^\xi_y){\rm d}\Hd{y}\leq 4d\lambda(B).
    \end{gather}
    Given $0<r\leq 1$, we can consider  the function $v=u/r$, which by \cite[Remark 4.6]{DalMasoJems} satisfies \eqref{eq:def GBD} with $\lambda/r$. Applying \eqref{eq:bound on J1} to this function, we obtain 
    \begin{equation}\label{bound on Jr}
         \Hd{J^{r}_u\cap B}\leq \frac{4d}{r}\lambda(B)
    \end{equation}
    for every Borel set $B\subset\Omega$. In particular, this implies that $[u]\in L^1_{\rm weak}(J_u,\hd)$. Moreover, since $J_u=\bigcup_{0< r\leq 1}J^{r}_u$, we have that $J_u$ is $\sigma$-finite with respect to $\hd$.
\end{remark}

 \begin{definition}\label{def:lambda xi}
     For every $u\in GBD(\Omega)$ and $\xi\in\Rd\setminus \{0\}$ we introduce the bounded Radon measure $\lambda^\xi_u$ defined for every $\xi\in\Rd\setminus\{0\}$ and every  Borel set $B\subset\Omega$ by
 \begin{equation}
\label{eq:defmuxi} \lambda^\xi_u(B):=
\displaystyle \int_{\Pi^\xi}\big(|\xi||Du^\xi_y|(B^\xi_y\setminus J^{|\xi|}_{u^\xi_y})+\hzero(B^\xi_y\cap J^{|\xi|}_{u^\xi_y})\big) {\rm d}\Hd{y}.
 \end{equation}
 \end{definition}

 Given $u\in GBD(\Omega)$, let $\lambda_u$ be the smallest measure $\lambda$ for which (a) and (b) of Definition \ref{def:GBD} hold true.
 It can be shown (see \cite[Proposition 4.17]{DalMasoJems}) that 
 \begin{equation}\label{def lambda u}
     \lambda_u(B)= \sup\sum_{i=1}^k\lambda^{\xi_i}_u(B_i)\quad \text{ for every Borel set  }B\subset \Omega,
 \end{equation}
 where the  supremum is taken over all $k\in\N$, all families $(\xi_i)_{i=1}^k $ of vectors of $\Sd$, and all Borel partitions $(B_i)_{i=1}^k$ of $B$.
\end{definition}

The following theorem collects some of the fine properties of $GBD(\Omega)$ functions, proved in  \cite[Proposition 6.1 and Theorems 6.2, 8.1, 9.1]{DalMasoJems}.
\begin{theorem}\label{fine propr of GBD}Let $u\in GBD(\Omega)$. Then the  following properties hold:

{\rm (a)} existence of the approximate symmetric  gradient: there exists $\mathcal{E}u\in L^1(\Omega;\Rdsym)$ such that for $\Lb^d$-a.e.\ $x\in\Omega$ we have
\begin{equation*}
    \underset{y\to x}{{\rm aplim}}\frac{(u(y)-u(x)-\mathcal{E}u(x)(y-x))\cdot(y-x)}{|x-y|^2}=0;
\end{equation*}
moreover, for every $\xi\in\Rd\setminus \{0\}$ and for $\hd$-a.e.\ $y\in\Pi^\xi$ we have 
\begin{equation*}
    \E u(y+t\xi)\xi\cdot\xi=\nabla u^{\xi}_y(t)
\end{equation*}
for $\Lb^1$-a.e.\ $t\in \Omega^\xi_y$, where $\nabla u^\xi_y$ denotes the density of the absolutely continuous part of $Du^\xi_y$ with respect to $\Lb^1$;

{\rm (b)} $J_u$ and its slices: the jump set $J_u$ is $(\hd,d-1)$-rectifiable; 
 for every $\xi\in\Rd\setminus \{0\}$ and for $\mathcal{H}^{d-1}$-a.e.\ $y\in\Pi^\xi$ we have 
\begin{gather}\label{eq:jump set to sliced jump set}
J_{u^\xi_y}=\{x\in J_u:[u](x)\cdot\xi\neq 0\}^\xi_y\subset (J_u)^\xi_y,\\ \label{eq:jump to sliced jump}
   (u^{\xi}_y)^{\pm}(t)=u^{\pm}(y+t\xi)\cdot\xi\quad \text{for every }t\in (J_u)^{\xi}_y, \\
   \label{Jxihatu}
   J^{|\xi|}_{u^\xi_y}\subset (J^1_u)^\xi_y,
\end{gather}
    where the normals at $J_u$ and $J_{u^\xi_y}$ are oriented in such a way that $\nu_u\cdot\xi\geq 0$ and $\nu_{u^{\xi}_y}=1$;
   moreover, setting 
    \begin{equation}\label{theta u}
        \Theta_u:=\Big\{x\in\Omega:\limsup_{\rho\to 0}\frac{\lambda_u(B_\rho(x))}{\rho^{d-1}}>0\Big\},
    \end{equation}
    we have that $\Theta_u$ is $(\hd,d-1)$-rectifiable, $J_u\subset \Theta_u$, and that $\hd(\Theta_u\setminus J_u)=0$.
\end{theorem}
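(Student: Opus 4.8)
The plan is to prove both statements by reducing them to one-dimensional facts about $BV$ functions along the lines $\{y+t\xi:t\in\R\}$ — which condition (b) of Definition \ref{def:GBD} makes available — and then reassembling the one-dimensional information into $d$-dimensional statements by integral-geometric (Fubini-type) arguments over the slicing hyperplanes $\Pi^\xi$, letting $\xi$ first range over a countable dense set and then over all directions. Both parts are established in \cite{DalMasoJems}; I sketch the structure of those arguments.

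For part (a): by Remark \ref{remar: finiteness}, for each $\xi\in\Sd$ and $\hd$-a.e.\ $y\in\Pi^\xi$ the slice $u^\xi_y$ belongs to $BV_{\rm loc}(\Omega^\xi_y)$, hence is approximately differentiable at $\Lb^1$-a.e.\ $t$ with derivative the density $\nabla u^\xi_y(t)$ of the absolutely continuous part of $Du^\xi_y$. Fixing a countable dense set $D\subset\Sd$ and applying Fubini's theorem (fibering $\Omega$ over each $\Pi^\xi$), one obtains a set of full $\Lb^d$-measure of points $x$ at which $u$ is approximately continuous and, simultaneously for all $\xi\in D$, the slice of $u$ through $x$ in direction $\xi$ is approximately differentiable. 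One then shows that the map associating to $\xi$ the value at $x$ of the approximate derivative of $u^\xi_{\pi^\xi(x)}$ is $2$-homogeneous, bounded below, and satisfies the parallelogram identity; by Proposition \ref{character quadratic} there is a symmetric matrix $\mathcal{E}u(x)$ representing it, and comparing its entries with $\lambda_u$ through \eqref{def lambda u} yields $\mathcal{E}u\in L^1(\Omega;\Rdsym)$. Finally one upgrades this to the genuine symmetric approximate differentiability of (a) by a blow-up argument, rescaling $u$ about $x$ and controlling the rescaled quantities $(u(y)-u(x)-\mathcal{E}u(x)(y-x))\cdot(y-x)/|y-x|^2$ through the one-dimensional approximate differentiability along a large measure of lines in enough directions; a two-dimensional slicing reduction in the spirit of \cite{AmbrCosciaDalM} is convenient here, and the last identity in (a) then holds by construction of $\mathcal{E}u$.

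For part (b): the bound $\hd(J^r_u\cap B)\le(4d/r)\lambda_u(B)$ of Remark \ref{re:RemarkAlmiTasso} shows that $J_u=\bigcup_{0<r\le1}J^r_u$ is $\sigma$-finite for $\hd$. To obtain rectifiability one studies the set $\Theta_u$ of points of positive upper $(d-1)$-density of the finite measure $\lambda_u$: standard density estimates give that $\Theta_u$ is $\sigma$-finite for $\hd$, and one shows it is $(\hd,d-1)$-rectifiable by exhibiting an approximate tangent hyperplane at $\hd$-a.e.\ of its points — this is where the directional data intervene, since $\lambda_u\mres\Theta_u$ is comparable to $\hd\mres\Theta_u$ while its slices in each direction $\xi$ are atomic (their size being controlled by $\hzero$ of the corresponding sliced jump sets), which forces blow-ups to be flat. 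The inclusion $J_u\subset\Theta_u$ follows by choosing, at $x\in J_u$, a direction $\xi$ with $[u](x)\cdot\xi\ne0$ and observing via the slicing relations \eqref{eq:jump set to sliced jump set}--\eqref{eq:jump to sliced jump} that a positive $\hd$-fraction of the slices near $x$ carries a jump of size comparable to $|[u](x)\cdot\xi|$, so that $\lambda_u(B_\rho(x))\ge c\rho^{d-1}$ for some $c>0$. Conversely $\hd(\Theta_u\setminus J_u)=0$: at $x\in\Theta_u\setminus J_u$ there is (by the rectifiability just proved) an approximate tangent hyperplane, and since $u$ is approximately continuous $\Lb^d$-a.e., the positive $\lambda_u$-density together with the one-dimensional structure of the slices forces the slices to have a common jump across that hyperplane — so $x\in J_u$ — outside an $\hd$-null set. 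The slicing identities \eqref{eq:jump set to sliced jump set}, \eqref{eq:jump to sliced jump}, \eqref{Jxihatu} and the stated orientation of the normals come from matching the one-dimensional traces $(u^\xi_y)^\pm$ with the $d$-dimensional traces $u^\pm$ at jump points, the sign conventions being fixed so that $\nu_u\cdot\xi\ge0$ and $\nu_{u^\xi_y}=1$.

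The main obstacle in (a) is the parallelogram identity for the one-dimensional slice derivatives regarded as a function of the direction $\xi$ — the pointwise counterpart of the measure-level identity \eqref{parallelogram identity} whose proof occupies Sections \ref{section main}--\ref{section conclusion} of the present paper — together with the passage from one-dimensional to $d$-dimensional differentiability; in (b) it is the construction of the approximate tangent hyperplane for $\Theta_u$, which, in the absence of a distributional gradient $Eu$ to rely on, must be extracted from the atomicity of the slices of $\lambda_u$ by an integral-geometric argument rather than by any soft reasoning.
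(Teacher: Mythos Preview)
The paper does not prove this theorem: it is stated as a summary of results established in \cite{DalMasoJems} (Proposition~6.1 and Theorems~6.2, 8.1, 9.1 there), with no argument given here. Your outline is a reasonable high-level description of the strategy in that reference, and there is nothing in the present paper to compare it against.

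One remark on your closing paragraph. You identify the pointwise parallelogram identity for the slice derivatives as the main obstacle in (a) and describe it as the ``pointwise counterpart'' of \eqref{parallelogram identity}, whose proof occupies Sections~\ref{section main}--\ref{section conclusion}. The analogy is suggestive but the difficulties are not parallel. What is new and hard in the present paper is the parallelogram identity for the \emph{singular} part of $\sigma^\xi_u$ (ultimately, for the Cantor part $\mu^c_u$), and it is this that forces the Riemann-sum discretization machinery. The absolutely continuous part---which is all that is relevant for constructing $\mathcal{E}u$---is handled in \cite{DalMasoJems} by more direct means (blow-up arguments and control of the $L^1$ densities via $\lambda_u$), and that argument is logically prior to and independent of Sections~\ref{section main}--\ref{section conclusion} here. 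So while your sketch is sound, framing the pointwise step as comparable in difficulty to the core contribution of this paper overstates it.
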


It is easy to see that for a function $u\in GBD(\Omega)\setminus BD(\Omega)$ its jump $[u]$ may be not  integrable on $J_u$ with respect to $\hd$ (see, for instance, \cite[Example 12.3]{DalMasoJems}). More precisely, one can  show (see, for instance,  \cite[Proposition 3.2]{AmbrCosciaDalM} or \cite[Remark 2.17]{TassoContinuity}) that, if $u\in GBD(\Omega)\cap L^1(\Omega;\Rd)$ and  $[u]\in L^1(J_u,\mathcal{H}^{d-1})$, then $u$ belongs to $BD(\Omega)$.  
Nonetheless, we now show that $[u]$ is integrable on $J_u\setminus J^1_u$ with respect to $\hd$. This is the content of the following proposition. 

\begin{proposition}\label{thm:L1smalljumps}
Let $u\in GBD(\Omega)$. Then 
\begin{equation}\label{boundedness jumps}
    \int_{J_u\setminus J^1_u}|([u]\odot\nu_u)\xi\cdot\xi|\,{\rm d}\hd<+\infty
\end{equation}
for every $\xi\in\Rd$. This is equivalent to
$[u]\in L^1(J_u\setminus J^1_u,\mathcal{H}^{d-1})$.
\end{proposition}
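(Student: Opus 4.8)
The plan is to estimate the integral in \eqref{boundedness jumps} by slicing in the $d$ coordinate directions (or in $d$ linearly independent directions) and using the integrability information on small jumps encoded in Definition \ref{def:GBD}(b), together with the rectifiability of $J_u$ from Theorem \ref{fine propr of GBD}(b). The point is that condition \eqref{eq:def GBD}, and its rescaled form \eqref{def:GBD con J1}, only give control of $|Du^\xi_y|$ \emph{away from} the big-jump slices $J^{|\xi|}_{u^\xi_y}$, i.e.\ precisely on the part corresponding to small jumps (plus the absolutely continuous and Cantor parts), so the jump contribution that survives there is exactly the one we want to bound.

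First I would reduce the pointwise integrand: for $\hd$-a.e.\ $x\in J_u$ one has $([u]\odot\nu_u)\xi\cdot\xi=([u](x)\cdot\xi)(\nu_u(x)\cdot\xi)$, so it suffices to bound $\int_{J_u\setminus J^1_u}|[u]\cdot\xi|\,|\nu_u\cdot\xi|\,{\rm d}\hd$ for $\xi=e_1,\dots,e_d$; summing these then controls $\sum_i|[u]\cdot e_i|$ pointwise (up to the harmless factors $|\nu_u\cdot e_i|$, which however can vanish, so this needs care — see below), hence $|[u]|$, giving the asserted equivalence with $[u]\in L^1(J_u\setminus J^1_u,\hd)$. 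The main estimate is then: for fixed $\xi$, use the coarea/slicing formula for rectifiable sets (see \cite[Chapter 2]{AmbrosioFuscoPallara}) to write
\begin{equation}\label{plan slicing}
\int_{(J_u\setminus J^1_u)}|[u]\cdot\xi|\,|\nu_u\cdot\xi|\,{\rm d}\hd=\int_{\Pi^\xi}\sum_{t:\,y+t\xi\in J_u\setminus J^1_u}|[u](y+t\xi)\cdot\xi|\,{\rm d}\Hd{y}.
\end{equation}
By Theorem \ref{fine propr of GBD}(b), for $\hd$-a.e.\ $y$ the points $y+t\xi\in J_u$ with $[u]\cdot\xi\neq0$ are exactly the jump points of $u^\xi_y$, with $[u](y+t\xi)\cdot\xi=[u^\xi_y](t)$, and the jump part of $Du^\xi_y$ at such $t$ equals $[u^\xi_y](t)$; moreover $J^{|\xi|}_{u^\xi_y}\subset(J^1_u)^\xi_y$ by \eqref{Jxihatu}. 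Hence for a.e.\ such $y$ the inner sum in \eqref{plan slicing} is at most $|Du^\xi_y|\big((\Omega\setminus J^1_u)^\xi_y\big)$: every $y+t\xi\in (J_u\setminus J^1_u)$ with $[u]\cdot\xi\neq0$ contributes a jump of $u^\xi_y$ at $t$ that lies outside $J^{|\xi|}_{u^\xi_y}$ (since it is in $J_u\setminus J^1_u$ one checks, using $[u^\xi_y](t)=[u](y+t\xi)\cdot\xi$ and $|[u](y+t\xi)\cdot\xi|\le|[u](y+t\xi)|<1$... wait — this needs $|\xi|\le 1$: I would simply take $\xi=e_i$, so $|\xi|=1$ and the condition $|[u^{e_i}_y](t)|\ge1$ defining $J^1_{u^{e_i}_y}$ forces $|[u](y+te_i)|\ge1$, i.e.\ $y+te_i\in J^1_u$). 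Therefore the inner sum is bounded by $|Du^\xi_y|\big(B^\xi_y\setminus J^{|\xi|}_{u^\xi_y}\big)$ with $\xi=e_i$, $B=\Omega$, and integrating in $y$ and invoking \eqref{eq:def GBD} (equivalently \eqref{def:GBD con J1}) gives $\int_{\Pi^{e_i}}\cdots\,{\rm d}\hd\le\lambda_u(\Omega)<+\infty$.

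The subtle point — and the step I expect to be the main obstacle — is the passage from the $d$ estimates with $\xi=e_i$ back to $L^1$-integrability of $[u]$ itself (and to \eqref{boundedness jumps} for \emph{every} $\xi$, not just the $e_i$): the factor $|\nu_u\cdot e_i|$ in \eqref{plan slicing} degenerates precisely where $\nu_u$ is orthogonal to $e_i$, so $\sum_i|[u]\cdot e_i|\,|\nu_u\cdot e_i|$ does not obviously dominate $|[u]|$. The fix is to choose, for each point (or rather, to exploit that on the rectifiable set $J_u$ one can split $J_u\setminus J^1_u$ into finitely many Borel pieces $G_1,\dots,G_M$ on each of which $|\nu_u\cdot \xi_m|\ge c>0$ for a suitable $\xi_m$ from a fixed finite family of unit vectors that ``$c$-covers'' $\Sd$): applying the above slicing estimate on $G_m$ with direction $\xi_m$ bounds $\int_{G_m}|[u]\cdot\xi_m|\,{\rm d}\hd\le c^{-1}\lambda_u(\Omega)$; then one more finite family of directions, chosen so that for every piece the vectors $\xi_m$ together with a few others form a basis with quantitative lower bounds, upgrades $|[u]\cdot\xi_m|$-control to $|[u]|$-control on $G_m$. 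Summing over $m$ yields $[u]\in L^1(J_u\setminus J^1_u,\hd)$, and then \eqref{boundedness jumps} for arbitrary $\xi$ follows from $|([u]\odot\nu_u)\xi\cdot\xi|\le|[u]||\xi|^2$. I would present the direction-covering argument cleanly as a short lemma (a fixed finite set of directions $\xi_1,\dots,\xi_N\in\Sd$ such that for every $\nu\in\Sd$ there is $i$ with $|\nu\cdot\xi_i|\ge1/2$ exists by compactness), which makes the whole reduction a one-line consequence of the single-direction slicing estimate.
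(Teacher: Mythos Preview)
Your slicing argument for \eqref{boundedness jumps} is correct and is essentially the paper's proof: the Area Formula turns the surface integral into an integral over $\Pi^\xi$ of the jump part of $Du^\xi_y$ on $(J_u\setminus J^1_u)^\xi_y$, and the inclusions \eqref{eq:jump set to sliced jump set}--\eqref{Jxihatu} place this inside $|Du^\xi_y|(\Omega^\xi_y\setminus J^1_{u^\xi_y})$, whose $\Pi^\xi$-integral is $\le\lambda(\Omega)$ by \eqref{eq:def GBD}. One remark: there is no reason to restrict to $\xi=e_i$. Your own computation only uses $|\xi|=1$ (so that $|[u^\xi_y](t)|<1$ when $|[u](y+t\xi)|<1$), hence the estimate holds for every $\xi\in\Sd$ directly, and \eqref{boundedness jumps} for arbitrary $\xi$ then follows by $2$-homogeneity. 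This already removes half of what you flagged as the ``main obstacle''.

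For the equivalence with $[u]\in L^1(J_u\setminus J^1_u,\hd)$, your direction-covering/partition scheme is correct but substantially heavier than what the paper does. Once \eqref{boundedness jumps} is known for every $\xi$, the polarisation identity gives
\[
|([u]\odot\nu_u)\xi\cdot\eta|\le \tfrac14\big(|([u]\odot\nu_u)(\xi+\eta)\cdot(\xi+\eta)|+|([u]\odot\nu_u)(\xi-\eta)\cdot(\xi-\eta)|\big),
\]
so all entries of $[u]\odot\nu_u$ are integrable, i.e.\ $[u]\odot\nu_u\in L^1(J_u\setminus J^1_u,\hd;\Rdsym)$. The elementary inequality $|a|\,|b|\le\sqrt{2}\,|a\odot b|$ (immediate from $|a\odot b|^2=\tfrac12|a|^2|b|^2+\tfrac12(a\cdot b)^2$), applied with $b=\nu_u$ and $|\nu_u|=1$, then yields $|[u]|\le\sqrt{2}\,|[u]\odot\nu_u|$ pointwise, hence $[u]\in L^1$. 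This two-line algebraic step replaces your covering lemma and Borel partition entirely; the degeneracy of $|\nu_u\cdot e_i|$ that worried you never enters, because one controls the full matrix $[u]\odot\nu_u$ rather than individual scalar components $([u]\cdot e_i)(\nu_u\cdot e_i)$.
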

\begin{proof} By homogeneity it is enough to show \eqref{boundedness jumps} for a fixed $\xi\in\Sd$. By definition of $\odot$ we have $([u]\odot\nu_u)\xi\cdot\xi=([u]\cdot \xi)(\nu\cdot\xi)$. Therefore, by the Area Formula \cite[12.4]{Simon1984}, we can write 
\begin{equation}\label{inequality new 2.11}
\begin{gathered}
    \int_{J_u\setminus J^1_u}|([u]\odot\nu_u)\xi\cdot\xi|\,{\rm d}\hd=\int_{J_u\setminus J^1_u}|([u]\cdot \xi)(\nu_u\cdot\xi)|\,{\rm d}\hd\\=\int_{\Pi^\xi}\Big(\int_{(J_u\setminus J^1_u)^\xi_y}|[u](y+t\xi)\cdot \xi|\,{\rm d}\hzero(t)\Big){\rm d}\hd(y)\\\leq \int_{\Pi^\xi}\Big(\int_{J_{u^\xi_y}\setminus J^1_{u^\xi_y}}|[u^\xi_y](t)|\,{\rm d}\hzero(t)\Big){\rm d}\hd(y),
\end{gathered}
\end{equation}
where in the inequality we have used \eqref{eq:jump set to sliced jump set}-\eqref{Jxihatu}. By (b) of Definition \ref{def:GBD}, for $\hd$-a.e.\ $y\in\Pi^\xi$  we have $u^\xi_y\in BV_{\rm loc}(\Omega^\xi_y)$, so that 
\begin{gather*}
   \int_{\Pi^\xi} \Big(\int_{J_{u^\xi_y}\setminus J^1_{u^\xi_y}}|[u^\xi_y](t)|\,{\rm d}\hzero(t)\Big){\rm d}\hd(y)=\int_{\Pi^\xi}|Du^\xi_y|(J_{u^\xi_y}\setminus J^1_{u^\xi_y}){\rm d}\hd(y)\\\leq \int_{\Pi^\xi} |Du^\xi_y|(\Omega^\xi_y\setminus J^1_{u^\xi_y}){\rm d}\hd(y)\leq \lambda (\Omega)<+\infty,
\end{gather*}
which, together with \eqref{inequality new 2.11}, concludes the proof of \eqref{boundedness jumps}.

From the polarisation identity  it follows that for every $\xi,\eta\in\Rd\setminus \{0\}$  one has
    \begin{gather*}
        |([u]\odot \nu_u)\xi\cdot\eta|\leq \frac{1}{4}\big( |([u]\odot \nu_u)(\xi+\eta)\cdot(\xi+\eta)|
       +|([u]\odot \nu_u)(\xi-\eta)\cdot(\xi-\eta)| \big)
    \end{gather*}
    on $J_u$.
    Hence, by \eqref{boundedness jumps} we deduce that $[u]\odot\nu_u\in L^1(J_u\setminus J^1_u, \mathcal{H}^{d-1})$
    Since for every $a,b\in\Rd$ we have  $|a||b|\leq \sqrt{2} |a\odot b|$,
    the proof is concluded.
\end{proof}

The space $GBD(\Omega)$ behaves nicely with respect to restriction to affine subspaces of the domain $\Omega$. This fact is made rigorous by the following result.
\begin{theorem}\label{theorem restriction} Assume $d\geq 2$. Let $u\in GBD(\Omega)$, let $V$ be a vector subspace of $\Rd$ of dimension $k$, with $1\leq k\leq d-1$, let $V^\perp$ be its orthogonal subspace, and let  $\pi_V\colon \Rd\to V $ be the orthogonal projection onto $V$. For every $z\in V^\perp$ and  $E\subset \Omega$ we set $E^V_z:=\{x\in V:z+x\in E\}=V\cap (E-z)$  and consider the function $u^V_z\colon\Omega^V_z\to V$  defined by $u^V_z(x):=\pi_V(u(z+x))$. Then the following properties hold: 
\begin{enumerate}
    \item[(a)] for $\mathcal{H}^{d-k}$-a.e.\ $z\in V^\perp$ we have $u^V_z\in GBD(\Omega^V_z)$;
    \item [(b)]for $\mathcal{H}^{d-k}$-a.e.\ $z\in V^\perp$ we have $J_{u^V_z}\subset (J_u)^V_z\cup N_z$ for a Borel set $N_z\subset V$ with $\mathcal{H}^{k-1}(N_z)=0$. 
\end{enumerate}
\end{theorem}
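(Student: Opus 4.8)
The plan is to reduce both statements to the one-dimensional slicing that defines $GBD$, using the elementary observation that slicing $u^V_z$ in a direction lying in $V$ reproduces a slice of $u$ in the same direction. First I would record that, for $\xi\in V$ with $|\xi|=1$, $z\in V^\perp$ and $y\in\Pi^\xi\cap V$, one has $z+y\in\Pi^\xi$ and, since $\pi_V(w)\cdot\xi=w\cdot\xi$ for all $w\in\Rd$,
\begin{equation*}
(u^V_z)^\xi_y(t)=u^\xi_{z+y}(t)\quad\text{for all }t\in\R,\qquad (B^V_z)^\xi_y=B^\xi_{z+y}\quad\text{for every Borel }B\subset\Omega .
\end{equation*}
Since $\Pi^\xi=(\Pi^\xi\cap V)\oplus V^\perp$ is an orthogonal splitting, Fubini gives $\hd\mres\Pi^\xi=\mathcal{H}^{k-1}\mres(\Pi^\xi\cap V)\otimes\mathcal{H}^{d-k}\mres V^\perp$. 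Combining these two facts with \eqref{eq:defmuxi} yields, for every unit $\xi\in V$ and every Borel $B\subset\Omega$, the disintegration
\begin{equation*}
\lambda^\xi_u(B)=\int_{V^\perp}\lambda^\xi_{u^V_z}(B^V_z)\,{\rm d}\mathcal{H}^{d-k}(z),
\end{equation*}
where $\lambda^\xi_{u^V_z}$ is the $[0,+\infty]$-valued measure on $\Omega^V_z\subset V\cong\R^k$ defined by the $k$-dimensional analogue of \eqref{eq:defmuxi}; in particular the $\pi_{V^\perp}$-pushforward of $\lambda^\xi_u$ is absolutely continuous with respect to $\mathcal{H}^{d-k}$.

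For part (a) I would then disintegrate the minimal measure $\lambda_u$ with respect to $\pi_{V^\perp}$: writing $(\pi_{V^\perp})_\#\lambda_u=h\,\mathcal{H}^{d-k}\mres\pi_{V^\perp}(\Omega)+\sigma^s$ with $h\ge0$ in $L^1$ and $\sigma^s\perp\mathcal{H}^{d-k}$ concentrated on a set $S$, the disintegration theorem provides a Borel family of probability measures $\{\widehat\lambda_z\}$, each concentrated on $z+V$, with $\lambda_u=\int\widehat\lambda_z\,{\rm d}(\pi_{V^\perp})_\#\lambda_u$. Define $\lambda_z(B'):=h(z)\,\widehat\lambda_z(z+B')$ for $z\notin S$ and Borel $B'\subset\Omega^V_z$; then $\lambda_z\in\mathcal{M}^+_b(\Omega^V_z)$ for $\mathcal{H}^{d-k}$-a.e.\ $z$, since $h<+\infty$ a.e.\ and $\mathcal{H}^{d-k}(S)=0$. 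From $\lambda^\xi_u\le\lambda_u$ and the two disintegrations above, testing on Borel sets of the form $(E+C)\cap\Omega$ with $E\subset V^\perp\setminus S$ and $C$ ranging over a countable generating algebra of Borel subsets of $V$, one obtains $\lambda^\xi_{u^V_z}\le\lambda_z$ for $\mathcal{H}^{d-k}$-a.e.\ $z$, for each $\xi$ in a fixed countable dense subset of $V\cap\Sd$; a density argument in $\xi$ based on the lower semicontinuity used in Remark \ref{Lipschitz} (after passing to the equivalent condition (a) of Definition \ref{def:GBD}) upgrades this to all unit $\xi\in V$. Since $\Omega^V_z$ is a bounded open subset of $V$, this means $u^V_z$ satisfies Definition \ref{def:GBD} with $\lambda_z$, i.e.\ $u^V_z\in GBD(\Omega^V_z)$.

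For part (b), fix a countable dense set $D\subset V\cap\Sd$. By part (a) and Theorem \ref{fine propr of GBD}(b), for $\mathcal{H}^{d-k}$-a.e.\ $z$ the set $J_{u^V_z}$ is $(\mathcal{H}^{k-1},k-1)$-rectifiable, with a measure-theoretic normal $\nu_{u^V_z}$ and jump $[u^V_z]$; moreover, applying Theorem \ref{fine propr of GBD}(b) to $u^V_z$ and to $u$, and using Fubini and the slicing identity $(u^V_z)^\eta_y=u^\eta_{z+y}$, for each $\eta\in D$ there is an $\mathcal{H}^{k-1}$-null Borel set $M^z_\eta\subset\Pi^\eta\cap V$ outside of which $J_{(u^V_z)^\eta_y}=\{x\in J_{u^V_z}:[u^V_z](x)\cdot\eta\neq0\}^\eta_y$ and, for $t$ in this set, $z+y+t\eta\in J_u$. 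A coarea argument for the rectifiable set $J_{u^V_z}$ and the orthogonal projection $\pi^\eta$ shows that $B^z_\eta:=\{x\in J_{u^V_z}:\nu_{u^V_z}(x)\cdot\eta\neq0,\ \pi^\eta(x)\in M^z_\eta\}$ is $\mathcal{H}^{k-1}$-null, hence so is $N_z:=\bigcup_{\eta\in D}B^z_\eta$. If $x\in J_{u^V_z}\setminus N_z$, choose $\eta\in D$ with $\nu_{u^V_z}(x)\cdot\eta\neq0$ and $[u^V_z](x)\cdot\eta\neq0$ (possible, since these conditions cut out an open dense subset of $V\cap\Sd$); then $\pi^\eta(x)\notin M^z_\eta$, and applying the slicing identities at $y=\pi^\eta(x)$ forces $z+x\in J_u$, i.e.\ $x\in(J_u)^V_z$. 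This proves (b).

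I expect the main obstacle to be the construction in part (a) of a single dominating measure $\lambda_z$ valid for all directions $\xi\in V$ simultaneously and finite for $\mathcal{H}^{d-k}$-a.e.\ $z$: membership in $GBD$ is quantified over all directions, a plain disintegration of $\lambda_u$ controls only one direction at a time, and the $\pi_{V^\perp}$-singular part $\sigma^s$ of $\lambda_u$ must be discarded carefully (which is exactly why the exceptional $z$ form only an $\mathcal{H}^{d-k}$-null set). A secondary technical point is the coarea estimate in part (b) controlling the extra jump set $N_z$, which arises because passing from approximate one-sided limits in $\Rd$ to approximate one-sided limits inside the lower-dimensional plane $z+V$ can fail along an $\mathcal{H}^{k-1}$-null set of $x$.
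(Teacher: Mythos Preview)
Your proposal is correct. For part (a) you are essentially reconstructing the argument of \cite[Theorem 4.19]{DalMasoJems}, which the paper simply cites; your identification of the main difficulty---building a single dominating measure $\lambda_z$ valid for all directions simultaneously, via disintegration of $\lambda_u$ and a density argument in $\xi$---matches what happens there.

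For part (b) your route is genuinely different from the paper's. The paper first proves the codimension-one case by appealing to Proposition~\ref{prop: jumps restricted}: a jump point $y_0$ of $u^V_{s\xi}$ is shown to satisfy $s\xi+y_0\in\Theta_u$ (the set of positive $(d-1)$-density for $\lambda_u$), and since $\hd(\Theta_u\setminus J_u)=0$ one takes $N_s:=(\Theta_u\setminus J_u)^V_{s\xi}$; the general codimension is then obtained by induction, slicing one direction at a time. Your argument instead goes through the one-dimensional slicing identity $(u^V_z)^\eta_y=u^\eta_{z+y}$ together with \eqref{eq:jump set to sliced jump set} applied both to $u^V_z$ and to $u$, and a coarea estimate on the rectifiable set $J_{u^V_z}$ to control the bad fibres. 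This handles all codimensions at once and avoids both Proposition~\ref{prop: jumps restricted} and the inductive step. The trade-off is that the paper's approach isolates a pointwise mechanism (membership in $\Theta_u$) that is of independent interest, while yours stays closer to the slicing definition and is arguably more self-contained once part (a) and Theorem~\ref{fine propr of GBD}(b) are available.
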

To prove property (b)  we need the following result concerning the relation  between the jump points of a function $u\in GBD(\Omega)$ and the jump points of its restriction to a hyperplane that does not intersect  the set $S_u$ of approximate discontinuity points.
\begin{proposition}\label{prop: jumps restricted}
   Assume $d\geq 2$. Let $u\in GBD(\Omega)$, $x_0\in\Omega$, and $\xi\in\Sd$. Assume that
    \begin{equation}\label{equation approximate continuity}
        \mathcal{H}^{d-1}(S_u\cap (x_0+\Pi^\xi))=0.
    \end{equation}
    Let $v\colon (\Omega-x_0)\cap \Pi^\xi \to \Pi^\xi$ be the function defined by $v(y):=\pi^\xi(\widetilde{u}(x_0+y))$ for $\mathcal{H}^{d-1}$-a.e.\ $y\in (\Omega-x_0)\cap\Pi^\xi$. Assume that there exist a direction $\nu\in\Sd\cap \Pi^\xi$ and two vectors $b^\pm\in \Pi^\xi$, with $b^+\neq b^-$, such that for every $\e>0$ we have
    \begin{equation}\label{def jump plane}
        \limsup_{\rho\to 0^+}\frac{\hd(\{y\in B_\rho(0)\cap\Pi^\xi:\pm y\cdot\nu>0,\,|v(y)-b^{\pm}|>\e\})}{\rho^{d-1}}=0.
    \end{equation}
 Then $x_0\in\Theta_u$.
\end{proposition}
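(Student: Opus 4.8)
The plan is to reduce the claim $x_0 \in \Theta_u$ to a statement about the blow-up behaviour of the measure $\lambda_u$ near $x_0$, and to establish the latter by slicing in directions close to $\xi$ and exploiting the one-dimensional jump that the hypothesis \eqref{def jump plane} forces on the relevant slices. Concretely, recall that $\Theta_u$ is defined in \eqref{theta u} through the condition $\limsup_{\rho\to0}\lambda_u(B_\rho(x_0))/\rho^{d-1}>0$, so it suffices to produce a constant $c>0$ and a sequence $\rho_n\to0$ with $\lambda_u(B_{\rho_n}(x_0))\ge c\rho_n^{d-1}$. By \eqref{def lambda u} it is enough to bound from below a single $\lambda_u^\eta$ on $B_{\rho_n}(x_0)$ for a conveniently chosen direction $\eta$, and by Definition \ref{def:lambda xi} this in turn amounts to controlling from below the integral over $\Pi^\eta$ of the sliced total variations $|Du^\eta_y|$ away from the big-jump set, plus the $\hzero$-count of big slice-jumps.

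First I would set up the geometry. After translating so that $x_0=0$, the hypothesis says that the "trace" $v=\pi^\xi(\widetilde u)$ on the hyperplane $\Pi^\xi$ has, at the origin, a genuine $(d-1)$-dimensional jump with normal $\nu\in\Sd\cap\Pi^\xi$ and one-sided values $b^\pm$, and $b^+\ne b^-$. Pick a vector $\eta$ with $\eta\cdot(b^+-b^-)\ne0$; natural choices are $\eta=\nu'$ for a suitable $\nu'$, but the point is that the $\eta$-component of $v$ genuinely jumps across $\{y\cdot\nu=0\}$ inside $\Pi^\xi$. The key observation is that for $\hd$-a.e.\ line $\ell$ in direction $\eta$ that meets a small ball around $0$ and crosses the hyperplane-jump, the slice $u^\eta_z$ must have variation, or a jump, of size comparable to $|\eta\cdot(b^+-b^-)|$ within that ball: if it had none, then $u^\eta_z$ would be (up to a small-$BV$ error) nearly constant along $\ell$, contradicting that its endpoints lie near points whose approximate values project to $b^+$ and $b^-$ respectively. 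This uses Theorem \ref{fine propr of GBD}(b), specifically \eqref{eq:jump to sliced jump} relating one-sided traces of $u$ to those of the slice, together with the hypothesis \eqref{equation approximate continuity} guaranteeing that $\widetilde u$ exists $\hd$-a.e.\ on $\Pi^\xi$ so that $v$ is well defined there. I would quantify "the endpoints lie near $b^\pm$" using \eqref{def jump plane}: outside a set of $y$'s of vanishing density, $v(y)$ is within $\e$ of $b^+$ on one side and of $b^-$ on the other.

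Then the estimate is assembled as a Fubini computation. Choose coordinates adapted to $\eta$; a line in direction $\eta$ through a point $z\in\Pi^\eta$ near $0$ meets $B_\rho(0)$ in an interval of length $\asymp\rho$, and for a set of $z$ of $\hd$-measure $\asymp\rho^{d-1}$ (those projecting near the $(d-2)$-dimensional "seam" of the hyperplane-jump, but still a positive fraction) the slice contributes at least $c_0:=\tfrac12|\eta\cdot(b^+-b^-)|$ to $|Du^\eta_z|(B_\rho(0)^\eta_z)$ — either as absolutely continuous/Cantor variation or, if there is a big slice-jump, as a unit contribution to the $\hzero$ term of $\lambda_u^\eta$ (here one should pick $\rho$ small enough, and $\e$ small enough, that a contribution of size $c_0$ cannot be "hidden" and that the portion of $|Du^\eta_z|$ on small jumps is accounted for too; the worst case, a big jump, only helps because \eqref{eq:defmuxi} still charges it). Integrating over $z\in\Pi^\eta$ gives $\lambda_u^\eta(B_\rho(0))\ge c\,\rho^{d-1}$ for all small $\rho$, hence $\lambda_u(B_\rho(0))\ge c\,\rho^{d-1}$, and therefore $0=x_0\in\Theta_u$.

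The main obstacle, I expect, is the quantitative slicing step: turning the density statement \eqref{def jump plane} about $v$ on $\Pi^\xi$ into a uniform lower bound, valid for a set of lines of full-order measure, on the variation of $u$ along those lines inside a ball of radius $\rho$. The difficulty is that \eqref{def jump plane} controls $v(y)=\pi^\xi(\widetilde u(y))$ only for $y\in\Pi^\xi$, i.e.\ on a null set for $\Ld$, whereas the slices $u^\eta_z$ see $u$ on full-dimensional lines transverse to $\Pi^\xi$; reconciling these requires a careful choice of which $(d-1)$-dimensional family of lines to integrate over — essentially lines that start near a "$b^-$ point" of $\Pi^\xi$ and end near a "$b^+$ point" — and a Fubini/coarea bookkeeping showing that the bad sets (where $\widetilde u$ fails to exist, where the slice is not $BV$, where the density in \eqref{def jump plane} has not yet kicked in) have, along the relevant lines, total one-dimensional measure small compared to $\rho$. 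Making the constants and the choice of $\rho_n$ explicit, while keeping the measurability of all the sliced quantities (cf.\ Lemma \ref{h0 slices} and the measurability results invoked for slices of $GBD$ functions), is where the real work lies; everything after that is the elementary Fubini lower bound above.
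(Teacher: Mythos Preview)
The paper does not actually prove this proposition: its proof consists entirely of the citation ``See \cite[Theorem 7.1]{DalMasoJems}.'' So there is no argument in the paper to compare against; your proposal is an attempt to supply what the paper outsources.

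Your sketch is in the right spirit, and the overall architecture --- pick $\eta$ with $\eta\cdot(b^+-b^-)\neq 0$, show that a set of lines in direction $\eta$ of $\hd$-measure $\gtrsim\rho^{d-1}$ must each carry slice-variation (or a jump) of definite size inside $B_\rho(x_0)$, then integrate to get $\lambda_u^\eta(B_\rho(x_0))\gtrsim\rho^{d-1}$ --- is indeed how the result in \cite{DalMasoJems} is obtained. You have also correctly located the real obstacle: the hypothesis \eqref{def jump plane} is a statement about $\widetilde u$ on the $\Ld$-null set $x_0+\Pi^\xi$, whereas the slices $u^\eta_z$ live on generic lines, and if $\eta\in\Pi^\xi$ the lines that lie in $x_0+\Pi^\xi$ form only a $(d-2)$-dimensional family in $\Pi^\eta$. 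Bridging this gap requires using that $\widetilde u(x)$ is the \emph{approximate} limit of $u$ at $x$ --- hence controls $u$ in density on full-dimensional balls around $x$ --- combined with Fubini, to propagate the one-sided values $b^\pm$ from the hyperplane to nearby points and hence to a positive-$\hd$ set of lines. Your phrase ``lines that start near a $b^-$ point and end near a $b^+$ point'' captures this, but turning it into a rigorous estimate (choosing scales so that the approximate-limit information has kicked in, the bad sets from \eqref{def jump plane} and \eqref{equation approximate continuity} are small, and the slice is $BV$) is exactly the content of the cited theorem and is not short.
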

\begin{proof}
   See \cite[Theorem 7.1]{DalMasoJems}.
\end{proof}
\begin{proof}[Proof of Theorem \ref{theorem restriction}]
    The proof of (a) can be found in \cite[Theorem 4.19]{DalMasoJems}. 
    
   We divide the proof of (b) into two steps.
   
   \noindent {\it Step 1.}  
 Assume that $k=d-1$ and let  $\xi\in\Sd$ be such that $V=\Pi^\xi$. We claim that for $\Lb^1$-a.e.\ $s\in\R$ there exists a Borel set $N_{s}\subset V$, with $\mathcal{H}^{d-2}(N_{s})=0$, such that 
 \begin{equation}\notag
     J_{u^V_{s\xi}}\subset (J_u)^V_{s\xi}+N_{s}.
 \end{equation}
 To prove this property, we observe that by the Fubini Theorem  the equality $\Lb^d(S_u)=0$ implies that   
    for $\Lb^1$-a.e.\ $s\in\R$   and for every $y_0\in \Pi^\xi=V$ condition \eqref{equation approximate continuity} holds with $x_0=s\xi+y_0$   , while the equality $\widetilde{u}=u$ $\Ld$-a.e.\ in $\Omega$ implies that for $\Lb^1$-a.e.\ $s\in\R$ we have \begin{equation}\label{new prop restriction}
   \widetilde{u}(s\xi+y)=u(s\xi+y) \quad \text{for $\hd$-a.e.\ $y\in \Pi^\xi$}.\end{equation}  Let us fix $s\in\R$ with these properties. Given  $y_0\in J_{u^V_{s\xi}}$,  we consider the function $v(y):=\pi^\xi(\widetilde{u}(s\xi+y_0+y))$ for $y\in\Pi^\xi$ and observe that by \eqref{new prop restriction} we have
   \begin{equation}\notag
       v(y)=u^V_{s\xi}(y_0+y) \quad \text{ for $\hd$-a.e.\ $y\in\Pi^\xi$}.
   \end{equation}
    Since $y_0\in J_{u^V_{s\xi}}$, $v$ satisfies \eqref{def jump plane}.  We can then apply Proposition \ref{prop: jumps restricted} to obtain that $s\xi+y_0\in \Theta_u$, which gives $y_0\in (\Theta_u)^V_{s\xi}$. Setting $N_{s}:=(\Theta_u\setminus J_u)^V_{s\xi}$, we have $y_0\in (J_u)^V_{s\xi}\cup N_s$, hence    $J_{u^V_{s\xi}}\subset (J_u)^V_{s\xi}\cup N_{s}$.  Since by Theorem \ref{fine propr of GBD}  we have $\Hd{\Theta_u\setminus J_u}=0$, it follows from \cite[Theorem 2.10.25]{Federer}  that $\mathcal{H}^{d-2}(N_{s})=0$ for $\Lb^1$-a.e.\ $s\in\R$. This concludes the proof of the claim. Of course, if $d=2$ this Step also completes the proof of (b).

\medskip

\noindent {\it Step 2.} Assume $d>2$. We now prove (b) by induction on the codimension of $V$.  Step 1 gives (b) when  the dimension of  $V$ is $d-1$. Given $1\leq k\leq d-2$, we  assume now that (b) holds for every subspace of dimension $k+1$ and we want to prove that it holds in  dimension $k$. Let us fix a subspace $V$ of  dimension $k$ and $\xi\in V^\perp\cap \Sd$. We consider the vector space $\widetilde{V}$ generated by $V$  and $\xi$. By (a) for $\mathcal{H}^{d-k-1}$-a.e.\ $z\in \widetilde{V}^\perp$ we have that  $u^{\widetilde{V}}_z\in GBD(\Omega^{\widetilde{V}}_z)$. 
    Using the inductive hypothesis, we deduce that for $\mathcal{H}^{d-k-1}$-a.e.\ $z\in \widetilde{V}^\perp$ there exists a Borel set $\widetilde{N}_z\subset \widetilde{V}$, with $\mathcal{H}^{k}(\widetilde{N}_{z})$=0, such that 
    \begin{equation}\label{jumps contained proof}
    J_{u^{\widetilde{V}}_{z}}\subset (J_u)^{\widetilde{V}}_z\cup \widetilde{N}_z.
    \end{equation}
    Let us fix $z\in \widetilde{V}^\perp$ satisfying both these properties. We observe that  for every  $s\in\R$ and $E\subset \Omega$ we have  \begin{equation}\label{equal sets new}
        E^V_{z+s\xi}=(E^{\widetilde{V}}_{z})^V_{s\xi}\,\,\,\text{ and }\,\,\, u^{V}_{z+s\xi}=(u^{\widetilde{V}}_z)_{s\xi}^V \,\,\text{ on $\in \Omega^V_{z+s\xi}$.}
    \end{equation}   
    
    Applying Step 1 with $\Rd$ replaced by $\widetilde{V}$ and with $u$ replaced by $u^{\widetilde{V}}_z$, we have that for $\Lb^1$-a.e.\ $s\in \R$ there exists a Borel set $N_{s}\subset V$, with $\mathcal{H}^{k-1}(N_s)=0$, such that
    \begin{equation}\notag  J_{(u_{z}^{\widetilde{V}})^V_{s\xi}}\subset (J_{u^{\widetilde{V}}_{z}})^V_{s\xi}\cup N_{s}.
    \end{equation} By 
    \eqref{jumps contained proof} this implies that
    \begin{equation}\notag J_{(u_{z}^{\widetilde{V}})^V_{s\xi}}\subset((J_u)^{\widetilde{V}}_z)^V_{s\xi}\cup (\widetilde{N}_{z})^V_{s\xi}\cup N_{s}.
    \end{equation}
    By \eqref{equal sets new} this gives 

    \begin{equation}\label{jump contained proof 5} J_{u^V_{z+s\xi}}\subset (J_u )^V_{z+s\xi}\cup (\widetilde{N}_{z})^V_{s\xi}\cup N_{s}.
    \end{equation}
    By \cite[Theorem 2.10.25]{Federer} we deduce that for $\Lb^1$-a.e.\ $s\in\R$ we have $\mathcal{H}^{k-1}((\widetilde{N}_{z})^V_{s\xi})=0$. Setting $N_{z+s\xi}:=N_{s}\cup (\widetilde{N}_z)^V_{s\xi}$, it follows that   $\mathcal{H}^{k-1}(N_{z+s\xi})=0$ for $\Lb^1$-a.e.\ $s\in\R$. Finally, from \eqref{jump contained proof 5} we deduce that 
    \begin{equation}\label{final projection}
        J_{u_{z+s\xi}^V}\subset (J_u)^V_{z+s\xi}\cup N_{z+s\xi}. 
    \end{equation}
    Since $V^\perp$ is the space generated by $\widetilde{V}^\perp$ and $\xi$ and  \eqref{final projection} holds for $\Lb^1$-a.e.\ $s\in\R$ and for $\mathcal{H}^{d-k-1}$-a.e.\ $z\in \widetilde{V}^\perp$, property (b) for $V$ follows from the Fubini Theorem. This concludes the proof of the inductive step and hence of the theorem.
\end{proof}

\section{Approximations by Riemann sums}\label{section Riemann}
In the proof of the main result of this paper, we will approximate various integrals with well-chosen Riemann sums, using a suitable version of a result that goes back to Hahn (see \cite{Hahn}). Similar results are proved also in \cite[Lemma A:1]{Santambrogio}. For technical reasons we use a construction described in \cite[Page 63]{Doob} and further developed in \cite[Lemma 4.12]{DalFraToa}. Since this result is crucial for our arguments, we give here the precise statement and a detailed proof. 

       \begin{lemma}\label{lemma:Riemann sums 1}
   Let $I=[a,b]\subset\R$  be a bounded closed interval. For every $z\in I$, $k\in\N$, and $i\in\Z$ let $t^k_i:=z+i/k$.   Let $(X,\|\cdot\|)$ be a Banach space and let $f\colon \R\to X$ be a Bochner integrable function such that $f=0$ on $\R\setminus I$. Then there exist an infinite subset $K\subset \N$ and an $\Lb^1$-negligible set $N\subset I$  such that  
    \begin{gather}
         \label{eq: prima lemma discre}\underset{k\in K}{\lim_{k\to +\infty}}\sum_{i\in \Z}\int_{t_{i}^{k}}^{t_{i+1}^{k}}\|f(t^{k}_i)-f(t)\|\, {\rm d}t=0,
    \end{gather}
    for every $z\in I\setminus N$. Moreover, given $h\in\N$ and setting $\mathcal{I}^k_h:=\{i\in\Z: [t^k_{i-h}, t^k_{i+h}]\subset I\}$ and
    $\mathcal{F}^k_h:=\{i\in\Z:t^k_i\in I\}\setminus  \mathcal{I}^{k}_h$, for every $\e>0$ there exists a Borel set  $I_\e\subset I$, with $\Lb^1(I\setminus I_\e)\leq \e$, such that 
    \begin{gather} \label{first uniform}
      \underset{k\in K}{\lim_{k\to +\infty}} \sum_{i\in \Z}\int_{t_{i}^{k}}^{t_{i+1}^{k}}\|f(t^{k}_i)-f(t)\|\, {\rm d}t=0 \quad \text{ uniformly  } \text{ for } z\in I_\e,\\  \label{second uniform}
       \underset{k\in K}{\lim_{k\to +\infty}} \frac{1}{{k}}\sum_{i\in \mathcal{I}^k_h}f(t_i^{k})=\int_{I}f(t)\, {\rm d}t \quad \text{ uniformly  } \text{ for } z\in I_\e,\\\label{new negligible}
       \underset{k\in K}{\lim_{k\to +\infty}} \frac{1}{{k}}\sum_{i\in\mathcal{F}^k_h}\!\|f(t_i^{k})\|\!=0\quad \text{ uniformly  } \text{ for } z\in I_\e.
    \end{gather}
    \end{lemma}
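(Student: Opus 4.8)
The plan is to package the whole statement around the single nonnegative quantity
\[
\phi_k(z):=\sum_{i\in\Z}\int_{t^k_i}^{t^k_{i+1}}\|f(t^k_i)-f(t)\|\,{\rm d}t,
\]
which is exactly the left-hand side of \eqref{eq: prima lemma discre} and \eqref{first uniform}, and which is a finite number for every $z$ since $f$ vanishes outside the bounded interval $I$ (so only finitely many summands are nonzero). The core step --- and the only real obstacle --- is the estimate $\int_I\phi_k(z)\,{\rm d}z\to0$ as $k\to\infty$. To prove it I would write $t^k_i=z+i/k$, change variables $t=z+i/k+s$ with $s\in[0,1/k]$ in each summand, and apply Fubini--Tonelli to the nonnegative scalar integrand $(z,s,w)\mapsto\|f(w)-f(w+s)\|$ (legitimate since $f$ is strongly measurable), obtaining
\[
\int_I\phi_k(z)\,{\rm d}z=\int_0^{1/k}\Big(\sum_{i\in\Z}\int_{a+i/k}^{\,b+i/k}\|f(w)-f(w+s)\|\,{\rm d}w\Big){\rm d}s.
\]
Since any $w\in\R$ lies in at most $k(b-a)+1$ of the intervals $[a+i/k,b+i/k]$, the inner sum is $\le\big(k(b-a)+1\big)\,\omega(s)$ with $\omega(s):=\int_\R\|f(w)-f(w+s)\|\,{\rm d}w$, hence $\int_I\phi_k(z)\,{\rm d}z\le\big((b-a)+\tfrac1k\big)\sup_{0\le s\le 1/k}\omega(s)$. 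The right-hand side tends to $0$ because $\omega(s)\to0$ as $s\to0$ (continuity of translations in $L^1(\R;X)$, proved by approximating $f$ in $L^1(\R;X)$ by functions in $C_c(\R;X)$ and using their uniform continuity). This is the Hahn-type argument of \cite[Page~63]{Doob} and \cite[Lemma 4.12]{DalFraToa}.

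Granting $\int_I\phi_k(z)\,{\rm d}z\to0$, the remaining steps are soft. I would extract an infinite set $K=\{k_1<k_2<\cdots\}\subset\N$ with $\int_I\phi_{k_j}(z)\,{\rm d}z\le 2^{-j}$; then $\sum_j\phi_{k_j}\in L^1(I)$ by the Monotone Convergence Theorem, so $\phi_{k_j}(z)\to0$ for every $z$ in $I\setminus N$ for some $\Lb^1$-negligible $N\subset I$, which is \eqref{eq: prima lemma discre}. Since $\Lb^1(I)<+\infty$, Egorov's theorem gives, for each $\e>0$, a Borel set $I_\e\subset I$ with $\Lb^1(I\setminus I_\e)\le\e$ on which $\phi_k\to0$ uniformly along $k\in K$; this is \eqref{first uniform}, and the same $I_\e$ will serve for the other two claims.

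Finally I would obtain \eqref{second uniform} and \eqref{new negligible} from the uniform smallness of $\phi_k$ on $I_\e$ via two elementary observations. First, because $\{[t^k_i,t^k_{i+1})\}_{i\in\Z}$ partitions $\R$ and $f\equiv0$ off $I$, one has $\big\|\frac1k\sum_{i\in\Z}f(t^k_i)-\int_I f(t)\,{\rm d}t\big\|\le\phi_k(z)$ and, for each $i$ with $t^k_i\in I$, $\frac1k\|f(t^k_i)\|\le\int_{t^k_i}^{t^k_{i+1}}\|f(t)\|\,{\rm d}t+\int_{t^k_i}^{t^k_{i+1}}\|f(t^k_i)-f(t)\|\,{\rm d}t$. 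Second, since $I=[a,b]$ is an interval, $i\in\mathcal{F}^k_h$ forces $t^k_i$ to lie within $h/k$ of $a$ or $b$, so $\{i:t^k_i\in I\}=\mathcal{I}^k_h\sqcup\mathcal{F}^k_h$ and every interval $[t^k_i,t^k_{i+1}]$ with $i\in\mathcal{F}^k_h$ is contained in the $z$-independent set $E^h_k:=\{t:{\rm dist}(t,\{a,b\})\le (h+1)/k\}$, whose $\|f\|$-integral $\int_{E^h_k}\|f\|\,{\rm d}t$ tends to $0$ as $k\to\infty$ by absolute continuity of the integral (the $E^h_k$ shrink to $\{a,b\}$). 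Combining, for $i\in\mathcal{F}^k_h$ one gets $\frac1k\sum_{i\in\mathcal{F}^k_h}\|f(t^k_i)\|\le\phi_k(z)+\int_{E^h_k}\|f\|\,{\rm d}t$, which is \eqref{new negligible}; and since $\frac1k\sum_{i\in\mathcal{I}^k_h}f(t^k_i)=\frac1k\sum_{i\in\Z}f(t^k_i)-\frac1k\sum_{i\in\mathcal{F}^k_h}f(t^k_i)$, the triangle inequality yields $\big\|\frac1k\sum_{i\in\mathcal{I}^k_h}f(t^k_i)-\int_I f(t)\,{\rm d}t\big\|\le 2\phi_k(z)+\int_{E^h_k}\|f\|\,{\rm d}t$, which is \eqref{second uniform}. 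Both bounds go to $0$ uniformly for $z\in I_\e$ for each fixed $h$, which completes the plan; the only nontrivial point throughout remains the Fubini interchange and overlap count behind $\int_I\phi_k(z)\,{\rm d}z\to0$.
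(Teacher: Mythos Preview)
Your proposal is correct and follows essentially the same approach as the paper: both compute $\int_I\phi_k(z)\,{\rm d}z$ via Fubini and a change of variables, bound it using the $L^1$-continuity of translations together with an overlap count (your $k(b-a)+1$ versus the paper's $\#\mathcal{J}^k\le 2k(b-a)+2$), then extract $K$ and apply Egorov. Your derivation of \eqref{second uniform} and \eqref{new negligible} via the $z$-independent boundary set $E^h_k$ is a slight repackaging of the paper's argument with $\widehat{\mathcal{F}}^k_h$, but the underlying idea---absolute continuity of $\int\|f\|$ on a set of vanishing measure near $\{a,b\}$---is identical.
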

     \begin{proof}
         We follow closely the proof of \cite[Lemma 4.12]{DalFraToa}.  Given $k\in\N$, we consider the set 
        \begin{equation}\label{lemma banach1}
            \mathcal{J}^k:=\{i\in\Z:k(a-b)-1\leq i\leq k(b-a)\}.
         \end{equation} 
        Note that for every $z\in I$ we have 
        \begin{equation}\nonumber \label{banach 2}
        [t_i^k,t_{i+1}^k]\cap I\neq\emptyset \quad\Longrightarrow \quad i\in \mathcal{J}^k.
        \end{equation}
Integrating with respect to $z$ the sum on the left-hand side of \eqref{eq: prima lemma discre} and using Fubini's theorem we obtain
        \begin{align} 
           \notag \int_I\Big(\sum_{i\in \Z}\int_{t^{k}_{i}}^{t^k_{i+1}}\|f(t^{k}_{i})-&f(t)\|\, {\rm d} t\Big){\rm d}z = \int_I\Big(\sum_{i\in \mathcal J^k}\int_{t^{k}_{i}}^{t^k_{i+1}}\|f(t^{k}_{i})-f(t)\|\, {\rm d} t\Big){\rm d}z
            \\\notag
            &= \int_I\Big(\sum_{i\in\mathcal J^k}\int_{0}^{\frac{1}{k}}\|f(z+\tfrac{i}{k})-f(z+\tfrac{i}{k}+s)\|\, {\rm d}s\Big){\rm d} z
            \\\notag
            &\leq \sum_{i\in \mathcal{J}^{k}}\int_{0}^{\frac{1}{k}}\Big(\int_{-\infty}^{+\infty}\|f(z+\tfrac{i}{k})-f(z+\tfrac{i}{k}+s)\|\, {\rm d}z\Big){\rm d}s\\\label{eq:fubini discretisation lemma}
            & =\sum_{i\in \mathcal{J}^{k}}\int_{0}^{\frac{1}{k}}\Big(\int_{-\infty}^{+\infty}\|f(z)-f(z+s)\|\,{\rm d}z\Big){\rm d}s.
        \end{align}
        By the $L^1$-continuity of translations  for every $\e>0$ there exists $\delta>0$ such that
        \begin{align*}&\displaystyle 
            \int_{-\infty}^{+\infty}\|f(z)-f(z+s)\|\, {\rm d}z<\e
        \end{align*}
        whenever $0\leq s<\delta $. Hence,
        \begin{equation*}
            \int_{0}^{\frac{1}{k}}\Big(\int_{-\infty}^{+\infty}\|f(z)-f(z+s)\|\,{\rm d}z\Big){\rm d}s< \frac{\e}{k}
        \end{equation*}
        for $k>1/\delta$.
     Observing that the number of elements of $\mathcal{J}^k$ satisfies $\hzero(\mathcal{J}^{k})\leq 2k(b-a)+2$, from the previous inequality and \eqref{eq:fubini discretisation lemma} we deduce that 
        \begin{equation}\nonumber 
\label{finale riemann 1}\lim_{k\to+\infty}\int_I\Big(\sum_{i\in \Z}\int_{t^k_{i}}^{t^k_{i+1}}\|f(t^{k}_i)-f(t)\|\, {\rm d}t\Big){\rm d}z=0.
        \end{equation}
        Hence, there exists an infinite set $K\subset \N$  and an $\Lb^1$-negligible set $N\subset I$  such that  \eqref{eq: prima lemma discre} holds. 

     To prove the second part of the statement, we first observe that 
        by Egorov's theorem there exists a Borel set $I_\e\subset I$, with $\Lb^1(I\setminus I_\e)\leq \e$, such that \eqref{first uniform} holds.
        
        To prove  \eqref{second uniform} and \eqref{new negligible}, we first observe that, since the number of elements of
        $$
       \widehat{\mathcal F}^k_h:=\{i\in\Z:[t^k_i,t^k_{i+1}]\cap I\neq\emptyset\}\setminus  \mathcal{I}^{k}_h
        $$
        is less than $2h+2$, by the absolute continuity of the integral 
        we have
        \begin{equation}\label{15-04-a}
        \lim_{k\to+\infty}\sum_{i\in \widehat{\mathcal{F}}^{k}_h}\int_{t^k_{i}}^{t^k_{i+1}}\|f(t)\|\, {\rm d}t=0  \quad \text{ uniformly  } \text{ for } z\in I_\e.
\end{equation}
Since
\begin{equation}\notag
    \frac{1}{k}\sum_{i\in\mathcal{F}^k_h}\!\|f(t_i^{k})\|\!\leq  \sum_{i\in\Z}\int_{t_i^{k}}^{t_{i+1}^{k}}\!\|f(t_i^{k})-f(t)\|\,{\rm d}t + \sum_{i\in \mathcal{F}^{k}_h}\int_{t^k_{i}}^{t^k_{i+1}}\|f(t)\|\, {\rm d}t,
\end{equation}
\eqref{new negligible} follows from \eqref{first uniform}, \eqref{15-04-a}, and from the inclusion $\mathcal{F}^{k}_h\subset \widehat{\mathcal{F}}^{k}_h$. Similarly, the inequality
$$
\Big\| \frac{1}{{k}}\sum_{i\in \mathcal{I}^k_h}f(t_i^{k})-\int_{I}f(t)\, {\rm d}t\Big\|\le
         \sum_{i\in \mathcal{I}^k_h}\int_{t_{i}^{k}}^{t_{i+1}^{k}}\|f(t^{k}_i)-f(t)\|\, {\rm d}t 
         + 
          \sum_{i\in \widehat{\mathcal{F}}^{k}_h}\int_{t^k_{i}}^{t^k_{i+1}}\|f(t)\|\, {\rm d}t,
$$
together with
         \eqref{first uniform} and \eqref{15-04-a}, gives  \eqref{second uniform}.
       \end{proof}
       
       In the next technical result we will consider the Riemann sums associated with functions converging to 0 in $L^1$ and depending on an additional parameter. This result will be crucial in Section \ref{section conclusion}.
      
  \begin{lemma}\label{lemma: Riemann 2}
            Let $I:=[a,b]$ and $J:=[c,d]$ be  bounded closed intervals. For every $z_1\in I$, $k\in\N$, and $i\in\Z$ let $t^k_i:=z_1+\tfrac{i}{k}$.  Let $(X,\|\cdot\|)$ be a Banach space and  let $f_k\colon\R\times \R\to X$ be a sequence of Bochner integrable functions, with $f_k=0$ on $(\R\times \R)\setminus(I\times J) $, such that 
        \begin{equation}\label{pointwise convergence}
                \lim_{k\to+\infty} f_{k}(t,z_2)=0\quad \text{ for $\Lb^1$-a.e.\ $t\in I$ and $\Lb^1$ -a.e.\ $z_2\in J$}.
            \end{equation}
              Assume also that there exists an integrable function $g\colon\R\to[0,+\infty)$ such that 
              \begin{equation}\label{uniform bound lemma}
        \|f_k(t,z_2)\|\leq g(t) \quad \text{for $\Lb^1$-a.e.\ $t\in I$,   $\Lb^1\text{-a.e.}$ $z_2\in J$, and  \text{every } $k\in\N$}.
            \end{equation}
            Then there exists an $\Lb^2$-negligible set $N\subset I\times J$ and a infinite set $K\subset \N$ such that for every $(z_1,z_2)\in (I\times J)\setminus N$ we have
            \begin{equation}\label{clail lemma somme variabili genrico}
 \underset{k\in K}{\lim_{k\to +\infty}}\frac{1}{k}\sum_{i\in\Z}\|f_k(t^k_i,z_2)\|=0.
            \end{equation}
            Moreover, for every $\e>0$ there exists a Borel set $A_\e\subset I\times J$, with $\Lb^2((I\times J)\setminus A_\e)\leq \e$, such that 
                   \begin{equation}\label{Egorovv variabile}
            \underset{k\in K}{\lim_{k\to +\infty}} \frac{1}{k}\sum_{i\in\Z}\|f_k(t^k_i,z_2)\|=0\text{ uniformly} \text{ for } (z_1,z_2)\in A_\e.
        \end{equation}
        \end{lemma}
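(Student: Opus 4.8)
The plan is to prove that the nonnegative $\Lb^2$-measurable function
\[
S_k(z_1,z_2):=\frac{1}{k}\sum_{i\in\Z}\big\|f_k\big(z_1+\tfrac{i}{k},z_2\big)\big\|
\]
converges to $0$ in $L^1(I\times J)$, and then to conclude exactly as in the final part of the proof of Lemma \ref{lemma:Riemann sums 1}: extracting an infinite set $K\subset\N$ along which $S_k\to 0$ for $\Lb^2$-a.e.\ $(z_1,z_2)\in I\times J$ produces the negligible set $N$ and \eqref{clail lemma somme variabili genrico}, while Egorov's theorem on the finite measure space $(I\times J,\Lb^2)$ produces, for every $\e>0$, the set $A_\e$ and \eqref{Egorovv variabile}. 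The point is that, in contrast with Lemma \ref{lemma:Riemann sums 1}, here no quantitative $L^1$-continuity of translations is needed: the required smallness is already encoded in the hypotheses, since by the Dominated Convergence Theorem \eqref{pointwise convergence} and \eqref{uniform bound lemma} give $\|f_k\|_{L^1(I\times J;X)}\to 0$.

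To make this precise, I would first set $g_k(t,z_2):=\|f_k(t,z_2)\|$. Since $f_k$ is Bochner integrable and vanishes outside $I\times J$, the function $g_k$ is $\Lb^2$-measurable on $\R^2$, vanishes outside $I\times J$, and satisfies $0\le g_k(t,z_2)\le g(t)\chi_{I\times J}(t,z_2)$ for $\Lb^2$-a.e.\ $(t,z_2)$ by \eqref{uniform bound lemma}, the dominating function being in $L^1(\R^2)$ because $g$ is integrable. By Fubini's theorem, \eqref{pointwise convergence} amounts to $g_k\to 0$ for $\Lb^2$-a.e.\ $(t,z_2)\in I\times J$, so the Dominated Convergence Theorem yields
\[
\|f_k\|_{L^1(I\times J;X)}=\int_{\R^2}g_k\,{\rm d}\Lb^2\longrightarrow 0\qquad\text{as }k\to+\infty.
\]

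Next I would integrate $S_k$ over $I\times J$. As in the proof of Lemma \ref{lemma:Riemann sums 1}, for $(z_1,z_2)\in I\times J$ only the indices $i$ in the finite set $\mathcal J^k:=\{i\in\Z:k(a-b)-1\le i\le k(b-a)\}$, which has $\hzero(\mathcal J^k)\le 2k(b-a)+2$ elements, contribute to the sum defining $S_k$. Using Tonelli's theorem, the translation $z_1\mapsto z_1+\tfrac{i}{k}$, the inequality $g_k\ge 0$, and the bound $\int_a^b g_k(z_1+\tfrac{i}{k},z_2)\,{\rm d}z_1\le\int_\R g_k(t,z_2)\,{\rm d}t$, one gets
\[
\int_{I\times J}S_k\,{\rm d}\Lb^2=\frac{1}{k}\sum_{i\in\mathcal J^k}\int_J\Big(\int_a^b g_k\big(z_1+\tfrac{i}{k},z_2\big)\,{\rm d}z_1\Big){\rm d}z_2\le\frac{\hzero(\mathcal J^k)}{k}\,\|f_k\|_{L^1(I\times J;X)}.
\]
Since $\hzero(\mathcal J^k)/k\le 2(b-a)+2$ for every $k\ge 1$, the previous two displays give $\int_{I\times J}S_k\,{\rm d}\Lb^2\to 0$, i.e.\ $S_k\to 0$ in $L^1(I\times J)$; in particular $S_k$ is finite $\Lb^2$-a.e.\ on $I\times J$, so the series in \eqref{clail lemma somme variabili genrico} converges for $\Lb^2$-a.e.\ $(z_1,z_2)$.

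From here the conclusion is routine: extracting from $S_k\to 0$ in $L^1(I\times J)$ an infinite $K\subset\N$ with $S_k\to 0$ $\Lb^2$-a.e.\ on $I\times J$ along $K$, and letting $N$ be the exceptional $\Lb^2$-negligible set, gives \eqref{clail lemma somme variabili genrico}; Egorov's theorem applied to $(S_k)_{k\in K}$ on $(I\times J,\Lb^2)$ then gives \eqref{Egorovv variabile}. I do not expect any serious obstacle in this argument; the only points deserving a word of care are the $\Lb^2$-measurability of $S_k$ on $\R^2$ (it is a countable sum of compositions of the measurable function $g_k$ with the $\Lb^2$-preserving maps $(z_1,z_2)\mapsto(z_1+\tfrac{i}{k},z_2)$) and the passage, via Fubini, from the a.e.\ hypothesis \eqref{pointwise convergence} to the $\Lb^2$-a.e.\ convergence of $g_k$ used above.
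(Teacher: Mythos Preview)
Your proposal is correct and follows essentially the same approach as the paper: integrate $S_k$ over $I\times J$, use the translation trick together with the bound $\hzero(\mathcal J^k)/k\le 2(b-a)+2$ to control the integral by a constant times $\|f_k\|_{L^1}$, apply the Dominated Convergence Theorem (with dominating function $g(t)\chi_{I\times J}$) to get $\|f_k\|_{L^1}\to 0$, and conclude by extracting a subsequence and applying Egorov. Your write-up is in fact slightly more careful than the paper's in flagging the measurability of $S_k$ and the Fubini step needed to pass from \eqref{pointwise convergence} to $\Lb^2$-a.e.\ convergence of $g_k$.
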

        \begin{proof}
        For $k\in\N$ let $\mathcal{J}^k$ be given by \eqref{lemma banach1}. Integrating the sum on the left-hand side of \eqref{clail lemma somme variabili genrico}  with respect to $z=(z_1,z_2)\in I\times J$  
 \begin{align}
            \displaystyle\notag &\int_{J}\Big(\int_I\Big(\frac{1}{k}\sum_{i\in \Z}\|f_k(t^k_i,z_2)\|\,\Big){\rm d}z_1\Big){\rm dz}_2=\int_{J}\Big(\int_{I}\Big(\frac{1}{k}\sum_{i\in \Z}\|f_k(z_1+\tfrac{i}{k},z_2)\|\,\Big){\rm d}z_1\Big){\rm d}z_2
            \\
            &\displaystyle\hspace{-0.5 cm}\leq \frac{1}{k}\sum_{i\in {\mathcal{J}^k}}\int_\R\Big(\int_{\R}\|f_k(z_1+\tfrac{i}{k},z_2)\|\, {\rm d}z_1\Big){\rm d}z_2\label{eq:fubini discretisation lemma 2}
           =\frac{\hzero(\mathcal{J}^k)}{k}\int_{\R}\Big(\int_\R\|f_k(z_1,z_2)\|\,{\rm d}z_1\Big){\rm d}z_2,
        \end{align}
        where in the inequality we have used the hypothesis on the supports of $f_k$.
       By \eqref{pointwise convergence}  for $\Lb^2$-a.e.\ every $z \in I\times J$ the sequence $\|f_k(z_1,z_2)\|$ converges to $0$ as $k\to+\infty$. Thanks to \eqref{uniform bound lemma},  by the Dominated Convergence Theorem we have $
\lim_{k}\|f_k\|_{L^1(\R^2;X)}=0.$
       This fact, together with \eqref{eq:fubini discretisation lemma 2} and the boundedness of  $\hzero({\mathcal{J}^k})/k$, implies  that 
\begin{equation}\notag\label{salvatrice}
     \lim_{k\to+\infty}\int_{I\times J}\Big(\frac{1}{k}\sum_{i\in \Z}\|f_k(t^k_i,z_2)\|\,\Big){\rm d}z=0,
       \end{equation}
which  gives \eqref{clail lemma somme variabili genrico}. By Egorov's Theorem we obtain also \eqref{Egorovv variabile}, concluding the proof.
        \end{proof}

   \begin{remark}\label{remark sequences}
 Suppose that  for every $n\in\N$ we have a function $(f^n)$ that satisfies the hypotheses of Lemma \ref{lemma:Riemann sums 1}. Arguing as in \cite[Remark 4.13]{DalFraToa}, one can find sets $K$ and $I_\e$ as in the statement of Lemma \ref{lemma: Riemann 2} for which \eqref{first uniform}-\eqref{new negligible} and \eqref{Egorovv variabile} hold with $f=f^n$ for every $n\in\N$.
 
  Similarly, if  for every $n\in\N$ we have a sequence  of functions $(f^n_k)_k$ that satisfies the hypotheses of Lemma \ref{lemma: Riemann 2}, the same argument shows that one can find sets $K$ and $U_\e$ as in the statement of Lemma \ref{lemma: Riemann 2} for which \eqref{clail lemma somme variabili genrico} and \eqref{Egorovv variabile} hold with $f_k=f^n_k$ for every $n\in\N$.
\end{remark}

\section{An auxiliary family of measures}\label{section auxiliary}
Given $u\in GBD(\Omega)$, we associate to it a family of measures, closely related to the measures $\lambda^\xi_u$ introduced in \eqref{eq:defmuxi}. For every $\xi\in\Rd\setminus \{0\}$ and every Borel set  $B\subset \Omega$ we set
\begin{equation}\label{def sigma}
\sigma_u^\xi(B):=|\xi|\int_{\Pi^\xi}Du^\xi_y((B\setminus J^1_{u})^\xi_y)\,{\rm d}\Hd{y}.
    \end{equation}
 For $\xi=0$, we set $\sigma^\xi_u(B)=0$ for every  Borel set $B\subset \Omega$ .

\begin{remark}\label{remark BD}
   Suppose that $u\in BD(\Omega)$, the space of {\textit{functions of bounded deformation}}, and let $Eu$ the distributional symmetric gradient of $u$, which by definition belongs to the space $\mathcal{M}_b(\Omega;\Rdsym)$. From the Structure Theorem for $BD(\Omega)$ functions \cite[Theorem 4.5]{AmbrCosciaDalM} it follows that 
   \begin{equation*}
       \sigma^{\xi}_u(B)=Eu(B\setminus J^1_u)\xi\cdot\xi
   \end{equation*}
   for every Borel set $B\subset \Omega$ and $\xi\in\Rd$. Since $\sigma_u^\xi$ is a measure, this equality can be extended to all functions $u\in BD_{\rm loc}(\Omega)$ such that $Eu\in\mathcal{M}_b(\Omega;\Rdsym)$.  This implies that in this case the function $\xi\mapsto\sigma^\xi_u(B)$ is quadratic in the sense of Definition \ref{def quadratic}. In particular, if $d=1$  this happens for every $u\in GBD(\Omega)$ thanks  to  Remark  \ref{1d gbd}.
\end{remark}
\begin{remark}\label{coincidenza sigma lambda}
    Let $\lambda^\xi_u$ be the measure introduced in \eqref{eq:defmuxi}. One can see that for every $\xi\in\Sd$, we have the equality $|\sigma^\xi_u|=\lambda^\xi_u\mres(\Omega\setminus J^1_u)$ as Borel measures on $\Omega$. In light of \cite[Theorem 3.103]{AmbrosioFuscoPallara}, this is an easy consequence of \eqref{def sigma} and of the fact that, by Theorem \ref{fine propr of GBD}, for $\hd$-a.e.\ $y\in \Pi^\xi$ we have the inclusion $J^1_{u^\xi_y}\subset (J^1_u)^\xi_y$. This fact, together with \eqref{def lambda u} and  the 2-homogeneity of the function $\xi\mapsto\sigma_u^\xi(\Omega)$ proved in Proposition \ref{prop: 2 homogeneity} below, implies that 
    \begin{equation} \label{lower bound}
        |\sigma^{\xi}_u|(\Omega)\leq |\xi|^2\lambda_u(\Omega\setminus J^1_u).
    \end{equation}
\end{remark}

 For $R>0$ let $\tau_R\colon \R\to\R$ be the $1$-Lipschitz functions defined by \begin{equation}\label{def tauR}
    \tau_R(s):=\begin{cases}-\frac{R}{2}&\text{ if }s\leq -\frac{R}{2},\\
        s& \text{if }\frac{-R}{2}\leq s\leq \frac{R}{2},\\
        \frac{R}{2}&\text{ if }s\geq \frac{R}{2}.
        \end{cases}
\end{equation} Thanks to Remark \ref{Lipschitz}, we have that $D_\xi(\tau_R(u\cdot\xi))\in \mathcal{M}_b(\Omega;\R)$ for every $u\in GBD(\Omega)$, $\xi\in\Sd$, and $R>0$. The following result shows that for Borel sets that do not intersect $J^1_u$ we can obtain the value of $\sigma^\xi_u(B)$ by considering the limit of   $D_\xi(\tau_R(u\cdot\xi))(B)$ as $R\to+\infty$.
\begin{proposition}\label{truncation}
    Let $u\in GBD(\Omega)$. Then for every $\xi\in\Sd$ we have
    \begin{gather}\label{teorema truncation 1}
\sigma_u^\xi(B)=\lim_{R\to+\infty}D_\xi(\tau_R(u\cdot\xi))(B),\\\label{teorema truncation 2}
\lambda^\xi_u(B)=\lim_{R\to+\infty}|D_\xi(\tau_R(u\cdot\xi))|(B),
    \end{gather}
    for every Borel set $B\subset \Omega$ with $\hd(B\cap J^1_u)=0$.
\end{proposition}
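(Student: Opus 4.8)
The plan is to reduce \eqref{teorema truncation 1}--\eqref{teorema truncation 2} to a one-dimensional statement by slicing in direction $\xi$, and then to let $R\to+\infty$ under the integral over $\Pi^\xi$ by dominated convergence. Fix $\xi\in\Sd$, so that $|\xi|=1$, and a Borel set $B\subset\Omega$ with $\hd(B\cap J^1_u)=0$. The first step is the slicing identity for the truncated function: the one-dimensional restriction of $\tau_R(u\cdot\xi)$ to the line $\{y+t\xi:t\in\R\}$ is the map $t\mapsto\tau_R(u(y+t\xi)\cdot\xi)=\tau_R(u^\xi_y(t))$, which lies in $BV_{\rm loc}(\Omega^\xi_y)$ for $\hd$-a.e.\ $y\in\Pi^\xi$ since $\tau_R$ is $1$-Lipschitz and $u^\xi_y\in BV_{\rm loc}(\Omega^\xi_y)$. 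Since $D_\xi(\tau_R(u\cdot\xi))\in\mathcal{M}_b(\Omega)$ by Remark \ref{Lipschitz}, the slicing theory of directional derivatives used in \cite{DalMasoJems} (see also \cite[Section 3.11]{AmbrosioFuscoPallara}) provides, for every Borel set $E\subset\Omega$, the identities $D_\xi(\tau_R(u\cdot\xi))(E)=\int_{\Pi^\xi}D(\tau_R(u^\xi_y))(E^\xi_y)\,{\rm d}\Hd{y}$ and $|D_\xi(\tau_R(u\cdot\xi))|(E)=\int_{\Pi^\xi}|D(\tau_R(u^\xi_y))|(E^\xi_y)\,{\rm d}\Hd{y}$.

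Next I would clean up the right-hand sides of \eqref{teorema truncation 1}--\eqref{teorema truncation 2}. Applying Lemma \ref{h0 slices} to the $\hd$-null Borel set $B\cap J^1_u$ gives $\hzero((B\cap J^1_u)^\xi_y)=0$, i.e.\ $B^\xi_y\cap(J^1_u)^\xi_y=\emptyset$, for $\hd$-a.e.\ $y\in\Pi^\xi$; combined with the inclusion $J^1_{u^\xi_y}\subset(J^1_u)^\xi_y$ of \eqref{Jxihatu}, this yields $B^\xi_y\cap J^1_{u^\xi_y}=\emptyset$ and $(B\setminus J^1_u)^\xi_y=B^\xi_y$ for $\hd$-a.e.\ $y$. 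Hence, from \eqref{def sigma} and \eqref{eq:defmuxi}, $\sigma_u^\xi(B)=\int_{\Pi^\xi}Du^\xi_y(B^\xi_y)\,{\rm d}\Hd{y}$ and $\lambda_u^\xi(B)=\int_{\Pi^\xi}|Du^\xi_y|(B^\xi_y)\,{\rm d}\Hd{y}$, and it remains to match these two integrals with the limits of the sliced quantities above.

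The core of the proof is the following one-dimensional convergence, which I would establish for every $y$ in the full-measure subset of $\Pi^\xi$ where the previous properties hold: writing $v:=u^\xi_y\in BV_{\rm loc}(\Omega^\xi_y)$ with $|Dv|(\Omega^\xi_y)<+\infty$ and $E:=B^\xi_y$, one has $D(\tau_R(v))(E)\to Dv(E)$ and $|D(\tau_R(v))|(E)\to|Dv|(E)$ as $R\to+\infty$. To see this, apply the chain rule for one-dimensional $BV$ functions to the $1$-Lipschitz map $\tau_R$ to split $D(\tau_R(v))(E)$ into the diffuse part $\int_E\tau_R'(\tilde v)\,{\rm d}(D^av+D^cv)$ and the jump part $\sum_{t\in J_v\cap E}(\tau_R(v^+(t))-\tau_R(v^-(t)))$, and similarly for $|D(\tau_R(v))|(E)$ with absolute values. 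As $R\to+\infty$, the diffuse part converges to $(D^av+D^cv)(E)$ because $0\le\tau_R'\le1$, $\tau_R'(\tilde v)=1$ on $\{|\tilde v|<R/2\}$, and $(|D^av|+|D^cv|)(\{|\tilde v|\ge R/2\})\to0$ since $\tilde v$ is everywhere finite on $\Omega^\xi_y$ while $|D^av|+|D^cv|$ is a finite measure; the jump part converges to $\sum_{t\in J_v\cap E}[v](t)=D^jv(E)$ because $\tau_R(v^\pm(t))\to v^\pm(t)$ for each $t$, while the $1$-Lipschitz bound $|\tau_R(v^+(t))-\tau_R(v^-(t))|\le|[v](t)|$ and the finiteness of $|D^jv|(E)=\sum_{t\in J_v\cap E}|[v](t)|$ make the part of the sum over the indices with $\max(|v^+(t)|,|v^-(t)|)>R/2$ — a set of jump points decreasing to $\emptyset$ — negligible in the limit.

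Finally I would pass to the limit $R\to+\infty$ inside the integrals over $\Pi^\xi$ by dominated convergence. For every $R>0$ and $\hd$-a.e.\ $y$ one has, using $0\le\tau_R'\le1$, the $1$-Lipschitz bound on all jumps, and $B^\xi_y\cap J^1_{u^\xi_y}=\emptyset$,
$$\bigl|D(\tau_R(u^\xi_y))(B^\xi_y)\bigr|\le\bigl|D(\tau_R(u^\xi_y))\bigr|(B^\xi_y)\le|Du^\xi_y|(B^\xi_y)=|Du^\xi_y|(B^\xi_y\setminus J^1_{u^\xi_y})\le|Du^\xi_y|(\Omega^\xi_y\setminus J^1_{u^\xi_y}),$$
and the last function of $y$ is $\hd$-integrable on $\Pi^\xi$ by \eqref{eq:def GBD}. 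Combining this bound with the one-dimensional convergence of the previous paragraph and the identities of the first two paragraphs yields both \eqref{teorema truncation 1} and \eqref{teorema truncation 2}. I expect the main obstacle to be the one-dimensional step: carrying out the chain rule for the non-$C^1$ truncation $\tau_R$ rigorously — in particular identifying the diffuse part of $D(\tau_R(v))$ on the sets $\{|\tilde v|<R/2\}$ and $\{|\tilde v|\ge R/2\}$ — and, to a lesser extent, making the slicing formula for the total variation $|D_\xi(\cdot)|$ fully precise.
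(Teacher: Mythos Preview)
Your proposal is correct and follows essentially the same route as the paper: slice via the identities $D_\xi(\tau_R(u\cdot\xi))(B)=\int_{\Pi^\xi}D(\tau_R(u^\xi_y))(B^\xi_y)\,{\rm d}\Hd{y}$ and its total-variation counterpart, reduce to $B^\xi_y\cap J^1_{u^\xi_y}=\emptyset$, apply the one-dimensional chain rule and let $R\to+\infty$ in each part, then use dominated convergence with the bound $|D(\tau_R(u^\xi_y))|(B^\xi_y)\le|Du^\xi_y|(B^\xi_y)$. The only cosmetic differences are that the paper deduces $B^\xi_y\cap(J^1_u)^\xi_y=\emptyset$ from $\hd(\pi^\xi(B\cap J^1_u))=0$ rather than via Lemma~\ref{h0 slices}, and it invokes \cite[Theorem~3.99]{AmbrosioFuscoPallara} directly for the chain rule with the Lipschitz truncation $\tau_R$, which dispatches the ``obstacle'' you flag.
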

\begin{proof}

Let us fix $\xi\in\Sd$ and a Borel set $B\subset \Omega$ as in the statement. Since   $\hd(B\cap J^1_u)=0$, it follows that $\hd(\pi^\xi(B\cap J^1_u))=0$ and hence $B^\xi_y\cap (J^1_u)^\xi_y=\emptyset$ for $\hd$-a.e.\ $y\in \Pi^\xi$.  Recalling that by Theorem \ref{fine propr of GBD}  for $\hd$-a.e.\  $y\in \Pi^\xi$ the inclusion in \eqref{Jxihatu} holds, we also have that
\begin{equation}\label{4.4 bis}
B^\xi_y\cap J^{1}_{u^\xi_y}=\emptyset \text{ for $\hd$-a.e.\ $y\in \Pi^\xi$}.
\end{equation}Thus, by \eqref{eq:defmuxi} and \eqref{def sigma} we have 
\begin{gather}\label{come si sigma}
\sigma^\xi_u(B)= \int_{\Pi^\xi}Du^\xi_y((B\setminus J^1_u)^\xi_y)\, {\rm d}\hd(y)=\int_{\Pi^\xi}Du^\xi_y(B^\xi_y)\, {\rm d}\hd(y),\\ \label{come si lambda}
\lambda^\xi_u(B)= \int_{\Pi^\xi}|Du^\xi_y|(B^\xi_y\setminus J^{1}_{u^\xi_y})\, {\rm d}\hd(y)=\int_{\Pi^\xi}|Du^\xi_y|(B^\xi_y)\, {\rm d}\hd(y).
\end{gather}
In particular, since $\lambda^\xi_u\in \mathcal{M}^+_b(\Omega)$, we have \begin{equation}\label{integrable}\int_{\Pi^\xi}|Du^\xi_y|(B^\xi_y)\, {\rm d}\hd(y)<+\infty.\end{equation} 

We then remark that from (b) of  Definition \ref{def:GBD} and the chain rule for BV-functions (see \cite[Theorem 3.99]{AmbrosioFuscoPallara}) it follows that for $\mathcal{H}^{d-1}$-a.e.\ $y\in\Pi^\xi$ we have  $\tau_R(u^\xi_y)\in BV_{\rm loc}(\Omega^\xi_y)$ 
 and 
 \begin{equation}\label{eq: 1d chain rule}
D(\tau_R(u^{\xi}_y))=\tau_R'(v^\xi_y)\nabla u^{\xi}_y\Lb^1+\tau_R'(v^\xi_y)D^cu^\xi_y+[\tau_R(u^{\xi}_y)]\hzero\mres J_{\tau_R(u^\xi_y)},
 \end{equation}
 where for every $t\in\Omega^\xi_y\setminus J_{u^\xi_y}$ the function $v^\xi_y$ is defined by \begin{equation*}v^\xi_y(t):=\lim_{\e\to 0^+}\frac{1}{2\e}\int_{-\e}^{\e}u^\xi_y(t+s)\,{\rm d}s,\end{equation*}
 and we set $\tau^{\prime}_R(\pm \frac{R}{2})=0$.
 Since the measures on the  right-hand side of \eqref{eq: 1d chain rule} are mutually singular, we have also
\begin{equation}\label{eq: 1d chain rule totalvar}
|D(\tau_R(u^{\xi}_y))|=|\tau_R'(v^\xi_y)\nabla u^{\xi}_y|\Lb^1+|\tau_R'(v^\xi_y)||D^cu^\xi_y|+|[\tau_R(u^{\xi}_y)]|\hzero\mres J_{\tau_R(u^\xi_y)}.
 \end{equation}
 Moreover, we observe that for every $y\in \Pi^\xi$ such that $u^\xi_y\in BV_{\rm loc}(\Omega^\xi_y)$ we have  
\begin{gather}\nonumber 
    |D^cu^\xi_y|(\Omega^\xi_y\setminus J_{u^\xi_y})=|D^cu^\xi_y|(\Omega^\xi_y),\\
    \label{cose 1}
     \lim_{R\to +\infty}\tau_R'(v^\xi_y)(t)=1\quad \text{for } \Lb^1\text{-a.e.}\,\,t\in \Omega^\xi_y,\\ \lim_{R\to +\infty}\tau_R'(v^\xi_y)(t)=1\quad \text{for } |D^cu^\xi_y|\text{-a.e.}\,\,t\in \Omega^\xi_y,\\\label{cose 3}
     J_{u^\xi_y}= \bigcup_{R>0}J_{\tau_R(u^\xi_y)} \,\,\,\text{ and }\lim_{R\to +\infty} [\tau_R(u^{\xi}_y)](t)= [u^{\xi}_y](t)\quad \text{ for every } t\in J_{u^{\xi}_y}.
 \end{gather}
Additionally, from  \eqref{def tauR} and \eqref{eq: 1d chain rule totalvar}  it follows that 
 \begin{equation}\label{eq inequality from above 1d}
     |D(\tau_R(u^{\xi}_y))|(B^\xi_y)\leq \int_{B^\xi_y}|\nabla u^{\xi}_y|\,{\rm dt}+|D^cu^\xi_y|(B^\xi_y)+\int_{ B^\xi_y\cap J_{u^\xi_y}}|[u^{\xi}_y]|\,{\rm d}\hzero=|Du^\xi_y|(B^\xi_y).
 \end{equation}

Recalling that by Remark \ref{remar: finiteness} for $\hd$-a.e.\ $y\in\Pi^\xi$ we have  $|Du^\xi_y|(B^\xi_y)<+\infty$, from \eqref{eq: 1d chain rule}-\eqref{cose 3}
 and the Dominated Convergence Theorem we deduce for $\hd$-a.e $y\in\Pi^\xi$ that
     \begin{gather}\label{conclusion 1d}
\lim_{R\to+\infty}D(\tau_R(u^\xi_y))(B^\xi_y)=Du^\xi_y(B^\xi_y),\\\label{conclusione 1d 1}
 \lim_{R\to+\infty}|D(\tau_R(u^\xi_y))|(B^\xi_y)=|Du^\xi_y|(B^\xi_y).
\end{gather}
 Thanks to the general theory of slicing  (see \cite[Theorem 3.107]{AmbrosioFuscoPallara}) and \cite[Proposition 3.1]{DalMasoJems}) for every $R>0$ we have that
\begin{gather}\label{directional slice 4.3}
    D_\xi(\tau_R(u\cdot\xi))(B)=\int_{\Pi^\xi}D(\tau_R (u^\xi_y))(B^{\xi}_y)\,{\rm d}\hd(y),\\\label{directional slice 4.3 variation}
     |D_\xi(\tau_R(u\cdot\xi))|(B)=\int_{\Pi^\xi}|D(\tau_R (u^\xi_y))|(B^{\xi}_y)\,{\rm d}\hd(y),
\end{gather}
so that by \eqref{eq: 1d chain rule} and \eqref{eq: 1d chain rule totalvar} we have
\begin{align*}
        D_\xi(\tau_R(u\cdot\xi))(B)&=\int_{\Pi^\xi}\Big(\int_{B^\xi_y}\tau_R'(v^\xi_y)\nabla u^{\xi}_y\,{\rm d}t\\  &\,\,\,+\int_{B^\xi_y}\tau_R'(v^\xi_y)\frac{{\rm d}D^cu^\xi_y}{{\rm d}|D^cu^\xi_y|}\,{\rm d}|D^cu^\xi_y|+\int_{ B^\xi_y\cap J_{\tau_R(u^\xi_y)}}\!\!\!\![\tau_R(u^{\xi}_y)]\,{\rm d}\hzero\Big){\rm d}\hd(y),\\
         |D_\xi(\tau_R(u\cdot\xi))|(B)&=\int_{\Pi^\xi}\Big(\int_{B^\xi_y}|\tau_R'(v^\xi_y)\nabla u^{\xi}_y|\,{\rm d}t\\  &\,\,\,+\int_{B^\xi_y}\tau_R'(v^\xi_y)\,{\rm d}|D^cu^\xi_y|+\int_{ B^\xi_y\cap J_{\tau_R(u^\xi_y)}}\!\!\!\!|[\tau_R(u^{\xi}_y)]|\,{\rm d}\hzero\Big){\rm d}\hd(y),
\end{align*}
Finally, recalling \eqref{eq:def GBD}, \eqref{4.4 bis},  and \eqref{eq inequality from above 1d},  we can apply  the Dominated Convergence Theorem  and from  \eqref{come si sigma}, \eqref{come si lambda}, and  \eqref{conclusion 1d}-\eqref{directional slice 4.3 variation} we obtain \eqref{teorema truncation 1} and  \eqref{teorema truncation 2}. 
\end{proof}

 The main goal of Sections \ref{section main}-\ref{section d larger} will be  proving that for every $u\in GBD(\Omega)$ and every Borel set $B\subset \Omega$  the function $\xi\mapsto \sigma^\xi_u(B)$ is quadratic. Toward this end, in the rest of this section we investigate some of the properties of the measure defined by \eqref{def sigma}.
We first show that the function $\xi\mapsto\sigma^{\xi}_u(B)$ is $2$-homogeneous. 

\begin{proposition}\label{prop: 2 homogeneity}
    Let $u\in GBD(\Omega)$ and let $B\subset \Omega$ be a Borel set. Then the function $\xi\mapsto \sigma^\xi_u(B)$ is 2-homogeneous.
\end{proposition}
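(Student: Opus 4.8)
The plan is to deduce the identity directly from the dilation formulas for one-dimensional slices collected in Remark~\ref{scaling remark}, in particular from \eqref{2.13 bis}. Fix $\xi\in\Rd$ and $t\in\R$; the goal is to show $\sigma^{t\xi}_u(B)=t^2\sigma^\xi_u(B)$. If $t=0$ or $\xi=0$ there is nothing to prove: in both cases $t\xi=0$, so by definition $\sigma^{t\xi}_u(B)=\sigma^0_u(B)=0$, while $t^2\sigma^\xi_u(B)=0$ as well (either because $t^2=0$ or because $\sigma^\xi_u(B)=\sigma^0_u(B)=0$).

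Assume now $t\neq 0$ and $\xi\neq 0$. The first step is to record the elementary facts $\Pi^{t\xi}=\Pi^\xi$ (both sets are the linear hyperplane orthogonal to the line $\R\xi$) and $|t\xi|=|t|\,|\xi|$, which turn \eqref{def sigma} into
\[
\sigma^{t\xi}_u(B)=|t|\,|\xi|\int_{\Pi^\xi}Du^{t\xi}_y\big((B\setminus J^1_u)^{t\xi}_y\big)\,{\rm d}\Hd{y}.
\]
The key observation is that $J^1_u$ is a fixed Borel subset of $\Omega$, independent of the slicing direction, so that $B\setminus J^1_u$ is itself a Borel subset of $\Omega$, and hence \eqref{2.13 bis} may be applied with $B$ replaced by $B\setminus J^1_u$. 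This gives, for $\hd$-a.e.\ $y\in\Pi^\xi$,
\[
Du^{t\xi}_y\big((B\setminus J^1_u)^{t\xi}_y\big)=|t|\,Du^{\xi}_y\big((B\setminus J^1_u)^{\xi}_y\big).
\]
Substituting this into the previous display and using $|t|\,|t|=t^2$ one obtains
\[
\sigma^{t\xi}_u(B)=t^2\,|\xi|\int_{\Pi^\xi}Du^{\xi}_y\big((B\setminus J^1_u)^{\xi}_y\big)\,{\rm d}\Hd{y}=t^2\sigma^\xi_u(B),
\]
which is the claim.

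There is no substantial obstacle here: the statement is essentially a repackaging of the scaling relations already established in Remark~\ref{scaling remark}. The only points worth making explicit are that one applies \eqref{2.13 bis} to the Borel set $B\setminus J^1_u$ — which is legitimate precisely because $J^1_u$ does not depend on $\xi$ — and that the hyperplanes $\Pi^{t\xi}$ and $\Pi^\xi$ coincide, so that no extra Jacobian factor from the $(d-1)$-dimensional Hausdorff measure on the domain of integration is produced by the change of direction.
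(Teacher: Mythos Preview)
Your proof is correct and follows essentially the same approach as the paper: both apply the scaling identity \eqref{2.13 bis} with $B$ replaced by $B\setminus J^1_u$ and use $\Pi^{t\xi}=\Pi^\xi$, $|t\xi|=|t|\,|\xi|$ to conclude. Your version is slightly more explicit about the trivial cases $t=0$ and $\xi=0$, but the argument is otherwise identical.
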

\begin{proof}
  Let us fix $t\neq0$ and $\xi \in \Rd\setminus\{0\}$. By \eqref{2.13 bis}, with $B$ replaced by $B\setminus J^1_u$, we have 
     \begin{equation}
\nonumber Du^{t\xi}_y((B\setminus J^1_u)^{t\xi}_y)=|t|Du^{\xi}_y((B\setminus J^1_u)^{\xi}_y).
    \end{equation}
    Hence, 
$$
       \sigma^{t\xi}_u(B)= |t\xi|\int_{\Pi^\xi}Du^{t\xi}_y((B\setminus J^1_u)^{t\xi}_y)\,{\rm d}\mathcal{H}^1{(y)}
        =t^2|\xi|\int_{\Pi^\xi}Du^{\xi}_y\big((B\setminus J^1_u)^{\xi}_y\big)\,{\rm d}\mathcal{H}^1(y)
        =t^2     \sigma^\xi_u(B).
$$
 This shows that $\xi\mapsto\sigma_u^\xi(B)$ is $2$-homogeneous, concluding the proof.
\end{proof}

In the next proposition we give an explicit formula for $\sigma^\xi_u(B)$ when the Borel set $B$ is contained in $J_u$. This shows in particular that in this case the function $\xi\mapsto\sigma^\xi_u(B)$ is quadratic in the sense of Definition \ref{def quadratic}.
\begin{proposition}
    \label{lemma:jumps smaller than 1}Let $u\in GBD(\Omega)$ and let $B\subset J_u$ be a Borel set. Then for every $\xi\in\Rd\setminus\{0\}$ we have
\begin{align}\label{claim:jump without slice}
&\displaystyle \sigma^\xi_u(B)=\int_{B\setminus J^1_u}(([u]\odot \nu_u)\xi\cdot\xi)\, {\rm d} \Hd{y}.
\end{align}

\end{proposition}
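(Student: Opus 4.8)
The plan is to reduce the $d$-dimensional statement to the one-dimensional slices, where everything is explicit. First I would observe that by Proposition \ref{prop: 2 homogeneity} both sides of \eqref{claim:jump without slice} are $2$-homogeneous in $\xi$, so it suffices to prove the identity for $\xi\in\Sd$. Fix such a $\xi$. The starting point is the definition \eqref{def sigma}, which reads
\[
\sigma^\xi_u(B)=\int_{\Pi^\xi}Du^\xi_y\big((B\setminus J^1_u)^\xi_y\big)\,{\rm d}\Hd{y}.
\]
Since $B\subset J_u$, the set $(B\setminus J^1_u)^\xi_y=(B\setminus J^1_u)\cap(J_u)^\xi_y$ consists of points $t$ with $y+t\xi\in J_u\setminus J^1_u$. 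The idea is that along each line the measure $Du^\xi_y$ restricted to such a set of jump points is purely atomic, and the atoms are controlled by Theorem \ref{fine propr of GBD}(b).

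Next I would carry out the slice-level computation. For $\hd$-a.e.\ $y\in\Pi^\xi$ we have $u^\xi_y\in BV_{\rm loc}(\Omega^\xi_y)$ and, by \eqref{eq:jump set to sliced jump set}, the set $(J_u)^\xi_y$ splits into $J_{u^\xi_y}$ (where $[u]\cdot\xi\neq0$) and a set where $[u]\cdot\xi=0$; on the latter $u^\xi_y$ has no jump, so $Du^\xi_y$ gives it zero mass on the jump part. Hence
\[
Du^\xi_y\big((B\setminus J^1_u)^\xi_y\big)=\sum_{t\in (B\setminus J^1_u)^\xi_y\cap J_{u^\xi_y}}[u^\xi_y](t),
\]
provided this sum is absolutely convergent — which follows because $|Du^\xi_y|(\Omega^\xi_y)<+\infty$ by Remark \ref{remar: finiteness}. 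Using \eqref{eq:jump to sliced jump} to identify $[u^\xi_y](t)=[u](y+t\xi)\cdot\xi$ (with normals oriented so that $\nu_u\cdot\xi\ge0$ and $\nu_{u^\xi_y}=1$), and recalling that on $J_u$ one has $([u]\odot\nu_u)\xi\cdot\xi=([u]\cdot\xi)(\nu_u\cdot\xi)$, I would rewrite the slice sum as $\sum_t ([u]\odot\nu_u)(y+t\xi)\xi\cdot\xi\cdot\frac{1}{\nu_u\cdot\xi}$; more cleanly, I would apply the coarea/area formula directly to the function on $J_u\setminus J^1_u$ rather than passing through $\nu_u\cdot\xi$ explicitly.

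Finally I would integrate over $\Pi^\xi$ and apply the Area Formula (\cite[12.4]{Simon1984}) exactly as in the proof of Proposition \ref{thm:L1smalljumps}, equation \eqref{inequality new 2.11}: since $J_u$ is $(\hd,d-1)$-rectifiable with normal $\nu_u$, integrating $t\mapsto g(y+t\xi)$ over $(J_u)^\xi_y$ and then over $\Pi^\xi$ recovers $\int_{J_u}g\,|\nu_u\cdot\xi|\,{\rm d}\hd$ for any Borel $g$. Taking $g=\chi_{B\setminus J^1_u}\cdot\frac{([u]\odot\nu_u)\xi\cdot\xi}{\nu_u\cdot\xi}$ (well-defined $\hd$-a.e.\ on the slices that matter, where $[u]\cdot\xi\neq0$ forces $\nu_u\cdot\xi\neq0$) yields $\int_{B\setminus J^1_u}([u]\odot\nu_u)\xi\cdot\xi\,{\rm d}\hd$, which is \eqref{claim:jump without slice}; the integrability is guaranteed by Proposition \ref{thm:L1smalljumps}. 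The main obstacle, and the step requiring the most care, is the bookkeeping of orientations and the handling of the set where $\nu_u\cdot\xi=0$: there the area formula degenerates, but on that set $[u]\cdot\xi=0$ by \eqref{eq:jump set to sliced jump set}, so those points contribute nothing to either side, and one must phrase the argument so that this cancellation is transparent rather than dividing by a vanishing quantity.
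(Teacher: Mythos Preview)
Your proposal is correct and follows essentially the same route as the paper: slice, identify $Du^\xi_y$ on $(B\setminus J^1_u)^\xi_y$ as a sum of jump atoms via Theorem~\ref{fine propr of GBD}(b), then apply the Area Formula as in Proposition~\ref{thm:L1smalljumps}. The only streamlining in the paper is that it fixes the orientation $\nu_u\cdot\xi\ge 0$ at the outset, so that $|\nu_u\cdot\xi|=\nu_u\cdot\xi$ and the Area Formula gives $\int_{B\setminus J^1_u}([u]\cdot\xi)(\nu_u\cdot\xi)\,{\rm d}\hd$ directly, with no division by $\nu_u\cdot\xi$ ever appearing.
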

\begin{proof}
 Let us fix $\xi\in\Rd\setminus\{0\}$. Since a  change of sign of $\nu_u$ implies a change of sign of $[u]$, we may assume without loss of generality that $\nu_u\cdot\xi\geq 0$. 
Thanks to Proposition \ref{thm:L1smalljumps} the integral in the right-hand side of \eqref{claim:jump without slice} is well-defined. Hence, by \eqref{eq:jump to sliced jump} and the Area Formula \cite[12.4]{Simon1984} we infer
\begin{gather*}
       \sigma^\xi_u(B)=|\xi|\int_{\Pi^\xi}Du^\xi_y((B\setminus J^1_u)^\xi_y)\, {\rm d}\Hd{y}
        =|\xi|\int_{\Pi^\xi}\Big(\int_{(B\setminus J^1_u)^\xi_y}[u^\xi_y](t)\,{\rm d}\mathcal{H}^0(t)\Big){\rm d}\Hd{y}\\=\int_{B\setminus J^1_u}([u]\cdot\xi)(\nu_u\cdot\xi)\, {\rm d}\Hd{y}=\int_{B\setminus J^1_u}(([u]\odot \nu_u)\xi\cdot\xi)\, {\rm d}\Hd{y},
    \end{gather*}
    which concludes the proof of \eqref{claim:jump without slice}.
\end{proof}

Given two $\hd$-measurable sets $A,B\subset \Rd$, we write $A\simeq B$ when $\hd(A\triangle B)=0$, where $\triangle$ denotes the symmetric difference. We now present a decomposition result for $GBD(\Omega)$ functions, which states that any function $u\in GBD(\Omega)$ can be written as the sum of two functions $v$ and $w$, with $v\in SBV(\Omega;\Rd)$, $w\in GBD(\Omega)$, $J_v\simeq J_u\setminus J^1_u$, and $J_w\simeq J^1_u$. We refer to \cite{AmbrosioFuscoPallara} for the definition and properties of the space $SBV$ of {\it special functions of bounded variation}.
  \begin{proposition}\label{Gagliardo}Let $u\in GBD(\Omega)$. Then there exists $v\in SBV(\Omega;\Rd)$ such that
\begin{gather}\label{simeq}
    J_v\simeq J_u\setminus J^1_u,\\ \label{jump v zero} [v]=[u]\,\,\,\, \text{ and }\,\,\,\, \nu_v=\nu_u \,\,\, \text{ $\hd$-a.e.\ on } J_u\setminus J^1_u.
\end{gather}
In particular, setting $w:=u-v$, we have $w\in GBD(\Omega)$,  $u=w+v$, \begin{gather} \label{jump set w}J_w\simeq J^1_w\simeq J^1_u,\\ [\label{jump w}w]=[u] \,\,\,\,\text{  $\hd$-a.e.\ $ J^1_u$}.
\end{gather}
\end{proposition}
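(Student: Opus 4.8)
The plan is to first build an $SBV$ function $v$ carrying exactly the ``small jumps'' of $u$, and then to read off all the properties of $w:=u-v$ by elementary manipulations with jump sets. Write $\Gamma:=J_u\setminus J^1_u$; by Theorem~\ref{fine propr of GBD} this is a Borel, $(\hd,d-1)$-rectifiable set, with normal $\nu_u$ and $[u]\neq0$ everywhere on it, and by Proposition~\ref{thm:L1smalljumps} we have $[u]\in L^1(\Gamma,\hd;\Rd)$. The core of the proof is to produce $v\in SBV(\Omega;\Rd)$ with $J_v\simeq\Gamma$ and $[v]=[u]$, $\nu_v=\nu_u$ $\hd$-a.e.\ on $\Gamma$. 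To this end I would reduce to pieces lying on $C^1$ graphs: using \eqref{def rectifiable} and covering each $C^1$ manifold by graph charts, one writes $\Gamma\simeq\bigcup_{m\in\N}\Gamma_m$ with the $\Gamma_m$ pairwise disjoint Borel sets, each contained in a $(d-1)$-dimensional $C^1$ graph of Lipschitz constant $\le1$ over a ball of radius $\le1$, with $\Gamma_m$ lying on the boundary of a bounded Lipschitz domain $E_m$ (a thin slab on one side of the graph) whose diameter and Lipschitz character are bounded uniformly in $m$.

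The construction on each piece is where the Gagliardo trace theorem enters (and explains the label of the proposition). Extending $[u]|_{\Gamma_m}$ by $0$ to all of $\partial E_m$ gives $h_m\in L^1(\partial E_m,\hd;\Rd)$ with $\|h_m\|_{L^1(\partial E_m)}=\|[u]\|_{L^1(\Gamma_m)}$; since the trace operator $W^{1,1}(E_m;\Rd)\to L^1(\partial E_m;\Rd)$ is onto and admits a bounded linear right inverse, there is $G_m\in W^{1,1}(E_m;\Rd)$ with trace $h_m$ and $\|G_m\|_{W^{1,1}(E_m)}\le C\|[u]\|_{L^1(\Gamma_m)}$, the constant $C$ being \emph{uniform} by the control on the domains. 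Then $v_m:=G_m\chi_{E_m}$ (extended by $0$ and restricted to $\Omega$) lies in $SBV(\Omega;\Rd)$ with $\nabla v_m=\chi_{E_m}\nabla G_m$, $J_{v_m}\subset\partial E_m$, jump $h_m$ across $\partial E_m$, whence $\|v_m\|_{BV(\Omega)}\le C'\|[u]\|_{L^1(\Gamma_m)}$, $J_{v_m}\simeq\{x\in\Gamma_m:[u](x)\neq0\}\simeq\Gamma_m$, and, choosing the side of the slab and orienting $\nu_{v_m}$ so that $v_m^+$ is the inner trace, $[v_m]=[u]$ and $\nu_{v_m}=\nu_u$ on $\Gamma_m$. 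Since $\sum_m\|v_m\|_{BV(\Omega)}\le C'\sum_m\|[u]\|_{L^1(\Gamma_m)}=C'\|[u]\|_{L^1(\Gamma)}<+\infty$, the series $v:=\sum_m v_m$ converges in $BV(\Omega;\Rd)$ and $Dv=\sum_m Dv_m$; the absolutely continuous parts sum to an $L^1$ density, while the jump parts, carried by the pairwise disjoint sets $\Gamma_m$, sum to a measure concentrated on $\Gamma$, which is $\sigma$-finite with respect to $\hd$. Hence the Cantor part of $Dv$ vanishes, so $v\in SBV(\Omega;\Rd)$, and $J_v\simeq\bigcup_m\Gamma_m\simeq\Gamma$ with $[v]=[u]$, $\nu_v=\nu_u$ $\hd$-a.e.\ on $\Gamma$; this is precisely \eqref{simeq} and \eqref{jump v zero}.

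Now set $w:=u-v$. Since $v\in SBV(\Omega;\Rd)\subset BV(\Omega;\Rd)\subset BD(\Omega)\subset GBD(\Omega)$ and $u\in GBD(\Omega)$, one gets $w\in GBD(\Omega)$, either by the fact that $GBD(\Omega)$ is a vector space (see \cite{DalMasoJems}) or by checking directly that $w^\xi_y=u^\xi_y-v^\xi_y$ satisfies the slicing estimate of Definition~\ref{def:GBD}(b) with a suitable bounding measure; and $u=v+w$. For the jump sets one uses only that $v\in BV(\Omega;\Rd)$, so $\hd(S_v\setminus J_v)=0$ (see \cite{AmbrosioFuscoPallara}), together with $J_v\simeq\Gamma=J_u\setminus J^1_u$ and \eqref{jump v zero}. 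For $\hd$-a.e.\ $x\in J^1_u$ we have $x\notin S_v$, so $v$ is approximately continuous at $x$ and hence $w=u-v$ jumps at $x$ with $[w](x)=[u](x)$ and $\nu_w(x)=\nu_u(x)$; as $|[u](x)|\ge1$ this gives $x\in J^1_w$ and proves \eqref{jump w} and the inclusion $J^1_u\subset J^1_w$ modulo $\hd$-null sets. Conversely let $x\in J_w$. For $\hd$-a.e.\ such $x$, either $x\notin S_v$, in which case $u=w+v$ jumps at $x$ with $[u](x)=[w](x)\neq0$, so $x\in J_u$, and if moreover $|[w](x)|<1$ then $x\in J_u\setminus J^1_u$, hence, since $J_u\setminus J^1_u\simeq J_v\subset S_v$, $\hd$-a.e.\ such $x$ lies in $S_v$, contradicting $x\notin S_v$; or $x\in J_v$, in which case $\hd$-a.e.\ such $x$ lies in $J_u\setminus J^1_u$, where $[v]=[u]$ and $\nu_v=\nu_u$, so that $[w]=[u]-[v]=0$, contradicting $x\in J_w$. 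Hence $\hd$-a.e.\ point of $J_w$ belongs to $J^1_w$ and to $J^1_u$; combining with the previous inclusion, $J_w\simeq J^1_w\simeq J^1_u$, which is \eqref{jump set w}.

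The main obstacle is the construction of $v$: the reduction to $C^1$-graph pieces must be organised so that the domains $E_m$ have a uniformly controlled Lipschitz character and diameter, since only then are the Gagliardo extension constants uniform and the series $\sum_m v_m$ convergent in $BV$; and one must check that extending the boundary datum by $0$ outside $\Gamma_m$ creates no spurious jump for $v_m$, which is exactly what forces $J_v\simeq\Gamma$ to hold up to $\hd$-null sets rather than only up to a ``transition layer''. Once $v$ is available, the passage to $w$ is routine.
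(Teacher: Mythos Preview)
Your proof is correct and follows essentially the same route as the paper's: the paper simply cites \cite[Theorems~3.1 and~4.1]{DalToaGagliardo} for the construction of $v$, and that reference carries out precisely the Gagliardo-extension argument you have spelled out (decompose the rectifiable set $J_u\setminus J^1_u$ into pieces on uniformly controlled $C^1$ graphs, use the surjectivity of the trace $W^{1,1}\to L^1$ with uniform constants to lift $[u]\chi_{\Gamma_m}$, glue in $BV$, and check that no Cantor part appears). Your derivation of \eqref{jump set w} and \eqref{jump w} from \eqref{simeq}--\eqref{jump v zero} is also what the paper intends, only more explicit; the case analysis on $J_w$ via $S_v$ is the right way to do it, and the technical caveats you flag at the end (uniformity of the extension constants, avoiding spurious boundary jumps) are exactly the points handled in \cite{DalToaGagliardo}.
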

\begin{proof}
    Since $J_u\setminus J^1_u$ is $(\hd,d-1)$-rectifiable and  $[u]$ is integrable on $J_u\setminus J^1_u$  by Proposition \ref{thm:L1smalljumps}, the proof of the statements concerning $v$ can be obtained arguing  as in \cite[Theorems 3.1 and 4.1]{DalToaGagliardo}. 

    The inclusion $w\in GBD(\Omega)$ is due to  the vector space properties of $GBD(\Omega)$. Equalities \eqref{jump set w} and  \eqref{jump w} follow from \eqref{simeq} and \eqref{jump v zero}.
 \end{proof}

 From this result, we derive the following useful consequence. 
 \begin{lemma}\label{lemma gagliardo}
     Let $u\in GBD(\Omega)$ and  $v\in SBV(\Omega;\Rd)$ be as in Proposition \ref{Gagliardo} and let $w:=u-v$. Assume that for a Borel set $B\subset\Omega$ the function $\xi\mapsto\sigma^\xi_w(B)$ is quadratic. Then the function $\xi\mapsto\sigma^\xi_u(B)$ is quadratic as well. 
 \end{lemma}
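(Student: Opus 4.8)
The plan is to split the problem into the $SBV$ part $v$ and the $GBD$ part $w$ by working on one-dimensional slices, where $\sigma^\xi$ is additive. Since $SBV(\Omega;\Rd)\subseteq BD(\Omega)$, Remark \ref{remark BD} already guarantees that $\xi\mapsto\sigma^\xi_v(E)$ is quadratic for \emph{every} Borel set $E\subseteq\Omega$, while $\xi\mapsto\sigma^\xi_w(B)$ is quadratic by assumption. Hence it is enough to prove the identity
\[
\sigma^\xi_u(B)=\sigma^\xi_v(B\setminus J^1_u)+\sigma^\xi_w(B)\qquad\text{for every }\xi\in\Rd,
\]
since then $\xi\mapsto\sigma^\xi_u(B)$ is a sum of two quadratic functions, hence quadratic in the sense of Definition \ref{def quadratic} (at $\xi=0$ all three terms vanish by convention).

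Two negligibility observations set the stage. First, $\hd(J^1_v)=0$: by \eqref{simeq} and \eqref{jump v zero}, $J_v$ agrees $\hd$-a.e.\ with $J_u\setminus J^1_u$ and $[v]=[u]$ there, while $|[u]|<1$ on $J_u\setminus J^1_u$ by the definition of $J^1_u$; thus $|[v]|<1$ $\hd$-a.e.\ on $J_v$. Second, by \eqref{jump set w} one has $\hd(J^1_w\triangle J^1_u)=0$.

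To obtain the displayed identity I would fix $\xi\in\Rd\setminus\{0\}$ and restrict to the set of $y\in\Pi^\xi$ of full $\hd$-measure on which $u^\xi_y$, $v^\xi_y$, $w^\xi_y$ all lie in $BV_{\rm loc}(\Omega^\xi_y)$; there $u^\xi_y=v^\xi_y+w^\xi_y$, hence $Du^\xi_y=Dv^\xi_y+Dw^\xi_y$. By the two negligibility facts and Lemma \ref{h0 slices}, for $\hd$-a.e.\ $y$ the three slices $(B\setminus J^1_u)^\xi_y$, $(B\setminus J^1_u\setminus J^1_v)^\xi_y$ and $(B\setminus J^1_u\setminus J^1_w)^\xi_y$ coincide; evaluating $Du^\xi_y$ on this common slice, multiplying by $|\xi|$, integrating over $\Pi^\xi$, and reading the two resulting integrals through \eqref{def sigma} (applied to $v$ and to $w$ with $B$ replaced by $B\setminus J^1_u$) yields $\sigma^\xi_u(B)=\sigma^\xi_v(B\setminus J^1_u)+\sigma^\xi_w(B\setminus J^1_u)$. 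It remains to replace $\sigma^\xi_w(B\setminus J^1_u)$ by $\sigma^\xi_w(B)$, i.e.\ to check $\sigma^\xi_w(B\cap J^1_u)=0$: writing $B\cap J^1_u=((B\cap J^1_u)\cap J^1_w)\cup((B\cap J^1_u)\setminus J^1_w)$, the first piece is contained in $J^1_w$, so $\sigma^\xi_w$ vanishes on it directly from \eqref{def sigma}; the second piece is $\hd$-negligible by \eqref{jump set w}, so its slices are $\hd$-a.e.\ empty by Lemma \ref{h0 slices} and $\sigma^\xi_w$ vanishes on it too.

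I do not expect a serious obstacle: the whole argument is additivity of measures along slices plus the elementary estimate of Lemma \ref{h0 slices}. The only point needing care is the bookkeeping — keeping track of which of $J^1_u$, $J^1_v$, $J^1_w$ is subtracted inside each of $\sigma^\xi_u$, $\sigma^\xi_v$, $\sigma^\xi_w$ — and this is handled once $\hd(J^1_v)=0$ and $\hd(J^1_w\triangle J^1_u)=0$ are in place.
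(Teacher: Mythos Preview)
Your proposal is correct and follows essentially the same route as the paper: additivity of the slice measures $Du^\xi_y=Dv^\xi_y+Dw^\xi_y$, combined with the bookkeeping on $J^1_u$, $J^1_v$, $J^1_w$ coming from Proposition~\ref{Gagliardo}. The only cosmetic difference is that the paper proves the clean identity $\sigma^\xi_u(B)=\sigma^\xi_v(B)+\sigma^\xi_w(B)$ by using the $SBV$ structure of $v$ to get $|Dv|(J^1_u)=0$ and then slicing, whereas you prove the (equivalent) identity $\sigma^\xi_u(B)=\sigma^\xi_v(B\setminus J^1_u)+\sigma^\xi_w(B)$ by appealing directly to Lemma~\ref{h0 slices} on the $\hd$-negligible sets $J^1_v$ and $J^1_w\triangle J^1_u$; both lead to the same conclusion with the same amount of work.
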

  \begin{proof}
      As $v\in SBV(\Omega;\Rd)$, from Remark \ref{remark BD} it follows immediately  that $\xi\mapsto\sigma_v^\xi(B)$ is quadratic, so that  the function $\xi\mapsto \sigma^\xi_w(B)+\sigma^\xi_v(B)$ is also quadratic. We claim that for every $\xi\in\Rd\setminus \{0\}$ we have
      \begin{equation}\label{claim lemma galgiardo}
        \sigma^\xi_u(B)=\sigma^\xi_w(B)+\sigma^\xi_v(B).
      \end{equation}
      To prove this, let us fix $\xi\in\Rd\setminus\{0\}$. By definition we have $u=w+v$. Thus, for $\hd$-a.e.\ $y\in\Pi^\xi$  it holds 
      \begin{equation}\label{decompo gagliardo}
          Du^\xi_y=Dw^\xi_y+Dv^\xi_y
      \end{equation}
      as Borel measures on $\Omega^\xi_y$. Moreover, by construction we have $J^1_v\simeq\emptyset$ and  $J_w\simeq J^1_w\simeq J^1_u$. This implies that \begin{equation}
           \notag Dw^\xi_y((B\setminus J^1_w)^\xi_y))= Dw^\xi_y((B\setminus J^1_u)^\xi_y)\quad \text{ for $\hd$-a.e.\ $y\in\Pi^\xi$}.
      \end{equation} Since $v\in SBV(\Omega;\R^d)$, from \eqref{simeq} we deduce that $|Dv|(J^1_u)=0$. By slicing we obtain that
      \begin{equation}\nonumber 
          |Dv^\xi_y|((J^1_u)^\xi_y)=0\quad \text{ for $\hd$-a.e.\ $y\in\Pi^\xi$}.
          \end{equation}for $\hd$-a.e.\ $y\in\Pi^\xi$. Recalling that $J^1_v\simeq \emptyset$ by \eqref{simeq} and \eqref{jump v zero}, this implies that  \begin{equation}  \nonumber Dv^\xi_y((B\setminus J^1_v)^\xi_y)=Dv^\xi_y(B^\xi_y) =Dv^\xi_y((B\setminus J^1_u)^\xi_y).\end{equation} These  remarks, together with \eqref{decompo gagliardo}, imply that for $\hd$-a.e.\ $y\in \Pi^\xi$ it holds
      \begin{equation*}
          Dw^\xi_y((B\setminus J^1_w)^\xi_y)+ Dv^\xi_y((B\setminus J^1_v)^\xi_y)= D(w^\xi_y+v^\xi_y)((B\setminus J^1_u)^\xi_y)=Du^\xi_y((B\setminus J^1_u)^\xi_y).
      \end{equation*}
      Integrating this equality, by \eqref{def sigma} we obtain \eqref{claim lemma galgiardo}, concluding the proof.
  \end{proof}

\section{The case of dimension \texorpdfstring{$d=2$}{}}\label{section main}

In this and the next section we assume that $\Omega\subset \R^2$. Our aim is to prove the following result. 
\begin{theorem}\label{prop:mainingredient}
   Let $u\in GBD(\Omega)$  and let $B\subset \Omega$ be a Borel set. Then the function $\xi\mapsto \sigma^\xi_u(B)$
    is quadratic.
\end{theorem}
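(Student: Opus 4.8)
By Proposition \ref{character quadratic} it suffices to check that, for every Borel set $B\subset\Omega$, the function $\xi\mapsto\sigma^\xi_u(B)$ is $2$-homogeneous, lower bounded, and satisfies the parallelogram identity \eqref{parallelogram identity}. The $2$-homogeneity is already established in Proposition \ref{prop: 2 homogeneity}, and the lower bound $\sigma^\xi_u(B)\geq-|\xi|^2\lambda_u(\Omega)$ follows at once from \eqref{lower bound} in Remark \ref{coincidenza sigma lambda}. Hence the whole content of the theorem is the parallelogram identity, and the plan is to reduce it, via the Gagliardo-type splitting $u=v+w$ of Proposition \ref{Gagliardo} together with Lemma \ref{lemma gagliardo}, to the case in which $u$ has ``large'' jump set, i.e.\ $J_u\simeq J^1_u$ and hence (outside a $\sigma$-finite set that $\sigma^\xi_u$ ignores on $J^1_u$) the slices $u^\xi_y$ have no jumps of size $<1$ relevant to $\sigma^\xi_u$. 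Actually, since the $SBV$ part $v$ contributes a genuinely quadratic term, and since $\sigma^\xi_u$ restricted to Borel subsets of $J_u$ is quadratic by Proposition \ref{lemma:jumps smaller than 1}, a further reduction decomposes $B=(B\setminus J_u)\cup(B\cap J_u)$ and leaves us to treat $B\subset\Omega\setminus J_u$ for a function $u$ all of whose relevant jumps lie in $J^1_u$. In that regime $(B\setminus J^1_u)^\xi_y=B^\xi_y$ up to $\hd$-null sets of $y$, so $\sigma^\xi_u(B)=|\xi|\int_{\Pi^\xi}Du^\xi_y(B^\xi_y)\,{\rm d}\hd(y)$ and, on the complement of $J^1_u$, this quantity behaves like a total increment of $u\cdot\xi$ along the segments.

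\textbf{Reduction to a parallelogram.} Since both sides of \eqref{parallelogram identity} are, for fixed $\xi,\eta$, finite signed measures in $B$ (each $\sigma^\zeta_u$ is a bounded Radon measure by Remark \ref{coincidenza sigma lambda}), it is enough to verify the identity on a family of sets generating the Borel $\sigma$-algebra on which two finite measures agreeing there must coincide; concretely, for linearly independent $\xi,\eta$ I would prove it when $B=Q$ is a half-open parallelogram with sides parallel to $\xi$ and $\eta$ (and separately handle the degenerate case $\xi,\eta$ parallel, where everything reduces to the one-dimensional situation of Remark \ref{remark BD} and Remark \ref{coincidenza sigma lambda}), then pass to finite disjoint unions of such parallelograms, to open sets by inner regularity, and to arbitrary Borel sets by outer regularity of the total variation measures. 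So fix such a parallelogram $Q$ and the four directions $\zeta\in\{\xi,\eta,\xi+\eta,\xi-\eta\}$.

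\textbf{The discretisation.} For each $\zeta$ I approximate $\sigma^\zeta_u(Q)=|\zeta|\int_{\Pi^\zeta}Du^\zeta_y(Q^\zeta_y\setminus(J^1_u)^\zeta_y)\,{\rm d}\hd(y)$ by Riemann sums built from a single two-dimensional lattice $\{x_{i,j}=\omega+i\xi/k+j\eta/k\}$ inside a slightly enlarged parallelogram, translated by a small vector $\omega$; projecting this lattice onto $\Pi^\zeta$ and onto the lines in direction $\zeta$ produces the base points $y_j$ and the partition points $a^j_i$, and the area element works out so that $\tfrac{1}{k^2}$ times a sum over lattice points approximates the integral (this is exactly the setting of Lemmas \ref{lemma:Riemann sums 1}--\ref{lemma: Riemann 2} and Remark \ref{remark sequences}, applied with $f$ the appropriate slice-derivative data and with $\omega$ playing the role of the free parameter $z$, chosen outside the bad sets so that all four families of Riemann sums converge simultaneously and uniformly). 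For a ``good'' interval $I^j_i$, i.e.\ one with $I^j_i\cap(J^1_u)^\zeta_{y_j}=\emptyset$, one has the telescoping identity $Du^\zeta_{y_j}(I^j_i\setminus(J^1_u)^\zeta_{y_j})=u(y_j+a^j_{i+1}\zeta)\cdot\zeta-u(y_j+a^j_i\zeta)\cdot\zeta$, which in lattice coordinates is a difference of the values $u(x_{i+1,j})\cdot\zeta-u(x_{i,j})\cdot\zeta$ (resp.\ in the $\eta$, $\xi+\eta$, $\xi-\eta$ directions). Summing the good contributions over the whole lattice and using the elementary algebraic identity $\big((u(x')-u(x))\cdot(\xi+\eta)\big)+\big((u(x')-u(x))\cdot(\xi-\eta)\big)=2(u(x')-u(x))\cdot\xi+\cdots$ together with the fact that the four directional lattices share the same vertices, the good parts satisfy the discrete parallelogram identity exactly. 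The bad contributions are those coming from pairs $i,j$ with $I^j_i\cap(J^1_u)^\zeta_{y_j}\neq\emptyset$.

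\textbf{The main obstacle.} The crux — which the introduction explicitly flags as the content of Section \ref{section conclusion} — is to show that the bad contributions are negligible in the limit $k\to\infty$ (after a further limit removing the enlargement of $Q$). The difficulty is that on a bad interval $Du^\zeta_{y_j}(I^j_i\setminus(J^1_u)^\zeta_{y_j})$ is \emph{not} a simple difference of lattice values, and the number of bad intervals meeting $J^1_u$ need not go to zero; what saves us is that $\hd(J^1_u)<+\infty$ (Remark \ref{re:RemarkAlmiTasso}) and that, by \eqref{def:GBD con J1}, the total $Du^\zeta_y$-variation of $u^\zeta_y$ restricted to any set away from $J^1_{u^\zeta_y}$ is controlled by $\lambda_u$. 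The plan is to regard $\sum_{\text{bad }i,j}\tfrac{1}{k}|Du^\zeta_{y_j}(I^j_i\setminus(J^1_u)^\zeta_{y_j})|$ as itself a Riemann sum, over the transverse variable $y$, of quantities of the form $\int_{\Pi^\zeta}|Du^\zeta_y|\big((\text{small neighbourhood of }(J^1_u)^\zeta_y)\setminus(J^1_u)^\zeta_y\big)\,{\rm d}\hd(y)$, which tends to $0$ because the ``small neighbourhood'' shrinks to $(J^1_u)^\zeta_y$ while $|Du^\zeta_y|$ charges no $\hd^0$-atom outside $J^{|\zeta|}_{u^\zeta_y}\subset(J^1_u)^\zeta_y$. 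Making this rigorous — choosing $\omega$ (and $h$ in Lemma \ref{lemma:Riemann sums 1}) so that the enlargement errors, the uniformity over $\omega$, and the $L^1$-convergence of the $k$-dependent functions encoding the bad neighbourhoods all hold at once, and verifying the requisite measurability (deferred to the Appendix) — is where the real work lies; the good part is then just bookkeeping. Finally, once the planar parallelogram case of \eqref{parallelogram identity} is proved, the reductions above upgrade it to all Borel $B$, and Proposition \ref{character quadratic} yields the theorem.
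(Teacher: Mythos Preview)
Your proposal is correct and follows essentially the same route as the paper: the same reductions (via Proposition~\ref{character quadratic}, the Gagliardo splitting of Proposition~\ref{Gagliardo} and Lemma~\ref{lemma gagliardo}, and approximation by parallelograms with sides parallel to $\xi,\eta$), the same common two-dimensional lattice $x^k_{i,j}=\omega+\tfrac{i}{k}\xi+\tfrac{j}{k}\eta$ driving all four Riemann-sum approximations simultaneously, the same telescoping on ``good'' cells yielding the discrete parallelogram identity, and the same identification of the main difficulty as showing that the ``bad'' contributions vanish---handled, as you anticipate, by recasting them as Riemann sums of integrals that tend to zero thanks to $\hd(J^1_u)<+\infty$ and the non-atomicity of $|Du^\zeta_y|$ off the jump set. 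The only point worth sharpening is that ``bad'' must mean that \emph{any} of the six segments of the elementary cell meets $J$ (not just the one in direction $\zeta$), since the discrete identity requires the cell to be good for all four directions at once; this is exactly the paper's definition \eqref{def bad}, and the corresponding vanishing statement is Theorem~\ref{vanishing indices}.
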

\begin{proof}Thanks to Proposition \ref{Gagliardo} and Lemma \ref{lemma gagliardo}, it is not restrictive to assume that $J_u\simeq J^1_u$. Moreover, it is enough to prove the result when $B$ is an open set, which will be denoted by $U$.

 To prove that $\xi\mapsto\sigma^\xi_u(U)$ is quadratic  we use Proposition \ref{character quadratic}. 
Since by Proposition \ref{prop: 2 homogeneity} the function $\xi\mapsto\sigma^\xi_u(U)$ is 2-homogeneous and by Remark \ref{coincidenza sigma lambda}   it satisfies the lower bound (c) of Proposition \ref{prop: 2 homogeneity},
 we are left with proving the {\it parallelogram identity} \begin{equation}
    \label{eq:parallelogram identity}\sigma^{\xi+\eta}_u(U)+\sigma_u^{\xi-\eta}(U)=2\sigma^\xi_u(U)+2\sigma^\eta_u(U),
\end{equation}
for every $\xi,\eta\in\R^2\setminus\{0\}$.  

To this aim, we fix $\xi,\eta\in\R^2\setminus\{0\}$. Note that, if $\xi$ and $\eta$ are not linearly independent, then the parallelogram identity follows from $2$-homogeneity, so  we may assume $\xi$ and $\eta$ to be linearly independent. We also note that it is not restrictive  to assume that  $U$ is a parallelogram of the form 
\begin{equation}\label{parallelogramU} 
U=\{s\xi+t\eta:s\in(0,\alpha) \text{ and }t\in(0,\beta)\},
\end{equation}
for suitable constants $\alpha,\beta>0$ and with $U\subset \subset \Omega$. Indeed, every open set $U$ contained in $\Omega$ can be approximated by a sequence $(U_k)_k$ of disjoint unions of such parallelograms for which $\sigma^\zeta_u(U)=\lim_k\sigma^\zeta_u(U_k)$ for every $\zeta\in\{\xi,\eta,\xi+\eta,\xi-\eta\}$.
 
The vectors $\xi$ and $\eta$, as well as the parallelogram $U$, are kept fixed throughout the rest of the proof. 

To prove the parallelogram identity \eqref{eq:parallelogram identity}, we will use Lemma \ref{lemma:Riemann sums 1} to approximate, by means of Riemann sums, each integral appearing in the definition of the terms occurring in \eqref{eq:parallelogram identity}. This will allow us to prove that the obtained approximations satisfy, up to an arbitrarily small error, the parallelogram identity.

In order to construct these approximations, we need to introduce some notation first. Given a point $\omega\in \R^2$, for every $k\in\N$ and for every $i,j\in\Z$ we set (see Figure \ref{parallelogram})
\begin{equation}
\label{def xkij}
x^{k}_{i,j}:=\omega+\tfrac{i}{k}\xi+\tfrac{j}{k}\eta.
\end{equation}
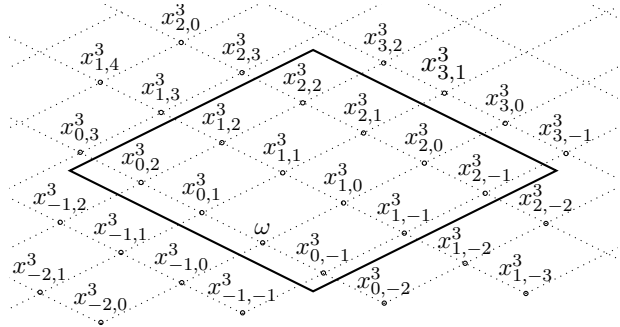
\begin{figure}[h]
    \centering
    \begin{tikzpicture}[scale=0.8]
    \clip (-4.5,-3) rectangle (5.5,4.5);
    \draw[thick]{(0.5,-0.25) -- (4.5,1.75) -- (0.5,3.75) -- (-3.5,1.75) --cycle};
 \def\parallelogram{(0,0) -- (2,1) -- (0.5,1.75) -- (-2,1) -- cycle} ;
  \foreach \x in {-14,-13,-12,-11,-10,-9,-8,-7,-6,-5,-4,-3,-2,-1,0,1,2,3,4,5,6,7,8,9,10,11,12,13,14} {
  \draw[dotted]{(0+\x*2.33+4,\x*0.166-4*0.07142857142)--(12+\x*2.33+4,6+\x*.166-0.07142857142*4)};
  }
  \foreach \y in {-14,-13,-12,-11,-10,-9,-8,-7,-6,-5,-4,-3,-2,-1,0,1,2,3,4,5,6,7,8,9,10,11,12,13,14} {
  \draw[dotted]{(0+\y*2.33+4,\y*0.166-4*0.07142857142)--(-12+\y*2.33+4,6+\y*0.166-0.07142857142*4)}
  ;
  }
  \draw(0+1+1.33,0*0.166-4*0.07142857142+0.66+1.5)  circle (1.0 pt) node [above,scale=0.85] {$x^3_{2,0}$};
\draw(0+1+2*1.33,0*0.166-4*0.07142857142+0.66+1.5+0.66)  circle (1.0 pt) node [above,scale=0.85] {$x^3_{3,0}$};
\draw(0+2*1.33,0*0.166-4*0.07142857142+0.66+2+0.66)  circle (1.0 pt) node [above] {$x^3_{3,1}$};
\draw(0-1+2*1.33,0*0.166-4*0.07142857142+0.66+2.5+0.66)  circle (1.0 pt) node [above,scale=0.85] {$x^3_{3,2}$};
\draw(0+2+2*1.33,0*0.166-4*0.07142857142+0.66+1+0.66)  circle (1.0 pt) node [above,scale=0.85] {$x^3_{3,-1}$};
  
  \draw(0+2+1.33,0*0.166-4*0.07142857142+0.66+1)  circle (1.0 pt) node [above,scale=0.85] {$x^3_{2,-1}$};
  \draw(0+3+1.33,0*0.166-4*0.07142857142+0.66+0.5)  circle (1.0 pt) node [above,scale=0.85] {$x^3_{2,-2}$};
\draw(0+1.33,0*0.166-4*0.07142857142+0.66+2)  circle (1.0 pt) node [above,scale=0.85] {$x^3_{2,1}$};
\draw(-1+1.33,0*0.166-4*0.07142857142+0.66+2.5)  circle (1.0 pt) node [above,scale=0.85] {$x^3_{2,2}$};
\draw(-2+1.33,0*0.166-4*0.07142857142+0.66+3)  circle (1.0 pt) node [above,scale=0.85] {$x^3_{2,3}$};
\draw(-2,0*0.166-4*0.07142857142+3)  circle (1.0 pt) node [above,scale=0.85] {$x^3_{1,3}$};\draw(-3,0*0.166-4*0.07142857142+3.5)  circle (1.0 pt) node [above,scale=0.85] {$x^3_{1,4}$};
\draw(-1,0*0.166-4*0.07142857142+2.5)  circle (1.0 pt) node [above,scale=0.85] {$x^3_{1,2}$};
\draw(0,0*0.166-4*0.07142857142+2)  circle (1.0 pt) node [above,scale=0.85] {$x^3_{1,1}$};
\draw(1,0*0.166-4*0.07142857142+1.5)  circle (1.0 pt) node [above,scale=0.85] {$x^3_{1,0}$};
\draw(2,0*0.166-4*0.07142857142+1)  circle (1.0 pt) node [above,scale=0.85] {$x^3_{1,-1}$};
\draw(3,0*0.166-4*0.07142857142+0.5)  circle (1.0 pt) node [above,scale=0.85,yshift=-0.05 cm] {$x^3_{1,-2}$};
\draw(4,0*0.166-4*0.07142857142)  circle (1.0 pt) node [above,scale=0.85] {$x^3_{1,-3}$};
\draw(1-1.33,0*0.166-4*0.07142857142+1.5-0.66)  circle (1.0 pt) node [above,scale=0.85] {$\omega$};
\draw(2-1.33,0*0.166-4*0.07142857142+1-0.66)  circle (1.0 pt) node [above,scale=0.85] {$x^3_{0,-1}$};
\draw(3-1.33,0*0.166-4*0.07142857142+0.5-0.66)  circle (1.0 pt) node [above,scale=0.85,yshift=-0.05 cm] {$x^3_{0,-2}$};
\draw(-1.33,0*0.166-4*0.07142857142+2-0.66)  circle (1.0 pt) node [above,scale=0.85] {$x^3_{0,1}$};
\draw(-2.33,0*0.166-4*0.07142857142+2.5-0.66)  circle (1.0 pt) node [above,scale=0.85] {$x^3_{0,2}$};
\draw(-3.33,0*0.166-4*0.07142857142+3-0.66)  circle (1.0 pt) node [above,scale=0.85] {$x^3_{0,3}$};
\draw(-3+1.33,0*0.166-4*0.07142857142+0.66+3.5)  circle (1.0 pt) node [above,scale=0.85] {$x^3_{2,0}$};
\draw(1-2*1.33,0*0.166-4*0.07142857142+1.5-2*0.66)  circle (1.0 pt) node [above,scale=0.85] {$x^{3}_{-1,0}$};
\draw(1-3*1.33,0*0.166-4*0.07142857142+1.5-3*0.66)  circle (1.0 pt) node [above,scale=0.85] {$x^{3}_{-2,0}$};
\draw(-3*1.33,0*0.166-4*0.07142857142+2-3*0.66)  circle (1.0 pt) node [above,scale=0.85] {$x^{3}_{-2,1}$};
\draw(-2*1.33,0*0.166-4*0.07142857142+2-2*0.66)  circle (1.0 pt) node [above,scale=0.85] {$x^{3}_{-1,1}$};
\draw(-1-2*1.33,0*0.166-4*0.07142857142+2.5-2*0.66)  circle (1.0 pt) node [above,scale=0.85] {$x^{3}_{-1,2}$};
\draw(2-2*1.33,0*0.166-4*0.07142857142+1-2*0.66)  circle (1.0 pt) node [above,scale=0.85] {$x^{3}_{-1,-1}$};
    \end{tikzpicture}
    \vspace{-1.5 cm}
    \caption{\label{parallelogram} The parallelogram $U$ and the grid of points $x^{k}_{i,j}$ associated to $\omega\in U$ and $k=3$}
\end{figure}

\noindent Since the points $x^k_{i,j}$ will be instrumental to the discretisation of  the summands in \eqref{eq:parallelogram identity}, which are integrals over the straight lines $\Pi^\zeta$ for $\zeta\in\{\xi,\eta,\xi+\eta,\xi-\eta\}$, we consider also the projections  of the points $x^{k}_{i,j}$ onto these lines. For every $k\in\N$, $i,j\in\Z$, and  $\zeta\in\{\xi,\eta,\xi+\eta,\xi-\eta\}$ we set
\begin{equation}\label{def projections}
y_{i,j}^{k,\zeta}:=\pi^{\zeta}(x_{i,j}^{k})=\pi^{\zeta}(\omega)+\tfrac{i}{k}\pi^{\zeta}(\xi)+\tfrac{j}{k}\pi^{\zeta}(\eta)\in \Pi^{\zeta}. 
\end{equation}
We observe that   $y^{k,\xi}_{i,j}$ \text{ depends only on }$j$ \text{ and }  $y^{k,\eta}_{i,j}$ \text{ depends only on }$i$, while $y^{k,\xi+\eta}_{i,j}$  depends only on $i-j $ and  $y^{k,\xi-\eta}_{i,j}$ \text{ depends only on } $i+j$.
When we want to underline the dependence of these families on a single index, we set 
\begin{equation}\label{1 parameter family}
\begin{gathered}
    y^{k,\xi}_j=y^{k,\xi}_{0,j}, \quad \,\,\, y^{k,\eta}_i=y^{k,\eta}_{i,0},\\
     y^{k,\xi+\eta}_j=y^{k,\xi+\eta}_{j,0}=y^{k,\xi+\eta}_{0,-j}, \quad  y^{k,\xi-\eta}_j=y^{k,\xi-\eta}_{j,0}=y^{k,\xi-\eta}_{0,j},
\end{gathered}
\end{equation}
see Figure \ref{figure projections}.
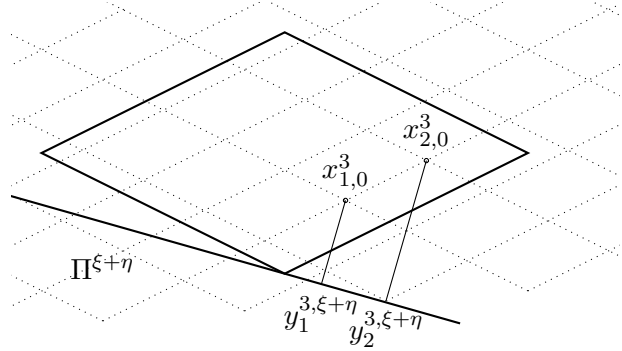
\begin{figure}[h]\vspace{-0.3 cm}
\begin{tikzpicture}[scale=0.8]
     \clip (-4.5,-3) rectangle (5.5,4.5);
    \draw[thick]{(0,0) -- (4,2) -- (0,4) -- (-4,2) --cycle};
  \foreach \x in {-14,-13,-12,-11,-10,-9,-8,-7,-6,-5,-4,-3,-2,-1,0,1,2,3,4,5,6,7,8,9,10,11,12,13,14} {
  \draw[dotted]{(0+\x*2.33+4,\x*0.166-4*0.07142857142)--(12+\x*2.33+4,6+\x*.166-0.07142857142*4)};
  }
  \foreach \y in {-14,-13,-12,-11,-10,-9,-8,-7,-6,-5,-4,-3,-2,-1,0,1,2,3,4,5,6,7,8,9,10,11,12,13,14} {
  \draw[dotted]{(0+\y*2.33+4,\y*0.166-4*0.07142857142)--(-12+\y*2.33+4,6+\y*0.166-0.07142857142*4)}
  ;
  }
  \draw[thick]{(3*3.5*0.27472112789,-3*0.27472112789)--(-10*3.5*0.27472112789,10*0.27472112789)};
  \draw (-3,0.5) node [below] {$\Pi^{\xi+\eta}$};
\draw(1+1.33,0*0.166-4*0.07142857142+0.66+1.5)  circle (1.0 pt) node [above] {$x^3_{2,0}$};
\draw[thin]{(1+1.33,0*0.166-4*0.07142857142+0.66+1.5)--(-0.66+1+1.33,0*0.166-4*0.07142857142+0.66+1.5-0.66*3.5)};\draw[thin]{(-1.33+1+1.33,0*0.166-4*0.07142857142+0.66+1.5-0.66)--(-0.33-1.33+1+1.33,0*0.166-4*0.07142857142+0.66+1.5-0.66-0.33*3.5)};
\draw[thin]{(-0.33-1.33+1+1.33,0*0.166-4*0.07142857142+0.66+1.5-0.66-0.33*3.5)--(-0.40-1.33+1+1.33,0*0.166-4*0.07142857142+0.66+1.5-0.66-0.40*3.5)};
\draw(-1.33+1+1.33,0*0.166-4*0.07142857142+0.66+1.5-0.66)  circle (1.0 pt) node [above] {$x^3_{1,0}$};
\draw(-0.40-1.33+1+1.33,0*0.166-4*0.07142857142+0.66+1.5-0.66-0.40*3.5) node [below] {$y^{3,\xi+\eta}_{1}$};
\draw(-0.66+1+1.33,0*0.166-4*0.07142857142+0.66+1.5-0.66*3.5) node [below] {$y^{3,\xi+\eta}_{2}$};
\end{tikzpicture}\vspace{-1.5 cm}
    \caption{Projections of the points $x^{k}_{i,j}$ onto  the straight line $\Pi^{\xi+\eta}$}
    \label{figure projections}
\end{figure}

It is clear from these definitions that for every $\zeta\in\{\xi,\eta,\xi+\eta,\xi-\eta\}$ and $i,j\in\Z$ there exists a unique real number $t^{k,\zeta}_{i,j}$ 
such that 
\begin{equation}\label{def:coordinates}
x_{i,j}^k=y_{i,j}^{k,\zeta}+t^{k,\zeta}_{i,j}\zeta.
\end{equation}

Let $C_{\xi,\eta}:=\big(|\xi|^2|\eta|^2-(\xi\cdot\eta)^2\big)^{1/2}>0$. We observe that for $i,j\in\Z$ we have 
\begin{equation}\label{def ellexi}
\begin{gathered}
    \displaystyle  k|y^{k,\xi}_{i,j+1}-y_{i,j}^{k,\xi}|=|\pi^{\xi}(\eta)|=\frac{1}{|\xi|}C_{\xi,\eta},
    \\
    \displaystyle   k|y_{i+1,j}^{k,\eta}-y_{i,j}^{k,\eta}|=|\pi^{\eta}(\xi)|=\frac{1}{|\eta|}C_{\xi,\eta}
 \\
    \displaystyle   k|y_{i,j}^{k,\xi+\eta}-y_{i,j+1}^{k,\xi+\eta}|=k|y_{i,j}^{k,\xi+\eta}-y_{i+1,j}^{k,\xi+\eta}|=|\pi^{\xi+\eta}(\xi)|=|\pi^{\xi+\eta}(\eta)|=\frac{1}{|\xi+\eta|}C_{\xi,\eta},
    \\
    \displaystyle k |y_{i,j}^{k,\xi-\eta}-y_{i,j+1}^{k,\xi-\eta}|= k |y_{i,j}^{k,\xi-\eta}-y_{i-1,j}^{k,\xi-\eta}|=|\pi^{\xi-\eta}(\xi)|=|\pi^{\xi-\eta}(\eta)|=\frac{1}{|\xi-\eta|}C_{\xi,\eta}.
\end{gathered}
\end{equation} 

  For technical reasons, which will appear in Lemmas \ref{lemma measurable R2} and \ref{measure app 1}, it is convenient to replace the set $J_u=J^1_u$ by a set $J\subset U$ that can be written as countable union of compact sets.  Since $\mathcal{H}^{1}(J^1_u)<+\infty$ by Remark \ref{re:RemarkAlmiTasso}, there exist a countable family of compact sets $K_n\subset J^1_u\cap U$ and two Borel sets $N_1\subset N\subset U$ with $\huno(N_1)=\huno(N)=0$ such that 
  \begin{equation}\label{J1 rectifiable}
     J^1_u\cap U=\big(\bigcup_{n\in\N}K_n\big)\cup N_1\quad \text{and}\quad J_u\cap U=\big(\bigcup_{n\in\N}K_n\big)\cup N.
  \end{equation}
  We set \begin{equation}\label{def Jhat}
      J:=\bigcup_{n\in\N}K_n
  \end{equation}
  and observe that for every $\zeta\in\{\xi,\eta,\xi+\eta,\xi-\eta\}$ and for $\huno$-a.e.\ $y\in\Pi^\zeta$ we have the equality $(J^1_u\cap U)^\zeta_y=(J_u\cap U)^\zeta_y=J^\zeta_y$. In particular, by \eqref{def sigma} we have that
  \begin{equation} \label{sigma con hat}
\sigma_u^\zeta(B):=|\zeta|\int_{\Pi^\zeta}Du^\zeta_y((B\setminus  J)^\zeta_y)\,{\rm d}\huno(y)
  \end{equation}
  for every $\zeta\in\{\xi,\eta,\xi+\eta,\xi-\eta\}$ and every Borel set $B\subset U$.
  
   The following two lemmas will be used in the choice of $\omega$ to obtain an approximation of $\sigma^\zeta_u(U)$ by means of suitable Riemann sums.
\begin{lemma}\label{fundamental lemma 1} 
Let  $\zeta\in\{\xi,\eta,\xi+\eta,\xi-\eta\}$. Then  for $\Lb^2$-a.e.\ $\omega\in U$ the following conditions are simultaneously satisfied:
\begin{enumerate}
     \item [(a)]{\it properties of the slices}:  for every $k\in \N$ and $i,j\in \Z$ we have \begin{gather}\nonumber 
    u^{\zeta}_{y^{k,\zeta}_{i,j}}\in BV(U^{\zeta}_{\!y^{k,\zeta}_{i,j}}) \quad \text{ and } \quad 
     (J^1_u\cap U)^{\zeta}_{y^{k,\zeta}_{i,j}}= J^{\zeta}_{y^{k,\zeta}_{i,j}};
     \end{gather}
        \item[(b)]{\it the points $x^{k}_{i,j}$ are directional Lebesgue points}: for every $k\in \N$ and  $i,j\in \Z$, with $x^k_{i,j}\in U$, we have 
    \begin{equation}\label{claim lebesgue}
        \lim_{\e\to 0^+}\frac{1}{2\e}\int_{-\e}^\e|u(x^{k}_{i,j}+s\zeta)\cdot\zeta-u(x^{k}_{i,j})\cdot \zeta|\,{\rm d}s=0.
    \end{equation}
\end{enumerate}  
\end{lemma}
\begin{proof}
    Taking into account that $U\subset \subset \Omega$ and recalling the definition of $GBD(\Omega)$ and Remark \ref{remar: finiteness}, from  \eqref{J1 rectifiable} and \eqref{def Jhat} it follows that there exists a Borel set $N_\zeta\subset \Pi^\zeta$, with $\mathcal{H}^1(N_\zeta)=0$, such that for every $y\in \Pi^\zeta\setminus N_\zeta$ we have $u^\zeta_y\in BV(U^\zeta_y)$ and $(J^1_u\cap U)^\zeta_y= J^\zeta_y$. Let $N^\infty_\zeta:=\bigcup_{(i,j)\in\Z^2}\big(N_\zeta-\tfrac{i}{k}\pi^\zeta(\xi)-\tfrac{j}{k}\pi^\zeta(\eta)\big)$.   It is immediate to check that $\huno(N^\infty_\zeta)=0$. By  \eqref{def projections} we have
\begin{equation*}
y^{k,\zeta}_{i,j}=\pi^\zeta(\omega)+\tfrac{i}{k}\pi^\zeta(\xi)+\tfrac{j}{k}\pi^\zeta(\eta),
\end{equation*}
so that if $\pi^\zeta(\omega)\notin N^\infty_\zeta$,  we have $u^{\zeta}_{y^{k,\zeta}_{i,j}}\in BV(U^{\zeta}_{y^{k,\zeta}_{i,j}})$ and that $(J^1_u\cap U)^\zeta_{y^{k,\zeta}_{i,j}}=J^\zeta_{y^{k,\zeta}_{i,j}}$. This proves that for $\Lb^2$-a.e $\omega\in U$ condition (a).

Let us prove (b).  Let $B$ be the $\Lb^2$-measurable set defined by 
\begin{equation}\nonumber 
    B:=\Big\{x\in U:\limsup_{\e\to 0^+}\frac{1}{2\e}\int_{-\e}^\e|u(x+s\zeta)\cdot\zeta-u(x)\cdot\zeta|\,{\rm d}s>0\Big\}.
\end{equation}
For every $y\in \Pi^\zeta\setminus N_\zeta$ we have $u^{\zeta}_{y}\in BV(U^\zeta_y)$ and the slices $B^\zeta_y$ satisfy
\begin{equation}\nonumber 
    B^\zeta_y=\Big\{t\in U^\zeta_y:\limsup_{\e\to 0^+}\frac{1}{2\e}\int_{-\e}^\e|u_y^\zeta(s+t)-u_y^\zeta(t)|\,{\rm d}s>0\Big\}.
\end{equation}
 Therefore, by the Lebesgue Differentiation Theorem $\Lb^1(B^\zeta_y)=0$ for every  $y\in \Pi^\zeta\setminus N_\zeta$ and  by the Fubini Theorem this implies that $\Lb^2(B)=0$. We observe that
 \begin{equation}\label{lebesgue 1d}
    \lim_{\e\to 0^+}\frac{1}{2\e}\int_{-\e}^\e|u^\zeta_y(t+s)-u^{\zeta}_y(t)|\,{\rm d}s=0
\end{equation}
for every $t\in \Pi^\zeta_y\setminus B^\zeta_y$.

Recalling that by \eqref{def:coordinates} we have that
\begin{equation}\label{recall def punto}
x^{k}_{i,j}=y^{k,\zeta}_{i,j}+t^{k,\zeta}_{i,j}\zeta,
\end{equation}
and that $y^{k,\zeta}_{i,j}\notin N_\zeta$ by the first step,
from \eqref{lebesgue 1d} we deduce that \eqref{claim lebesgue} holds whenever 
\begin{equation}\label{qualcosa}
 t^{k,\zeta}_{i,j}\notin N^\zeta_{y^{k,\zeta}_{i,j}}\,.
 \end{equation}

Thus, to prove (b) it is enough to show that, for given $i,j$, and $\zeta$, condition \eqref{qualcosa} holds for $\Lb^2$-a.e.\ $\omega\in U$. Observing that 
$y^{k,\zeta}_{i,j}\cdot\zeta=0$ and recalling \eqref{def xkij}, if we multiply \eqref{recall def punto} by $\zeta/|\zeta|^2$  we obtain that
\begin{equation*}
t^{k,\zeta}_{i,j}=\tfrac{x^{k}_{i,j}\cdot\zeta}{|\zeta|^2}=\tfrac{\omega\cdot \zeta}{|\zeta|^2}+\tfrac{i}{k}\tfrac{\xi\cdot\zeta}{|\zeta|^2}+\tfrac{j}{k}\tfrac{\eta\cdot\zeta}{|\zeta|^2}.
\end{equation*}
Hence, \eqref{qualcosa} holds whenever 
\begin{equation}\label{fine lebesgue}
\tfrac{\omega\cdot\zeta}{|\zeta|^2}\notin N^{\zeta}_{y^{k,\zeta}_{i,j}}-\tfrac{i}{k}\tfrac{\xi\cdot\zeta}{|\zeta|^2}-\tfrac{j}{k}\tfrac{\eta\cdot\zeta}{|\zeta|^2}.
\end{equation}
Recalling that by \eqref{def projections} $y^{k,\zeta}_{i,j}$ has the form $\pi^{\zeta}(\omega)+z$ for some $z\in \Pi^\zeta$, depending on $k$, $\zeta$, $i$, and $j$, we deduce that \eqref{fine lebesgue} holds for $\Lb^2$-a.e.\ $\omega\in U$. This proves that \eqref{qualcosa} holds for $\Lb^2$-a.e.\ $\omega\in U$, concluding the proof.\end{proof}
  
\noindent {\it Proof of Theorem \ref{prop:mainingredient} (continuation).} Given $i,j\in \Z$ and $\zeta\in\{\xi,\eta,\xi+\eta,\xi-\eta\}$ we set 
\begin{gather}\label{def intervals zeta}
   I_{i,j}^{k,\zeta}:=[t_{i,j}^{k,\zeta},t_{i,j}^{k,\zeta}+\tfrac{1}{k}),
\end{gather}
where $t_{i,j}^{k,\zeta}$ are defined in \eqref{def:coordinates}.
We note that 
\begin{equation}\label{5.17bis}
   [x_{i,j}^k,x_{i,j}^k+\tfrac{1}{k}\zeta)=\{y^{k,\zeta}_{i,j}+t\zeta:t\in I^{k,\zeta}_{i,j}\}.
\end{equation}

For $h\in\N$ we set 
\begin{equation}\label{5.18}
\mathcal{J}^k_h:=\{(i,j)\in\Z^2:  x_{i,j}^k\pm\tfrac{h}{k}\zeta \in U \text{ for every }\zeta\in\{\xi,\eta,\xi+\eta,\xi-\eta\}\}.
\end{equation}      
Since every $\omega\in\R^2$ can be written in a unique way as $\omega=s\xi+t\eta$ with $s, t\in\R$, 
by \eqref{parallelogramU} and \eqref{def xkij}
we have
\begin{equation}\label{Jk new}
\mathcal{J}^k_h:=\{(i,j)\in\Z^2: 0<s+\tfrac{i\pm h}{k}<\alpha \text{ and }0<t+\tfrac{j\pm h}{k}<\beta\}.
\end{equation} 

In the following lemma, given  a sequence $(\omega_k)_k$ of elements of $U$, we consider the points $x_{i,j}^k$ and $y^{k,\zeta}_{i,j}$  defined by \eqref{def xkij} and \eqref{def projections} with $\omega=\omega_k$.
We recall that  $C_{\xi,\eta}>0$ is the constant which appears in  \eqref{def ellexi}.

\begin{lemma}\label{fundamental lemma 2} 
There exists an infinite set $K\subset \N$  and, for every $\e>0$, a  Borel set $U_\e\subset U$, with $\Lb^2(U\setminus U_\e)\leq \e$, such that for every sequence $(\omega_k)_{k\in \N}$ in  $U_\e$  and for every  $\zeta\in\{\xi,\eta,\xi+\eta,\xi-\eta\}$  conditions  (a) and (b) of Lemma \ref{fundamental lemma 1} are satisfied and  
\begin{equation} 
\label{eq:riemannXi1}\lim_{\substack{k\to+\infty\\ k\in K}}\frac{C_{\xi,\eta}}{k}\sum_{(i,j)\in\mathcal{J}^{k}}Du^{\zeta}_{\!y_{i,j}^{k,\zeta}}(I^{k,\zeta}_{i,j}\setminus  J^{\zeta}_{y_{i,j}^{k,\zeta}})=\sigma^\zeta_u(U)
\end{equation}  
for every sequence $(\mathcal{J}^k)$ in $\Z^2$ for which there exists $h\in\N$ such that $\mathcal{J}^k_h\subset \mathcal{J}^k\subset \mathcal{J}^k_1$ for every $k\in\N$.
\end{lemma}
\begin{proof} Thanks to Lemma \ref{fundamental lemma 1}, there exists a Borel set $U_0\subset U$, with $\Lb^2(U_0)=\Lb^2(U)$, such that (a) and (b) hold for every $\omega\in U_0$, so that  we only need to show that there exist an infinite set $K\subset \N$ and for every $\e>0$, a Borel set $U_\e\subset U_0$, with  $\Lb^2(U\setminus U_\e)\leq \e$, such that \eqref{eq:riemannXi1} holds.
The proof for every $\zeta\in\{\xi,\eta,\xi+\eta,\xi-\eta\}$ will be carried out in four steps, first for $\zeta=\xi$, then for $\zeta=\eta$, next for $\zeta=\xi+\eta$, and finally for $\zeta=\xi-\eta$. Starting from the second step, $\N$ is replaced by the set $K$ of the previous step and we may assume that
 $U_\e$ is contained in the corresponding set of the previous step, so that the sets $K$ and $U_\e$ obtained at the end 
satisfy \eqref{eq:riemannXi1} for every $\zeta\in\{\xi,\eta,\xi+\eta,\xi-\eta\}$.

We begin by proving the result for $\zeta=\xi$.
We observe that every $\omega\in\R^2$ can be written in a unique way as 
\begin{equation*}
    \omega=z_1\eta+z_2\xi,
\end{equation*}
with $z_1,z_2\in\R$.
 For every $k\in\N$ and $i,j\in\Z$ by \eqref{def projections} we have 
 \begin{equation}\label{alternative projections}
y^{k,\xi}_{i,j}=\big(z_1+\tfrac{j}{k}\big)\pi^\xi(\eta).
\end{equation}
    We set $I:=(0,\beta)=\{t
    \in\R:t
    \pi^{\xi}(\eta)\in \pi^\xi(U)\}$. For every $h,k\in\N$ and $z_1\in I$ we define
    \begin{gather*}\label{Ikz1}
    \mathcal{I}^{k}_{h}(z_1):=\big\{j\in\Z: 
   z_1+\tfrac{j\pm h}k\in I\big\} =  
    \big\{j\in\Z:     y^{k,\xi}_{0,j}\pm \tfrac{h}{k}\pi^{\xi}(\eta)\in \pi^\xi(U) \big\},
    \\
        \mathcal{F}^{k}_{h}(z_1):=\big\{j\in\Z: z_1+\tfrac{j}k\in I\big\}\setminus      \mathcal{I}^{k}_{h}(z_1).
        \label{Fkz1}
    \end{gather*} 
    Let $N_\xi\subset \Pi^\xi$ be the $\mathcal{H}^1$-negligible Borel set introduced at the beginning of the proof of Lemma \ref{fundamental lemma 1} for $\zeta=\xi$  and   consider the Borel set $M_\xi:=\{t\in\R: t\pi^\xi(\eta)\in N_\xi\}$.  Applying  Lemma \ref{lemma:Riemann sums 1} to the function defined for $t\in\R$ by
    \begin{equation*}\label{function f} 
    \begin{aligned}
       f(t):=\begin{cases} Du^\xi_{t
        \pi^{\xi}(\eta)}((U\setminus  J)^\xi_{t \pi^{\xi}(\eta)})&\text{ if $t\in\R\setminus M_\xi$},\\
        0 &\text{ if $t\in M_\xi$},
        \end{cases}
    \end{aligned}
    \end{equation*}
    which vanishes out of $I$, and recalling  \eqref{def ellexi}, \eqref{sigma con hat}, and \eqref{alternative projections}, we obtain an infinite set $H\subset \N $ and, for every $\e>0$, 
    a Borel set $I_\e\subset I$, with   $\Lb^1(I\setminus I_\e)\leq \e$ and $I_\e\cap M_\xi=\emptyset$, such that for every $h\in\N$ we have
    \begin{gather}\notag
    \lim_{\substack{k\to+\infty\\k\in H}}
    \frac{ C_{\xi,\eta}}{k} \sum_{j\in\mathcal{I}^{k}_{h}{(z_1)}}
   Du^{\xi}_{y^{k,\xi}_{0,j}}((U\setminus  J)^{\xi}_{y^{k,\xi}_{0,j}})
    =|\xi||\pi^\xi(\eta)|\int_{I}Du^{\xi}_{s\pi^\xi(\eta)}((U\setminus  J)^{\xi}_{s\pi^\xi(\eta)})\, {\rm d}s\\\label{funzione 1}
     =|\xi|\int_{\Pi^\xi}Du^{\xi}_{y}((U\setminus  J)^{\xi}_{y})\, {\rm d}\huno(y)=\sigma^\xi_u(U) \qquad\hbox{uniformly for }z_1\in I_\e,
      \\
           \lim_{\substack{k\to+\infty\\k\in H}}
       \frac{1}{k} \sum_{j\in\mathcal{F}^{k}_{h}{(z_1)}}
Du^{\xi}_{y^{k,\xi}_{0,j}}((U\setminus  J)^{\xi}_{y^{k,\xi}_{0,j}})=0 \qquad\hbox{uniformly for }z_1\in I_\e.
\label{5.27}
    \end{gather}
     
   We set   \begin{gather}\label{def Vxi}
   V_\e:=\{\omega\in U:\omega=z_1\eta+z_2\xi,\  z_1\in {I_\e,}\ z_2\in \R\}\end{gather} and  observe that $\Lb^2(U\setminus V_\e)\leq c\e$ 
   for a constant $c>0$ depending only on $\xi$, $\eta$, $\alpha$, and $\beta$. 
  For every $h,k\in\N$ we set  
  \begin{gather}
    \label{def Wk} W^k_h:=\{x\in U:\text{ either }x+\tfrac{h}k\xi \text{ \,\,or\,\, } x-\tfrac{h}k\xi \text{ \,\,\, does not belong to }U \},\\\label{fkxilemma}
f^k_h(t,z_2):=\begin{cases}|Du^\xi_{t\pi^\xi(\eta)}|((W^k_h\setminus  J)^\xi_{t\pi^\xi(\eta)})\quad &\text{ if }t \in I\setminus M_\xi\hbox{ and }z_2\in (0,\alpha+\beta),\\
      0\quad &\text{ otherwise},
      \end{cases}\\  \label{gxilemma}g(t):=\begin{cases}|Du^\xi_{t\pi^\xi(\eta)}|((U\setminus  J)^\xi_{t\pi^\xi(\eta)})\qquad &\text{ for }  t\in I\setminus M_\xi,\\
      0& \text{ otherwise.}\end{cases}
  \end{gather}  
 We observe that $0\le f^k_h(t,z_2)\leq g(t)$ for $\Lb^1$-a.e.\ $t\in I$ and  $\Lb^1$-a.e.\ $z_2\in \R$. 
Let $\lambda$ be a measure as in Definition~\ref{def:GBD}.
Since ${\lambda}(W^k_h)$ converges to $0$ as $k\to+\infty$, from \eqref{def:GBD xiquadro} and \eqref{Jxihatu} we deduce that the sequence $(f^k_h)$ converges  to $0$ in $L^1(I\times \R)$ as $k\to+\infty$. 
   Thanks to Lemma \ref{lemma: Riemann 2} and Remark \ref{remark sequences}, applied with $\N$ replaced by $H$, we can find an infinite set $K\subset H \subset \N$ and a Borel set $U_\e\subset V_\e\subset U$, with $\Lb^2(U\setminus U_\e)\leq c \e$   for a constant $c>0$ depending only on $\xi,\eta$, $\alpha$, and $\beta$, such that for every $h\in\N$
   \begin{align}\label{new applicaiton lemma 2}
 \lim_{\substack{k\to+\infty\\k\in K}}\frac{1}{k}\sum_{j\in\mathcal{I}^{k}_{h}(z_1)}|Du^{\xi}_{\!y_{0,j}^{k,\xi}}|((W^k_h\setminus  J)^{\xi}_{y_{0,j}^{k,\xi}})
= \lim_{\substack{k\to+\infty\\k\in{ K}}}\frac{ 1}{k}\sum_{j\in\mathcal{I}^{k}_{h}{(z_1)}}f_h^k(z_1+\tfrac{j}{k},z_2)=0
   \end{align}
uniformly for $\omega=z_1\eta+z_2\xi\in U_\e$, where the first equality follows from \eqref{alternative projections}.

For every $j\in \Z$ let $ \mathcal{J}^k(j):=\{i\in \Z:(i,j)\in  \mathcal{J}^k\}$.
Since $\mathcal{J}^k_h\subset \mathcal{J}^k\subset \mathcal{J}^k_1$, by \eqref{def intervals zeta} and \eqref{def Wk} for every $j\in \mathcal{I}^{k}_{h}(z_1)$ it holds 
\begin{equation*}
    (U\setminus  J)^{\xi}_{y^{k,\xi}_{0,j}}=(W^k_h\setminus  J)^{\xi}_{y_{0,j}^{k,\xi}}\cup \bigcup_{i\in \mathcal{J}^{k}(j)}\big(I^{k,\xi}_{i,j}\setminus  J^{\xi}_{y^{k,\xi}_{0,j}}\big).
\end{equation*}
Hence,
\begin{equation*}
\Big|D u^\xi_{y^{k,\xi}_{0,j}}\big( (U\setminus  J)^{\xi}_{y^{k,\xi}_{0,j}}\big)
- \sum_{i\in \mathcal{J}^{k}(j)}D u^\xi_{y^{k,\xi}_{0,j}}\big(I^{k,\xi}_{i,j}\setminus  J^{\xi}_{y^{k,\xi}_{0,j}}\big)\Big|
\le \big|D u^\xi_{y^{k,\xi}_{0,j}}\big|   \big((W^k_h\setminus  J)^{\xi}_{y_{0,j}^{k,\xi}}\big).
\end{equation*}
Recalling that  $y_{i,j}^{k,\xi}=y_{0,j}^{k,\xi}$,  the previous inequality
gives
\begin{gather*}
\Big|\sum_{j\in \mathcal{I}^{k}_{h}(z_1)}D u^\xi_{y^{k,\xi}_{0,j}}\big( (U\setminus  J)^{\xi}_{y^{k,\xi}_{0,j}}\big)
- \sum_{(i,j)\in \mathcal{J}^{k}}D u^\xi_{y^{k,\xi}_{0,j}}\big(I^{k,\xi}_{i,j}\setminus  J^{\xi}_{y^{k,\xi}_{0,j}}\big)\Big|
\\
\le \big|D u^\xi_{y^{k,\xi}_{0,j}}\big|   \big((W^k_h\setminus  J)^{\xi}_{y_{0,j}^{k,\xi}}\big) + \sum_{j\in \mathcal{F}^{k}_{h}(z_1)} \big|
D u^\xi_{y^{k,\xi}_{0,j}}\big( (U\setminus  J)^{\xi}_{y^{k,\xi}_{0,j}}\big)\big|.
\end{gather*}
Combining \eqref{funzione 1}, \eqref{5.27}, and  \eqref{new applicaiton lemma 2}, we 
obtain  \eqref{eq:riemannXi1} for $\zeta=\xi$. The proof for the case $\zeta=\eta$ can be obtained by arguing as above, exchanging the roles of $\xi$ and $\eta.$

   In the case $\zeta=\xi+\eta$ we argue as follows. First, we write  every $\omega\in\R^2$ as  
    \begin{equation*}
        \omega=z_1\eta+z_2(\xi+\eta),
    \end{equation*}
    with $z_1,z_2\in\R$, so that by \eqref{def projections} 
    \begin{equation*}
        y^{k,\xi+\eta}_{i,j}=(z_1+\tfrac{j}{k})\pi^{\xi+\eta}(\eta)+\tfrac{i}{k}\pi^{\xi+\eta}(\xi)=(z_1+\tfrac{j-i}{k})\pi^{\xi+\eta}(\eta).
    \end{equation*}
    Setting $m:=j-i$,   we have 
    \begin{equation}\label{proiezione con m}
         y^{k,\xi+\eta}_{i,j}=(z_1+\tfrac{m}{k})\pi^{\xi+\eta}(\eta).
    \end{equation}
We now set $I:=\{t\in\R:t\pi^{\xi+\eta}(\eta)\in\pi^{\xi+\eta}(U)\}$ and for every $k\in\N$ and $z_1\in I$ we define
\begin{equation}\label{def indici in proof}
    \mathcal{I}^{k}(z_1):=\{m\in\Z:z_1+\tfrac{m}{k}\in I\}.
\end{equation}

Let $N_{\xi+\eta}$ be the $\huno$-negligible Borel introduced at the beginning of the proof of Lemma \ref{fundamental lemma 1} for $\zeta=\xi+\eta$ and consider the Borel set $M_{\xi+\eta}:=\{t\in\R: t\pi^{\xi+\eta}(\eta)\in N_{\xi+\eta}\}$. We can apply Lemma \ref{lemma:Riemann sums 1} to the function defined for $t\in\R$ by
\begin{equation*}
    h(t):= \begin{cases} Du^{\xi+\eta}_{t
        \pi^{\xi+\eta}(\eta)}((U\setminus  J)^{\xi+\eta}_{t \pi^{\xi+\eta}(\eta)})&\text{ if $t\in \R\setminus  M_{\xi+\eta}$},\\
        0&\text{ if $t\in M_{\xi+\eta}$}.
        \end{cases}
\end{equation*}
and arguing as in the previous part of the proof  we obtain an infinite set $H $ contained in the set $K$ obtained in the previous steps and, for every $\e>0$, a Borel set $I_\e\subset I$, with   $\Lb^1(I\setminus I_\e)\leq \e$ and $I_\e\cap M_{\xi+\eta}=\emptyset$,  such that 
\begin{gather}\label{uniform in proof}
    \lim_{\substack{k\to+\infty\\k\in H}}
  \frac{ C_{\xi,\eta}}{k}  \sum_{m\in\mathcal{I}^{k}{(z_1)}}
    Du^{\xi+\eta}_{y^{k,\xi+\eta}_{0,m}}((U\setminus  J)^{\xi}_{y^{k,\xi}_{0,m}})=\sigma^{\xi+\eta}_u(U) \qquad\hbox{uniformly for }z_1\in I_\e.
    \end{gather}
    For every $h,k\in\N$ now define 
\begin{equation*}
    W^k_h:=\{x\in U:\text{there exists } \zeta\in\{\pm \xi,\pm\eta,\pm(\xi+\eta),\pm(\xi-\eta)\} \text{ such that }x+\tfrac{h}{k}\zeta\notin U\}
\end{equation*}
and we observe that \eqref{Jk new} and the inclusions $ \mathcal{J}^{k}_h\subset \mathcal{J}^{k}\subset  \mathcal{J}^{k}_1$ imply that
\begin{gather}\notag
  \{(i,j)\in\Z^2:x^k_{i,j}\in U\setminus W^k_h\}\subset   \mathcal{J}^{k},\\
  \notag  \mathcal{J}^{k} \subset \{(i,j)\in\Z^2:[x^k_{i,j},x^k_{i,j}\pm \tfrac1k\zeta]\subset U\hbox{ for every }\zeta\in\{\xi,\eta,\xi+\eta,\xi-\eta\}\}.
\end{gather}
It follows from \eqref{def intervals zeta} and \eqref{5.17bis} that
\begin{equation} \label{Jk aux}
    (U\setminus  J)^{\xi+\eta}_{y^{k,\xi+\eta}_{0,m}}=(W^k_h\setminus  J)^{\xi+\eta}_{y^{k,\xi+\eta}_{0,m}}\cup\bigcup_{\substack{(i,j)\in\mathcal{J}^k\\j-i=m}}(I^{k,\xi+\eta}_{i,j}\setminus  J^{\xi+\eta}_{y^{k,\xi+\eta}_{0,m}}).
\end{equation}

For every $k\in\N$ we now define $V_\e$ , $W^k_h$,  $f^k_h$, and $g$  as in \eqref{def Vxi}-\eqref{gxilemma}, with $\xi$ replaced by $\xi+\eta$.
Arguing as in the first part of the proof, we obtain that $(f^k_h)$ converges to $0$ in $L^1(I\times \R)$ as $k\to+\infty.$ Hence, recalling \eqref{Jk new}, we may apply Lemma \ref{lemma: Riemann 2}, with $\N$ replaced by $H$, to obtain  an infinite set $K\subset H \subset \N$ and a Borel set $U_\e\subset V_\e\subset U$, with $\Lb^2(U\setminus U_\e)\leq c \e$ 
    for a constant $c>0$ depending only on $\xi,\eta$, $\alpha$, and $\beta$, such that 
\begin{equation}\label{2404}\lim_{\substack{k\to+\infty\\k\in K}}\frac{1}{k}\sum_{m\in\mathcal{I}^{k}(z_1)}\!\!\!\!\!|Du^{\xi+\eta}_{\!y_{0,m}^{k,\xi+\eta}}|((W^k_h\setminus  J)^{\xi+\eta}_{y_{0,m}^{k,\xi+\eta}})
= \lim_{\substack{k\to+\infty\\k\in{ K}}}\frac{ 1}{k}\sum_{m\in\mathcal{I}^{k}{(z_1)}}f^{k}_h(z_1+\tfrac{m}{k},z_2)=0
\end{equation}
uniformly for $\omega=z_1\eta+z_2(\xi+\eta)\in U_\e$. Recalling the equality $y^{k,\xi+\eta}_{i,j}=y^{k,\xi+\eta}_{0,m}$ for $j-i=m$, from \eqref{Jk aux}  it follows that 
\begin{equation*}
|Du^{\xi+\eta}_{y^{k,\xi+\eta}_{0,m}}((U\setminus  J)^{\xi+\eta}_{y^{k,\xi+\eta}_{0,m}})-\sum_{\substack{(i,j)\in\mathcal{J}^k\\j-i=m}} Du^{\xi+\eta}_{y^{k,\xi+\eta}_{0,m}}(I^{k,\xi+\eta}_{i,j}\setminus  J^{\xi+\eta}_{y^{k,\xi+\eta}_{0,m}})|\leq |Du^{\xi+\eta}_{y^{k,\xi+\eta}_{0,m}}|((W^k_h\setminus  J)^{\xi+\eta}_{y_{0,m}^{k,\xi+\eta}}).
\end{equation*}
Since by \eqref{proiezione con m} and \eqref{def indici in proof} we have
\begin{equation*}
\sum_{m\in\mathcal{I}^{k}(z_1)}\sum_{\substack{(i,j)\in\mathcal{J}^k\\j-i=m}}=\sum_{(i,j)\in\mathcal{J}^k},
\end{equation*}
combining \eqref{uniform in proof} and \eqref{2404} with the previous inequality, we obtain \eqref{eq:riemannXi1} for $\zeta=\xi+\eta$. 
The proof for $\zeta=\xi-\eta$ is similar.
\end{proof}
\noindent {\it Proof of Theorem \ref{prop:mainingredient} (continuation).} By Lemma \ref{fundamental lemma 2}  we may choose a sequence $(\omega_k)_k\subset U$ and an infinite set $K\subset\N$ such that  for every $\zeta\in\{\xi,\eta,\xi+\eta,\xi-\eta\}$ conditions (a) and (b) of Lemma \ref{fundamental lemma 1} hold and 
\begin{equation}\notag 
\label{riemann finto}\lim_{\substack{k\to+\infty \\ k\in K}}\frac{C_{\xi,\eta}}{k}\sum_{(i,j)\in\mathcal{J}^{k}}Du^{\zeta}_{\!y_{i,j}^{k,\zeta}}(I^{k,\zeta}_{i,j}\setminus  J^{\zeta}_{y_{i,j}^{k,\zeta}})=\sigma^\zeta_u(U),
\end{equation}
for every $\mathcal{J}^k$ such that $\mathcal{J}^k_3\subset \mathcal{J}^k\subset \mathcal{J}^k_1$ (see \eqref{5.18}), where the projections $y^{k,\zeta}_{i,j}$ are defined taking $\omega=\omega_k$ in \eqref{def projections}. 

To present the technique we will employ in the sequel, let us assume for a moment that for every $(i,j)\in\mathcal{J}^{k}_1$ the segments $[x_{i,j}^k,x_{i+1,j}^k]$, $[x_{i,j}^k,x_{i,j+1}^k]$, $[x_{i,j}^k,x_{i+1,j+1}^k]$,  $[x^k_{i+1,j},x^{k}_{i+1,j+1}]$,  $[x^k_{i,j+1},x^{k}_{i+1,j+1}]$,  and   $[x^k_{i+1,j},x_{i,j+1}^k]$ do not intersect the set $ J$, which implies $I^{k,\zeta}_{i,j}\setminus  J^{\zeta}_{y_{i,j}^{k,\zeta}}=I^{k,\zeta}_{i,j}$, hence
\begin{gather} 
\label{riemann finto 2}\lim_{\substack{k\to+\infty \\ k\in K}}\frac{C_{\xi,\eta}}{k}\sum_{(i,j){\in\mathcal{J}^{k}}}Du^{\zeta}_{\!y_{i,j}^{k,\zeta}}(I^{k,\zeta}_{i,j})=\sigma^\zeta_u(U),
\end{gather}
whenever $\mathcal{J}^k_3\subset \mathcal{J}^k\subset \mathcal{J}^k_1$.
 By (a) and (b) of Lemma \ref{fundamental lemma 1} and \eqref{5.17bis}, for every $k\in\N$ and $(i,j)\in\mathcal{J}^k_1$ we have that $u^\zeta_{\!y^{k,\zeta}_{i,j}}\in BV(U^\zeta_{\!y_{i,j}^{k,\zeta}})$ and  
\begin{gather}\label{FTC}
Du^\zeta_{y^{k,\zeta}_{i,j}}(I^{k,\zeta}_{i,j})=(u(x^{k,\zeta}_{i,j}+\tfrac{1}{k}\zeta)-u(x^{k,\zeta}_{i,j}))\cdot\zeta\quad \text{ for every $\zeta\in \{\xi,\eta,\xi+\eta,\xi-\eta\}$.}
\end{gather}
Let $e_1:=(1,0)$ and $e_2:=(0,1)$ and set  \begin{equation}\notag\label{hat check jk}\mathcal{J}^k_2+e_1:=\{(i+1,j):(i,j)\in \mathcal{J}^k_2\}\quad \text{ and }\quad  \mathcal{J}_2^k+e_2:=\{(i,j+1):(i,j)\in \mathcal{J}^k_2\},
 \end{equation} and observe that  by \eqref{Jk new} we have 
$\mathcal{J}^k_3\subset \mathcal{J}^k_2+e_1\subset \mathcal{J}^k_1$ and $\mathcal{J}^k_3\subset \mathcal{J}^k_2+e_2\subset \mathcal{J}^k_1$.
From \eqref{def xkij} and  \eqref{FTC} it follows that 
\begin{equation*}
\begin{gathered}
\sum_{(i,j){\in\mathcal{J}^{k}_2}}Du^\xi_{y^{k,\xi}_{i,j}}(I^{k,\xi}_{i,j})=\sum_{(i,j)\in \mathcal{J}^{k}_2}\big(u(x_{i+1,j}^{k})-u(x_{i,j}^{k})\big)\cdot\xi,\\
\sum_{(i,j){\in{\mathcal{J}}^{k}_2+e_2}}Du^\xi_{y^{k,\xi}_{i,j}}(I^{k,\xi}_{i,j})=\sum_{(i,j)\in {\mathcal{J}}^{k}_2+e_2}\big(u(x_{i+1,j}^{k})-u(x_{i,j}^{k})\big)\cdot\xi,\\
\sum_{(i,j)\in \mathcal{J}^{k}_2}Du^\eta_{y^{k,\eta}_{i,j}}(I^{k,\eta}_{i,j})=\sum_{(i,j)\in \mathcal{J}^{k}_2}\big(u(x_{i,j+1}^{k})-u(x_{i,j}^{k})\big)\cdot\eta,\\
\sum_{(i,j)\in {\mathcal{J}}^{k}_2+e_1}Du^\eta_{y^{k,\eta}_{i,j}}(I^{k,\eta}_{i,j})=\sum_{(i,j)\in {\mathcal{J}}^{k}_2+e_1}\big(u(x_{i,j+1}^{k})-u(x_{i,j}^{k})\big)\cdot\eta,\\
\sum_{(i,j)\in \mathcal{J}^{k}_2}Du^{\xi+\eta}_{y^{k,\xi+\eta}_{i,j}}(I^{k,\xi+\eta}_{i,j})=\sum_{(i,j)\in \mathcal{J}^{k}_2}\big(u(x_{i+1,j+1}^{k})-u(x_{i,j}^{k})\big)\cdot(\xi+\eta),\\
\sum_{(i,j)\in \mathcal{J}^{k}_2}Du^{\xi-\eta}_{y^{k,\xi-\eta}_{i,j}}(I^{k,\xi-\eta}_{i,j})=\sum_{(i,j)\in \mathcal{J}^{k}_2}\big(u(x_{i+1,j}^{k})-u(x_{i,j+1}^{k})\big)\cdot(\xi-\eta).
\end{gathered}
\end{equation*}
Thus,
\begin{equation*}
\begin{gathered}
\sum_{(i,j){\in\mathcal{J}^{k}_2}}Du^{\xi+\eta}_{y^{k,\xi+\eta}_{i,j}}(I^{k,\xi+\eta}_{i,j})+ \sum_{(i,j)\in \mathcal{J}^{k}_2}Du^{\xi-\eta}_{y^{k,\xi-\eta}_{i,j}}(I^{k,\xi-\eta}_{i,j})
  \\=\sum_{(i,j)\in \mathcal{J}^{k}_2}\!\!\big(u(x^k_{i+1,j+1})-u(x^k_{i,j})\big)\cdot(\xi+\eta)+\sum_{(i,j)\in \mathcal{J}^{k}_2}\big(u(x^{k}_{i+1,j})-u(x^k_{i,j+1})\big)\cdot(\xi-\eta)\end{gathered}
  \end{equation*}
  \begin{gather}\label{uno dei conti finali}=
\sum_{(i,j)\in \mathcal{J}^{k}_2}(u(x^k_{i+1,j+1})-u(x^k_{i,j+1}))\cdot \xi+\sum_{(i,j)\in \mathcal{J}^{k}_2} (u(x^k_{i+1,j})-u(x^k_{i,j}))\cdot \xi
  \\\nonumber 
  \,\,\,\,+\sum_{(i,j)\in \mathcal{J}^{k}_2}(u(x^k_{i+1,j+1})-u(x^{k}_{i+1,j}))\cdot\eta +\sum_{(i,j)\in \mathcal{J}^{k}_2}(u(x^k_{i,j+1})-u(x^{k}_{i,j}))\cdot\eta
  \\\nonumber =\sum_{(i,j)\in \mathcal{J}^{k}_2} \!\!\!\!\!\ Du^{\xi}_{y^{k,\xi}_{i,j}}(I^{k,\xi}_{i,j})+\!\!\!\!\!\!\sum_{(i,j)\in {\mathcal{J}}^{k}_2+e_2} \!\!\!\!\!\!\!Du^{\xi}_{y^{k,\xi}_{i,j}}(I^{k,\xi}_{i,j})+\!\!\!\!\!\!\!\!\sum_{(i,j)\in {\mathcal{J}}^{k}_2+e_1}\!\!\!\!\!\!\!\! Du^{\eta}_{y^{k,\eta}_{i,j}}(I^{k,\eta}_{i,j})+\!\!\!\sum_{(i,j)\in \mathcal{J}^{k}_2}\!\!\!\! Du^{\eta}_{y^{k,\eta}_{i,j}}(I^{k,\eta}_{i,j}).
\end{gather}
Thanks to \eqref{riemann finto 2} we obtain
    \begin{gather*}
    \sigma_{u}^{\xi+\eta}(U)+\sigma_u^{\xi-\eta}(U)=2\sigma^{\xi}_u(U)+2\sigma^{\eta}_u(U),
\end{gather*}
which implies that $\xi\mapsto \sigma^{\xi}_u(U)$ is quadratic.

Unfortunately, the hypothesis that for every $k\in\N$ and $(i,j)\in\mathcal{J}^k_1$   the every one of the six  segments  $[x_{i,j}^k,x_{i+1,j}^k]$, $[x_{i,j}^k,x_{i,j+1}^k]$, $[x_{i,j}^k,x_{i+1,j+1}^k]$,  $[x^k_{i+1,j},x^{k}_{i+1,j+1}]$,  $[x^k_{i,j+1},x^{k}_{i+1,j+1}]$,  and   $[x_{i+1,j}^k,x_{i,j+1}^k]$ do not intersect the set $ J$ is almost never satisfied. Therefore, for every $k\in\N$ we introduce the set $\mathcal{G}^k\subset \Z^2$ of good indices, defined as
\begin{equation}\label{def good}
\begin{gathered}\hspace{-0.2 cm}\mathcal{G}^k:=\big\{(i,j){\in\mathcal{J}^k_2}: \text{ none of the segments } [x_{i,j}^k,x_{i+1,j}^k],\, [x_{i,j}^k,x_{i,j+1}^k]\,,\,[x_{i,j}^k,x_{i+1,j+1}^k],\\ [x^k_{i+1,j},x^{k}_{i+1,j+1}],\, [x^k_{i,j+1},x^{k}_{i+1,j+1}],\,  [x_{i+1,j}^k,x_{i,j+1}^k] \text{ intersects }   J\big\},
\end{gathered}
\end{equation}
Note that by \eqref{5.17bis} we have\begin{equation}
\begin{gathered}\label{star 1}I^{k,\zeta}_{i,j}\cap  J^{\zeta}_{y^{k,\zeta}_{i,j}}=\emptyset\quad \text{ for every }(i,j)\in\mathcal{G}^k,\\I^{k,\xi}_{i,j}\cap  J^{\xi}_{y^{k,\xi}_{i,j}}=\emptyset\quad \text{ for every }(i,j)\in{\mathcal{G}}^k+e_2,\\
I^{k,\eta}_{i,j}\cap  J^{\eta}_{y^{k,\eta}_{i,j}}=\emptyset\quad \text{ for every }(i,j)\in{\mathcal{G}}^k+e_1,
\end{gathered}
\end{equation}
where 
\begin{equation*}
     {\mathcal{G}}^k+e_1:=\{(i+1,j)\in\Z^2:(i,j)\in \mathcal{G}^k\}\quad \text{ and }\quad {\mathcal{G}}^k+e_2:=\{(i,j+1):(i,j)\in \mathcal{G}^k\}.
\end{equation*}
To prove the result in the general case, in the next section (see Theorem \ref{vanishing indices}) we shall show that the sequence $(\omega_k)_k\subset U$ and the infinite set $K\subset \N$ can be chosen in such a way that  conditions (a) and (b) of Lemma \ref{fundamental lemma 1} hold and, in addition, for every $\zeta\in\{\xi,\eta,\xi+\eta,\xi-\eta\}$,
\begin{equation}
\begin{gathered} \label{g angelici}
\lim_{\substack{k\to+\infty \\ k\in K}}\frac{C_{\xi,\eta}}{k}\sum_{(i,j)\in\mathcal{G}^k}Du^{\zeta}_{y_{i,j}^{k,\zeta}}(I^{k,\zeta}_{i,j})=\sigma^{\zeta}_u(U),\\
\lim_{\substack{k\to+\infty \\ k\in K}}\frac{C_{\xi,\eta}}{k}\sum_{(i,j)\in{\mathcal{G}}^k+e_2}Du^{\xi}_{y_{i,j}^{k,\xi}}(I^{k,\xi}_{i,j})=\sigma^{\xi}_u(U),\\
\lim_{\substack{k\to+\infty \\ k\in K}}\frac{C_{\xi,\eta}}{k}\sum_{(i,j)\in{\mathcal{G}}^k+e_1}Du^{\eta}_{y_{i,j}^{k,\eta}}(I^{k,\eta}_{i,j})=\sigma^{\eta}_u(U).
\end{gathered}
\end{equation}

Assuming that these equalities hold, we now conclude the proof in the general case.  Observing that \eqref{FTC} still holds for $(i,j)\in\mathcal{G}^k$, and also for $(i,j)\in \mathcal{G}^k+e_1$ when $\zeta=\eta$ and  for $(i,j)\in \mathcal{G}^k+e_2$ when $\zeta=\xi$, repeating the arguments that led to \eqref{uno dei conti finali} we obtain 
\begin{gather*}
\sum_{(i,j){\in\mathcal{G}^k}}Du^{\xi+\eta}_{y^{k,\xi+\eta}_{i,j}}(I^{k,\xi+\eta}_{i,j})+\sum_{(i,j)\in\mathcal{G}^k}Du^{\xi-\eta}_{y^{k,\xi-\eta}_{i,j}}(I^{k,\xi-\eta}_{i,j})\\=
  \sum_{(i,j)\in\mathcal{G}^k} Du^{\xi}_{y^{k,\xi}_{i,j}}(I^{k,\xi}_{i,j})+\sum_{(i,j)\in{\mathcal{G}}^k+e_2} Du^{\xi}_{y^{k,\xi}_{i,j}}(I^{k,\xi}_{i,j})\\
+\sum_{(i,j)\in{{\mathcal{G}}^k}+e_1} Du^{\eta}_{y^{k,\eta}_{i,j}}(I^{k,\eta}_{i,j})+\sum_{(i,j)\in{\mathcal{G}^k}} Du^{\eta}_{y^{k,\eta}_{i,j}}(I^{k,\eta}_{i,j}).
\end{gather*}
Multiplying the previous equality by $C_{\xi,\eta}/k$ and using \eqref{g angelici}  we obtain \eqref{eq:parallelogram identity}. This concludes the proof.
\end{proof}

\section{Conclusion of the proof in dimension \texorpdfstring{$d=2$}{}}\label{section conclusion}

In this section we prove a technical result, which concludes the proof of Theorem \ref{prop:mainingredient}. Throughout this section $u$, $\xi,\,\eta$, and $U$ are as in Section \ref{section main} and we use the notation introduced in the proof of Theorem \ref{prop:mainingredient}. In particular, we recall that $\mathcal{G}^k$ is defined by \eqref{def good}. Before stating the main result of this section
we introduce the set of  bad indices $\mathcal{B}^k\subset \Z^2$, defined as 
\begin{equation}
    \begin{gathered}\label{def bad}
        \mathcal{B}^k:=\big\{(i,j)\in\mathcal{J}^k_2: \text{ one of the segments } [x_{i,j}^k,x_{i+1,j}^k],\, [x_{i,j}^k,x_{i,j+1}^k]\,,[x_{i,j}^k,x_{i+1,j+1}^k],\\  [x^k_{i+1,j},x^{k}_{i+1,j+1}],\, [x^k_{i,j+1},x^{k}_{i+1,j+1}],\,  [x_{i+1,j}^k,x_{i,j+1}^k] \text{ intersects }   J\big\},
    \end{gathered}
\end{equation}

\begin{theorem}\label{vanishing indices} There exist an infinite set $K\subset \N$ and a sequence $(\omega_k)_{k\in\N}\subset U$ such that  for  every $\zeta\in\{\xi,\eta,\xi+\eta,\xi-\eta\}$ conditions $($a$)$, $($b$)$ of Lemma \ref{fundamental lemma 1} and \eqref{eq:riemannXi1} of Lemma \ref{fundamental lemma 2} hold and the following equalities are satisfied: \begin{gather}\label{da scartare nuova}
\lim_{\substack{k\to+\infty \\ k\in K}}\frac{C_{\xi,\eta}}{k}\sum_{(i,j)\in\mathcal{G}^k}Du^{\zeta}_{y_{i,j}^{k,\zeta}}(I^{k,\zeta}_{i,j})=\sigma^{\zeta}_u(U),\\
\label{da scartare nuova 1}
\lim_{\substack{k\to+\infty \\ k\in K}}\frac{C_{\xi,\eta}}{k}\sum_{(i,j)\in{\mathcal{G}}_2^k+e_2}Du^{\xi}_{y_{i,j}^{k,\xi}}(I^{k,\xi}_{i,j})=\sigma^{\xi}_u(U),\\\label{da scartare nuova 2}
\lim_{\substack{k\to+\infty \\ k\in K}}\frac{C_{\xi,\eta}}{k}\sum_{(i,j)\in{\mathcal{G}}_2^k+e_1}Du^{\eta}_{y_{i,j}^{k,\eta}}(I^{k,\eta}_{i,j})=\sigma^{\eta}_u(U),
\\
  \label{da scartare 1}\lim_{\substack{k\to+\infty \\ k\in K}}\,\,\,\frac{1}{k}\sum_{(i,j)\in \mathcal{B}^{k}}
|Du^{\zeta}_{y^{k,\zeta}_{i,j}}|(I^{k,\zeta}_{i,j}\setminus  J^{\zeta}_{y^{k,\zeta}_{i,j}})=0,\\
\label{da scartare 3}
\lim_{\substack{k\to+\infty \\ k\in K}}\,\,\,\frac{1}{k}\sum_{(i,j)\in {\mathcal{B}}^{k}+e_2}
|Du^{\xi}_{y^{k,\xi}_{i,j}}|( I^{k,\xi}_{i,j}\setminus  J^{\xi}_{y^{k,\xi}_{i,j}})=0,\\\label{da scartare 4}
\lim_{\substack{k\to+\infty \\ k\in K}}\,\,\,\frac{1}{k}\sum_{(i,j)\in {\mathcal{B}}^{k}+e_1}
|Du^{\eta}_{y^{k,\eta}_{i,j}}|(I^{k,\eta}_{i,j}\setminus  J^{\eta}_{y^{k,\eta}_{i,j}})=0,
     \end{gather} 
     where the points $y^{k,\zeta}_{i,j}$ introduced in \eqref{1 parameter family} are defined by taking $\omega=\omega_k$.   
 \end{theorem}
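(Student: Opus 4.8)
The plan is to bootstrap from Lemmas \ref{fundamental lemma 1} and \ref{fundamental lemma 2}: these already supply an infinite set $K\subset\N$ and, for each $\e>0$, a set $U_\e$ with $\Lb^2(U\setminus U_\e)\le c\e$ such that, for every sequence $(\omega_k)$ in $U_\e$, conditions (a), (b) and \eqref{eq:riemannXi1} hold for every admissible family $(\mathcal J^k)$. I would first observe that the three ``good'' identities \eqref{da scartare nuova}--\eqref{da scartare nuova 2} are \emph{formal consequences} of \eqref{eq:riemannXi1} together with the three ``bad'' estimates \eqref{da scartare 1}--\eqref{da scartare 4}. Indeed $\mathcal J^k_2=\mathcal G^k\sqcup\mathcal B^k$ and, similarly, $\mathcal J^k_2+e_l=(\mathcal G^k+e_l)\sqcup(\mathcal B^k+e_l)$ for $l=1,2$; each of $\mathcal J^k_2$, $\mathcal J^k_2+e_1$, $\mathcal J^k_2+e_2$ lies between $\mathcal J^k_3$ and $\mathcal J^k_1$ (as in the proof of Theorem \ref{prop:mainingredient}), so \eqref{eq:riemannXi1} applies to each of them; and on the good indices $I^{k,\zeta}_{i,j}\cap J^\zeta_{y^{k,\zeta}_{i,j}}=\emptyset$ by \eqref{star 1}, so $Du^\zeta_{y^{k,\zeta}_{i,j}}(I^{k,\zeta}_{i,j})=Du^\zeta_{y^{k,\zeta}_{i,j}}(I^{k,\zeta}_{i,j}\setminus J^\zeta_{y^{k,\zeta}_{i,j}})$ there. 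Subtracting, and bounding the bad contribution in total variation by \eqref{da scartare 1}--\eqref{da scartare 4}, gives \eqref{da scartare nuova}--\eqref{da scartare nuova 2}. Hence the whole content of the theorem is the vanishing of the three bad sums, and the rest of the argument (in Section~\ref{section conclusion}) is devoted to them, possibly shrinking $K$ and the $U_\e$'s at each stage, which is harmless.

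For the bad sums I would reorganise each sum along the lines parallel to the direction $\zeta$ in which the variation is measured, exactly as in the proof of Lemma \ref{fundamental lemma 2}: after the substitutions $m=j$, $m=i$, $m=j\mp i$, the base point $y^{k,\zeta}_{i,j}$ depends on a single index, consecutive lines are equispaced with step $C_{\xi,\eta}/(k|\zeta|)$, so $\tfrac1k\sum$ becomes a Riemann sum over $\Pi^\zeta$. On a fixed $\zeta$-line the corrupted intervals $I^{k,\zeta}_{i,j}$ coming from bad cells on that line assemble into a disjoint union $E^k$ of at most $O(\hzero(J^\theta_y))$ intervals of length $1/k$, because each of the six segments defining badness has length $\le c/k$ and, when it meets $J$, forces the whole cell — hence the corresponding $\zeta$-interval — to lie within $c'/k$ of $J$. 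The three structural facts I would use are: $(\mathrm i)$ for $\hd$-a.e.\ $y$ the slice $(J^1_u\cap U)^\zeta_y=J^\zeta_y$ is \emph{finite} (Lemma \ref{h0 slices} and \eqref{eq:bound on J1}); $(\mathrm{ii})$ since we have reduced to $J_u\simeq J^1_u$, the measure $|Du^\zeta_y|\mres(\Omega^\zeta_y\setminus J^\zeta_y)$ carries \emph{no jump part}, so it is atomless and gives zero mass to any finite set; $(\mathrm{iii})$ $g(y):=|Du^\zeta_y|(\Omega^\zeta_y\setminus J^\zeta_y)\in L^1(\Pi^\zeta)$ by \eqref{def:GBD con J1}. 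In the easy case where the bad segment is parallel to $\zeta$, $E^k$ is contained in the $1/k$-neighbourhood of the finite set $J^\zeta_y$, which shrinks to $J^\zeta_y$; by $(\mathrm{ii})$--$(\mathrm{iii})$ and dominated convergence the quantity $|Du^\zeta_y|(E^k\setminus J^\zeta_y)$ tends to $0$ for a.e.\ $y$ and is $\le g(y)$, so Lemma \ref{lemma: Riemann 2} (with a dummy second variable) and Remark \ref{remark sequences} — to handle all four directions $\zeta$ and the shifts $e_1,e_2$ at once — give the vanishing of the corresponding Riemann sum, uniformly for $\omega$ in a set $U_\e$ along a common subsequence $K$.

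The genuine difficulty, which is why Section~\ref{section conclusion} is needed, is the ``cross'' case: the bad segment points in a direction $\theta\ne\zeta$, so the corrupted $\zeta$-intervals are not contained in a neighbourhood of the $\zeta$-slice of $J$ (whose closure could be huge), but only in the $c'/k$-neighbourhood of the $\zeta$-coordinate of some point of $J$ and on a $\zeta$-line within $c'/k$ of the corresponding projection. Here I would invoke the $\sigma$-compactness $J=\bigcup_n K_n$ from \eqref{def Jhat}: fixing $\e'>0$, choose $N$ with $\huno(J\setminus J_N)<\e'$, where $J_N:=K_1\cup\cdots\cup K_N$ is compact. The part coming from cells whose bad segment meets $J_N$ is controlled by a Riemann sum of the functions $f^N_k(y):=|Du^\zeta_y|\big((\mathcal N_{c'/k}(J_N))^\zeta_y\setminus J^\zeta_y\big)$ (measurability of such auxiliary functions being the point of the Appendix); these satisfy $0\le f^N_k\le g$, and $f^N_k(y)\to|Du^\zeta_y|((J_N)^\zeta_y\setminus J^\zeta_y)=0$ for a.e.\ $y$, since $J_N\subset J^1_u\cap U$ forces $(J_N)^\zeta_y\subset J^\zeta_y$ and $(\mathrm i)$--$(\mathrm{ii})$ apply; so Lemma \ref{lemma: Riemann 2} plus Remark \ref{remark sequences} (now over all $N$, all $\zeta$, all shifts) make this part vanish along $K$, uniformly on $U_\e$. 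The remaining part, coming from cells whose bad segment meets $J\setminus J_N$ only, has at most $O(\hzero((J\setminus J_N)^\theta_y))$ corrupted intervals per line, and $\int_{\Pi^\theta}\hzero((J\setminus J_N)^\theta_y)\,{\rm d}\hd(y)\le\huno(J\setminus J_N)<\e'$ by Lemma \ref{h0 slices}; combining this smallness with the atomlessness $(\mathrm{ii})$ and a further application of Lemma \ref{lemma:Riemann sums 1}/\ref{lemma: Riemann 2}, its $\limsup$ (uniformly in $\omega$ on a suitable $U_\e$) is bounded by a fixed multiple of $\e'$.

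Finally I would let $\e'$ run through a sequence tending to $0$, and run a diagonal selection over the corresponding subsequences of $K$ and over the exceptional sets: since at each step the relevant limits hold uniformly on a set $U_\e$ with $\Lb^2(U\setminus U_\e)$ arbitrarily small, the intersection of countably many such sets still has positive measure, so one can pick $\omega_k\equiv\omega^*$ (or, if preferred, a genuinely varying sequence) in that intersection for which (a), (b), \eqref{eq:riemannXi1} and \eqref{da scartare nuova}--\eqref{da scartare 4} all hold. The main obstacle throughout is the cross case described in the previous paragraph: reconciling the combinatorics of ``bad in direction $\theta$'' with a sum organised along direction $\zeta$ while keeping everything a bona fide Riemann sum of integrals that can be made arbitrarily small, and checking the measurability of the resulting family of functions — the latter being postponed to the Appendix.
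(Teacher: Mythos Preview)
Your reduction of \eqref{da scartare nuova}--\eqref{da scartare nuova 2} to \eqref{da scartare 1}--\eqref{da scartare 4} via \eqref{eq:riemannXi1} and \eqref{star 1} matches the paper, and your treatment of the $J_N$ part through the shrinking neighbourhood $(\mathcal N_{c'/k}(J_N))^\zeta_y$ is sound and close in spirit to Lemma~\ref{lemma vanishing integral}. The genuine gap is the residual $J\setminus J_N$ contribution. Knowing $\huno(J\setminus J_N)<\e'$ controls, via Lemma~\ref{h0 slices}, a global \emph{count} of bad cells, but each bad cell contributes $|Du^\zeta_{y}|(I^{k,\zeta}_{i,j}\setminus J^\zeta_y)$, which is only dominated by $g(y)\in L^1(\Pi^\zeta)$, not $L^\infty$; so ``few bad cells in total'' does not yield ``small sum''. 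Your invocation of atomlessness does not close this either: atomlessness makes each short interval's mass vanish as $k\to\infty$ \emph{once the line $y$ and the number of intervals on that line are fixed}, but produces no bound proportional to $\e'$. Moreover, the phrase ``$O(\hzero((J\setminus J_N)^\theta_y))$ corrupted intervals per line'' conflates the direction $\theta$ in which badness is detected with the direction $\zeta$ along which the sum is organised: for a fixed $\zeta$-line the offending $\theta$-segments sit on many distinct $\theta$-lines, so no single $\hzero((J\setminus J_N)^\theta_{y'})$ controls the count on that $\zeta$-line.

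The paper avoids this by decomposing differently: instead of splitting $J$, it splits the $\zeta$-lines by the number $\mathcal N^{k,\zeta}(y)$ of bad cells they carry (see \eqref{Ikzy Nkzy} and \eqref{def hatE checkE}). On lines with $\mathcal N^{k,\zeta}\le m$ the union of bad intervals is a bounded number of intervals of length $1/k$, and Lemma~\ref{lemma:diffusenonatomistic} (uniform atomlessness over $m$-tuples) gives pointwise vanishing dominated by $g$ --- this is Lemma~\ref{lemma vanishing integral}. On lines with $\mathcal N^{k,\zeta}>m$, a counting argument over the six segment types bounds $\int_{\Pi^\zeta}\mathcal N^{k,\zeta}\,{\rm d}\huno\le C$ uniformly in $k$ (Lemma~\ref{lemma smallness of A>m}), so Chebyshev gives $\huno(\{\mathcal N^{k,\zeta}>m\})\le C/m$; then the \emph{absolute continuity of the integral of $g$} makes the contribution on this small set of lines smaller than any prescribed $\e$ once $m$ is chosen large enough (Lemma~\ref{fundamental lemma}, equations \eqref{epsilondelta}--\eqref{quasifine lemma brutto}). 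This last step --- trading a small set of lines for a small integral of $g$ --- is the mechanism missing from your argument, and is what forces the paper's threshold-$m$ structure and the final diagonal argument over $m\to\infty$.
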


The crucial part in the proof of this result is proving \eqref{da scartare 1}-\eqref{da scartare 4}, as \eqref{da scartare nuova}-\eqref{da scartare nuova 2} can then be obtained from \eqref{eq:riemannXi1} by difference, using \eqref{star 1}. We only prove \eqref{da scartare 1}, as the proof of \eqref{da scartare 3} and \eqref{da scartare 4} are similar. This  proof is extremely technical. The arguments we are going to use require  some additional notation.

Given $\zeta\in\{\xi,\eta,\xi-\eta,\xi+\eta\}$, we set  
\begin{equation}\label{eq: def zeta check}
\text{ $\bar{\zeta}:=\xi$ \,\,if \,\,$\zeta\in\{\eta,\xi+\eta\}$ }\quad  \text{ and \quad   $ \bar{\zeta}:=\eta$ \,\,if \,\,$\zeta\in\{\xi,\xi-\eta\}$}.
    \end{equation} 
We observe that $\zeta$ and $\bar{\zeta}$ are linearly independent.
For $y\in\R^2$, $k\in\N$, $j\in\Z$, and $\zeta\in\{\xi,\eta,\xi+\eta,\xi-\eta\}$ let $t^{k,\zeta}_{j}(y)$  be the real number characterised by
\begin{equation}\label{def intersections coord}
    y+t^{k,\zeta}_{j}(y)\zeta \in \{\omega+\tfrac{j}{k}\zeta +s \bar{\zeta}:s\in\R\}.
\end{equation}
 We also set 
\begin{equation}\label{def intersections}
\begin{gathered}
    \displaystyle 
    x^{k,\zeta}_{j}(y):= y+t^{k,\zeta}_{j}(y)\zeta.
\end{gathered}
\end{equation}
In other words, $x^{k,\zeta}_j(y)$ is the intersection of the straight lines $\{y+t\zeta:t\in\R\}$ and $\{\omega+\tfrac{j}{k}\zeta +s \bar{\zeta}:s\in\R\}$.
Note that the family of straight lines  $(\{\omega+\tfrac{j}{k}\zeta +s \bar{\zeta}:s\in\R\})_{j\in\Z}$ coincides with the family of  the straight lines parallel to $ \bar{\zeta}$ passing through one of the points 
$x^k_{i,j}$ for $i,j\in\Z$.

Note that  for every $j\in\Z$ and $t\in\R$ we have 
\begin{equation}\notag\label{tkzetaj+1}
t^{k,\zeta}_{j+1}(y)=t^{k,\zeta}_{j}(y)+\tfrac{1}{k}\quad\hbox{and}\quad t^{k,\zeta}_{j}(y+t\zeta)=t^{k,\zeta}_{j}(y)-t,
\end{equation}
which give
\begin{equation}\label{xkzeta y+tzeta}
x^{k,\zeta}_{j+1}(y)=x^{k,\zeta}_{j}(y)+\tfrac{1}{k}\zeta
\quad\hbox{and}\quad
x^{k,\zeta}_j(y+t\zeta)=x^{k,\zeta}_j(y).
\end{equation}
Moreover,
\begin{equation}\label{xkzeta y and tkzeta}
[x^{k,\zeta}_j(y),x^{k,\zeta}_{j+1}(y))=\{y+t\zeta:t\in [t^{k,\zeta}_j(y),t^{k,\zeta}_{j+1}(y))\}.
\end{equation}
Therefore for every $y\in\R^2$ 
each straight line $\{y+t\zeta:t\in\R\}$ can be written as disjoint union of segments in the following way
\begin{equation}\label{decomposition of a line}
   \{y+t\zeta:t\in\R\}= \bigcup_{j\in\Z}[x^{k,\zeta}_j(y),x^{k,\zeta}_{j+1}(y)).
\end{equation}

We need to introduce some sets  which are useful to establish \eqref{da scartare 1} and whose definition requires some additional notation. 
Let us fix $\zeta\in\{\xi,\eta,\xi+\eta,\xi-\eta\}$ and $k\in\N$. In view of \eqref{decomposition of a line}, for every $x\in\R^2$ there exists a unique $j\in\Z$ such that 
\begin{equation}\label{def zeta}
    x\in [x^{k,\zeta}_{j}(x),x^{k,\zeta}_{j+1}(x)).
\end{equation}
We define the map $z^{k,\zeta}\colon \R^2\to \R^2$ (see Figure \ref{figure parallelogram}) as 
\begin{equation}\label{true def z}
    z^{k,\zeta}(x)=x_j^{k,\zeta}(x),
\end{equation}
where $j\in\Z$ is the unique index such that \eqref{def zeta} holds.
\begin{figure}[h]\hspace*{-0.5cm}
\begin{tikzpicture}[scale=0.7]
\def\parallelogram{ (0,0)--(4,2) -- (1,3.5) -- (-3,1.5)-- cycle}   
\draw[thick] \parallelogram;

\draw[thick, dotted](-3,1.5)--(-5,2.5);
\draw[thick,dotted] (-2,1)--(1,2.5);
\draw[fill=black](1,2.5)  circle (2.0 pt) node [above] {$x$};
\draw[thick, dotted](-0,0)--(2,-1);
\draw[fill=black](-2,1)  circle (2.0 pt) node [below, yshift=-0.1 cm] {$z^{k,\xi}(x)$};
\draw  (-5,2.5) node [below, yshift=0 cm,yshift=-0.3 cm, xshift=-0.7 cm]{$\{x^k_{i,j}+s\eta:s\in\R\}$};
\draw[thick, dotted] (4,2)--(6,1);
\draw  (5,1.5) node [above,xshift=0.4 cm, yshift=0.1 cm,xshift=1.0 cm]{$\{x^k_{i+1,j}+s\eta:s\in\R\}$};
\draw[thick,dotted]{(1,3.5)--(0,4)};
\end{tikzpicture}
\caption{ \label{figure parallelogram} The map $x\mapsto z^{k,\xi}(x)$.}
\end{figure}
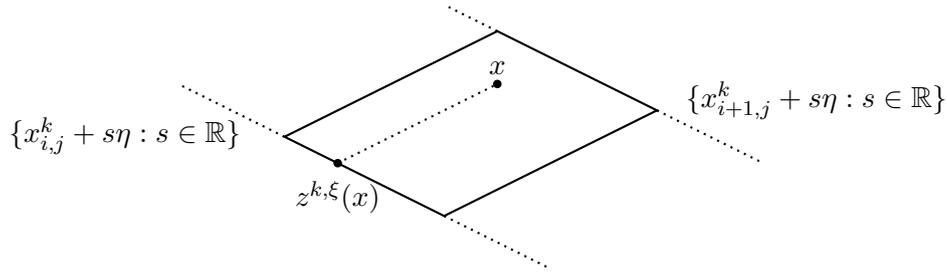

By \eqref{xkzeta y+tzeta} and \eqref{xkzeta y and tkzeta}  for every $y\in\R^2$
\begin{equation}\label{star}
z^{k,\zeta}(y+t\zeta)=x^{k,\zeta}_j(y)\quad\hbox{for every }t\in  [t^{k,\zeta}_j(y),t^{k,\zeta}_{j+1}(y)).
\end{equation}

Geometrically (see Figure \ref{figure parallelogram}), $z^{k,\zeta}(x)$ is given by $x-t\zeta$ where $t\geq 0$ is the smallest number such that  $x-t\zeta$ belongs to 
one  of the straight lines parallel to $\bar\zeta$ passing through one of the points $x^k_{i,j}$.
By this geometric characterisation, $z^{k,\zeta}$ is a Borel function.

We consider the union $S$ of the sides and the diagonals of the parallelogram of vertices at $0$, $\xi,\xi+\eta,\eta$, i.e.,   \begin{equation}\label{union of segments}
    S:=[0,\xi]\cup[0,\eta]\cup[0,\xi+\eta]\cup[\eta,\xi]\cup [\eta,\xi+\eta]\cup[\xi,\xi+\eta].
    \end{equation}
 For $k\in\N$  and $\zeta\in \{\xi,\eta,\xi+\eta,\xi-\eta\}$ we introduce the set
\begin{gather}\label{def Akzeta}
    E^{k,\zeta}:=\{x\in \R^2:(z^{k,\zeta}(x)+\tfrac{1}{k}S)\cap  J\neq\emptyset\}.
\end{gather} 
 For $y\in\R^2$ we define  
 \begin{equation}
 \begin{gathered}\label{Ikzy Nkzy}
\mathcal{I}^{k,\zeta}(y):=\{i\in\Z:(x^{k,\zeta}_i(y)+\tfrac1kS)\cap  J\neq \emptyset\},
\quad
\mathcal{N}^{k,\zeta}(y):=\hzero(\mathcal{I}^{k,\zeta}(y)),
 \\
 E^{k,\zeta}(y):=\bigcup_{i\in \mathcal{I}^{k,\zeta}(y)}[t^{k,\zeta}_i(y),t^{k,\zeta}_{i+1}(y)).
 \end{gathered}
 \end{equation}
 By \eqref{star} we have the equality \begin{equation}\label{eq:definition notation slicing}E^{k,\zeta}(y)=(E^{k,\zeta})^\zeta_y \quad \text{for every $y\in \R^2$}
 \end{equation}  and by \eqref{xkzeta y+tzeta} we have
\begin{equation}\label{Nky+tzeta}
 \mathcal{I}^{k,\zeta}(y+t\zeta)=\mathcal{I}^{k,\zeta}(y)\quad\hbox{and}\quad \mathcal{N}^{k,\zeta}(y+t\zeta)=\mathcal{N}^{k,\zeta}(y)
 \quad\hbox{for every }t\in\R.
\end{equation}
 
  Let $x\in\R^2$ and let $j\in\Z$ be the unique index such that \eqref{def zeta} holds.
  By \eqref{xkzeta y+tzeta} for every $i\in\Z$ we have
  \begin{equation*}
  x^{k,\zeta}_i(x) = x^{k,\zeta}_i(x+\tfrac{i-j}k\zeta) = z^{k,\zeta}(x+\tfrac{i-j}k\zeta),
  \end{equation*}
  where the last equality follows from \eqref{true def z}, since $x+\tfrac{i-j}k\zeta\in [x^{k,\zeta}_i(x+\tfrac{i-j}k\zeta),x^{k,\zeta}_{i+1}(x+\tfrac{i-j}k\zeta))$ by \eqref{xkzeta y+tzeta}.
  Recalling the definition of $E^{k,\zeta}$ in \eqref{def Akzeta}, the equalities above imply that
\begin{equation}\notag\label{new Ikzeta}
  \mathcal{I}^{k,\zeta}(x)=\{i\in\Z:(z^{k,\zeta}(x+\tfrac{i-j}k\zeta)+\tfrac1kS)\cap J\neq \emptyset\}
  =\{i\in\Z:x+\tfrac{i-j}k\zeta\in  E^{k,\zeta}\},
 \end{equation}
 which gives
\begin{equation}\label{new Nkzeta}
\mathcal{N}^{k,\zeta}(x)=\hzero(
 \{i\in\Z:x+\tfrac{i}k\zeta\in E^{k,\zeta}\}).
 \end{equation}
 
 For $\zeta\in \{\xi,\eta,\xi+\eta,\xi-\eta\}$, $k,m\in\N$, and $y\in\R^2$ we set 
\begin{gather}\label{def hatE checkE}
 \hat E^{k,\zeta}_{m}:=\{x\in  E^{k,\zeta}: \mathcal{N}^{k,\zeta}(x)\le m\},\quad  \Check E^{k,\zeta}_{m}:=\{x\in  E^{k,\zeta}: \mathcal{N}^{k,\zeta}(x)> m\},
\\\label{6.23 bis}
 \hat E^{k,\zeta}_{m}(y):= \begin{cases}
        E^{k,\zeta}(y)&\text{if $\mathcal{N}^{k,\zeta}(y)\leq m$},\\
        \emptyset &\text{otherwise},
    \end{cases}\quad
 \Check E^{k,\zeta}_{m}(y):= \begin{cases}
        E^{k,\zeta}(y)&\text{if $\mathcal{N}^{k,\zeta}(y)>m$},\\
        \emptyset &\text{otherwise}.
    \end{cases}
\end{gather}

By \eqref{eq:definition notation slicing} and \eqref{Nky+tzeta}  we have 
\begin{equation}\label{slice of hats}\hat E^{k,\zeta}_{m}(y)=(\hat E^{k,\zeta}_{m})^\zeta_y\,\,\, \text{ and } \,\,\,\Check E^{k,\zeta}_{m}(y)=(\Check E^{k,\zeta}_{m})^\zeta_y.\end{equation}
 
All these sets are  Borel  measurable as the following lemma shows.
\begin{lemma}\label{lemma measurable R2}
    The sets $E^{k,\zeta}$, $\hat E^{k,\zeta}_m$, and $\check E^{k,\zeta}_m$ are Borel measurable. Moreover, the function $\mathcal{N}^{k,\zeta}$ is Borel measurable on $\R^2$.
\end{lemma}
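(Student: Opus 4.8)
The plan is to establish Borel measurability of all the relevant sets by reducing everything to measurability of the map $z^{k,\zeta}$ and of a few auxiliary countable operations. First I would record that $z^{k,\zeta}\colon\R^2\to\R^2$ is a Borel function, as already observed in the text from its geometric characterisation (it sends $x$ to $x-t\zeta$ for the smallest $t\ge 0$ landing on one of the countably many lines $\{\omega+\tfrac jk\zeta+s\bar\zeta:s\in\R\}$); one can make this precise by writing $z^{k,\zeta}$ explicitly in coordinates relative to the basis $\{\zeta,\bar\zeta\}$, where it becomes $x\mapsto x-\{(\text{linear functional of }x)\}\zeta$ with $\{\cdot\}$ the fractional-part map, which is Borel. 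Alternatively one invokes \eqref{true def z} together with \eqref{def zeta}.

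Next I would handle $E^{k,\zeta}$. By \eqref{def Akzeta}, $x\in E^{k,\zeta}$ iff $(z^{k,\zeta}(x)+\tfrac1kS)\cap J\neq\emptyset$. Recall from \eqref{def Jhat} that $J=\bigcup_{n\in\N}K_n$ with each $K_n$ compact, and from \eqref{union of segments} that $S$ is a finite union of closed segments. For a fixed point $p\in\R^2$, the condition $(p+\tfrac1kS)\cap K_n\neq\emptyset$ says that $p$ lies in the set $K_n-\tfrac1kS$, which is compact (continuous image of the compact set $K_n\times(\tfrac1kS)$ under subtraction), hence closed. Therefore the set $\{p:(p+\tfrac1kS)\cap J\neq\emptyset\}=\bigcup_{n\in\N}(K_n-\tfrac1kS)$ is an $F_\sigma$, in particular Borel. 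Composing with the Borel map $z^{k,\zeta}$ gives that $E^{k,\zeta}=(z^{k,\zeta})^{-1}\big(\bigcup_n(K_n-\tfrac1kS)\big)$ is Borel.

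For $\mathcal{N}^{k,\zeta}$ I would use the formula \eqref{new Nkzeta}, namely $\mathcal{N}^{k,\zeta}(x)=\hzero(\{i\in\Z:x+\tfrac ik\zeta\in E^{k,\zeta}\})$. Since $x\mapsto\chi_{E^{k,\zeta}}(x+\tfrac ik\zeta)$ is Borel for each fixed $i\in\Z$ (translation composed with the Borel characteristic function), the function $\mathcal{N}^{k,\zeta}=\sum_{i\in\Z}\chi_{E^{k,\zeta}}(\cdot+\tfrac ik\zeta)$ is a countable sum of Borel functions with values in $[0,+\infty]$, hence Borel measurable. Finally, $\hat E^{k,\zeta}_m=E^{k,\zeta}\cap\{\mathcal{N}^{k,\zeta}\le m\}$ and $\check E^{k,\zeta}_m=E^{k,\zeta}\cap\{\mathcal{N}^{k,\zeta}>m\}$ are intersections of Borel sets, hence Borel, by \eqref{def hatE checkE}.

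The only genuinely delicate point is the measurability of $z^{k,\zeta}$, i.e.\ making the ``geometric'' argument rigorous; everything else is bookkeeping with countable unions, translations, and the compactness of the $K_n$ and of $S$. I expect the cleanest route is the coordinate computation: fix the linear isomorphism $T\colon\R^2\to\R^2$ with $T(\zeta)=e_1$, $T(\bar\zeta)=e_2$; writing $\omega=a\zeta+b\bar\zeta$, the condition \eqref{def intersections coord} characterising $t^{k,\zeta}_j(y)$ becomes an affine equation in the first coordinate, from which $z^{k,\zeta}(x)=x-\big(\langle x,\zeta\rangle/|\zeta|^2-a-\lfloor k(\langle x,\zeta\rangle/|\zeta|^2-a)\rfloor /k\big)\zeta$ up to a harmless reindexing; since the floor function is Borel, so is $z^{k,\zeta}$. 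With this in hand the lemma follows immediately.
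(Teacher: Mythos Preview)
Your argument is correct and follows essentially the same route as the paper: you show $E^{k,\zeta}$ is the $z^{k,\zeta}$-preimage of the $F_\sigma$ set $\bigcup_n(K_n-\tfrac1kS)$, then obtain $\mathcal{N}^{k,\zeta}$ as a countable sum of translated characteristic functions and read off $\hat E^{k,\zeta}_m$, $\check E^{k,\zeta}_m$ as its sub/superlevel sets. The only addition is your explicit coordinate computation for $z^{k,\zeta}$ (the paper simply cites the geometric description); note that the scalar you want is the $\zeta$-coefficient in the oblique basis $\{\zeta,\bar\zeta\}$ rather than $\langle x,\zeta\rangle/|\zeta|^2$, but as you indicate this is a harmless bookkeeping point and the Borel measurability via the floor function is unaffected.
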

\begin{proof}
    For every set $B\subset \R^2$ we define
    \begin{gather*}
        E^{k,\zeta}_{B}:=\{x\in \R^2:( z^{k,\zeta}(x)+\tfrac{1}{k}S)\cap B\neq \emptyset\},\\
         F_B:=\{z\in \R^2: (z+\tfrac{1}{k}S)\cap B\neq \emptyset\}.
    \end{gather*}
    We begin by proving that for a compact set $K\subset \R^2$ the set $E^{k,\zeta}_K$ 
    is Borel measurable. To this aim we note that  the set $F_K$ is closed and that $ E_{K}^{k,\zeta}=\{x\in \R^2:z^{k,\zeta}(x)\in F_K\}$. Recalling that $z^{k,\zeta}$ is Borel measurable, we conclude that  $E^{k,\zeta}_K$ is Borel measurable.
    By \eqref{def Jhat} we have $ J=\bigcup_{n\in\N}K_n$, where $K_n$ are  compact sets. This gives that $E^{k,\zeta}=\bigcup_{n\in \N}E^{k,\zeta}_{K_n}$. Since the sets $E^{k,\zeta}_{K_n}$ are Borel measurable, so is $E^{k,\zeta}$. 

     To prove that $\hat E^{k,\zeta}_m$ Borel measurable, we observe that by \eqref{new Nkzeta}
      and \eqref{def hatE checkE} a point $x$ belongs to $\hat E^{k,\zeta}_m$ if and only if  the number of indices $i\in\Z$ such that $x\in E^{k,\zeta}-\tfrac{i}{k}\zeta$ is less than or equal to $m$. This implies that, setting $E_i^{k,\zeta}:=E^{k,\zeta}-\tfrac{i}{k}\zeta$,  we have
     \begin{equation*}
     \textstyle \hat E^{k,\zeta}_m=\big\{x\in \R^2: \sum_{i}\chi_{E^{k,\zeta}_i}(x)\leq m\big\},
     \end{equation*}
     where $\chi_{E^{k,\zeta}_i}$ is the characteristic function of $E^{k,\zeta}_i$.
     Since the sets $E^{k,\zeta}_i$ are Borel measurable, we deduce that $\hat E^{k,\zeta}_m$ is Borel measurable. The Borel measurability of $\check E^{k,\zeta}_m$ follows from the equality $\check E^{k,\zeta}_m=E^{k,\zeta}\setminus \hat E^{k,\zeta}_m$.

     To prove that the function $\mathcal{N}^{k,\zeta}$ is Borel measurable, it is enough to observe that by \eqref{new Nkzeta}
we have $\mathcal{N}^{k,\zeta}=\sum_{i\in\Z}\chi_{E^{k,\zeta}_i}$.
\end{proof}
 
\begin{remark}\label{re:dependence on omega}
 All the sets defined in \eqref{def Akzeta} and \eqref{def hatE checkE} depend non-trivially on $\omega\in \R^2$, since by \eqref{def xkij} every choice of $\omega$ determines different points $x^k_{i,j}$ and thus, by \eqref{def intersections}, different sets in \eqref{def Akzeta} and \eqref{def hatE checkE} as well. Not to overburden the notation, we do not indicate the dependence of such objects on $\omega$. The measurability issues with respect to $\omega$ will be dealt with in the Appendix.
\end{remark}

To use the sets $\hat E^{k,\zeta}_m$ and $\check E^{k,\zeta}_m$ in our estimates we need the properties proved in the following two lemmas, whose proofs are postponed.
We observe that, since $\xi$ and $\eta$ are linearly independent, given $\zeta\in\{\xi,\eta,\xi+\eta,\xi-\eta\}$, every point $\omega\in\R^2$ can be written in a unique way as \begin{equation}\label{coordinates omega}\omega=z_1 \bar{\zeta}+z_2\zeta, \end{equation}for suitable $z_1,z_2\in\R$. We set 
\begin{equation}\label{def: Izeta}
    I^\zeta:=\begin{cases}
        (0,\alpha)\quad &\text{ if }\zeta=\xi \text{ or }\zeta=\xi-\eta,\\
        (0,\beta)\quad &\text{ if }\zeta=\eta \text{ or }\zeta=\xi+\eta,\\
    \end{cases}
\end{equation}
and we observe that by \eqref{parallelogramU} 
\begin{equation}\label{proiezione di U}
    \pi^{ \bar{\zeta}}(U):=\{z_2\pi^{ \bar{\zeta}}(\zeta): z_2\in I^\zeta\}.
\end{equation}
\begin{lemma}\label{lemma smallness of A>m}
   There exist a constant $C>0$  such that for every $\zeta\in\{\xi,\eta,\xi+\eta,\xi-\eta\}$ and for every $\e>0$ there exist an infinite set $K^\zeta_\e\subset \N$ and  a Borel  set $I_\e^\zeta\subset I^\zeta$, with $\Lb^1(I^\zeta\setminus I^\zeta_\e)\leq \e$, such that
     \begin{equation} \label{claim lemma smallness}
\mathcal{H}^{1}\big(\pi^{\zeta}(\check E^{k,\zeta}_m)\big)\leq C/m
     \end{equation}
     for every $m\in\N$,  $\omega\in U^\zeta_\e:=U\cap \{z_1 \bar{\zeta}+z_2\zeta: z_1\in\R,\, z_2\in I^\zeta_\e\}$, and $k\in K^\zeta_\e$.
 \end{lemma}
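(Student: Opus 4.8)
The plan is to reduce the estimate, through linear coordinates adapted to $\zeta$, to a one-dimensional statement, and then to control it by decomposing the generating set $S$ into its six sides and diagonals. Fix $\zeta\in\{\xi,\eta,\xi+\eta,\xi-\eta\}$, use the coordinates $(p,q)$ associated with the basis $\{\zeta,\bar\zeta\}$ (writing $x=p\zeta+q\bar\zeta$ and $\omega=z_1\bar\zeta+z_2\zeta$ as in \eqref{coordinates omega}, so that $z_2$ is the $p$-coordinate of $\omega$), and identify $J$ with its image in these coordinates. Then the grid $\{x^k_{i,j}\}$ is the lattice $\omega+\tfrac1k\Z^2$, the lines parallel to $\bar\zeta$ through grid points are $\{p=z_2+\tfrac jk\}$, $j\in\Z$, and $z^{k,\zeta}$ sends a point of coordinates $(p,q)$ to $(z_2+\tfrac jk,q)$, where $j=j(p)$ is the largest integer with $z_2+\tfrac jk\le p$. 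A direct inspection of the four admissible $\zeta$ shows that the linear functional $x\mapsto p$ takes only the values $0$ and $1$ at the four vertices $0,\xi,\xi+\eta,\eta$ of the parallelogram whose sides and diagonals constitute $S$; hence $p(S)\subset[0,1]$ and the scaled cell $z^{k,\zeta}(x)+\tfrac1kS$ lies, in the $p$-direction, in the single column $[z_2+\tfrac jk,z_2+\tfrac{j+1}k]$. Setting
\begin{equation*}
B_j:=\{q\in\R:\big((z_2+\tfrac jk,q)+\tfrac1kS\big)\cap J\neq\emptyset\},
\end{equation*}
one has $E^{k,\zeta}\cap\{z_2+\tfrac jk\le p<z_2+\tfrac{j+1}k\}=\{z_2+\tfrac jk\le p<z_2+\tfrac{j+1}k,\ q\in B_j\}$; since translating by $\tfrac ik\zeta$ replaces $p$ by $p+\tfrac ik$ and leaves $q$ unchanged, formula \eqref{new Nkzeta} gives $\mathcal N^{k,\zeta}(x)=\hzero(\{i\in\Z:q\in B_{j(p)+i}\})=N(q)$ with $N(q):=\hzero(\{j\in\Z:q\in B_j\})$, independently of $p$. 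Consequently $\pi^\zeta(\check E^{k,\zeta}_m)=\{q\,\pi^\zeta(\bar\zeta):N(q)>m\}$ and, by Markov's inequality,
\begin{equation*}
\huno(\pi^\zeta(\check E^{k,\zeta}_m))=|\pi^\zeta(\bar\zeta)|\,\Lb^1(\{N>m\})\le\frac{|\pi^\zeta(\bar\zeta)|}{m}\int_\R N\,{\rm d}q=\frac{|\pi^\zeta(\bar\zeta)|}{m}\sum_{j\in\Z}\Lb^1(B_j),
\end{equation*}
so the lemma reduces to bounding $\sum_{j}\Lb^1(B_j)$ by a constant independent of $k$ for a suitable choice of $z_2$.

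Next I would split $B_j$ as the union, over the six sides and diagonals of $S$, of the contributions $B_j^{(\ell)}$. Four of these segments are not parallel to $\bar\zeta$: for such an $\ell$, the corresponding piece of the scaled cell is the graph of an affine function of $p$ over a subinterval of $[z_2+\tfrac jk,z_2+\tfrac{j+1}k]$ with slope bounded in terms of $\xi,\eta$, so $B_j^{(\ell)}$ is the image of $J\cap(\{z_2+\tfrac jk\le p\le z_2+\tfrac{j+1}k\})$ under a Lipschitz map whose constant depends only on $\xi$ and $\eta$. Since each point of $J$ lies in at most two of these closed columns, summing over $j$ yields $\sum_{j}\Lb^1(B_j^{(\ell)})\le C_1\huno(J^1_u)<+\infty$ by Remark \ref{re:RemarkAlmiTasso}, and this bound holds for every $\omega$ and every $k$.

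The main difficulty is the remaining two segments, namely the two sides of $S$ parallel to $\bar\zeta$: for these the cell pieces are segments of length $\tfrac1k$ sitting on the grid lines $\{p=z_2+\tfrac jk\}$ and $\{p=z_2+\tfrac{j+1}k\}$, so each such $B_j^{(\ell)}$ is contained in the $\tfrac1k$-neighbourhood in $\R$ of the $q$-section $\{q\in\R:(z_2+\tfrac jk,q)\in J\}$ (respectively with $j$ replaced by $j+1$). By the coarea inequality for the $1$-Lipschitz map $(p,q)\mapsto p$, the function $f(c):=\hzero(\{q\in\R:(c,q)\in J\})$ is measurable, vanishes outside a bounded interval (since $J$ is bounded), and satisfies $\int_\R f<+\infty$; in particular $f(c)<+\infty$ for $c$ outside a set $Z$ with $\Lb^1(Z)=0$. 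Choosing $z_2$ outside the null set $\bigcup_{j\in\Z,\,k\in\N}(Z-\tfrac jk)$ makes every grid point $z_2+\tfrac jk$ good, whence $\Lb^1(B_j^{(\ell)})\le\tfrac1k f(z_2+\tfrac jk)$ and $\sum_{j}\Lb^1(B_j^{(\ell)})\le\tfrac1k\sum_{j}f(z_2+\tfrac jk)$, a Riemann sum for $\int_\R f$. Applying Lemma \ref{lemma:Riemann sums 1} to $f$, on a bounded closed interval large enough to contain $\overline{I^\zeta}$ and the projection onto the $p$-axis of $J$, produces an infinite set $K^\zeta_\e\subset\N$ and, for every $\e>0$, a Borel set $I^\zeta_\e\subset I^\zeta$ with $\Lb^1(I^\zeta\setminus I^\zeta_\e)\le\e$ (obtained from the set $I_\e$ of that lemma after discarding the null sets above), along which the Riemann sums $\tfrac1k\sum_{j}f(z_2+\tfrac jk)$ converge to $\int_\R f$ uniformly in $z_2$ as $k\to+\infty$ in $K^\zeta_\e$, hence are uniformly bounded. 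Adding the six contributions gives $\sum_{j}\Lb^1(B_j)\le C'_\zeta$ for all $\omega\in U^\zeta_\e$ and $k\in K^\zeta_\e$, and the lemma follows with $C:=\max_{\zeta}|\pi^\zeta(\bar\zeta)|\,C'_\zeta$. The measurability of $B_j$, of $N$, and of the auxiliary sets is provided by Lemma \ref{lemma measurable R2} and the discussion referred to in Remark \ref{re:dependence on omega}.
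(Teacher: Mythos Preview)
Your proof is correct and follows essentially the same strategy as the paper's: both reduce the estimate via \v{C}eby\v{s}\"{e}v's inequality to a uniform bound on $\int_{\Pi^\zeta}\mathcal{N}^{k,\zeta}\,d\mathcal{H}^1$ (equivalently, on $\sum_j\Lb^1(B_j)$), then split $S$ into the two sides parallel to $\bar\zeta$ and the four transversal segments; the parallel ones are controlled by Riemann sums of the slice-counting function $f(c)=\hzero(J\cap\{p=c\})$ via Lemma~\ref{lemma:Riemann sums 1}, and the transversal ones by a bound in terms of $\mathcal{H}^1(J)$.

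The only genuine difference is in the transversal step. The paper counts intersections along the direction $\tilde\zeta$ of each $S_h$, confines them to strips $L_j$ of width $2/k$ in the $\bar\zeta$-direction, and applies Lemma~\ref{h0 slices} after an affine change of variables on $\Pi^\zeta$. Your argument is slightly more direct: having observed that in $(p,q)$-coordinates every transversal segment has $p$-endpoints exactly $0$ and $1$, you read $q\in B_j^{(\ell)}$ as $q=\Phi_j(x)$ for some $x\in J$ in the closed column $[z_2+\tfrac jk,z_2+\tfrac{j+1}k]$, with $\Phi_j$ affine and of Lipschitz constant depending only on $\xi,\eta$, and then sum $\mathcal{H}^1(J\cap\text{column}_j)$ with overlap at most two. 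Both yield a $k$-independent bound of the form $C\,\mathcal{H}^1(J)$ valid for every $\omega$ and every $k$. Two tiny remarks: the map $(p,q)\mapsto p$ is Lipschitz rather than $1$-Lipschitz in Euclidean coordinates, which is all you need; and you should take $I^\zeta_\e$ inside $I^\zeta$ (not the larger interval) after intersecting with the complement of the null set $\bigcup_{k,j}(Z-\tfrac jk)$, as you indicate.
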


 \begin{lemma}\label{lemma vanishing integral}
    Let $m\in\N$ and $\zeta\in\{\xi,\eta,\xi+\eta,\xi-\eta\}$. Then we have 
\begin{gather}\label{trascu 2}
\lim_{k\to+\infty}|Du^\zeta_y|(\hat E^{k,\zeta}_m(y)\cap U^{\zeta}_y\setminus  J^{\zeta}_y)=0\quad \text{ for }\huno \text{-a.e.\ }y\in \Pi^\zeta,
\\\label{trascurabile 1}
\lim_{k\to+\infty}\int_{\Pi^\zeta}|Du^\zeta_y|(\hat E^{k,\zeta}_m(y)\cap U^\zeta_y\setminus  J^{\zeta}_y)\, {\rm d}\mathcal{H}^{1}(y)=0.
\end{gather}
\end{lemma}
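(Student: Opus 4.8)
The plan is to deduce \eqref{trascurabile 1} from \eqref{trascu 2} by dominated convergence, and to prove \eqref{trascu 2} by a subsequence argument resting on two facts: for each $y$ and $k$ the slice $\hat E^{k,\zeta}_m(y)$ is either empty or a union of at most $m$ intervals of length $1/k$, and the measure $|Du^\zeta_y|\mres(U^\zeta_y\setminus J^\zeta_y)$ is non-atomic.

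I would first record the $\huno$-measurability of $y\mapsto|Du^\zeta_y|(\hat E^{k,\zeta}_m(y)\cap U^\zeta_y\setminus J^\zeta_y)$: by Lemma \ref{lemma measurable R2} the set $\hat E^{k,\zeta}_m$ is Borel, $J$ is Borel by \eqref{def Jhat}, and $U$ is open, so $B_k:=(\hat E^{k,\zeta}_m\cap U)\setminus J$ is a fixed Borel subset of $\Rd$, which by \eqref{slice of hats} satisfies $(B_k)^\zeta_y=\hat E^{k,\zeta}_m(y)\cap U^\zeta_y\setminus J^\zeta_y$, and the measurability of $y\mapsto|Du^\zeta_y|((B_k)^\zeta_y)$ is part of the $GBD$ slicing theory (see \cite[Section 3]{DalMasoJems}, \cite[Theorem 3.107]{AmbrosioFuscoPallara}). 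For the reduction itself I would dominate the integrand by $G(y):=|Du^\zeta_y|(U^\zeta_y\setminus J^\zeta_y)$: for $\huno$-a.e.\ $y$ one has $J^{|\zeta|}_{u^\zeta_y}\subset(J^1_u)^\zeta_y=J^\zeta_y$ by \eqref{Jxihatu} and the identity $(J^1_u\cap U)^\zeta_y=J^\zeta_y$ recorded after \eqref{def Jhat}, whence $U^\zeta_y\setminus J^\zeta_y\subset U^\zeta_y\setminus J^{|\zeta|}_{u^\zeta_y}$ and $G(y)\le|Du^\zeta_y|(U^\zeta_y\setminus J^{|\zeta|}_{u^\zeta_y})$, which is integrable over $\Pi^\zeta$ by \eqref{eq:defmuxi} and \eqref{def lambda u}; then \eqref{trascurabile 1} follows from \eqref{trascu 2} and the Dominated Convergence Theorem.

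To prove \eqref{trascu 2}, fix $y\in\Pi^\zeta$ in the full-measure set where $u^\zeta_y\in BV_{\rm loc}(\Omega^\zeta_y)$, $|Du^\zeta_y|(\Omega^\zeta_y)<+\infty$, and $J_{u^\zeta_y}\subset J^\zeta_y$, and put $\mu_y:=|Du^\zeta_y|\mres(U^\zeta_y\setminus J^\zeta_y)$. This is a finite measure, and it is non-atomic, because every atom of $|Du^\zeta_y|$ is a jump point of $u^\zeta_y$, hence belongs to $J_{u^\zeta_y}\subset J^\zeta_y$ and is removed. Set $A_k:=\hat E^{k,\zeta}_m(y)\cap U^\zeta_y$. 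By \eqref{Ikzy Nkzy}, \eqref{6.23 bis}, and the fact that the intervals $[t^{k,\zeta}_i(y),t^{k,\zeta}_{i+1}(y))$ have length $1/k$, the set $A_k$ is empty when $\mathcal{N}^{k,\zeta}(y)>m$, and otherwise is contained in a union of at most $m$ half-open intervals of length $1/k$ whose left endpoints lie among $\{t^{k,\zeta}_i(y):i\in\mathcal{I}^{k,\zeta}(y)\}$; discarding those disjoint from $U^\zeta_y$ we may write $A_k\subset\bigcup_{l=1}^m[c^k_l,c^k_l+\tfrac1k)$ with every $c^k_l$ in a bounded set independent of $k$ (padding with arbitrary values if $\mathcal{N}^{k,\zeta}(y)<m$). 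Hence $|Du^\zeta_y|(A_k\setminus J^\zeta_y)=\mu_y(A_k)\le\sum_{l=1}^m\mu_y([c^k_l,c^k_l+\tfrac1k))$.

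Finally I would apply the subsequence principle: from any subsequence of the integers extract a further subsequence $(k_j)$ along which $c^{k_j}_l\to c_l$ for every $l=1,\dots,m$; since $[c^{k_j}_l,c^{k_j}_l+\tfrac1{k_j})\subset[c_l-\varrho,c_l+\varrho]$ for $j$ large, one gets $\limsup_j\mu_y([c^{k_j}_l,c^{k_j}_l+\tfrac1{k_j}))\le\mu_y([c_l-\varrho,c_l+\varrho])$ for every $\varrho>0$, and this tends to $\mu_y(\{c_l\})=0$ as $\varrho\to0$ because $\mu_y$ is finite and non-atomic; therefore $\mu_y(A_{k_j})\to0$, and so $\mu_y(A_k)\to0$, which is \eqref{trascu 2}. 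I expect the main (in fact essentially the only) delicate points to be the measurability statements — reduced to Lemma \ref{lemma measurable R2} and the $GBD$ slicing theory, with the dependence on the translation $\omega$ postponed to the Appendix — and the verification that $\mu_y$ is non-atomic, which is exactly why $J^\zeta_y$ must be removed; no quantitative control of how $\hat E^{k,\zeta}_m$ shrinks toward $J$ is needed for this lemma.
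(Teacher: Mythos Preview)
Your argument is correct and follows essentially the same approach as the paper: non-atomicity of $|Du^\zeta_y|\mres(U^\zeta_y\setminus J^\zeta_y)$ combined with the fact that $\hat E^{k,\zeta}_m(y)$ is a union of at most $m$ intervals of length $1/k$, followed by dominated convergence for \eqref{trascurabile 1}. The only cosmetic difference is that the paper packages your subsequence/compactness step as a separate lemma (Lemma~\ref{lemma:diffusenonatomistic}) stating that a finite non-atomic measure on a compact interval assigns uniformly small mass to unions of $m$ intervals of length $<\delta$, and then applies it; your inline argument is precisely the contradiction proof of that lemma.
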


To prove Lemma \ref{lemma smallness of A>m} we need the following elementary result.
\begin{lemma}\label{lemma prob}
Let $F\subset\R $ be a finite set and let $a>0$ and $b<c$. Then 
\begin{equation*}
    \int_{\R}\mathcal{H}^0([at+b,at+c]\cap F)\,{\rm d}t= \frac{c-b}{a}\hzero(F).
\end{equation*}
\end{lemma}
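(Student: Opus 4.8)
The plan is to turn the integral into a finite sum of lengths of intervals, i.e.\ to read the identity as an instance of Fubini's theorem for the product of $\Lb^1$ with counting measure on $F$. First I would enumerate the finite set as $F=\{f_1,\dots,f_N\}$ with $N=\hzero(F)$ and observe that, since $F$ is finite,
\[
\hzero([at+b,at+c]\cap F)=\sum_{n=1}^{N}\chi_{[at+b,at+c]}(f_n)\qquad\text{for every }t\in\R.
\]
In particular the integrand is a finite sum of characteristic functions, hence $\Lb^1$-measurable and nonnegative, so no measurability or convergence issue arises.

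The key observation is that each summand, viewed as a function of $t$, is again the characteristic function of an interval: for fixed $n$, using $a>0$, the condition $at+b\le f_n\le at+c$ is equivalent to $\tfrac{f_n-c}{a}\le t\le\tfrac{f_n-b}{a}$, so $\chi_{[at+b,at+c]}(f_n)=\chi_{[(f_n-c)/a,\,(f_n-b)/a]}(t)$, and this interval has length exactly $\tfrac{(f_n-b)-(f_n-c)}{a}=\tfrac{c-b}{a}$, independently of $n$.

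Integrating term by term — legitimate for a finite sum — I would then obtain
\[
\int_{\R}\hzero([at+b,at+c]\cap F)\,{\rm d}t=\sum_{n=1}^{N}\Lb^1\Big(\Big[\tfrac{f_n-c}{a},\tfrac{f_n-b}{a}\Big]\Big)=N\cdot\frac{c-b}{a}=\frac{c-b}{a}\,\hzero(F),
\]
which is the assertion. There is essentially no obstacle here; the only point requiring a moment's care is the orientation of the interval when dividing the defining inequalities by $a$, which is harmless since $a>0$ (for a general $a\neq0$ one would replace $a$ by $|a|$ and the length, hence the conclusion, would be unchanged).
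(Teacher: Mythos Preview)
Your proof is correct and follows essentially the same approach as the paper: both rewrite the counting-measure integrand via characteristic functions and swap the order of summation/integration (the paper phrases this as Fubini with respect to $\hzero$ on $F$, you enumerate $F$ and sum finitely), then compute each resulting interval length as $(c-b)/a$.
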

\begin{proof}
    By the Fubini Theorem we have that 
    \begin{align*}
       \int_{\R}&\mathcal{H}^0([at+b,at+c]\cap F)\,{\rm d}t=  \int_{\R}\Big(\int_F\chi_{[at+b,at+c]}(s)\,{\rm d}\hzero(s)\Big){\rm d}t\\&=
       \int_{F}\Big(\int_\R\chi_{[\frac{s-c}{a},\frac{s-b}{a}]}(t)\,{\rm d}t\Big){\rm d}\hzero(s)= \frac{c-b}{a}\hzero(F).
    \end{align*}
    This concludes the proof.
\end{proof}

{\it Proof of Lemma \ref{lemma smallness of A>m}.} We observe that $\pi^{\zeta}(\check E^{k,\zeta}_m)=\{y\in \Pi^\zeta: \mathcal{N}^{k,\zeta}(y)>m\}$ by \eqref{Nky+tzeta} and \eqref{def hatE checkE}. Since $\mathcal{N}^{k,\zeta}$ is Borel measurable by Lemma \ref{lemma measurable R2}, $\pi^{\zeta}(\check E^{k,\zeta}_m)$  is a Borel set. 
By \v{C}eby\v{s}\"{e}v's inequality  we obtain 
\begin{equation} \label{claim a-m}
\mathcal{H}^{1}\big(\pi^{\zeta}(\check E^{k,\zeta}_m)\big)=\mathcal{H}^{1}\big(\{y\in \Pi^\zeta: \mathcal{N}^{k,\zeta}(y)>m\}\big)
  \leq  \frac{1}{m}\int_{\Pi^\zeta}\mathcal{N}^{k,\zeta}(y)\,{\rm d}\mathcal{H}^{1}(y)
\end{equation}
for every $\zeta\in\{\xi,\eta,\xi+\eta,\xi-\eta\}$, $k,m\in\N$, $\omega\in U$.
To conclude the proof it is enough to show that there exist a constant $C>0$ and, for every $\e>0$ and $\zeta\in\{\xi,\eta,\xi+\eta,\xi-\eta\}$, an infinite set $K^\zeta_\e\subset \N$ and a Borel set $I^\zeta_\e\subset I^\zeta$,  with $\Lb^1(I^\zeta\setminus I^\zeta_\e)\leq \e$, such that 
\begin{equation}\label{claim chebishev}
    \int_{\Pi^\zeta}\mathcal{N}^{k,\zeta}(y)\,{\rm d}\mathcal{H}^1(y)\leq C
\end{equation}
for every $\omega\in U^\zeta_\e:=U\cap \{z_1 \bar{\zeta}+z_2\zeta: z_1\in\R,\, z_2\in I^\zeta_\e\}$ and $k\in K^\zeta_\e$.

We observe that by \eqref{union of segments}  we can write $ S=S_1\cup\dots\cup S_6$,
where   
\begin{equation}\label{def: Szeta}S_1:=[0, \bar{\zeta}],
  \qquad S_2:=\begin{cases}S_1+\xi \quad \text{ if }& \zeta \in\{\xi,\xi-\eta\},\\
        S_1+\eta \quad \text{ if }& \zeta\in\{\eta,\xi+\eta\},
    \end{cases}
\end{equation}
while $S_3,$...$,$ $S_6$ are the other four segments of $S$, which are transversal to $ \bar{\zeta}$. By \eqref{Ikzy Nkzy} this implies that
\begin{equation}\notag
    \mathcal{I}^{k,\zeta}(y)=\bigcup_{h=1}^6\{j\in\Z:(x^{k,\zeta}_j(y)+S_h)\cap  J\neq \emptyset\}
\end{equation}
for every $y\in\Pi^\zeta$, hence
\begin{equation}\notag
    \mathcal{N}^{k,\zeta}(y)\leq \sum_{h=1}^6\hzero(\{j\in\Z:(x^{k,\zeta}_j(y)+S_h)\cap  J\neq \emptyset\}).
\end{equation}
Thus, 
\begin{equation}\label{integral brutto}
\int_{\Pi^\zeta} \mathcal{N}^{k,\zeta}(y)\,{\rm d}\huno(y)\leq\sum_{h=1}^6\int_{\Pi^\zeta}\mathcal{H}^0(\{j\in\Z:(x^{k,\zeta}_{j}(y)+\tfrac{1}{k}S_h)\cap  J\neq\emptyset\})\,{\rm d}\mathcal{H}^{1}(y).
\end{equation}

  Let us fix  $\zeta\in\{\xi,\eta,\xi+\eta,\xi-\eta\}$ and $\e>0$. We claim that there exist a constant $c_1>0$, independent of $\omega,$ $k$, and $\e$, an infinite set $K^\zeta_\e\subset \N$, independent of $\omega$, and  a Borel set $I^\zeta_\e\subset I^\zeta$,  with $\Lb^1(I^\zeta\setminus I^\zeta_\e)\leq \e$, such that 
\begin{equation}\label{claim con m}
\int_{\Pi^\zeta}\mathcal{H}^0(\{j\in\Z:(x^{k,\zeta}_{j}(y)+\tfrac{1}{k}S_h)\cap  J\neq\emptyset\})\,{\rm d}\mathcal{H}^{1}(y) \leq c_1,
 \end{equation}
 for every $\omega\in U^\zeta_\e$, $k\in K^\zeta_\e$, and $h\in\{1,...,6\}$. 
 To prove this claim, we first observe that for every $y\in \Pi^\zeta$  we have
 \begin{equation}\label{equazione h}
   \mathcal{H}^0(\{j\in\Z:(x^{k,\zeta}_{j}(y)+\tfrac{1}{k}S_h)\cap  J\neq\emptyset\})\leq\sum_{j\in\Z}\mathcal{H}^{0}((x^{k,\zeta}_j(y)+\tfrac{1}{k}S_h)\cap J).
 \end{equation}
 
We consider first the case $h=1$. We prove that there exists a constant $c>0$, independent of $\omega$, $k$, $\e$, and  $j$, such that 
\begin{equation}\label{new aux 1}
    \int_{\Pi^\zeta}\mathcal{H}^{0}((x^{k,\zeta}_j(y)+\tfrac{1}{k}S_1)\cap J)\, {\rm d}\huno(y)\leq \frac{c}{k}\mathcal{H}^0( J^{ \bar{\zeta}}_{y_j^{k, \bar{\zeta}}})
\end{equation}
for every $k\in\N$ and $j\in\Z$. If $\mathcal{H}^0( J^{ \bar{\zeta}}_{y_j^{k, \bar{\zeta}}})=+\infty$ there is nothing to prove. Let us fix $k$ and $j$ such that $\mathcal{H}^0( J^{ \bar{\zeta}}_{y_j^{k, \bar{\zeta}}})<+\infty.$

We parametrise $\Pi^\zeta$ by $y=s\pi^{\zeta}( \bar{\zeta})$ with $s\in\R$. 
and observe that \eqref{new aux 1} is equivalent to
\begin{equation}\label{new aux 2}
    \int_{\R}\mathcal{H}^{0}((x^{k,\zeta}_j(s\pi^\zeta( \bar{\zeta}))+\tfrac{1}{k}S_1)\cap J)\, {\rm d}s\leq \frac{c}{k}\mathcal{H}^0( J^{ \bar{\zeta}}_{y_j^{k, \bar{\zeta}}})
\end{equation} 
for a possibly different constant $c$, independent of $\omega,\, k\,$, $\e$, and $j$.
By \eqref{def projections}, \eqref{1 parameter family}, and \eqref{eq: def zeta check}  we have that 
$\pi^{ \bar{\zeta}}(\omega+\frac{j}{k}\zeta)=y^{k, \bar{\zeta}}_j$.
Therefore by the comments after \eqref{def intersections} 
\begin{equation}\label{mumble}\text{ $\{x^{k,\zeta}_{j}(y)\}=\{x^{k,\zeta}_j(s\pi^\zeta( \bar{\zeta}))\}$=$\{s\pi^\zeta( \bar{\zeta})+t\zeta:t\in\R\}$$\cap\{y^{k, \bar{\zeta}}_j+t \bar{\zeta}:t\in\R\}$}.
\end{equation} This implies that  for every $s\in\R$ there exists a unique $\tau^{k, \bar{\zeta}}_{j}(s)\in\R$ such that $y_{j}^{k, \bar{\zeta}}+\tau^{k, \bar{\zeta}}_{j}(s) \bar{\zeta}=x^{k,\zeta}_j(s\pi^\zeta( \bar{\zeta}))$, so that by \eqref{def: Szeta} we have
\begin{equation}\notag \label{new aux 3}
    \mathcal{H}^{0}((x^{k,\zeta}_j(s\pi^\zeta( \bar{\zeta}))+\tfrac{1}{k}S_1)\cap J)=\mathcal{H}^{0}([\tau^{k, \bar{\zeta}}_j(s),\tau^{k, \bar{\zeta}}_j(s)+\tfrac{1}{k}]\cap J^{ \bar{\zeta}}_{y^{k, \bar{\zeta}}_j}).
\end{equation}
 Elementary geometric arguments show  that  there exist a constant $c>0$, independent of $\omega$, $k$, $\e$, and $j$, and  a constant $d\in\R$, depending on $\omega$, $k$, and $j$, such that 
 \begin{equation}\notag
    \tau^{k, \bar{\zeta}}_j(s)=\frac{s}{c}+d \quad \text{ for every $s\in\R$}.
\end{equation}
Since by assumption $\mathcal{H}^0( J^{ \bar{\zeta}}_{y_j^{k, \bar{\zeta}}})<+\infty$, we may apply Lemma \ref{lemma prob} to obtain that 
 \begin{equation}\label{auxxe}
\int_{\R}\mathcal{H}^{0}([\tau^{k, \bar{\zeta}}_j(s),\tau^{k, \bar{\zeta}}_j(s)+\tfrac{1}{k}]\cap J^{ \bar{\zeta}}_{y^{k, \bar{\zeta}}_j})\,{\rm ds}\leq \frac{c}{k}\mathcal{H}^0( J^{ \bar{\zeta}}_{y_j^{k, \bar{\zeta}}}).
 \end{equation}
 As the constant $c$ depends only on $\xi$ and $\eta$, this proves \eqref{new aux 2}, which gives \eqref{new aux 1}. Arguing in a similar way, we prove that 
\begin{equation}\label{new aux}
    \int_{\Pi^\zeta}\mathcal{H}^{0}((x^{k,\zeta}_j(y)+\tfrac{1}{k}S_2)\cap J)\, {\rm d}\huno(y)\leq \frac{c}{k}\mathcal{H}^0( J^{ \bar{\zeta}}_{y_{j\pm 1}^{k, \bar{\zeta}}}),
\end{equation}
where the sign in $\pm 1$ depends on the specific value of $\zeta$, according  to \eqref{def: Szeta}.

By  \eqref{def projections}, \eqref{1 parameter family}, \eqref{eq: def zeta check},   and \eqref{coordinates omega}  we have the equality  $y^{k, \bar{\zeta}}_j=(z_2+\frac{j}{k})\pi^{ \bar{\zeta}}(\zeta)$ for every $j\in\Z$ and for a suitable $z_2\in\R$. Let $f\colon\R\to [0,+\infty]$ be defined by 
\begin{equation*}
    f(s):=\mathcal{H}^{0}( J^{\bar{\zeta}}_{s\pi^{ \bar{\zeta}}(\zeta)})\quad \text{for every $s\in\R$}.
\end{equation*}
Observe that, since $ J\subset U$,  by \eqref{proiezione di U} the function $f$ vanishes outside of $I^\zeta$. Since by \eqref{J1 rectifiable} and \eqref{def Jhat} we have  $\huno(J)=\huno(J^1_u\cap U)$  and $\huno(J^1_u\cap U)<+\infty$ by Remark \ref{re:RemarkAlmiTasso}, the function  $f$ is integrable by Lemma \ref{h0 slices}. Thus, we may apply Lemma \ref{lemma:Riemann sums 1} and we obtain an infinite set $K^\zeta_\e\subset \N$ and a Borel set $I^\zeta_\e\subset I^\zeta$, with $\Lb^1(I^\zeta\setminus {I}^\zeta_\e)\leq \e$, such that
\begin{equation*}
   \lim_{\substack{k\to+\infty \\ k\in K^\zeta_\e}}\frac{1}{k}\sum_{j\in\Z}\mathcal{H}^0( J^{ \bar{\zeta}}_{y_j^{k, \bar{\zeta}}})=\int_{{I}^\zeta}\mathcal{H}^0( J^{ \bar{\zeta}}_{s\pi^{ \bar{\zeta}}(\zeta)})\, {\rm d}s=\frac{1}{|\pi^{\bar\zeta}(\zeta)|}\int_{\Pi^{\bar{\zeta}}}\mathcal{H}^0( J^{ \bar{\zeta}}_{y})\, {\rm d}\huno(y)\leq \frac{1}{|\pi^{\bar\zeta}(\zeta)|}\mathcal{H}^1( J)
\end{equation*}
uniformly for $z_2\in {I}^\zeta_\e$. Hence, up to removing a finite number of elements from $K_\e^\zeta$, we may assume that
\begin{equation}\label{eq: qualcosa}
\frac{1}{k}\sum_{j\in\Z}\mathcal{H}^0( J^{ \bar{\zeta}}_{y_j^{k, \bar{\zeta}}})\leq \frac{1}{|\pi^{\bar\zeta}(\zeta)|}\huno( J)+1
\end{equation}
for every $k\in K^\zeta_\e$ and $z_2\in{I^\zeta_\e}$. Together with \eqref{equazione h}, \eqref{new aux 1}, and \eqref{new aux}, this implies \eqref{claim con m} for $h=1$ and $h=2$.

Let us now fix  $h\in\{3,\dots,6\}$. 
 For every $j\in\Z$, let $L_j$ be the strip defined by $L_j:=\{x\in \R^2:\pi^{ \bar{\zeta}}(x)\in [y^{k, \bar{\zeta}}_{j-1}, y^{k, \bar{\zeta}}_{j+1}]\}$, that is to say, the region of the plane between the straight lines $\{y^{k, \bar{\zeta}}_{j-1}+s \bar{\zeta}:s\in \R\}$ and $\{y^{k, \bar{\zeta}}_{j+1}+s \bar{\zeta}:s\in \R\}$. 
 Since $\pi^{\bar\zeta}(S_h)$ is equal to $[0,\pi^\xi(\eta)]$ or $[\pi^\xi(\eta),0]$ if $\bar\zeta=\xi$, while  $\pi^{\bar\zeta}(S_h)=[0,\pi^\eta({\xi})]$ if $\bar\zeta=\eta$, from \eqref{def projections} and  \eqref{1 parameter family} we see that in every case we have the inclusion \begin{equation*}
 \bigcup_{s\in\R}\{y^{k,\bar\zeta}_j+s\bar\zeta+\tfrac{1}{k}S_h\}\subset L_{j}\quad \text{ for every $j\in\Z$}.  
 \end{equation*} 

 Let $\tilde{\zeta}\in\R^2\setminus\{0\}$ a vector in  the direction of the segment $S_h$. Recalling that $x_{j}^{k,\zeta}(y)\in \{y_j^{k,\bar{\zeta}}+s{\bar\zeta}:s\in\R\}$ for every $y\in\Pi^\zeta$ by \eqref{mumble}, we deduce from the previous inclusion that  $(x^{k,\zeta}_j(y)+\tfrac{1}{k}S_h)\cap J\subset J\cap L_j$, hence
 \begin{equation}\label{stima fini}
  \mathcal{H}^{0}((x^{k,\zeta}_j(y)+\tfrac{1}{k}S_h)\cap J)\leq\hzero((J\cap L_{j})_{\pi^{\widetilde{\zeta}}(x_{j}^{k,{\zeta}}(y))}^{\widetilde{\zeta}})
 \end{equation}
 for every $j\in\Z$. 
 As $\zeta$ and $\bar\zeta$ are linearly independent and the same holds for $\tilde\zeta$ and $\bar\zeta$, the map  $y\mapsto \pi^{\tilde{\zeta}}(x_{j}^{k,{\zeta}}(y))$ from $\Pi^\zeta$ into $\Pi^{\tilde\zeta}$ is affine and invertible. Moreover, its linear part is independent of $\omega$, $k$, and $j$, since it depends only on  $\zeta$, $\bar{\zeta}$, and $\tilde{\zeta}$. By Lemma \ref{h0 slices} we then have
 \begin{equation*}
     \int_{\Pi^{\widetilde\zeta}}\hzero((J\cap L_{j})^{\widetilde\zeta}_{z})\,{\rm d}\huno(z)\leq \huno(J\cap L_j).
 \end{equation*}

 Using the map  $y\mapsto \pi^{\widetilde{\zeta}}(x_{j}^{k,{\zeta}}(y))$ as change of variables,  from the previous inequality and \eqref{stima fini} it follows that
 \begin{equation}\notag
     \int_{\Pi^\zeta}\mathcal{H}^{0}((x^{k,\zeta}_j(y)+\tfrac{1}{k}S_h)\cap J)\,{\rm d}\huno(y)\leq  \gamma\huno( J\cap L_j)
 \end{equation}
 for  every  $j\in\Z$, $k\in\N$, and $\omega\in U$,  with $\gamma>0$,  a constant depending only on $\zeta$, $\bar\zeta$, and $\tilde\zeta$.
 Observing that every point $x\in\R^2$ belongs at most to three strips of the form $L_j$, from the previous inequality it follows that   
 \begin{equation*}\label{somma fin}
      \sum_{j\in\Z}\int_{\Pi^\zeta}\mathcal{H}^{0}((x^{k,\zeta}_j(y)+\tfrac{1}{k}S_h)\cap J)\,{\rm d}\huno(y)\leq 3\gamma\huno( J).
 \end{equation*}
This inequality, together with \eqref{equazione h}, yields 
\begin{equation*}
    \int_{\Pi^\zeta}\mathcal{H}^0(\{j\in\Z:(x^{k,\zeta}_{j}(y)+\tfrac{1}{k}S_h)\cap  J\neq\emptyset\})\,{\rm d}\mathcal{H}^{1}(y)\leq 3\gamma\huno( J).
\end{equation*}
Since $\huno(J)<+\infty$, this  proves \eqref{claim con m} for $h\in\{3,\dots,6\}$.  Therefore, \eqref{claim con m} holds for every $h\in\{1,\dots, 6\}$. Thanks to  \eqref{integral brutto},  from \eqref{claim con m}  we obtain \eqref{claim chebishev}, which by \eqref{claim a-m} concludes the proof. 
\qed

\vspace{0.3 cm}

Before proving Lemma \ref{lemma vanishing integral}, we state a result about one-dimensional measures, which shows that non-atomic measures  satisfy a suitable uniform absolute continuity property.
\begin{lemma}\label{lemma:diffusenonatomistic}
    Let $I=[a,b]\subset \R$ be a bounded closed interval  and let $\mu\in\mathcal{M}^+_b(I)$ be a measure such that $\mu(\{t\})=0$ for every $t\in I$. Then for every $m\in\N$ and $\e>0$ there exists $\delta(\e,m)>0$ such that for every $\delta\in (0,\delta(\e,m))$ we have 
    \begin{equation*}
\mu\big(\bigcup_{\ell=1}^m(t_\ell-\delta,t_\ell+\delta)\cap I\big)\leq \e \quad \text{for every }(t_1,...,t_m)\in I^m.
    \end{equation*}
\end{lemma}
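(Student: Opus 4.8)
The plan is to reduce the statement to a standard fact about non-atomic finite measures: the function $\delta\mapsto\sup_{t\in I}\mu((t-\delta,t+\delta)\cap I)$ tends to $0$ as $\delta\to0^+$, and then to absorb the factor $m$ by a straightforward bound. So first I would prove the auxiliary claim
\begin{equation*}
\lim_{\delta\to0^+}\ \sup_{t\in I}\mu\big((t-\delta,t+\delta)\cap I\big)=0.
\end{equation*}
Suppose not; then there exist $\e_0>0$, a sequence $\delta_n\downarrow0$, and points $t_n\in I$ with $\mu((t_n-\delta_n,t_n+\delta_n)\cap I)\ge\e_0$. Since $I$ is compact, pass to a subsequence with $t_n\to t_*\in I$. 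For any fixed $\rho>0$, once $n$ is large we have $(t_n-\delta_n,t_n+\delta_n)\subset(t_*-\rho,t_*+\rho)$, hence $\mu((t_*-\rho,t_*+\rho)\cap I)\ge\e_0$ for every $\rho>0$. Letting $\rho\to0^+$ and using continuity of $\mu$ from above on the nested sets $(t_*-\rho,t_*+\rho)\cap I$ (whose intersection is $\{t_*\}$, with $\mu(I)<+\infty$), we get $\mu(\{t_*\})\ge\e_0>0$, contradicting the non-atomicity hypothesis $\mu(\{t\})=0$ for every $t\in I$. This proves the auxiliary claim.

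Next I would use subadditivity to pass from one interval to $m$ intervals. Fix $m\in\N$ and $\e>0$. By the auxiliary claim, choose $\delta(\e,m)>0$ such that $\sup_{t\in I}\mu((t-\delta,t+\delta)\cap I)\le\e/m$ for every $\delta\in(0,\delta(\e,m))$. Then for any such $\delta$ and any $(t_1,\dots,t_m)\in I^m$,
\begin{equation*}
\mu\Big(\bigcup_{\ell=1}^m(t_\ell-\delta,t_\ell+\delta)\cap I\Big)\le\sum_{\ell=1}^m\mu\big((t_\ell-\delta,t_\ell+\delta)\cap I\big)\le m\cdot\frac{\e}{m}=\e,
\end{equation*}
which is exactly the assertion.

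There is essentially no serious obstacle here; the only point requiring a little care is the continuity-from-above argument, which uses crucially that $\mu$ is a finite measure (so that $\mu\big(\bigcap_\rho((t_*-\rho,t_*+\rho)\cap I)\big)=\lim_{\rho\to0^+}\mu((t_*-\rho,t_*+\rho)\cap I)$), together with the fact that the nested intersection over rational $\rho\downarrow0$ is exactly $\{t_*\}$. One could alternatively prove the auxiliary claim by a direct $\e$-$\delta$ argument using the regularity of $\mu$: cover $I$ by finitely many intervals of small $\mu$-measure (possible since $\mu$ has no atoms, after splitting $I$ at a finite partition whose points are chosen so each piece has measure $<\e/2$) and then take $\delta$ smaller than the minimal length of these pieces; but the compactness argument above is cleaner and I would present that one.
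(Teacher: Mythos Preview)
Your proof is correct and rests on the same ideas as the paper's (contradiction via compactness, then continuity from above to contradict non-atomicity). The only difference is organizational: you first isolate the single-interval statement $\sup_{t\in I}\mu((t-\delta,t+\delta)\cap I)\to 0$ and then pass to $m$ intervals by subadditivity with an $\e/m$ threshold, whereas the paper runs the compactness argument directly in $I^m$, extracting a convergent subsequence of the $m$-tuples $(t^k_1,\dots,t^k_m)$ and obtaining $\mu(\{t_1,\dots,t_m\})\ge\e$. Your version has the small advantage of making the role of $m$ transparent and avoiding any care about the order in which one fixes $\delta$ and extracts the subsequence; the paper's version avoids introducing an auxiliary lemma. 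Either way the substance is identical.
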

\begin{proof}
        We argue by contradiction. Suppose that there exist $m \in\N$ and $\e>0$ such that for a sequence $\delta_k>0$ converging to $0$ we have
        \begin{equation}\notag
            \mu\big(\bigcup_{\ell=1}^m(t^k_\ell-\delta_k,t^k_\ell+\delta_k)\cap I
            \big)> \e \quad \text{for some }(t^k_1,...,t^k_m)\in I^m.
        \end{equation}
         Given $\delta>0$, this implies that
         \begin{equation}\notag 
            \mu\big(\bigcup_{\ell=1}^m(t^k_\ell-\delta,t^k_\ell+\delta)\cap I
            \big)>\e \quad \text{ for all $k\in\N$ sufficiently large}.
         \end{equation}Since $I$ is compact, there exists a subsequence, not relabelled, and a point $(t_1,...,t_m)\in I^m$ such that $(t_1^k,...,t^k_m)$ converges to $(t_1,...,t_m)$ as $k\to +\infty$. Since $(t^k_\ell-\delta, t^k_\ell+\delta)\subset (t_\ell-2\delta,t_\ell+2\delta)$ for $k$ large
         we deduce that 
         \begin{equation*}
\mu\big(\bigcup_{\ell=1}^m(t_\ell-2\delta,t_\ell+2\delta)\cap I
            \big)>\e. 
         \end{equation*}
         Since $\delta>0$ is arbitrary, we obtain 
         \begin{equation*}
 \mu\big(\bigcup_{\ell=1}^m\{t_\ell\}\big)\geq \e,
         \end{equation*}
         in contradiction with our hypotheses. This concludes the proof.
    \end{proof}

\noindent {\it Proof of Lemma \ref{lemma vanishing integral}.}
 We begin by noting that for $\huno$-a.e.\ $y\in \Pi^\zeta$ we have $u^{\zeta}_y\in BV(U^\zeta_y)$ by Remark \ref{remar: finiteness}
 and,  recalling \eqref{J1 rectifiable} and  \eqref{def Jhat}, by Theorem \ref{fine propr of GBD} we have also $J_{u^\zeta_y}\cap U^\zeta_y\subset  J^\zeta_y=(J^1_u)^\zeta_y$. Let us fix $y\in \Pi^\zeta$ such that these two conditions hold. By standard properties of $BV$-functions in dimension one we have $Du^\xi_y(\{t\})=0$ for every $t\in (U\setminus J)^\zeta_y$. We recall that by definition $\hat{E}^{k,\zeta}_m(y)$ is the union of at most $m$ intervals of length $\tfrac{1}{k}$ (see \eqref{Ikzy Nkzy}). Hence, Lemma \ref{lemma:diffusenonatomistic},  applied to the measure $\mu:=|Du^\xi_y|\mres (U^\zeta_y\setminus J^\zeta_y)$ defined on the closure of the interval $U^\zeta_y$, implies that  \eqref{trascu 2} holds.

We observe that the integral in \eqref{trascurabile 1} is well-defined, since by Lemma \ref{lemma measurable R2} the set $\hat E^{k,\zeta}_m$ is Borel measurable and $\hat{E}^{k,\zeta}_m(y)$ is its slice by \eqref{slice of hats}. Since  $|Du^\zeta_y|(\hat E^{k,\zeta}_m(y)\cap U^\zeta_y\setminus  J^{\zeta}_y)\leq |Du^\zeta_y|(U^\zeta_y\setminus   J^{\zeta}_y)$ for $\huno$-a.e.\ $y\in \Pi^\zeta$ and the function $y\mapsto|Du^\zeta_y|(U^\zeta_y\setminus   J^{\zeta}_y)$  is integrable on $\Pi^\zeta$ with respect to $\huno$ by  \eqref{def:GBD con J1}, the Dominated Convergence Theorem implies \eqref{trascurabile 1}, concluding the proof.\qed
 
\vspace{0.3 cm}

For technical reasons, instead of a single $\omega\in U$ 
we have to consider a sequence $(\omega_k)_k\subset U$; accordingly, the points $y^{k,\zeta}_j$, introduced in \eqref{1 parameter family}, and the sets $\hat E^{k,\zeta}_m(y)$ and $\check E^{k,\zeta}_m(y)$ are defined by taking $\omega=\omega_k$. 
\begin{lemma}\label{fundamental lemma} 
For every $\e>0$ there exist $m_\e\in\N$ and an infinite set $K_\e\subset \N$ with the following property: for every $k\in K_\e$ there exists a Borel set $U^k_\e\subset U$, with $\Lb^2(U\setminus U^k_\e)\leq \e$, such that  for every  $\zeta\in\{\xi,\eta,\xi+\eta,\xi-\eta\}$ and for every sequence $(\omega_k)_{k\in  K_\e}$, with  $\omega_k\in U^k_\e$ for every $k\in K_\e$, the following conditions are simultaneously satisfied:

\begin{gather}
\label{condizione infernale}
\frac{1}{k}\sum_{j\in\Z}|Du^{\zeta}_{\!y^{k,\zeta}_{j}}|(\check{E}^{k,\zeta}_{m_\e}(y_j^{k,\zeta})\cap U^\zeta_{y^{k,\zeta}_j}\setminus  J^{\zeta}_{\!y^{k,\zeta}_{j}})\leq \e\quad \text{ for every $k\in K_\e$},\\
     \label{condizione che sembrava infernale}
  \underset{k\in K_\e}{\lim_{k\to+\infty}} \,
  \frac{1}{k}\sum_{j\in\Z}|Du^{\zeta}_{\!y^{k,\zeta}_{j}}|(\hat{E}^{k,\zeta}_{m_\e}(y_j^{k,\zeta})\cap U^\zeta_{y^{k,\zeta}_j}\setminus  J^{\zeta}_{\!y^{k,\zeta}_{j}})=0.
 \end{gather}

\end{lemma}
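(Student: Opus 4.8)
The plan is to establish the two conclusions \eqref{condizione infernale} and \eqref{condizione che sembrava infernale} separately: I would first fix $m_\e$ so that \eqref{condizione infernale} holds, and then verify \eqref{condizione che sembrava infernale} for that value of $m_\e$. Throughout, set $g^\zeta(y):=|Du^\zeta_y|(U^\zeta_y\setminus J^\zeta_y)$ for $y\in\Pi^\zeta$; by \eqref{def:GBD con J1}, by the standing reduction $J\simeq J^1_u\cap U$, and by the inclusion $J_{u^\zeta_y}\cap U^\zeta_y\subset J^\zeta_y$ valid for $\huno$-a.e.\ $y$ (Theorem \ref{fine propr of GBD}), one has $g^\zeta\in L^1(\Pi^\zeta,\huno)$ and $|Du^\zeta_y|\mres(U^\zeta_y\setminus J^\zeta_y)$ is non-atomic for $\huno$-a.e.\ $y$. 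The key geometric remark is that the points $y^{k,\zeta}_j$ are projections on $\Pi^\zeta$ of grid points, so that the intersection points $x^{k,\zeta}_i(y^{k,\zeta}_j)$ all lie in the grid $\{x^k_{p,q}\}$, and summing over the one-parameter index $j$ together with the transversal index $i$ amounts to summing over all grid points in a fixed bounded region.

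For \eqref{condizione infernale}: since $\check E^{k,\zeta}_m(y)$ is non-empty only when $\mathcal N^{k,\zeta}(y)>m$, I bound the summand by $g^\zeta(y^{k,\zeta}_j)\chi_{\{\mathcal N^{k,\zeta}>m\}}(y^{k,\zeta}_j)$, use \v{C}eby\v{s}\"{e}v's inequality $\chi_{\{\mathcal N^{k,\zeta}>m\}}\le\mathcal N^{k,\zeta}/m$, and split $g^\zeta=(g^\zeta\wedge T)+(g^\zeta-T)^+$, so that $\tfrac1k\sum_j|Du^\zeta_{y^{k,\zeta}_j}|\big(\check E^{k,\zeta}_m(y^{k,\zeta}_j)\cap U^\zeta_{y^{k,\zeta}_j}\setminus J^\zeta_{y^{k,\zeta}_j}\big)$ is at most $\tfrac{T}{m}\tfrac1k\sum_j\mathcal N^{k,\zeta}(y^{k,\zeta}_j)$ plus $\tfrac1k\sum_j(g^\zeta-T)^+(y^{k,\zeta}_j)$. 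The second term is a Riemann sum of the fixed $L^1$ function $(g^\zeta-T)^+$, so by Lemma \ref{lemma:Riemann sums 1} and Remark \ref{remark sequences} it converges, uniformly for $\omega$ in a Borel subset of $U$ with arbitrarily small complement, to a multiple of $\int_{\Pi^\zeta}(g^\zeta-T)^+\,{\rm d}\huno$, which I make $<\e/2$ by taking $T=T_\e$ large. The heart of the matter is then the uniform bound $\tfrac1k\sum_j\mathcal N^{k,\zeta}(y^{k,\zeta}_j)\le C'$, with $C'$ independent of $k$ and $\omega$, valid for $k$ in an infinite subset of $\N$ and $\omega$ in a set of almost full measure; granting it, choosing $m_\e$ so large that $\tfrac{T_\e}{m_\e}C'<\e/2$ gives \eqref{condizione infernale}. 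To prove this bound I would rerun, at the discrete level, the estimate behind Lemma \ref{lemma smallness of A>m}: writing $S=S_1\cup\dots\cup S_6$ as there, $\mathcal N^{k,\zeta}(y^{k,\zeta}_j)\le\sum_{h=1}^6\hzero\big(\{i:(x^{k,\zeta}_i(y^{k,\zeta}_j)+\tfrac1kS_h)\cap J\ne\emptyset\}\big)$, hence $\tfrac1k\sum_j\mathcal N^{k,\zeta}(y^{k,\zeta}_j)\le\sum_{h}\tfrac1k\sum_{(i,j)}\hzero\big((x^k_{i,j}+\tfrac1kS_h)\cap J\big)$; each segment $x^k_{i,j}+\tfrac1kS_h$ equals $[x^k_{p,q},x^k_{p',q'}]$ with $(p',q')-(p,q)$ one of the four lattice vectors $(1,0),(0,1),(1,1),(1,-1)$, so summing out the corresponding index tiles the family of grid lines parallel to one of $\xi,\eta,\xi+\eta,\xi-\eta$, and for $\Lb^2$-a.e.\ $\omega$ (so that no grid point lies on $J$) the sum equals a Riemann sum of the fixed integrable function $y\mapsto\hzero(J^{\zeta_h}_y)$ on $\Pi^{\zeta_h}$ (integrable by Lemma \ref{h0 slices}, since $\huno(J)<+\infty$); Lemma \ref{lemma:Riemann sums 1} and Remark \ref{remark sequences} keep all six such Riemann sums below a fixed multiple of $\huno(J)+1$.

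For \eqref{condizione che sembrava infernale}, with $m_\e$ now fixed: since $\hat E^{k,\zeta}_m(y)$ is a union of at most $m$ intervals of length $1/k$, Lemma \ref{lemma vanishing integral} gives $|Du^\zeta_y|(\hat E^{k,\zeta}_m(y)\cap U^\zeta_y\setminus J^\zeta_y)\to0$ for $\huno$-a.e.\ $y\in\Pi^\zeta$, for every fixed $\omega$ (the set $\hat E^{k,\zeta}_m$ depends on $\omega$ only through the $\zeta$-coordinate $z_2$ in the basis $\{\bar\zeta,\zeta\}$, and the dominating function $g^\zeta$ is independent of $\omega$). Writing $\omega=z_1\bar\zeta+z_2\zeta$ and $y^{k,\zeta}_j=(z_1+\tfrac jk)\pi^\zeta(\bar\zeta)$, I set $f^\zeta_k(t,z_2):=|Du^\zeta_{t\pi^\zeta(\bar\zeta)}|\big(\hat E^{k,\zeta}_{m_\e}(t\pi^\zeta(\bar\zeta))\cap U^\zeta\setminus J^\zeta\big)$, which is jointly measurable (the Appendix; cf.\ Remark \ref{re:dependence on omega}), vanishes outside a bounded rectangle, tends to $0$ for $\Lb^2$-a.e.\ $(t,z_2)$ by Fubini's theorem, and is dominated by the fixed integrable function $t\mapsto g^\zeta(t\pi^\zeta(\bar\zeta))$. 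Lemma \ref{lemma: Riemann 2} and Remark \ref{remark sequences} then produce an infinite set $K$ and a Borel set of almost full measure on which $\tfrac1k\sum_j f^\zeta_k(z_1+\tfrac jk,z_2)\to0$ uniformly, which is exactly \eqref{condizione che sembrava infernale}.

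Finally, intersecting the infinitely many indices and the finitely many Borel sets produced in the two steps, discarding finitely many $k$'s so that the inequalities from the first step hold for every remaining $k$, and rescaling $\e$ at the outset to absorb the finitely many geometric constants, gives $K_\e$, $m_\e$, and a Borel set that may be taken as $U^k_\e$ for every $k\in K_\e$; the uniformity makes both conditions hold for any sequence $(\omega_k)$ with $\omega_k\in U^k_\e$. I expect the main obstacle to be precisely the uniform bound on $\tfrac1k\sum_j\mathcal N^{k,\zeta}(y^{k,\zeta}_j)$: one cannot simply compare this Riemann sum with the integral $\int_{\Pi^\zeta}\mathcal N^{k,\zeta}\,{\rm d}\huno$ controlled in Lemma \ref{lemma smallness of A>m}, because $\mathcal N^{k,\zeta}$ varies with $k$ and Riemann sums of $k$-dependent functions need not approximate their integrals; one is thus forced to exploit the special form of the probes $x^k_{i,j}+\tfrac1kS_h$ and the tiling they induce on the grid lines, treating the six segments $S_h$ (two parallel to $\xi$, two to $\eta$, one to $\xi+\eta$, one to $\xi-\eta$) one at a time.
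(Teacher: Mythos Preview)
Your proposal is correct, and for \eqref{condizione che sembrava infernale} it is essentially identical to the paper's argument (Lemma \ref{lemma: Riemann 2} applied to the functions $g^{k,\zeta}_{m_\e}$). For \eqref{condizione infernale}, however, you take a genuinely different route.

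The paper proceeds as follows: it invokes the integral bound $\huno(\pi^\zeta(\check E^{k,\zeta}_m))\le C/m$ from Lemma \ref{lemma smallness of A>m}, combines it with the absolute continuity of $y\mapsto\int g^\zeta$ to choose $m_\e$ so that $\int_{\pi^\zeta(\check E^{k,\zeta}_{m_\e})}g^\zeta$ is small, then integrates the Riemann sum $\tfrac1k\sum_j h^{k,\zeta}_{m_\e,\e}(y^{k,\zeta}_j,z_2)$ over \emph{all} of $\omega=(z_1,z_2)$ and applies \v{C}eby\v{s}\"{e}v's inequality in $\omega$ to extract, for each $k$, a good set $U^k_\e$ (which therefore genuinely depends on $k$). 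Your approach bypasses this: you apply the pointwise \v{C}eby\v{s}\"{e}v bound $\chi_{\{\mathcal N^{k,\zeta}>m\}}\le\mathcal N^{k,\zeta}/m$ and truncate $g^\zeta$ at a level $T_\e$, thereby reducing everything to (i) Riemann sums of the fixed $L^1$ function $(g^\zeta-T_\e)^+$, handled by Lemma \ref{lemma:Riemann sums 1}, and (ii) a uniform bound on the discrete quantity $\tfrac1k\sum_j\mathcal N^{k,\zeta}(y^{k,\zeta}_j)$. Your tiling argument for (ii) is the new idea: since the probe points $x^{k,\zeta}_i(y^{k,\zeta}_j)$ are exactly the grid points $x^k_{p,q}$ (the map $(i,j)\mapsto(p,q)$ being a lattice bijection for each $\zeta$), the double sum $\tfrac1k\sum_{p,q}\hzero((x^k_{p,q}+\tfrac1kS_h)\cap J)$ collapses, for each of the six segments $S_h$, into a one-parameter Riemann sum of the fixed function $y\mapsto\hzero(J^{\zeta_h}_y)$ along $\Pi^{\zeta_h}$, again controlled by Lemma \ref{lemma:Riemann sums 1}. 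This is the discrete counterpart of the bound \eqref{claim chebishev} proved inside Lemma \ref{lemma smallness of A>m}, but obtained directly without passing through the integral. The payoff is that your good set $U^k_\e$ is actually independent of $k$, a slight strengthening of the statement; the cost is that you must handle all four slicing directions $\zeta_h\in\{\xi,\eta,\xi+\eta,\xi-\eta\}$ simultaneously (each contributing a different cylinder in $U$), whereas the paper's averaging-in-$\omega$ trick treats one $\zeta$ at a time.
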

\begin{proof}
Let us fix $\zeta\in\{\xi,\eta,\xi+\eta,\xi-\eta\}$ and $\e>0$.  Arguing as in Lemma \ref{fundamental lemma 2} we see that it is enough to prove that there exist $m_\e\in\N$ and an infinite set $K_\e\subset \N$ with the following property: for every $k\in K_\e$ there exists a Borel set $U^k_\e\subset U$, with $\Lb^2(U\setminus U^k_\e)\leq \e$, such that  for every sequence $(\omega_k)_{k\in  K_\e}$, with  $\omega_k\in U^k_\e$ for every $k\in K_\e,$ conditions \eqref{condizione infernale} and \eqref{condizione che sembrava infernale} hold for this particular $\zeta$.

 We observe that every $\omega\in\R^2$ can be written in a unique way as $\omega=z_1 \bar{\zeta}+z_2\zeta$, where $z_1,z_2\in\R$ and $\bar\zeta$ is defined by \eqref{eq: def zeta check}. By \eqref{def intersections coord} the numbers $t^{k,\zeta}_i(y)$ depend on $\omega$ only through $z_2$, hence the same holds for $x^{k,\zeta}_j(y)$,  $\hat E^{k,\zeta}_m(y)$, and $\check E^{k,\zeta}_m(y)$ (see \eqref{def intersections} and  \eqref{6.23 bis}).  We also remark that there exists a  constant $c_1>0$ such that if $\omega\in U$ then $|z_1|\leq c_1$.
   Moreover, there exists a constant $c_2>0$ such that, if $B\subset \R^2$ is  a Borel set and $A=\{\omega\in U: \omega=z_1 \bar{\zeta}+z_2\zeta \text{ with }(z_1,z_2)\in B\}$, then 
  \begin{equation}\label{formula con czeta}
      \Lb^2(A)\leq c_2\Lb^2(B). 
  \end{equation}
 
 Let $C>0$ be the constant of Lemma \ref{lemma smallness of A>m}. Thanks to  Lemma \ref{lemma smallness of A>m}, we can find an infinite set $H_\e^\zeta\subset \N$ and a Borel set $I^\zeta_\e\subset I^\zeta$, with 
 \begin{equation}\label{L1 differenza}\Lb^1(I^\zeta\setminus I^\zeta_\e)\leq \e/{(4c_1c_2)},
 \end{equation} such that \eqref{claim lemma smallness} holds for every $m\in\N$, $k\in H_\e^\zeta$, and $\omega\in U_\e^k:=U\cap \{z_1\bar\zeta+z_2\zeta: z_1,\in \R,\, z_2\in I^\zeta_\e\}$. Moreover, let $N_\zeta\subset \Pi^\zeta$ be the $\huno$-negligible Borel set introduced at the beginning of the proof of Lemma \ref{fundamental lemma 1}.
 
 For every $m\in\N$, $k\in H_\e^\zeta$, $y\in \Pi^\zeta$, and $z_2\in\R$ we define 
    \begin{gather}\label{def hkm aux}
         h^{k,\zeta}_{m,\e}(y,z_2):=\begin{cases}
             |Du^\zeta_y|(\check E^{k,\zeta}_m(y)\cap U^\zeta_{y}\setminus  J^{\zeta}_y)&\text{ if $y\in \Pi^\zeta\setminus N_\zeta$ and  }z_2\in I^\zeta_\e,\\
             0 &\text{ if $y\in N_\zeta$ or $z_2\notin I^\zeta_\e$},
            \end{cases} 
     \end{gather}
     where $\check E^{k,\zeta}_m(y)$ is defined using $\omega$ of the form $z_1 \bar{\zeta}+z_2\zeta$ with an arbitrary $z_1\in\R$. We observe that  the function $h^{k,\zeta}_{m,\e}$ is Borel measurable on $\Pi^\zeta\times \R$ thanks to Lemma \ref{appendix principal}.  We also define   
     \begin{gather}\label{hzeta} g^\zeta(y):=\begin{cases}
            |Du^\zeta_y|((U\setminus  J)^\zeta_y)&\text{if $y\in\Pi^\zeta\setminus N_\zeta$},\\
             0 &\text{if  $y\in N_\zeta$}.
            \end{cases}
    \end{gather}
    Note that for every $y\in \Pi^\zeta$  we have $\check E^{k,\zeta}_m(y)\cap U^\zeta_y\setminus  J^\zeta_y\subset (U\setminus  J)^{\zeta}_y$,  hence $h^{k,\zeta}_{m,\e}(y,z_2)\leq g^\zeta(y)$ for every $y\in\Pi^\zeta$ and $z_2\in \R$. 
   
    We set  
    \begin{equation*}\label{chat} M^\zeta:=\huno(\pi^\zeta(U))+1.\end{equation*}
    By \eqref{def:GBD con J1} we have  $g^\zeta\in L^1(\Pi^\zeta,\huno)$, so that by the absolute continuity of the integral there exists $\delta_\e>0$ such that
    \begin{equation}\label{epsilondelta}
        \int_{B}g^\zeta(y)\,{\rm d}\huno(y)< \frac{\e^2|\pi^\zeta( \bar{\zeta})|^2}{4\max\{\alpha,\beta\}M^\zeta c_2}
    \end{equation}
    for every $\huno$-measurable set $B\subset \Pi^\zeta$  with $\huno(B)<\delta_\e$.

    Since by \eqref{slice of hats} and \eqref{def hkm aux}  the inequality $h^{k,\zeta}_{m,\e}(y,z_2)>0$ implies $y\in\pi^\zeta(\check E^{k,\zeta}_m)$ and $z_2\in I^\zeta_\e$,
    we have
    \begin{equation}\label{Fubinazzo}
    \begin{gathered}
    \int_\R\Big(\int_{\R} h^{k,\zeta}_{m,\e}(z_1\pi^\zeta( \bar{\zeta}) ,z_2)\,{\rm d}z_1\Big){\rm d}z_2=|\pi^\zeta( \bar{\zeta})|^{-1}\int_{{I}_\e^\zeta}\Big(\int_{\pi^\zeta({\check E^{k,\zeta}_m})}h^{k,\zeta}_{m,\e}(y,z_2)\,{\rm d}\huno(y)\Big)\,{\rm d}z_2\\\leq |\pi^\zeta( \bar{\zeta})|^{-1}\int_{{I}_\e^\zeta}\Big(\int_{\pi^\zeta(\check E^{k,\zeta}_m)}g^\zeta(y)\,{\rm d}\huno(y)\Big){\rm d}z_2.
    \end{gathered}
    \end{equation}
    
   Let us fix $m_\e\in\N$ such that $m_\e>C\delta^{-1}_\e$, where $\delta_\e$ is  a constant such that \eqref{epsilondelta} holds. By \eqref{claim lemma smallness} we have $\huno(\pi^\zeta(\Check{E}^{k,\zeta}_{m_\e}))< \delta_\e$,
    for every $k\in H_\e^\zeta$ and  $\omega=z_1 \bar{\zeta}+z_2\zeta$ with $z_2\in I^\zeta_\e$. Since $\Lb^1(I^\zeta_\e)\leq \Lb^1(I^\zeta)\leq \max\{\alpha,\beta\}$ by \eqref{def: Izeta}, from  \eqref{epsilondelta}  and  \eqref{Fubinazzo} we infer
that \begin{eqnarray}\label{quasifine lemma brutto}
 \int_\R\Big(\int_{\R} h^{k,\zeta}_{m_\e,\e}(z_1\pi^\zeta( \bar{\zeta}) ,z_2){\rm d}z_1\Big){\rm d}z_2<
\frac{\e^2|\pi^\zeta( \bar{\zeta})|}{4M^\zeta c_2}
  \end{eqnarray}
  for every $k\in H_\e^\zeta$.

Let  $\mathcal{P}^{k,\zeta}:=\{j\in\Z:y^{k,\zeta}_j \text{ belongs to }\pi^\zeta(U)\}$. Since $\omega=z_1\bar\zeta+z_2\zeta$,   by \eqref{def projections}, \eqref{1 parameter family}, and \eqref{eq: def zeta check}  we have 
\begin{equation} \label{JDocasion}
    y^{k,\zeta}_j=z_1\pi^\zeta( \bar{\zeta})+\tfrac{j}{k}\pi^\zeta({ \bar{\zeta}}) \quad \text{ for }j\in\Z.
\end{equation}
We set  $A^\zeta:=\{(z_1,z_2)\in\R^2: \omega=z_1 \bar{\zeta}+z_2\zeta\in U\}$. 
  Setting $z=(z_1,z_2)$, by the Fubini Theorem we obtain that  \begin{gather*}\label{smallness double integral}
\int_{A^\zeta}\Big(\frac{1}{k}\sum_{j\in \Z} h^{k,\zeta}_{m_\e,\e}(y^{k,\zeta}_{j},z_2)\Big){\rm d}z= \frac{1}{k}\sum_{j\in {\mathcal{P}^{k,\zeta}}} \int_{A^\zeta}h^{k,\zeta}_{m_\e,\e}((z_1+\tfrac{j}{k})\pi^\zeta( \bar{\zeta}),z_2)\,{\rm d}z
        \\\leq 
       \frac{1}{k} \!\!\!\sum_{j\in {\mathcal{P}^{k,\zeta}}} \int_\R\Big(\int_\R h^{k,\zeta}_{m_\e,\e}(z_1\pi^\zeta( \bar{\zeta}),z_2)\,{\rm d} z_1\Big){\rm d}z_2= 
        \frac{\hzero({\mathcal{P}^{k,\zeta}})}{k}\!\!\int_\R\Big(\int_{\R} h^{k,\zeta}_{m_\e,\e}(z_1\pi^\zeta( \bar{\zeta}) ,z_2)\,{\rm d}z_1\Big){\rm d}z_2.
        \end{gather*}
      Since $\hzero({\mathcal{P}^{k,\zeta}})\leq kM^\zeta |\pi^\zeta( \bar{\zeta})|^{-1}$,   this inequality, together with \eqref{quasifine lemma brutto}, implies that  
\begin{equation}\label{orange is the new black}
    \int_{A^\zeta}\Big( \frac{1}{k}\sum_{j\in \Z} h^{k,\zeta}_{m_\e,\e}(y^{k,\zeta}_{j},z_2)  \Big){\rm d}z\leq \frac{\e^2}{4c_2}
\end{equation}
for every $k\in\ H_\e^\zeta$.

    We now set 
    \begin{equation}\label{def Ukme}
    V^{k,\zeta}_{\e}:=\Big\{z\in A^\zeta:\frac{1}{k}\sum_{j\in \Z} h^{k,\zeta}_{m_\e,\e}(y^{k,\zeta}_{j},z_2)>\e\Big\}\cup F^{k,\zeta},
    \end{equation}
    where 
    \begin{equation*}
        F^{k,\zeta}:=\Big\{z\in A^\zeta:{y^{k,\zeta}_j}\in N_\zeta \text{ for some }j\in \mathcal{P}^{k,\zeta}\Big\}.
    \end{equation*}
   Since $\huno(N_\zeta)=0$ by  \eqref{JDocasion} we obtain that $\Lb^2(F^{k,\zeta})=0$. 
  In light of {C}eby\v{s}\"{e}v's inequality and \eqref{orange is the new black}, we infer that
    \begin{equation}\notag
    \Lb^2(V^{k,\zeta}_\e)\leq \frac{\e}{4c_2}.
    \end{equation}
We also introduce the set 
\begin{equation}\label{Vtilde}
   W^{k,\zeta}_\e:=V^{k,\zeta}_\e\cup\Big\{z\in A^\zeta: z_2 \in I^\zeta \setminus I^{\zeta}_\e\Big\}
\end{equation}
and observe that, since $\Lb^2(\big\{z\in A^\zeta: z_2 \in I^\zeta \setminus I^{\zeta}_\e\big\})\leq c_1\Lb^1(I^\zeta\setminus I^{\zeta}_\e)\leq \e/(4c_2)$ by \eqref{L1 differenza},
  we have 
   \begin{equation}\label{stima vzeta}
\Lb^2(W^{k,\zeta}_\e)\leq \frac{\e}{2c_2}.
   \end{equation}
   It follows immediately from \eqref{def hkm aux}, \eqref{def Ukme}, and  \eqref{Vtilde} that 
   \begin{equation}\label{equality on vtilde}
    \frac{1}{k} \sum_{j\in\Z}|Du^{\zeta}_{\!y^{k,\zeta}_{j}}|(\check{E}^{k,\zeta}_{m_\e}(y_j^{k,\zeta})\cap U^\zeta_{y^{k,\zeta}_{j}}\setminus  J^{\zeta}_{\!y^{k,\zeta}_{j}}) =\frac{1}{k} \sum_{j\in \Z} h^{k,\zeta}_{m_\e,\e}(y^{k,\zeta}_{j},z_2)\leq \e
   \end{equation}
  for every $k\in H_\e^\zeta$ and  for  every  $z\in A^\zeta\setminus W^{k,\zeta}_\e$.

     For every $k,m\in\N$, $y\in\Pi^\zeta$, and $z_2\in\R$ we set 
\begin{equation}\label{def funzioni ausiliare in proof lemma}
        \begin{aligned}
         g^{k,\zeta}_{m}(y,z_2):=
            \begin{cases}
                |Du^\zeta_y|(\hat E^{k,\zeta}_m(y)\cap U^\zeta_y\setminus  J^\zeta_y)\, &\text{ if $y\in \Pi^\zeta\setminus N_\zeta$} \text{ and } z_2\in I^\zeta,\\
             0 &\text{ if $N_\zeta$ or $z_2\notin I^\zeta$}.
            \end{cases}
        \end{aligned}
    \end{equation}
    By Lemma \ref{appendix principal} the function $g^{k,\zeta}_m$ is Borel measurable on $\Pi^\zeta\times\R$. We also observe that by Lemma \ref{lemma vanishing integral} for every $z_2\in\R$ and for $\huno$-a.e.\ $y\in \pi^\zeta(U)$  the sequence $g_{ m}^{k,\zeta}(y,z_2)$ converges to $0$ as $k\to+\infty$. Arguing as before, we obtain that $g^{k,\zeta}_{ m}(y,z_2)\leq g^{\zeta}(y)$ for every $y\in \Pi^\zeta$ and  $z_2\in \R$, where $g^\zeta$ is the function defined by \eqref{hzeta}.

    We set 
    \begin{equation}\label{def scalare}
       f^{k,\zeta}_\e(z_1,z_2):= g^{k,\zeta}_{m_\e}(z_1\pi^\zeta( \bar{\zeta}),z_2)
    \end{equation}
    for every $(z_1,z_2)\in\R^2$ and observe that $f_\e^{k,\zeta}=0$ out of a suitable bounded set. 
    Using Lemma \ref{lemma: Riemann 2} and Remark \ref{remark sequences} with $\N$ replaced by $H_\e^\zeta$, we obtain an infinite set $K^\zeta_\e\subset H_\e^\zeta$ and  a Borel set $B^\zeta_\e\subset A^\zeta$, with $\Lb^2(B^\zeta_\e)\leq \e/(2c_2)$, such that for every $m\in\N$ \begin{equation}\label{uniform convergence}
\underset{k\in K_\e}{{\lim_{k\to+\infty}}}\frac{1}{k}\sum_{j
\in\Z} f_\e^{k,\zeta}(z_1+\tfrac{j}{k},z_2)=0 \,\,\,\text{uniformly for  } (z_1,z_2)\in A^\zeta\setminus B^\zeta_\e.
\end{equation}
We set $C^\zeta_\e:=\{\omega\in U: \omega=z_1 \bar{\zeta}+z_2\zeta \text{ with }(z_1,z_2)\in B^\zeta_\e\}$ and observe that from \eqref{formula con czeta} it follows that \begin{equation}\label{stima Czeta}
\Lb^2(C^\zeta_\e)\leq \frac{\e}{2}.
\end{equation}
Moreover, from \eqref{def funzioni ausiliare in proof lemma}, \eqref{def scalare}, and \eqref{uniform convergence} we deduce that 
\begin{equation}\label{new uniform}
    \underset{k\in K_\e}{{\lim_{k\to+\infty}}}\frac{1}{k}\sum_{j
\in\Z}  |Du^{\zeta}_{\!y^{k,\zeta}_{j}}|(\hat{E}^{k,\zeta}_{m_\e}(y_j^{k,\zeta})\cap U^\zeta_{y^{k,\zeta}_{j}}\setminus  J^{\zeta}_{\!y^{k,\zeta}_{j}})=0 \,\,\,\text{uniformly for  }\omega\in U\setminus C^\zeta_\e.
\end{equation}
We set $D_\e^{k,\zeta}=\{\omega\in U: \omega=z_1 \bar{\zeta}+z_2\zeta \text{ with }(z_1,z_2)\in  W^{k,\zeta}_\e \}$ and observe that by \eqref{formula con czeta}, \eqref{stima vzeta}, and \eqref{equality on vtilde} we have \begin{gather}\label{stima Dzeta}
    \Lb^2(D_\e^{k,\zeta})\leq \frac{\e}{2}\quad \text{ for every $k\in K^\zeta_\e$},\\
    \label{equality con dtilde}
    \frac{1}{k} \sum_{j\in\Z}|Du^{\zeta}_{\!y^{k,\zeta}_{j}}|(\check{E}^{k,\zeta}_{m_\e}(y_j^{k,\zeta})\cap U^\zeta_{y^{k,\zeta}_{j}}\setminus  J^{\zeta}_{\!y^{k,\zeta}_{j}})\leq \e\quad \text{for every $k\in K^\zeta_\e$ and  $\omega\in U\setminus D^{k,\zeta}_\e$}.
   \end{gather}
   
  Let $U^{k,\zeta}_\e:=U\setminus 
(C^\zeta_\e\cup D_\e^{k,\zeta})$. Combining  \eqref{stima Czeta} and \eqref{stima Dzeta} we obtain that $\Lb^2(U\setminus U^{k,\zeta}_\e)\leq  \e$.
Finally, from  \eqref{new uniform} and \eqref{equality con dtilde} it follows that 
  \eqref{condizione infernale} and \eqref{condizione che sembrava infernale} hold for every sequence $(\omega_k)_{k\in K^\zeta_\e}$ such that  $\omega_k\in U^{k,\zeta}_\e$ for every $k\in K^\zeta_\e$.
This concludes the proof.
\end{proof}

We are finally ready to prove Theorem \ref{vanishing indices}.

\medskip

\noindent {\it Proof of Theorem \ref{vanishing indices}.} Recalling \eqref{def bad} and  \eqref{union of segments} for every $k\in\N$ we have\begin{equation}
   \begin{gathered}\label{aaa}
       \mathcal{B}^k=\big\{(i,j)\in \mathcal{J}^k_2: (x^{k}_{i,j}+\tfrac{1}{k}S)\cap  J\neq \emptyset\}.
   \end{gathered}
   \end{equation}
Since  by \eqref{true def z}  we have \begin{equation}\label{zeta=x}
       z^{k,\zeta}(x^k_{i,j})=x^{k}_{i,j}\end{equation}
 (see also the comments after \eqref{star}), from \eqref{def Akzeta} and \eqref{aaa} we deduce that for every $\zeta\in\{\xi,\eta,\xi+\eta,\xi-\eta\}$ we have 
 \begin{align}\notag \label{def alternatica bad} 
 &\displaystyle \mathcal{B}^{k} =\{(i,j)\in\mathcal{J}^k_2: x^{k}_{i,j}\in E^{k,\zeta}\}.
 \end{align}
For every $k,m\in\N$ and $\zeta\in\{\xi,\eta,\xi+\eta,\xi-\eta\}$  we set 
\begin{equation}
\begin{gathered}\label{def Bkmdiscrete}
\hat{\mathcal{B}}^{k,\zeta}_{m}:=\{(i,j)\in \mathcal{J}^k_2: x^{k}_{i,j}\in \hat E^{k,\zeta}_m\},\\
\check{\mathcal{B}}^{k,\zeta}_{m}:=\{(i,j)\in \mathcal{J}^k_2: x^{k}_{i,j}\in \check{E}^{k,\zeta}_m\},
\end{gathered}
\end{equation}
and observe that $\mathcal{B}^k=\hat{\mathcal{B}}^{k,\zeta}_{m}\cup \check{\mathcal{B}}^{k,\zeta}_{m}$.

Let us fix $0<\e<\Lb^2(U)/2$. For every $n\in\N$ we apply Lemma \ref{fundamental lemma 2}  with $\e$ replaced by $\e/2^n$  and obtain an infinite set $H_n$ and a Borel set $U_{n}$, with $\Lb^2(U\setminus U_n)\leq \e/{2^n}$, such that all conditions of Lemma \ref{fundamental lemma 2} hold with $U_\e$ and $K_\e$ replaced by $U_{n}$ and $H_n$. In the step $n+1$ we can replace $\N$ in Lemma \ref{fundamental lemma 2}  by $H_n$,  obtaining $H_{n+1}\subset H_n$ for every $n\in\N$. Then, we apply Lemma \ref{fundamental lemma}, with $\e$ replaced by $\e/2^n$ and $\N$ replaced by $H_n$, and we obtain $m_n\in\N$, with $m_{n+1}\geq m_{n}$,  an infinite set $K_n\subset H_n$, and for every $k\in K_n$ a Borel set $U^k_n\subset U$, with $\Lb^2(U\setminus U^k_n)\leq \e/2^n$, with the following property:  for every sequence $(\omega_k)_{k\in K_n}$ such that  $\omega_k\in U^k_n$ for every $k\in K_n$  and for every $\zeta\in\{\xi,\eta,\xi+\eta,\xi-\eta\}$ we have
\begin{gather}\label{aa 1}
\frac{1}{k}\sum_{j\in\Z}|Du^{\zeta}_{\!y^{k,\zeta}_{j}}|(\Check{E}^{k,\zeta}_{m_n}(y_{j}^{k,\zeta})\cap U^\zeta_{y^{k,\zeta}_{j}}\setminus  J^{\zeta}_{\!y^{k,\zeta}_{j}})\leq \frac{\e}{2^n}\text{ \,\, for every $k\in K_n$},\\\label{aa}
 \underset{k\in K_n}{\lim_{k\to+\infty}}\frac{1}{k}\sum_{j\in\Z}|Du^{\zeta}_{\!y^{k,\zeta}_{j}}|(\hat{E}^{k,\zeta}_{m_n}(y_{j}^{k,\zeta})\cap U^\zeta_{y^{k,\zeta}_{j}}\setminus  J^{\zeta}_{\!y^{k,\zeta}_{j}})=0.
  \end{gather}
Replacing $U^k_n$ by $U^k_n\cap U_n$ we obtain that $U_n^k\subset U_n$  and $\Lb^2(U_n\setminus  U^k_n)\leq \e/2^n$.   Moreover, repeating the same argument used  when we pass from $n$ to $n+1$, it is not restrictive to assume that $K_{n+1}\subset K_n$ for every $n\in\N$.
 
 By a diagonal argument, we can find an infinite set $K\subset \N$ and a strictly increasing  sequence $(k_n)_n\subset \N$  such that $K\cap [k_n,+\infty)\subset K_n$ for every $n\in\N$. For every $k\in K$ we also define $n_k\in\N$ as the largest integer such that $k_n\leq k$ and observe that $n_k\to+\infty$ as $k\to+\infty$. For $k\in K$ consider the set $U^k:=\bigcap_{n=1}^{n_k}U^k_n$ and observe that $\Lb^2(U^k)\geq \Lb^2(U)-2\e$ for every $k\in K$, hence  $U^
  k\neq \emptyset$. We fix $(\omega_k)_{k\in K}\subset U$ such that $\omega_k\in U^k$ for every $k\in K$. By \eqref{aa} we have  
  \begin{equation}\label{new aa}
       \underset{k\in K}{\lim_{k\to+\infty}}\frac{1}{k}\sum_{j\in\Z}|Du^{\zeta}_{\!y^{k,\zeta}_{j}}|(\hat{E}^{k,\zeta}_{m_n}(y_{j}^{k,\zeta})\cap U^\zeta_{y^{k,\zeta}_{j}}\setminus  J^{\zeta}_{\!y^{k,\zeta}_{j}})=0\quad \text{ for every $n\in\N$.}
  \end{equation}
  
 We now prove \eqref{da scartare 1} for $\zeta=\xi$. 
Let us fix  $(i,j)\in\mathcal{B}^k$ and $t\in I^{k,\xi}_{i,j}$. By \eqref{aaa} we have \begin{equation}\label{intersection final}
    (x_{i,j}^k+\tfrac{1}{k}S)\cap J\neq \emptyset.
\end{equation} Recalling \eqref{def intervals zeta},  we also have  $t^{k,\xi}_{i,j}\leq t < t^{k,\xi}_{i,j}+\frac{1}{k}$, where $t^{k,\xi}_{i,j}$ is defined in \eqref{def:coordinates}. Since $x^{k}_{i,j}\in\{\omega_k+\frac{i}{k}\xi+s\eta:s\in\R\}$, by \eqref{def:coordinates} and \eqref{def intersections coord} we have $t_i^{k,\xi}(y_j^{k,\xi})=t^{k,\xi}_{i,j}$, so that $t^{k,\xi}_{i}(y^{k,\xi}_j)\leq t < t^{k,\xi}_{i}(y^{k,\xi}_j)+\frac{1}{k}$. From \eqref{star} and \eqref{zeta=x} we deduce that 
\begin{equation*}
    z^{k,\xi}(y_j^{k,\xi}+t\xi)=z^{k,\xi}(y_j^{k,\xi}+t^{k,\xi}_{i}(y^{k,\xi}_j)\xi)=z^{k,\xi}(y_j^{k,\xi}+t^{k,\xi}_{i,j}\xi)=z^{k,\xi}(x^k_{i,j})=x^k_{i,j}.
\end{equation*}
In light of \eqref{intersection final}, this implies that 
\begin{equation*}
     (z^{k,\xi}(y_j^{k,\xi}+t\xi)+\tfrac{1}{k}S)\cap J\neq\emptyset,
\end{equation*}
 which, by  \eqref{def Akzeta}, implies that  $y_j^{k,\xi}+t\xi\in E^{k,\xi}$. Recalling \eqref{eq:definition notation slicing}, this is equivalent to $t\in E^{k,\xi}(y^{k,\xi}_j)$. Since $(i,j)\in\mathcal{J}_2^k$, this shows that 
\begin{equation}\label{inclusion intervals}
    I_{i,j}^{k,\xi}\subset  E^{k,\xi}(y^{k,\xi}_j)\cap U^\xi_{y^{k,\xi}_j}.
\end{equation}

If $(i,j)\in \hat{\mathcal{B}}^{k,\xi}_{m}$ for some $m\in\N$, it follows from \eqref{6.23 bis} and \eqref{def Bkmdiscrete} that $\mathcal{N}^{k,\xi}(x_{i,j}^k)\leq m$, where $\mathcal{N}^{k,\xi}$ is defined by \eqref{Ikzy Nkzy}. By \eqref{Nky+tzeta} we have $\mathcal{N}^{k,\xi}(x_{i,j}^k)=\mathcal{N}^{k,\xi}(y_{j}^k+t\xi)$ for every $t\in\R$, so that from  \eqref{inclusion intervals} we also deduce that 
\begin{equation}\label{inclusion intervals 1}
     I_{i,j}^{k,\xi}\subset  \hat{E}_m^{k,\xi}(y^{k,\xi}_j)\cap U^\xi_{y^{k,\xi}_j}.
\end{equation}
In a similar way we prove that if $(i,j)\in \check{\mathcal{B}}^{k,\xi}_{m}$ for some $m\in\N$ then 
\begin{equation}\label{inclusion intervals 2}
     I_{i,j}^{k,\xi}\subset  \check{E}_m^{k,\xi}(y^{k,\xi}_j)\cap U^\xi_{y^{k,\xi}_j}.
\end{equation}

For every $k,m\in\N$ and $j\in\Z$ we set $\mathcal{B}^{k}
(j):=\{i\in\Z:(i,j)\in \B^{k}\}$,  
$\hat{\mathcal{B}}^{k,\xi}_{m}(j):=\{i\in\Z:(i,j)\in \hat{\B}^{k,\xi}_{m}\}$, and $\check{\B}^{k,\xi}_{m}(j):=\{i\in\Z:(i,j)\in \check{\B}^{k,\xi}_{m}\}$.
From \eqref{inclusion intervals 1} and  \eqref{inclusion intervals 2} we deduce  that for every $k,m\in\N$ and  $j\in\Z$ we have
\begin{gather*}
    \bigcup_{i\in \hat{\B}^{k,\xi}_{m}\!(j)}I^{k,\xi}_{i,j}\subset \hat{E}^{k,\xi}_{{m}}(y^{k,\xi}_{j})\cap U^\xi_{y^{k,\xi}_j} \quad \text{ and }\quad 
    \bigcup_{i\in \check{\B}_{m}^{k,\xi}\!(j)}I^{k,\zeta}_{i,j}\subset \check{E}^{k,\xi}_{{m}}(y^{k,\xi}_{j})\cap U^\xi_{y^{k,\xi}_j}.
\end{gather*}
Therefore,  for every $k,m\in\N$ we have
\begin{gather}\label{sottosucc}
  \sum_{i\in \hat{\B}^{k,\xi}_{m}\!(j)}\!\!\!\!|Du^{\xi}_{y^{k,\xi}_{j}}|(I^{k,\xi}_{i,j}\setminus  J^{\xi}_{y^{k,\xi}_{j}})\leq |Du^{\xi}_{y^{k,\xi}_{j}}|(\hat{E}^{k,\xi}_{m}(y^{k,\xi}_{j})\cap U^\xi_{y^{k,\xi}_{j}}\setminus  J^{\xi}_{y^{k,\xi}_{j}}),\\\label{sottosucc 1}
      \sum_{i\in \check{\B}^{k,\xi}_{m}\!
(j)}\!\!\!\!|Du^{\xi}_{y^{k,\xi}_{j}}|(I^{k,\xi}_{i,j}\setminus  J^{\xi}_{y^{k,\xi}_{j}})\leq |Du^{\xi}_{y^{k,\xi}_{j}}|(\check{E}^{k,\xi}_{m}(y^{k,\xi}_{j})\cap U^\xi_{y^{k,\xi}_{j}}\setminus  J^{\xi}_{y^{k,\xi}_{j}}).
\end{gather}
From  \eqref{aa 1} and  \eqref{sottosucc}  for every $n\in\N$ we obtain that 
\begin{gather}\label{aa 4}
    \frac{1}{k}\sum_{j\in\Z}\sum_{i\in\check{ \B}^{k,\xi}_{m_n}\!(j)}|Du^{\xi}_{y^{k,\xi}_{i,j}}|(I^{k,\xi}_{i,j}\setminus  J^{\xi}_{y^{k,\xi}_{i,j}})\leq \frac{\e}{2^{n}}\quad \text{ for every   $k\in K$ with $k\geq k_n$},
\end{gather}
while  \eqref{new aa} and \eqref{sottosucc 1} give
\begin{equation}\label{conc}
    \underset{k\in K}{\lim_{k\to+\infty}}\,\frac{1}{k}\sum_{j\in\Z}\sum_{i\in \hat{\B}^{k,\xi}_{m_n}\!(j)}\!\!\!\!|Du^{\xi}_{y^{k,\xi}_{i,j}}|(I^{k,\xi}_{i,j}\setminus  J^{\xi}_{y^{k,\xi}_{i,j}})=0\quad  \text{for every $n\in\N$.  }
    \end{equation}

Combining \eqref{aa 4} and \eqref{conc}, we deduce that
\begin{equation}\notag \label{aa 5}
    \underset{k\in K}{\limsup_{k\to+\infty}}\,\,\,\,\frac{1}{k}\sum_{(i,j)\in \mathcal{B}^k}\displaystyle|Du^{\xi}_{y^{k,\xi}_{i,j}}|(I^{k,\xi}_{i,j}\setminus  J^{\xi}_{y^{k,\xi}_{i,j}})\leq  \frac{\e}{2^{n}}\quad \text{for every $n\in\N$},
\end{equation}
 which concludes the proof of 
\eqref{da scartare 1}  for $\zeta=\xi.$  The proof of \eqref{da scartare 1} for  $\zeta\in\{\eta,\xi-\eta,\xi+\eta\}$, as well as the proof of \eqref{da scartare 3} and \eqref{da scartare 4}, is analogous.

Equalities \eqref{da scartare nuova}-\eqref{da scartare nuova 2} can then be obtained by difference from \eqref{eq:riemannXi1}. This concludes the proof of Theorem \ref{vanishing indices}.
\qed

\section{The case of dimension \texorpdfstring{$d>2$}{}}\label{section d larger}

We now show that Theorem \ref{prop:mainingredient} can be extended to the general case  $d>2$.  This is done by means of a  Fubini-type argument.  To this aim, we present and prove a short lemma that shows that the measure $\lambda_u$ introduced in \eqref{def lambda u} does not charge Borel sets that are $\sigma$-finite with respect to $\hd$   and do not intersect the jump set. 
\begin{lemma}\label{lemma salvami}
    Let $d\geq 1$, let  $u\in GBD(\Omega)$, and let $B\subset \Omega$ be a Borel  set that is $\sigma$-finite with respect to $\hd$. Then  $\lambda_u(B\setminus J_u)=0.$
\end{lemma}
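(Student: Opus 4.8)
\textbf{Proof plan for Lemma \ref{lemma salvami}.}

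The plan is to reduce the statement to the slicing definition of $GBD(\Omega)$ and to the characterisation \eqref{def lambda u} of $\lambda_u$ as a supremum over finite families of measures $\lambda^{\xi_i}_u$. First I would fix a Borel partition $(B_i)_{i=1}^k$ of $B\setminus J_u$ and a family $(\xi_i)_{i=1}^k$ of unit vectors, and estimate $\sum_{i=1}^k\lambda^{\xi_i}_u(B_i)$. Since each $B_i\subset B\setminus J_u$ is itself $\sigma$-finite with respect to $\hd$, it suffices to show $\lambda^\xi_u(E)=0$ for every $\xi\in\Sd$ and every Borel set $E\subset\Omega\setminus J_u$ that is $\sigma$-finite with respect to $\hd$; then the supremum in \eqref{def lambda u} is a supremum of sums of zeros, hence $\lambda_u(B\setminus J_u)=0$. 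By writing $E=\bigcup_{n}E_n$ with $\hd(E_n)<+\infty$ and using countable subadditivity, it is enough to treat the case $\hd(E)<+\infty$.

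For such an $E$, I would use formula \eqref{eq:defmuxi} defining $\lambda^\xi_u$. By Lemma \ref{h0 slices} we have $\int_{\Pi^\xi}\hzero(E^\xi_y)\,{\rm d}\Hd{y}\le\Hd{E}<+\infty$, so for $\hd$-a.e.\ $y\in\Pi^\xi$ the slice $E^\xi_y$ is a finite (in particular countable) subset of $\R$. For a function $u^\xi_y\in BV_{\rm loc}$, its distributional derivative $Du^\xi_y$ restricted to $\Omega^\xi_y\setminus J_{u^\xi_y}$ is non-atomic, so $|Du^\xi_y|(E^\xi_y\setminus J^{|\xi|}_{u^\xi_y})=|Du^\xi_y|\bigl((E^\xi_y\setminus J_{u^\xi_y})\bigr)=0$ for a countable set $E^\xi_y$, because $E^\xi_y\setminus J_{u^\xi_y}\subset E^\xi_y\setminus J^{|\xi|}_{u^\xi_y}$ is countable and $Du^\xi_y$ assigns measure zero to each of its points outside the jump set. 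Here I must also control the term $\hzero(E^\xi_y\cap J^{|\xi|}_{u^\xi_y})$ in \eqref{eq:defmuxi}: since $E\subset\Omega\setminus J_u$ and, by Theorem \ref{fine propr of GBD} (specifically \eqref{eq:jump set to sliced jump set}), for $\hd$-a.e.\ $y$ we have $J_{u^\xi_y}\subset(J_u)^\xi_y$, it follows that $E^\xi_y\cap J_{u^\xi_y}\subset(\Omega\setminus J_u)^\xi_y\cap(J_u)^\xi_y=\emptyset$ for $\hd$-a.e.\ $y$, so both terms in the integrand of \eqref{eq:defmuxi} vanish $\hd$-a.e.\ on $\Pi^\xi$, giving $\lambda^\xi_u(E)=0$.

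The main obstacle is the bookkeeping around the exceptional $\hd$-null sets in $\Pi^\xi$: one needs the slice $u^\xi_y$ to lie in $BV_{\rm loc}(\Omega^\xi_y)$, the inclusion $J_{u^\xi_y}\subset(J_u)^\xi_y$, and the finiteness $\hzero(E^\xi_y)<+\infty$ to hold simultaneously for $\hd$-a.e.\ $y$, which is fine since each fails only on an $\hd$-null set. A subtle point worth spelling out is that $E\subset\Omega\setminus J_u$ does \emph{not} directly give $E^\xi_y\cap(J_u)^\xi_y=\emptyset$ unless one is careful about orientation, but since $E$ and $J_u$ are disjoint as subsets of $\Omega$, their slices in any fixed direction are disjoint subsets of $\R$; combined with $J_{u^\xi_y}\subset(J_u)^\xi_y$ this yields $E^\xi_y\cap J_{u^\xi_y}=\emptyset$. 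The rest is the standard fact that a one-dimensional $BV$ function has a non-atomic diffuse part, so assigns zero mass to any countable set disjoint from its jump set.
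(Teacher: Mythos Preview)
Your proof is correct and follows essentially the same route as the paper: reduce to $\hd(E)<+\infty$, use the characterisation \eqref{def lambda u} to reduce to $\lambda^\xi_u(E)=0$, invoke Lemma~\ref{h0 slices} to get countable slices, use \eqref{eq:jump set to sliced jump set} to conclude $E^\xi_y\cap J_{u^\xi_y}=\emptyset$ for a.e.\ $y$, and finish with the non-atomicity of the diffuse part of $Du^\xi_y$. The paper compresses your two-term analysis into the single inequality $\lambda^\xi_u(B\setminus J_u)\le\int_{\Pi^\xi}|Du^\xi_y|(B^\xi_y\setminus J_{u^\xi_y})\,{\rm d}\hd(y)$, but the substance is identical.
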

\begin{proof}
    It is not restrictive to assume that $\hd(B)<+\infty$. By \eqref{def lambda u} to prove the claim it is enough to show that for every $\xi\in\Sd$ the measure $\lambda^\xi_u$ defined by \eqref{eq:defmuxi} satisfies $
        \lambda^\xi_u(B\setminus J_u)=0.$
   Let us fix  $\xi\in\Sd$. Since $J_{u^\xi_y}\subset (J_u)^\xi_y$ for $\hd$-a.e.\ $y\in\Pi^\xi$ by Theorem \ref{fine propr of GBD},  it follows from \eqref{eq:defmuxi} that 
    \begin{equation}\label{inequiality lemma d>2}
      \lambda^\xi_u(B\setminus J_u)\leq \int_{\Pi^\xi}|Du^{\xi}_y|(B^\xi_y\setminus J_{u^\xi_y})\,{\rm d}\hd(y).
    \end{equation}
     Recalling that by assumption $\hd(B)<+\infty$,  Lemma \ref{h0 slices}  implies  that for $\hd$-a.e.\ $y\in \Pi^\xi$ the slice $B^\xi_y$ is a  finite set. By well-known properties of $BV$ functions of one variable (see \cite[Corollary 3.33]{AmbrosioFuscoPallara}), this implies that for $\hd$-a.e.\ $y\in \Pi^\xi$ we have $|Du^{\xi}_y|(B^\xi_y\setminus J_{u^\xi_y})=0$. By \eqref{inequiality lemma d>2} this equality gives  $\lambda^\xi_u(B\setminus J_u)=0$, concluding the proof.
\end{proof}

\begin{theorem}\label{main prop d>2}
    Let $d>2$, let $u\in GBD(\Omega)$, and let $B\subset \Omega$ be a  Borel set. Then the function $\xi\mapsto\sigma^\xi_u(B)$ is quadratic.
\end{theorem}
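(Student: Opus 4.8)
The plan is to reduce the case $d>2$ to the already-established planar case (Theorem~\ref{prop:mainingredient}) by a Fubini-type slicing argument, exactly as announced in the introduction. We want to show that the function $f_B\colon\xi\mapsto\sigma^\xi_u(B)$ is quadratic; by Proposition~\ref{character quadratic} it suffices to verify $2$-homogeneity (already known from Proposition~\ref{prop: 2 homogeneity}), the lower bound (already known from \eqref{lower bound} in Remark~\ref{coincidenza sigma lambda}), and the parallelogram identity $\sigma^{\xi+\eta}_u(B)+\sigma^{\xi-\eta}_u(B)=2\sigma^\xi_u(B)+2\sigma^\eta_u(B)$ for all $\xi,\eta\in\Rd\setminus\{0\}$. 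As in the planar proof, if $\xi,\eta$ are linearly dependent the identity follows from $2$-homogeneity, so we may fix linearly independent $\xi,\eta$ and let $V:=\mathrm{span}\{\xi,\eta\}$, a $2$-dimensional subspace with orthogonal complement $V^\perp$ of dimension $d-2$.

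First I would apply Theorem~\ref{theorem restriction}: for $\mathcal{H}^{d-2}$-a.e.\ $z\in V^\perp$ we have $u^V_z\in GBD(\Omega^V_z)$ (with $\Omega^V_z\subset V\cong\R^2$), and moreover $J_{u^V_z}\subset (J_u)^V_z\cup N_z$ with $\mathcal{H}^1(N_z)=0$. For such $z$, Theorem~\ref{prop:mainingredient} applied in $V$ gives that $\xi\mapsto\sigma^\xi_{u^V_z}((B^V_z))$ is quadratic, hence satisfies the parallelogram identity for $\zeta\in\{\xi,\eta,\xi+\eta,\xi-\eta\}$ (all of which lie in $V$). The key bookkeeping step is then a \emph{Fubini-type identity} relating $\sigma^\zeta_u(B)$ to the integral over $z\in V^\perp$ of $\sigma^\zeta_{u^V_z}(B^V_z)$. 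For $\zeta\in V$, writing $\Pi^\zeta\cap\Rd$ as $(\Pi^\zeta\cap V)\oplus V^\perp$ and using that the slice of $u$ in direction $\zeta$ at a point $y=w+z$ (with $w\in\Pi^\zeta\cap V$, $z\in V^\perp$) coincides with the $\zeta$-slice of $u^V_z$ at $w$, together with the compatibility $(B\setminus J^1_u)^\zeta_{w+z}=(B^V_z\setminus (J^1_u)^V_z)^\zeta_w$, and the fact (to be justified) that $J^1_{u^V_z}\simeq (J^1_u)^V_z$ for a.e.\ $z$, one obtains
\begin{equation*}
\sigma^\zeta_u(B)=\int_{V^\perp}\sigma^\zeta_{u^V_z}(B^V_z)\,{\rm d}\mathcal{H}^{d-2}(z).
\end{equation*}
Integrating the planar parallelogram identity for $u^V_z$ over $z\in V^\perp$ against $\mathcal{H}^{d-2}$ then yields the parallelogram identity for $\sigma^\cdot_u(B)$ in dimension $d$.

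The main obstacle is the careful justification of this Fubini identity and, in particular, the claim $J^1_{u^V_z}\simeq (J^1_u)^V_z$ for $\mathcal{H}^{d-2}$-a.e.\ $z$ — that is, that passing to $V$-slices neither creates nor destroys the ``large jump'' set modulo $\mathcal{H}^1$-null sets. The inclusion $J_{u^V_z}\subset (J_u)^V_z\cup N_z$ from Theorem~\ref{theorem restriction}(b) controls one direction up to $\mathcal{H}^1$-null error; the reverse requires that the jump amplitude $|[u]|$ restricted to the slice stays $\geq 1$ for $\mathcal{H}^1$-a.e.\ point of $(J^1_u)^V_z$, which should follow since $\pi_V([u](z+x))$ at a jump point projects the full jump and, for a.e.\ $z$, the $V$-slice does not see artificial cancellation — here Lemma~\ref{lemma salvami} is useful to discard the $\sigma$-finite-but-$\mathcal{H}^{d-1}$-null exceptional sets, and one uses that $\lambda_u$ (hence $\sigma^\zeta_u$) does not charge them. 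A secondary technical point is measurability in $z$ of the maps $z\mapsto\sigma^\zeta_{u^V_z}(B^V_z)$, which follows from the slicing structure of Definition~\ref{def:GBD} and standard projection-theorem arguments as in Lemma~\ref{h0 slices}. Once the identity is set up, the conclusion is immediate; the bulk of the work is confirming that all the a.e.-in-$z$ statements can be combined and that the integrands are $\mathcal{H}^{d-2}$-integrable, the latter being guaranteed by the bound $|\sigma^\zeta_u|(\Omega)\leq|\zeta|^2\lambda_u(\Omega)<+\infty$ together with its sliced counterpart.
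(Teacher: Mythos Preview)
Your overall strategy---reduce to the planar case via two-dimensional slices $u^V_z$ and integrate the parallelogram identity over $z\in V^\perp$---is exactly the paper's approach. The gap is in the Fubini identity you propose, namely
\[
\sigma^\zeta_u(B)=\int_{V^\perp}\sigma^\zeta_{u^V_z}(B^V_z)\,{\rm d}\mathcal{H}^{d-2}(z),
\]
which you reduce to the claim $J^1_{u^V_z}\simeq (J^1_u)^V_z$ for $\mathcal{H}^{d-2}$-a.e.\ $z$. This claim is false in general. Since $[u^V_z](x)=\pi_V([u](z+x))$, one has $|[u^V_z]|\le|[u]|$, so the inclusion $J^1_{u^V_z}\subset (J^1_u)^V_z$ (modulo $N_z$) holds, but the reverse fails: a point with $|[u]|\ge 1$ and $[u]$ nearly orthogonal to $V$ will have $|\pi_V([u])|<1$, hence lies in $(J^1_u)^V_z\setminus J^1_{u^V_z}$. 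Your sentence ``$\pi_V([u])$ projects the full jump and, for a.e.\ $z$, the $V$-slice does not see artificial cancellation'' is the wrong intuition---the space $V$ is fixed by $\xi,\eta$, and there is no averaging over $z$ that restores the norm lost under $\pi_V$. At such points the one-dimensional measure $Du^\zeta_{w+z}$ genuinely charges the jump, so the two sides of your identity differ.

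The paper avoids this by first splitting $\sigma^\zeta_u(B)=\sigma^\zeta_u(B\cap J_u)+\sigma^\zeta_u(B\setminus J_u)$. On $B\cap J_u$ the quadraticity is immediate from the explicit formula of Proposition~\ref{lemma:jumps smaller than 1}. On $B\setminus J_u$ the Fubini identity becomes
\[
\sigma^\zeta_u(B\setminus J_u)=\int_{V^\perp}\sigma^\zeta_{u^V_z}(B^V_z\setminus J_{u^V_z})\,{\rm d}\mathcal{H}^{d-2}(z),
\]
where now \emph{all} of $J_u$ (resp.\ $J_{u^V_z}$) is removed, not just the $J^1$ part. The remaining discrepancy is $(J_u)^V_z\setminus J_{u^V_z}$; since $(J_u)^V_z$ is $\sigma$-finite with respect to $\mathcal{H}^1$ for a.e.\ $z$, Lemma~\ref{lemma salvami} gives $\lambda_{u^V_z}((J_u)^V_z\setminus J_{u^V_z})=0$, which is precisely what is needed. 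Your instinct to invoke Lemma~\ref{lemma salvami} was right, but it applies to this version of the identity, not to the $J^1$ version you wrote down.
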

\begin{proof}
    By Proposition \ref{character quadratic} it is enough to show that $\xi\mapsto\sigma_u^\xi(B)$ is $2$-homogeneous,   satisfies the parallelogram identity, and  is lower bounded  in the sense of (c) of Proposition \ref{character quadratic}. Since by Proposition \ref{prop: 2 homogeneity} the function $\xi\mapsto\sigma_u^\xi(B)$ is $2$-homogeneous and by Remark \ref{coincidenza sigma lambda} it satisfies the correct lower bound,  we only need to prove the parallelogram identity.     
    
    We decompose $\sigma^\xi_u(B)$ as 
\begin{equation}\nonumber \label{eq:decomposing sigma}
\sigma^\xi_u(B)=\sigma^\xi_u(B\setminus J_u)+\sigma^\xi_u(B\cap J_u)
\end{equation}
and observe that by Propositions \ref{lemma:jumps smaller than 1}  the function
 $\xi\mapsto\sigma_u^\xi(B\cap J_u)$ is quadratic.
Thus, to conclude we only need to prove that $\xi\mapsto\sigma_u^{\xi}(B\setminus J_u)$ satisfies the parallelogram identity.

 Let $\xi,\eta\in\Rd$ be two linearly independent vectors and consider the 2-dimensional vector space $V$  generated by $\xi$ and $\eta$. Let $\pi_V\colon \Rd\to V$ be the orthogonal projection  onto $V$. For $z\in \Rd$ and  $E\subset \Rd$ let $E^V_z:=\{y\in V:z+y\in E\}=V\cap (E-z)$ and let $u^V_z\colon \Omega^V_z\to V$ be the function defined for every $y\in\Omega^V_z$ by  $u^V_z(y):=\pi_V(u(z+y))$.  By  (a) of Theorem \ref{theorem restriction} for $\mathcal{H}^{d-2}$-a.e.\ $z\in V^{\perp}$  we have $u^V_z\in GBD(\Omega^V_z)$.

 We observe that for every $\zeta\in \{\xi,\eta,\xi+\eta,\xi-\eta\}\subset V$ and   for every $E\subset \Rd$ we have  
 \begin{equation}\label{uguaglianza traslati}(E^V_z)^\zeta_y= E^\zeta_{z+y} \quad \text{for every $z\in V^\perp$ and $y\in V\cap \Pi^\zeta$}.
 \end{equation}
 Moreover, for every $z\in V^\perp$, $y\in V\cap \Pi^\zeta$, and  $t\in(\Omega^V_z)^\zeta_y= \Omega^\zeta_{z+y}$ we have \begin{equation}\label{uguaglianza funzioni tralsate}
    (u^V_z)^{\zeta}_{y}(t)=u^{\zeta}_{z+y}(t).
    \end{equation} 
This implies that  for $\mathcal{H}^{d-2}$-a.e.\ $z\in\ V^\perp$ and for $\mathcal{H}^{1}$-a.e.\ $y\in V\cap \Pi^\zeta$ we have
    \begin{equation}\label{fundamental larger that 2}
D(u^V_z)_y^\zeta=Du^{\zeta}_{z+y}
\end{equation}
as Borel measures on $(\Omega^V_z)^\zeta_y=\Omega^\zeta_{y+z}$. 

We then apply Theorem \ref{theorem restriction} to deduce that for $\mathcal{H}^{d-2}$-a.e.\ $z\in V^\perp$ there exists a Borel set $N_z\subset V$ such that 
\begin{equation}\label{Nzeta}
\text{$\mathcal{H}^{1}(N_z)=0$\quad \, and \quad  $J_{u^V_z}\subset (J_u)^V_z\cup N_z$}.
\end{equation}  
 By Remark \ref{re:RemarkAlmiTasso} the set $J_u$ is $\sigma$-finite with respect to $\hd$. Therefore, by \cite[Theorem 2.10.25]{Federer}  we have that $\huno((J_u)^V_z)$ is $\sigma$-finite with respect to $\huno$ for $\mathcal{H}^{d-2}$-a.e.\ $z\in V^\perp$. We can then apply Lemma \ref{lemma salvami}, with $d=2$,  $\Omega$ replaced by 
$\Omega^V_z$, $u$ replaced by $u^V_z$, and $B=(J_u)^V_z$, and we obtain that $\lambda_{u^V_z}((J_u)^V_z\setminus J_{u^V_z})=0$
for $\mathcal{H}^{d-2}$-a.e.\ $z\in V^\perp$. By Definition \ref{def:GBD}, applied to $u^V_z\in GBD(\Omega^V_z)$,  and from this equality it follows that 
\begin{equation}\label{zero lambda}
 \int_{V\cap \Pi^\zeta}|D(u^V_z)_y^\zeta|((J_u)^V_z\setminus J_{u^V_z})^\zeta_y)\,{\rm  d}\mathcal{H}^{1}(y)\leq \lambda_{u^V_z}((J_u)^V_z\setminus J_{u^V_z})=0
\end{equation}
for $\mathcal{H}^{d-2}$-a.e.\ $z\in V^\perp$.

Since  for every triple of sets $A_1,A_2,A_3$ we have
$(A_1\setminus A_2)\setminus (A_1\setminus A_3)\subset A_3\setminus A_2$, we obtain that 
\begin{equation}\nonumber 
    (B^V_z\setminus J_{u^V_z})\setminus (B^V_z\setminus (J_u)^V_z)\subset (J_{u})^V_z\setminus J_{u^V_z},
\end{equation}
hence by \eqref{zero lambda} 
\begin{equation}\label{zero setminus}
    |D(u^V_z)^\zeta_y|((B^V_z\setminus J_{u^V_z})^\zeta_y\setminus (B^V_z\setminus (J_u)^V_z)^\zeta_y)=0
\end{equation}
  for  $\mathcal{H}^{d-2}$-a.e.\ $z\in V^\perp$ and for  $\huno$-a.e.\ $y\in V\cap \Pi^\zeta$. 

 The inclusion in \eqref{Nzeta} implies that for  $\mathcal{H}^{d-2}$-a.e.\ $z\in V^\perp$ and for every $y\in V\cap \Pi^\zeta\setminus \pi^\zeta(N_z)$ we have $(J_{u^V_z})^\zeta_y\subset ((J_u)^V_z)^\zeta_y$, and hence $ (B^V_z\setminus (J_u)^V_z)^\zeta_y\subset (B^V_z\setminus J_{u^V_z})^\zeta_y$. Observing that  $\huno(\pi^\zeta(N_z))=0$ by the equality in \eqref{Nzeta},  we deduce from this inclusion and from  \eqref{zero setminus} that
\begin{equation*}
   D(u^V_z)^\zeta_y((B^V_z\setminus J_{u^V_z})^\zeta_y)=  D(u^V_z)^\zeta_y ((B^V_z\setminus (J_u)^V_z)^\zeta_y)
\end{equation*}
  for $\mathcal{H}^{d-2}$-a.e.\ $z\in V^\perp$ and for $\huno$-a.e.\ $y\in V\cap \Pi^\zeta$.
  
Integrating  this equality with respect to $y$ we obtain 
  that
\begin{equation*}
    \int_{V\cap \Pi^\zeta}D(u^V_z)_y^\zeta((B\setminus J_u)^V_z)^\zeta_y)\,{\rm  d}\mathcal{H}^{1}(y)= \int_{V\cap \Pi^\zeta}D(u^V_z)_y^\zeta((B_z^V\setminus J_{u^V_z})^\zeta_y)\, {\rm d}\mathcal{H}^{1}(y)
\end{equation*}
 for $\mathcal{H}^{d-2}$- a.e.\ $z\in V^\perp$,
so that, setting $y'=z+y$, by  \eqref{uguaglianza traslati}-\eqref{fundamental larger that 2} and the Fubini Theorem we have
\begin{align}\notag 
\int_{\Pi^\zeta}Du_{y'}^\zeta((B&\setminus J_u)_{y'}^\zeta)\, {\rm d}\mathcal{H}^{d-1}(y')\\\nonumber \label{conclusion}
&= \int_{V^\perp}\Big(\int_{V\cap \Pi^\zeta}D(u^V_z)_y^\zeta((B^V_z\setminus J_{u^V_z})^\zeta_y)\, {\rm d}\mathcal{H}^{1}(y)\Big){\rm d}\mathcal{H}^{d-2}(z).
\end{align}
 Taking into account the definition of $\sigma^\zeta_u$ (see \eqref{def sigma}), this last equality can be written as 
 \begin{equation}\label{sigmato2dsigma}
     \sigma_{u}^\zeta(B\setminus J_u)=\int_{V^\perp}\sigma_{u^V_z}^\zeta(B^V_z\setminus J_{u^V_z})\,{\rm d}\mathcal{H}^{d-2}(z).
 \end{equation}
 
 We may now  apply Theorem \ref{prop:mainingredient}  to the function $u^V_z\in GBD(\Omega^V_z)$ to obtain
 \begin{equation*}
     \sigma^{\xi+\eta}_{u^V_z}(B^V_z\setminus J_{u^V_z})+\sigma^{\xi-\eta}_{u^V_z}(B^V_z\setminus J_{u^V_z})=2\sigma^{\xi}_{u^V_z}(B^V_z\setminus J_{u^V_z})+2\sigma^{\eta}_{u^V_z}(B^V_z\setminus J_{u^V_z})
 \end{equation*}
 for $\mathcal{H}^{d-2}$-a.e.\ $z\in V^\perp$. Integrating this equality with respect to $z$ and exploiting \eqref{sigmato2dsigma} we deduce that 
\begin{equation*}
    \sigma^{\xi+\eta}_{u}(B\setminus J_u)+\sigma^{\xi-\eta}_{u}(B\setminus J_u)=2\sigma^{\xi}_{u}(B\setminus J_u)+2\sigma^{\eta}_{u}(B\setminus J_u).
\end{equation*}
This shows that the function  $\xi\mapsto\sigma^\xi_u(B\setminus J_u)$ satisfies the parallelogram identity, concluding the proof.
\end{proof}

\section{A matrix-valued measure associated to a \texorpdfstring{$GBD$}{}   function}\label{cantor section}
In this section, for every $u\in GBD(\Omega)$ we introduce a matrix-valued measure $\mu_u$ that generalises the distributional symmetric gradient $Eu$ of $BD(\Omega)$ functions. We then analyse some of its properties and deduce some useful consequences.  

\begin{theorem}\label{corollary}
    Let $d\geq 1$ and $u\in GBD(\Omega)$. Then there exists a measure $\mu_u\in \mathcal{M}_b(\Omega;\Rdsym)$ such that for every $\xi\in \Sd$ we have
    \begin{gather}\label{claim corollary}
\mu_u(B)\xi\cdot\xi=\sigma^\xi_u(B)=\lim_{R\to+\infty}D_\xi(\tau_R(u\cdot\xi))(B\setminus J^1_u)\quad \text{ for every Borel set }B\subset \Omega ,
    \end{gather}
    where  $\sigma^\xi_u$ is the measure defined in \eqref{def sigma} and $\tau_R$ are the truncation functions defined in \eqref{def tauR}.
    Moreover, the variation $|\mu_u|$ with respect to the operator norm in $\Rdsym$ satisfies \begin{gather}\label{mu j1}  |\mu_u|(J^1_u)=0,\\\label{total var corollary}
    |\mu_u|=\lambda_u\mres(\Omega\setminus J^1_u) \quad \text{ as Borel measures on $\Omega$,}
    \end{gather}
    where $\lambda_u$ is the positive measure defined by \eqref{def lambda u}. 
\end{theorem}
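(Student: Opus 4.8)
The strategy is to build $\mu_u$ pointwise on Borel sets by applying the quadraticity already established. Fix a Borel set $B\subset\Omega$. By Theorem \ref{main prop d>2} (together with Theorem \ref{prop:mainingredient} and Remark \ref{remark BD} for the low-dimensional cases), the function $\xi\mapsto\sigma^\xi_u(B)$ is quadratic, so by Definition \ref{def quadratic} there is a unique $A_B\in\Rdsym$ with $A_B\xi\cdot\xi=\sigma^\xi_u(B)$ for every $\xi\in\Rd$; set $\mu_u(B):=A_B$. The uniqueness of the representing matrix (which follows from the polarisation identity, since a symmetric matrix is determined by the associated quadratic form) is what makes $B\mapsto\mu_u(B)$ well-defined. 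The first equality in \eqref{claim corollary} is then the definition, and the second follows directly from Proposition \ref{truncation}: for $\xi\in\Sd$ and a Borel set $B\subset\Omega$, applying \eqref{teorema truncation 1} to $B\setminus J^1_u$ (which satisfies $\hd((B\setminus J^1_u)\cap J^1_u)=0$) gives $\sigma^\xi_u(B\setminus J^1_u)=\lim_{R\to+\infty}D_\xi(\tau_R(u\cdot\xi))(B\setminus J^1_u)$, while $\sigma^\xi_u(B)=\sigma^\xi_u(B\setminus J^1_u)$ by the very definition \eqref{def sigma}, in which the jump set $J^1_u$ is removed.

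\textbf{Additivity and boundedness of $\mu_u$.} Next I would verify that $B\mapsto\mu_u(B)$ is a bounded $\Rdsym$-valued measure. Finite additivity: if $B_1,B_2$ are disjoint Borel sets, then for every $\xi\in\Sd$ we have $\mu_u(B_1\cup B_2)\xi\cdot\xi=\sigma^\xi_u(B_1\cup B_2)=\sigma^\xi_u(B_1)+\sigma^\xi_u(B_2)=(\mu_u(B_1)+\mu_u(B_2))\xi\cdot\xi$ because each $\sigma^\xi_u$ is a measure; since a symmetric matrix is determined by its quadratic form, $\mu_u(B_1\cup B_2)=\mu_u(B_1)+\mu_u(B_2)$. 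Countable additivity follows similarly using $\sigma$-additivity of $\sigma^\xi_u$ applied to finitely many directions spanning $\Rdsym$ (e.g. the vectors $e_i$ and $e_i+e_j$), together with the polarisation identity, which recovers all off-diagonal entries of $\mu_u(B)$ from the quadratic forms in these fixed directions. Boundedness: from the polarisation identity the entries of $\mu_u(B)$ are controlled by $|\sigma^{e_i}_u(B)|$ and $|\sigma^{e_i+e_j}_u(B)|$, each of which is bounded by $|\sigma^{e_i}_u|(\Omega)$, resp. $|\sigma^{e_i+e_j}_u|(\Omega)$, and these are finite by \eqref{lower bound} in Remark \ref{coincidenza sigma lambda} (the measures $\lambda^\xi_u$ are bounded, and $\lambda_u$ is bounded). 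Hence $\mu_u\in\mathcal{M}_b(\Omega;\Rdsym)$.

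\textbf{The total variation identities.} For \eqref{mu j1}: if $B\subset J^1_u$ is Borel, then $(B\setminus J^1_u)=\emptyset$, so $\sigma^\xi_u(B)=0$ for every $\xi$, whence $\mu_u(B)=0$; thus $\mu_u\mres J^1_u=0$ and a fortiori $|\mu_u|(J^1_u)=0$. For \eqref{total var corollary} I would argue by double inequality. For the inequality $|\mu_u|\le\lambda_u\mres(\Omega\setminus J^1_u)$: given a Borel set $B$ and a Borel partition $(B_i)_{i\in I}$ of $B$, one estimates $\sum_i\|\mu_u(B_i)\|_{\mathrm{op}}$; since the operator norm of a symmetric matrix $A$ is $\sup_{|\xi|=1}|A\xi\cdot\xi|$ and, for fixed $\xi$, by \eqref{def sigma} and Theorem \ref{fine propr of GBD} (the inclusion $J^1_{u^\xi_y}\subset(J^1_u)^\xi_y$) one has $|\sigma^\xi_u(B_i)|=|\sigma^\xi_u|(B_i)\le\lambda^\xi_u(B_i\setminus J^1_u)\le\lambda_u(B_i\setminus J^1_u)$ (using Remark \ref{coincidenza sigma lambda} and \eqref{def lambda u}), summing and using the additivity of $\lambda_u$ gives $\sum_i\|\mu_u(B_i)\|_{\mathrm{op}}\le\lambda_u(B\setminus J^1_u)$; taking the supremum over partitions yields $|\mu_u|(B)\le\lambda_u(B\setminus J^1_u)$. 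For the reverse inequality, I would use the representation \eqref{def lambda u} of $\lambda_u$ as a supremum over direction-partitions: given $\varepsilon>0$, $k\in\N$, directions $\xi_1,\dots,\xi_k\in\Sd$, and a Borel partition $(B_1,\dots,B_k)$ of $B\setminus J^1_u$ with $\sum_i\lambda^{\xi_i}_u(B_i)\ge\lambda_u(B\setminus J^1_u)-\varepsilon$; using Remark \ref{coincidenza sigma lambda} again, $\lambda^{\xi_i}_u(B_i)=|\sigma^{\xi_i}_u|(B_i)$ (on $\Omega\setminus J^1_u$), and $|\sigma^{\xi_i}_u|(B_i)$ can be approximated from below by $\sum_\ell|\sigma^{\xi_i}_u(B_{i,\ell})|=\sum_\ell|\mu_u(B_{i,\ell})\xi_i\cdot\xi_i|\le\sum_\ell\|\mu_u(B_{i,\ell})\|_{\mathrm{op}}$ over a finer Borel partition $(B_{i,\ell})_\ell$ of $B_i$; refining all these partitions simultaneously and summing shows $|\mu_u|(B)\ge\lambda_u(B\setminus J^1_u)-\varepsilon$, hence $|\mu_u|(B)\ge\lambda_u(B\setminus J^1_u)=(\lambda_u\mres(\Omega\setminus J^1_u))(B)$. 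Combining the two inequalities gives \eqref{total var corollary}.

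\textbf{Main obstacle.} The routine verifications (additivity, the definition of $\mu_u(B)$) are immediate once quadraticity is in hand; the only genuinely delicate point is the lower bound $|\mu_u|\ge\lambda_u\mres(\Omega\setminus J^1_u)$ in \eqref{total var corollary}, because it requires converting the supremum defining $\lambda_u$ (over both directions and Borel partitions) into a supremum over single-partition variation sums of $\mu_u$ — one must carefully reconcile the fact that $|\mu_u(B_i)|_{\mathrm{op}}$ involves a supremum over $\xi$ with the fact that each $\lambda^{\xi_i}_u(B_i)$ uses a single fixed $\xi_i$, and pass to common refinements so that the quantities $|\sigma^{\xi_i}_u|(B_i)=\sup\sum_\ell|\sigma^{\xi_i}_u(B_{i,\ell})|$ and $|\mu_u|(B)=\sup\sum\|\mu_u(\cdot)\|_{\mathrm{op}}$ can be compared term by term. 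I expect this to be handled via a standard (if somewhat fiddly) $\varepsilon$-refinement argument combined with \eqref{def lambda u} and Remark \ref{coincidenza sigma lambda}, relying on the already-proven $2$-homogeneity (Proposition \ref{prop: 2 homogeneity}) so that the identity $|\sigma^\xi_u|=\lambda^\xi_u\mres(\Omega\setminus J^1_u)$ extends from $\Sd$ to all of $\Rd\setminus\{0\}$ with the correct $|\xi|^2$ factor.
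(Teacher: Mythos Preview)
Your proposal is correct, and the construction of $\mu_u$ via quadraticity, the verification that it is a bounded $\Rdsym$-valued measure, the derivation of \eqref{claim corollary} from Proposition~\ref{truncation}, and the upper bound $|\mu_u|\le\lambda_u\mres(\Omega\setminus J^1_u)$ all match the paper's argument essentially verbatim. (One slip: in the upper bound you write $|\sigma^\xi_u(B_i)|=|\sigma^\xi_u|(B_i)$; this should be $\le$, which is all that is needed.)

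The one genuine difference is in the \emph{lower bound} $|\mu_u|\ge\lambda_u\mres(\Omega\setminus J^1_u)$. You prove it directly by unpacking the supremum \eqref{def lambda u} that defines $\lambda_u$: pick directions $\xi_i$ and a partition $(B_i)$ nearly realising $\lambda_u(B\setminus J^1_u)$, use Remark~\ref{coincidenza sigma lambda} to rewrite $\lambda^{\xi_i}_u(B_i)=|\sigma^{\xi_i}_u|(B_i)$, refine each $B_i$ so that the variation $|\sigma^{\xi_i}_u|(B_i)$ is nearly achieved by a finite sum $\sum_\ell|\sigma^{\xi_i}_u(B_{i,\ell})|=\sum_\ell|\mu_u(B_{i,\ell})\xi_i\cdot\xi_i|\le\sum_\ell\|\mu_u(B_{i,\ell})\|_{\mathrm{op}}$, and compare with the variation supremum of $\mu_u$. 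This works and is entirely self-contained. The paper instead takes a shorter route via the \emph{minimality} of $\lambda_u$: it sets $\lambda(B):=|\mu_u|(B)+\hd(B\cap J^1_u)$, checks (using the already-proven upper bound, Lemma~\ref{h0 slices}, Remark~\ref{re:RemarkAlmiTasso}, and \eqref{Jxihatu}) that this $\lambda$ satisfies \eqref{eq:def GBD}, and concludes $\lambda_u\le\lambda$, whence $\lambda_u(B\setminus J^1_u)\le|\mu_u|(B\setminus J^1_u)=|\mu_u|(B)$. The paper's argument is slicker and avoids the partition bookkeeping; yours is more hands-on but has the advantage of not invoking the verification that $\lambda$ dominates the full slicing expression in \eqref{eq:def GBD}, which the paper leaves somewhat implicit.
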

\begin{proof}
If $d=1$ these results follow from Remarks \ref{remark BD}, \ref{coincidenza sigma lambda}, and  Proposition \ref{teorema truncation 1}. We may thus assume that $d\geq 2$.  From Theorems  \ref{prop:mainingredient} and \ref{main prop d>2} it follows that the function $\xi\mapsto \sigma^\xi_u(B)$ is quadratic  for every Borel set $B\subset \Omega$. Thus, there exists a set function $\mu_u$ defined on the $\sigma$-algebra of all Borel subsets of $\Omega$ and with values in $\Rdsym$ such that 
    \begin{equation}\label{muxixiemisura}
        \sigma^\xi_u(B)=\mu_u(B)\xi\cdot\xi
    \end{equation}
    for every Borel set $B\subset \Omega$  and $\xi\in\Rd\setminus\{0\}$.
   Observing that $\sigma^\xi_u(B\cap J^1_u)=0$ by \eqref{def sigma},  we may apply Proposition \ref{truncation} to $B\setminus J^1_u$ and we  obtain 
   \begin{equation*}
\sigma^\xi_u(B)=\lim_{R\to+\infty}D_\xi(\tau_R(u\cdot\xi))(B\setminus J^1_u)
   \end{equation*}
   for every Borel set $B\subset \Omega$  and $\xi\in\Sd$.
  As \eqref{mu j1} is an obvious consequence of \eqref{total var corollary}, we are left with proving that $\mu_u\in\mathcal{M}_b(\Omega;\Rdsym)$ and that equality \eqref{total var corollary} holds.
  
   Since $B\mapsto\sigma^\xi_u(B)$ is a bounded scalar-valued Radon measure for every $\xi\in\Rd\setminus \{0\}$, it follows from \eqref{muxixiemisura} that the same property holds for $B\mapsto\mu_u(B)\xi\cdot\xi$. The polarisation identity then implies that $B\mapsto\mu_u(B)\xi\cdot \eta$ belongs to $\mathcal{M}_b(\Omega)$ for every $\xi,\eta\in\Rd$, hence $\mu_u\in\mathcal{M}_b(\Omega;\Rdsym)$.

    To prove \eqref{total var corollary} let us first show that 
    \begin{equation}\label{intermedio corollary}
        |\mu_u|\leq \lambda_u\mres 
        (\Omega\setminus J^1_u)\quad \text{ as Borel measures on $\Omega$.}
    \end{equation} To this aim, we observe that, since  $\mu_u$ takes values in $\Rdsym$,  for every Borel set $B\subset \Omega$  the operator norm $|\mu_u(B)|$ satsfies
    
    \begin{equation}\nonumber 
        |\mu_u(B)|=\sup_{\xi\in\Sd}|(\mu_u(B)\xi \cdot \xi)|=\sup_{\xi\in\Sd}|\sigma^{\xi}_u(B)|,
    \end{equation}
    so that by \eqref{def lambda u} and \eqref{lower bound} we have
    \begin{equation}\nonumber \label{dv6}
          |\mu_u|(B)=\sup\sum|\sigma^{\xi_i}_u(B_i)|\leq \sup\sum\lambda^{\xi_i}_u(B_i\setminus J^1_u)=\lambda_u(B\setminus J^1_u),
    \end{equation}
  where the {\it supremum} is taken over all finite Borel partitions $(B_i)_{i}$ of $B$ and all finite collections of vectors $(\xi_i)_i\subset \Sd$.
This shows \eqref{intermedio corollary}.

To prove the inequality 
\begin{equation}\label{secondinequality}
    |\mu_u|\geq \lambda_u\mres 
        (\Omega\setminus J^1_u)\quad \text{ as Borel measures on $\Omega$,}
\end{equation}
we argue as follows. Consider the measure $\lambda\in\mathcal{M}_b^+(\Omega)$ defined for Borel set $B\subset \Omega$  by
\begin{equation*}
    \lambda(B):=|\mu_u|(B)+\hd(B\cap J^1_u).
\end{equation*}
 Thanks to \eqref{intermedio corollary}, Lemma \ref{h0 slices}, Remark \ref{re:RemarkAlmiTasso}, and Theorem \ref{fine propr of GBD} it follows that $\lambda$ satisfies \eqref{eq:def GBD}. Since $\lambda_u$ is the minimal measure that satisfies \eqref{eq:def GBD} it follows that $\lambda_u(B)\leq \lambda(B)$ for every Borel set $B\subset \Omega$, which implies \eqref{secondinequality}. 
\end{proof}

\begin{remark}\label{derivative BD}
    By Remark \ref{remark BD} it follows immediately from \eqref{muxixiemisura} that if $u\in BD(\Omega)$, then 
   $ \mu_{u}=(Eu)\mres(\Omega\setminus J^1_u)$
    as Borel measures on $\Omega$.
\end{remark}

Given $u\in GBD(\Omega)$,  the Lebesgue Decomposition Theorem allows us to decompose the measure $\mu_u$ as the sum of a measure $\mu^a_u$, which is absolutely continuous with respect to $\Lb^d$,  and a measure $\mu^s_u$, which is singular with respect to $\Lb^d$. In the following definition, we introduce a further decomposition of $\mu_u$, which closely resembles the classical decomposition \eqref{decomposition Eu} of $Eu$  for a function $u\in BD(\Omega)$.
\begin{definition}\label{def: Cantor Measure}
    For $u\in GBD(\Omega)$ we introduce the measures $\mu^c_u,\mu^j_u\in\mathcal{M}_b(\Omega;\Rdsym)$, called the {\it Cantor} part  and the {\it jump} part of $\mu_u,$ defined for every Borel set $B\subset \Omega$  by
    \begin{gather*}
        \mu^c_u(B):=\mu^s_u(B\setminus J_u),\\
         \mu^j_u(B):=\mu^s_u(B\cap  J_u)=\mu_u(B\cap J_u).
    \end{gather*}
    Since $\mu_u=\mu^a_u+\mu^s_u$, we have $\mu_u=\mu^a_u+\mu^c_u+\mu^j_u$
   as Borel measures on $\Omega$.
\end{definition}
\begin{remark}\label{remark Eu mu}
    It follows from Remark \ref{derivative BD} that if $u\in BD(\Omega)$  then the measure $\mu_u^c$ of Definition \ref{def: Cantor Measure} coincides with the Cantor part $E^cu$ (see \cite[Definition 4.1]{AmbrCosciaDalM}) of the symmetrised gradient $Eu$.
\end{remark}
We recall that for every $\xi\in\R^d\setminus\{0\}$ and every $y\in\Pi^\xi$ such that $u^\xi_y\in BV_{\rm loc}(\Omega^\xi_y)$ and $Du^\xi_y\in \mathcal{M}_b(\Omega^\xi_y)$ we can consider the measures
$D^au^\xi_y\in \mathcal{M}_b(\Omega^\xi_y)$ and $D^su^\xi_y\in \mathcal{M}_b(\Omega^\xi_y)$, defined as the absolutely continuous and the singular part of 
$Du^\xi_y$ with respect to the one-dimensional Lebesgue measure, and the measures $D^cu^\xi_y\in \mathcal{M}_b(\Omega^\xi_y)$ and $D^ju^\xi_y\in \mathcal{M}_b(\Omega^\xi_y)$, defined
for every Borel set $B\subset \Omega^\xi_y$ by
\begin{gather*}
  D^cu^\xi_y(B):=D^su^\xi_y(B\setminus J_{u^\xi_y}),\\
         D^ju^\xi_y(B):=D^su^\xi_y(B\cap   J_{u^\xi_y})=Du^\xi_y(B\cap  J_{u^\xi_y}).
    \end{gather*}
  Since $Du^\xi_y=D^au^\xi_y+D^su^\xi_y$, we have $Du^\xi_y=D^au^\xi_y+D^cu^\xi_y+D^ju^\xi_y$ 
  as Borel measures on $\Omega^\xi_y$.

We now show that the measures $\mu^a_u$ and $\mu_u^j$ can be expressed as suitable integrals depending on the approximate symmetric gradient $\E u$ (see Theorem \ref{fine propr of GBD}) and $[u]$, respectively,
and that $\mu^c_u$ an be expressed by means of $D^cu^\xi_y$.
\begin{proposition}
\label{representation}
Let $u\in GBD(\Omega)$. Then
\begin{gather}\label{ac representation}
    \mu^a_u(B)=\int_B\E u\, {\rm d}x,\\\label{representation jump}
    \mu^j_u(B)=\int_{(J_u\setminus J^1_u)\cap B}[u]\odot \nu_u\, {\rm d}\hd,
\end{gather}
for every Borel set $B\subset \Omega$. Moreover, for every $\xi\in\Rd\setminus\{0\}$ and every Borel set $B\subset \Omega$  we have
\begin{gather}\label{representation cantor}
    \mu^c_u(B)\xi\cdot\xi=|\xi|\int_{\Pi^\xi}D^cu^{\xi}_y(B^\xi_y)\,{\rm d}\hd(y).
\end{gather}
\end{proposition}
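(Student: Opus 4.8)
The plan is to prove the three representation formulas by relating the matrix-valued measure $\mu_u$ and its parts to the one-dimensional slices of $u$, using the already established identity $\mu_u(B)\xi\cdot\xi = \sigma^\xi_u(B)$ from Theorem \ref{corollary}, together with the slicing formula \eqref{def sigma}. The key observation is that, since $\mu_u$ takes values in $\Rdsym$, each of $\mu^a_u$, $\mu^c_u$, $\mu^j_u$ is determined by the quadratic form $\xi\mapsto \mu^\bullet_u(B)\xi\cdot\xi$ via the polarisation identity, so it suffices to compute these quadratic forms. First I would prove \eqref{representation jump}: by Definition \ref{def: Cantor Measure} we have $\mu^j_u(B)=\mu_u(B\cap J_u)$, and by Theorem \ref{corollary} and \eqref{mu j1} (which gives $|\mu_u|(J^1_u)=0$) we have $\mu^j_u(B)\xi\cdot\xi = \sigma^\xi_u(B\cap J_u) = \sigma^\xi_u((B\cap J_u)\setminus J^1_u) = \sigma^\xi_u(B\cap (J_u\setminus J^1_u))$. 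Then Proposition \ref{lemma:jumps smaller than 1}, applied to the Borel set $B\cap (J_u\setminus J^1_u)\subset J_u$, yields $\sigma^\xi_u(B\cap(J_u\setminus J^1_u)) = \int_{(J_u\setminus J^1_u)\cap B}([u]\odot\nu_u)\xi\cdot\xi\,{\rm d}\hd$, and since two symmetric matrices with the same quadratic form coincide, \eqref{representation jump} follows (the right-hand integrand is $\hd$-integrable by Proposition \ref{thm:L1smalljumps}).

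Next I would prove \eqref{ac representation}. By definition $\mu^a_u$ is the absolutely continuous part of $\mu_u$ with respect to $\Ld$, so $\mu^a_u = (\mathrm{d}\mu_u/\mathrm{d}\Ld)\Ld$; I must identify the density with $\E u$. For a fixed $\xi\in\Sd$, apply the one-dimensional Lebesgue decomposition inside the slicing formula \eqref{def sigma}: writing $Du^\xi_y = D^a u^\xi_y + D^s u^\xi_y$ with $D^a u^\xi_y = \nabla u^\xi_y\,\Ld^1$, and using the fact that for $\hd$-a.e.\ $y$ the set $(J^1_u)^\xi_y$ is $\Ld^1$-null, one gets that the part of $\sigma^\xi_u$ coming from $D^a u^\xi_y$ is $\int_{\Pi^\xi}\int_{B^\xi_y}\nabla u^\xi_y(t)\,{\rm d}t\,{\rm d}\hd(y)$, which by the coarea/Fubini formula (as in \cite[Theorem 3.107]{AmbrosioFuscoPallara}) and part (a) of Theorem \ref{fine propr of GBD}, namely $\nabla u^\xi_y(t) = \E u(y+t\xi)\xi\cdot\xi$, equals $\int_B (\E u(x)\xi\cdot\xi)\,{\rm d}x$. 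The complementary part, coming from $D^s u^\xi_y$, is singular with respect to $\Ld$ after integration (this requires a short Fubini-type argument: a set of the form $B$ with $\Ld^d(B)=0$ has $\Ld^1(B^\xi_y)=0$ for $\hd$-a.e.\ $y$, and conversely one localizes to a set where $\mu_u$ is singular). Hence the absolutely continuous part of $\sigma^\xi_u$ is $\xi\mapsto\int_B\E u(x)\xi\cdot\xi\,{\rm d}x$ for every $\xi$, which forces $\mu^a_u(B) = \int_B \E u\,{\rm d}x$ by polarisation.

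Finally, \eqref{representation cantor}: by Definition \ref{def: Cantor Measure}, $\mu^c_u(B) = \mu^s_u(B\setminus J_u) = \mu_u(B\setminus J_u) - \mu^a_u(B\setminus J_u)$. Since $\hd(J_u)$ is $\sigma$-finite (Remark \ref{re:RemarkAlmiTasso}) and hence $\Ld^d(J_u)=0$, we have $\mu^a_u(B\setminus J_u) = \mu^a_u(B) = \int_{B\setminus J_u}\E u\,{\rm d}x$, and by the previous paragraph $\mu_u(B\setminus J_u)\xi\cdot\xi = \sigma^\xi_u(B\setminus J_u) = |\xi|\int_{\Pi^\xi}Du^\xi_y((B\setminus J_u)^\xi_y\setminus(J^1_u)^\xi_y)\,{\rm d}\hd(y)$; using \eqref{eq:jump set to sliced jump set} ($J_{u^\xi_y}\subset(J_u)^\xi_y$) this becomes $|\xi|\int_{\Pi^\xi}Du^\xi_y((B\setminus J_u)^\xi_y)\,{\rm d}\hd(y)$, and decomposing $Du^\xi_y$ on $(B\setminus J_u)^\xi_y$ (which avoids $J_{u^\xi_y}$) into $\nabla u^\xi_y\Ld^1 + D^c u^\xi_y$ gives $\mu_u(B\setminus J_u)\xi\cdot\xi = \int_{B\setminus J_u}\E u\,\xi\cdot\xi\,{\rm d}x + |\xi|\int_{\Pi^\xi}D^c u^\xi_y(B^\xi_y)\,{\rm d}\hd(y)$, where in the last integral I used that $D^c u^\xi_y$ ignores both $J_{u^\xi_y}$ and $\Ld^1$-null sets, so restricting to $B^\xi_y\setminus(J_u)^\xi_y$ versus $B^\xi_y$ makes no difference (again invoking that $(J_u)^\xi_y\setminus J_{u^\xi_y}$ has no $|D^c u^\xi_y|$ mass, via Lemma \ref{lemma salvami} applied on slices or directly that $D^c$ vanishes on $\sigma$-finite sets). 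Subtracting the absolutely continuous term yields \eqref{representation cantor}. The main obstacle will be the Fubini/slicing justification that the one-dimensional singular parts $D^s u^\xi_y$, integrated over $\Pi^\xi$, assemble exactly into the singular part $\mu^s_u$ of $\mu_u$ (rather than merely into a singular measure) — this is where one must carefully use that $\Ld^d(N)=0$ is equivalent, up to $\hd$-null exceptional $y$, to $\Ld^1(N^\xi_y)=0$, combined with the minimality/representation of $\mu_u$; once this compatibility is in place, the identification of each piece is routine from the formulas above.
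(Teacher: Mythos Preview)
Your proposal is correct and follows essentially the same route as the paper: the jump part via Proposition \ref{lemma:jumps smaller than 1}, the absolutely continuous part via Theorem \ref{fine propr of GBD}(a) and Fubini, and the Cantor part by subtraction on $B\setminus J_u$ together with the fact that $(J_u)^\xi_y$ is countable for $\hd$-a.e.\ $y$ so that $D^cu^\xi_y$ does not see it. The technical obstacle you single out---that integrating $D^a u^\xi_y$ and $D^s u^\xi_y$ over $\Pi^\xi$ gives exactly the Lebesgue decomposition of $\sigma^\xi_u$---is precisely what the paper isolates as Lemma \ref{lemma A.4} in the Appendix (proved via the Besicovitch derivation theorem on slices and a measurability argument), and this is the only ingredient you would still need to supply.
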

\begin{proof}
 Let us fix a Borel set $B\subset \Omega$.   By definition of $\mu_u^a$ and of $\mu^j_u$,  it follows from the polarisation identity and from \eqref{muxixiemisura} that
    \begin{gather}\label{polarisation ac}
        \mu^a_u(B)\xi\cdot \eta=\frac{1}{4}((\sigma^{\xi+\eta}_u)^a(B)-(\sigma^{\xi-\eta}_u)^a(B)),\\\label{polarisation jump}
        \mu^j_u(B)\xi\cdot \eta=\frac{1}{4}(\sigma^{\xi+\eta}_u(B\cap J_u)-\sigma^{\xi-\eta}_u(B\cap J_u)),
    \end{gather}
    for every $\xi,\eta\in\Rd\setminus \{0\}$, where for a vector $\zeta\in\Rd\setminus \{0\}$ the measure $(\sigma_u^\zeta)^a$ is the absolutely continuous part of $\sigma^\zeta_u$ with respect to $\Lb^d$. By Lemma \ref{lemma A.4}  for every $\zeta\in\Rd\setminus \{0\}$ we have
    \begin{gather}\label{absolutely slicing}
  (\sigma^\zeta_u)^a(B)=|\zeta|\int_{\Pi^\zeta}D^au^{\zeta}_y(B^\zeta_y)\,{\rm d}\hd(y),
  \\\label{singular slicing}
  (\sigma^\zeta_u)^s(B)=|\zeta|\int_{\Pi^\zeta}D^su^{\zeta}_y((B\setminus J^1_u)^\zeta_y)\,{\rm d}\hd(y),
    \end{gather}
    where $(\sigma_u^\zeta)^s$ is the singular part of $\sigma_u^\zeta$ with respect to $\Ld$. In light of Theorem \ref{fine propr of GBD}, using \eqref{absolutely slicing}  and the Fubini Theorem, we deduce that
    \begin{equation}
        \nonumber (\sigma^{\zeta}_u)^a(B)=\int_\Omega\E u\,\zeta\cdot\zeta\, {\rm d}x
    \end{equation}
    for every $\zeta\in\Rd\setminus \{0\}$. Combining this  equality with \eqref{polarisation ac}, we obtain that
    \begin{equation*}
        \mu^a_u(B)\xi\cdot \eta=\int_{B}\E u\,\xi\cdot \eta\, {\rm d}x =\big(\int_{B}\E u\, {\rm d}x\big)\xi\cdot \eta
    \end{equation*}
    for every $\xi,\eta\in\Rd\setminus \{0\}$, 
    which proves \eqref{ac representation}.

    To prove \eqref{representation jump}, we observe that by Proposition \ref{lemma:jumps smaller than 1} for every $\zeta\in\Rd\setminus \{0\}$ we have
    \begin{equation*}
        \sigma^\zeta_u(B\cap J_u)z
        =\int_{(J_u\setminus J^1_u)\cap B}([u]\odot\nu_u)\zeta\cdot\zeta\,{\rm d}\hd.
    \end{equation*}
    Together with \eqref{polarisation jump}, this equality shows that
    \begin{equation}
        \notag \mu^j_u(B)\xi\cdot \eta=\int_{(J_u\setminus J^1_u)\cap B}([u]\odot\nu_u)\xi\cdot \eta\,{\rm d}\hd =\big(\int_{(J_u\setminus J^1_u)\cap B}([u]\odot\nu_u)\,{\rm d}\hd\big)\xi\cdot \eta
    \end{equation}
    for every $\xi,\eta\in\Rd\setminus \{0\}$. This proves \eqref{representation jump}.

To conclude, we fix $\xi \in\Rd\setminus \{0\}$ and a Borel set $B\subset \Omega$ and show that \eqref{representation cantor} holds. We observe that for every  $y\in\Pi^\xi$  such that \eqref{eq:jump set to sliced jump set} holds  we have $ |D^ju^\xi_y|((B\setminus J_u)^\xi_y)=0$, hence
\begin{equation}\notag
    D^su^\xi_y((B\setminus J_u)^\xi_y)= D^cu^\xi_y((B\setminus J_u)^\xi_y).
\end{equation}
Since \eqref{eq:jump set to sliced jump set}  holds for $\hd$-a.e.\ $y\in \Pi^\xi$, from this equality, \eqref{muxixiemisura}, \eqref{singular slicing}, and Definition \ref{def: Cantor Measure} it follows that 
\begin{equation}\label{AbCdE}
  \mu_u^c(B)\xi\cdot\xi=  (\sigma^\xi_u)^{s}(B\setminus J_u)=|\xi|\int_{\Pi^\xi}D^cu^\xi_y((B\setminus J_u)^\xi_y)\, {\rm d}\hd(y).
\end{equation}
Recalling Lemma \ref{h0 slices}, by Remark \ref{re:RemarkAlmiTasso}  for $\hd$-a.e.\ $y\in \Pi^\xi$ the set  $(J_u)^\xi_y$ is finite or countable. Recalling the properties of the derivatives  of $BV$ functions in dimension one, this implies that
$D^cu^\xi_y((J_u)^\xi_y)=0$  for $\hd$-a.e.\ $y\in \Pi^\xi$. Therefore \eqref{AbCdE} implies \eqref{representation cantor}.
\end{proof}
 The following corollary shows that in the previous results  we can replace $J^1_u$ by $J^r_u$ (see \eqref{def Jr}) for an arbitrary $r>0$ and that the absolutely continuous and the Cantor part of the corresponding  measure $\mu_{u,r}$ do not depend on $r$.
\begin{corollary}\label{main corollary}
      Let $d\geq 1$, $u\in GBD(\Omega)$, and $r>0$. Then there exists a measure $\mu_{u,r}\in \mathcal{M}_b(\Omega;\Rdsym)$ such that for every $\xi\in \Sd$ we have
    \begin{gather}\label{formula corollary}
\mu_{u,r}(B)\xi\cdot\xi=\lim_{R\to+\infty}D_\xi(\tau_R(u\cdot\xi))(B\setminus J^r_u)\quad \text{ for every Borel set }B\subset \Omega ,
    \end{gather}
    where $\tau_R$ are the truncation functions defined in \eqref{def tauR}.
  Moreover, setting $\mu^j_{u,r}:=\mu_{u,r}\mres J_u$, we have 
  \begin{gather}\label{representation jumpr}
      \mu^j_{u,r}(B)=\int_{(J_u\setminus J^r_u)\cap B}[u]\odot \nu_u\, {\rm d}\hd \quad \text{ for every Borel set $B\subset \Omega$.}
  \end{gather}
 Finally, 
  we have $\mu_{u,r}=\mu^a_u+\mu^c_u+\mu^j_{u,r}$  as Borel measures on $\Omega$.
\end{corollary}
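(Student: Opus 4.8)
The plan is to reduce everything to the case $r=1$ already settled in Theorem \ref{corollary} and Proposition \ref{representation} via the scaling trick used in Remark \ref{re:RemarkAlmiTasso}. First I would observe that $u\in GBD(\Omega)$ implies $v:=u/r\in GBD(\Omega)$, by \cite[Remark 4.6]{DalMasoJems}, and that $J_v=J_u$ as sets while $[v]=[u]/r$ and $\nu_v=\nu_u$; in particular $J^1_v=\{x\in J_v:|[v](x)|\ge 1\}=\{x\in J_u:|[u](x)|\ge r\}=J^r_u$. Moreover, for every $\xi\in\Sd$ and $R>0$ one has $\tau_R(v\cdot\xi)=\tfrac1r\tau_{rR}(u\cdot\xi)$, hence $D_\xi(\tau_R(v\cdot\xi))=\tfrac1r D_\xi(\tau_{rR}(u\cdot\xi))$ as measures on $\Omega$, and since $R\mapsto rR$ is a bijection of $(0,+\infty)$ onto itself, the limit as $R\to+\infty$ of $D_\xi(\tau_R(v\cdot\xi))(B\setminus J^1_v)$ equals $\tfrac1r$ times the limit of $D_\xi(\tau_R(u\cdot\xi))(B\setminus J^r_u)$.

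Then I would simply \emph{define} $\mu_{u,r}:=r\,\mu_v$, where $\mu_v\in\mathcal{M}_b(\Omega;\Rdsym)$ is the measure produced by Theorem \ref{corollary} applied to $v=u/r$. By \eqref{claim corollary} for $v$ and the two scaling identities of the previous paragraph, for every $\xi\in\Sd$ and every Borel set $B\subset\Omega$,
\begin{equation*}
\mu_{u,r}(B)\xi\cdot\xi=r\,\mu_v(B)\xi\cdot\xi=r\lim_{R\to+\infty}D_\xi(\tau_R(v\cdot\xi))(B\setminus J^1_v)=\lim_{R\to+\infty}D_\xi(\tau_R(u\cdot\xi))(B\setminus J^r_u),
\end{equation*}
which is \eqref{formula corollary}. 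Since $\mu_v\in\mathcal{M}_b(\Omega;\Rdsym)$ and $\Rdsym$ is a vector space, $\mu_{u,r}=r\mu_v$ is again a bounded $\Rdsym$-valued Radon measure, so the existence assertion is complete.

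Next I would compute the jump part. By Definition \ref{def: Cantor Measure} applied to $v$ and the definition of $\mu^j_{u,r}$, we have $\mu^j_{u,r}=\mu_{u,r}\mres J_u=r(\mu_v\mres J_v)=r\mu^j_v$. Proposition \ref{representation}, equation \eqref{representation jump}, applied to $v$ gives $\mu^j_v(B)=\int_{(J_v\setminus J^1_v)\cap B}[v]\odot\nu_v\,{\rm d}\hd=\tfrac1r\int_{(J_u\setminus J^r_u)\cap B}[u]\odot\nu_u\,{\rm d}\hd$, using $J_v\setminus J^1_v=J_u\setminus J^r_u$, $[v]=[u]/r$, $\nu_v=\nu_u$, and the bilinearity of $\odot$. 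Multiplying by $r$ yields \eqref{representation jumpr}. For the absolutely continuous and Cantor parts I would argue that, by \eqref{ac representation} for $v$ and $\E v=\tfrac1r\E u$ (which follows at once from the defining property of $\E u$ in Theorem \ref{fine propr of GBD}, since $v=u/r$), we get $r\mu^a_v(B)=r\int_B\E v\,{\rm d}x=\int_B\E u\,{\rm d}x=\mu^a_u(B)$; and similarly, by \eqref{representation cantor} for $v$ together with $D^cv^\xi_y=\tfrac1r D^cu^\xi_y$ (a consequence of $v^\xi_y=\tfrac1r u^\xi_y$ and linearity of the decomposition of one-dimensional $BV$ derivatives), $r\mu^c_v=\mu^c_u$. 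Hence $r\mu_v=r(\mu^a_v+\mu^c_v+\mu^j_v)=\mu^a_u+\mu^c_u+\mu^j_{u,r}$, which is the final displayed identity $\mu_{u,r}=\mu^a_u+\mu^c_u+\mu^j_{u,r}$, and it makes transparent that $\mu^a_u$ and $\mu^c_u$ do not depend on $r$.

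I do not expect any serious obstacle here: the only points requiring a line of justification are the scaling identity $\tau_R(v\cdot\xi)=\tfrac1r\tau_{rR}(u\cdot\xi)$ (immediate from the piecewise formula \eqref{def tauR}), the equality $J^1_v=J^r_u$ (immediate from the definitions \eqref{def Jr}), and the compatibility of the three-fold decomposition with the scalar multiplication $v=u/r$, which follows from the fact that the Lebesgue–Radon–Nikod\'ym splitting and the restriction to $J_u$ are linear operations on measures. The genuinely substantive content has already been proved in Theorem \ref{corollary} and Proposition \ref{representation}; this corollary is a bookkeeping exercise in rescaling.
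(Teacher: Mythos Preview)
Your proposal is correct and follows essentially the same route as the paper: define $v:=u/r$, set $\mu_{u,r}:=r\mu_v$, and use the scaling identities $\tau_R(v\cdot\xi)=\tfrac1r\tau_{rR}(u\cdot\xi)$ and $J^1_v=J^r_u$ to deduce \eqref{formula corollary} and \eqref{representation jumpr} from Theorem \ref{corollary} and Proposition \ref{representation}. The only minor difference is in verifying the decomposition $\mu_{u,r}=\mu^a_u+\mu^c_u+\mu^j_{u,r}$: you scale each piece explicitly via the representation formulas \eqref{ac representation} and \eqref{representation cantor}, while the paper instead observes that $\mu_{u,r}(B\setminus J_u)=\mu_u(B\setminus J_u)$ directly from \eqref{claim corollary} and \eqref{formula corollary} (since $J^r_u\subset J_u$), which immediately gives $\mu^a_{u,r}=\mu^a_u$ and $\mu^s_{u,r}\mres(\Omega\setminus J_u)=\mu^c_u$ without invoking the slicing formulas; both arguments are equally short and valid.
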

\begin{proof}
    Let $v:=u/r$ and $\mu_{u,r}:=r\mu_v$. Using the equalities 
    \begin{equation}\label{Jru}
        \tau_R(v\cdot\xi)=\frac{1}{r}\tau_{(rR)}(u\cdot\xi)\quad \text{ and }\quad J^1_v=J^r_u,
    \end{equation}
   from \eqref{claim corollary} we obtain \eqref{formula corollary}.  Using the equalities $J^1_v=J^r_u$ and  $J_v=J_u$, from  \eqref{representation jump} we deduce \eqref{representation jumpr}. 
   
   Let $\mu^a_{u,r}$ and $\mu^s_{u,r}$ be the absolutely  continuous and the singular part of $\mu_{u,r}$ with respect to $\Lb^d$. By  \eqref{claim corollary} and \eqref{formula corollary},  with $B$ replaced by $B\setminus J_u$, we obtain $\mu_{u,r}(B\setminus J_u)=\mu_u(B\setminus J_u)$. This implies that $\mu^a_{u,r}(B)=\mu^a_u(B)$ and $\mu^s_{u,r}(B\setminus J_u)=\mu^s_{u}(B\setminus J_u)=\mu^c_u(B)$ for every Borel set $B\subset \Omega$. Hence,  $\mu_{u,r}(B)=\mu^a_{u,r}(B)+\mu^s_{u,r}(B\setminus J_u)+\mu_{u,r}^s(B\cap J_u)=\mu_{u}^a(B)+\mu^c_u(B)+\mu_{u,r}^j(B)$ for every Borel set $B\subset \Omega$.
\end{proof}

\begin{remark}\label{derivative BDsecond}
    Using the function $v:=u/r$, it follows from Remark \ref{derivative BD} that, if $u\in BD(\Omega)$, then  
   $ \mu_{u,r}=(Eu)\mres(\Omega\setminus J^r_u)$
    as Borel measures on $\Omega$.
\end{remark}

The following result shows that, in analogy with $E^cu$, the measure $\mu^c_u$ does not charge Borel sets which are $\sigma$-finite with respect to $\hd$. 
\begin{proposition}\nonumber \label{prop sigma finito}
    Let $u\in GBD(\Omega)$ and let $B$ be a Borel set that is $\sigma$-finite with respect to $\hd$. Then $|\mu^c_u|(B)=0$.
\end{proposition}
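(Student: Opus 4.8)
The plan is to reduce everything to the one-dimensional slicing description of $\mu^c_u$ obtained in Proposition \ref{representation} and then use the structure of $BV$ functions of one variable, exactly as in Lemma \ref{lemma salvami}. First I would note that it suffices to treat the case $\hd(B)<+\infty$, since $\mu^c_u$ is a (bounded) measure and a $\sigma$-finite set is a countable union of sets of finite $\hd$-measure, so $|\mu^c_u|(B)$ is the supremum of $|\mu^c_u|$ over such pieces. Next, recall from Theorem \ref{corollary} and Definition \ref{def: Cantor Measure} that $\mu^c_u$ takes values in $\Rdsym$, so for every Borel set $E\subset\Omega$
\begin{equation*}
|\mu^c_u(E)|=\sup_{\xi\in\Sd}|\mu^c_u(E)\xi\cdot\xi|,
\end{equation*}
and consequently, taking suprema over finite Borel partitions and finite collections of directions,
\begin{equation*}
|\mu^c_u|(B)\le \sup\sum_i |\mu^c_u(B_i)\xi_i\cdot\xi_i|,
\end{equation*}
the supremum being over all finite Borel partitions $(B_i)$ of $B$ and all $(\xi_i)\subset\Sd$. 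Hence it is enough to show that for every fixed $\xi\in\Sd$ the scalar measure $B'\mapsto\mu^c_u(B')\xi\cdot\xi$ vanishes on every Borel subset $B'\subset B$.

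For this, fix $\xi\in\Sd$. By \eqref{representation cantor} of Proposition \ref{representation} we have, for every Borel set $B'\subset B$,
\begin{equation*}
\mu^c_u(B')\xi\cdot\xi=\int_{\Pi^\xi}D^cu^\xi_y((B')^\xi_y)\,{\rm d}\hd(y).
\end{equation*}
Since $\hd(B)<+\infty$, Lemma \ref{h0 slices} gives that for $\hd$-a.e.\ $y\in\Pi^\xi$ the slice $B^\xi_y$, hence also $(B')^\xi_y\subset B^\xi_y$, is a finite set. For such $y$ (and among those for which $u^\xi_y\in BV_{\rm loc}(\Omega^\xi_y)$ with $Du^\xi_y\in\mathcal{M}_b(\Omega^\xi_y)$, which holds for $\hd$-a.e.\ $y$ by Remark \ref{remar: finiteness}), the measure $D^cu^\xi_y$ is the Cantor part of the one-dimensional derivative, which by the well-known structure of $BV$ functions of one variable (see \cite[Corollary 3.33]{AmbrosioFuscoPallara}) is non-atomic, so it vanishes on any finite set; thus $D^cu^\xi_y((B')^\xi_y)=0$ for $\hd$-a.e.\ $y\in\Pi^\xi$. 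Integrating, $\mu^c_u(B')\xi\cdot\xi=0$, and since $B'\subset B$ was arbitrary and $\xi\in\Sd$ was arbitrary, the displayed bound for $|\mu^c_u|(B)$ forces $|\mu^c_u|(B)=0$.

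There is no real obstacle here: the only points requiring a little care are the passage from the $\sigma$-finite case to the finite-$\hd$-measure case (handled by countable additivity and monotonicity of the total variation), and the justification that the supremum defining $|\mu^c_u|(B)$ is controlled by quantities of the form $|\mu^c_u(B')\xi\cdot\xi|$, which uses only that $\mu^c_u$ is $\Rdsym$-valued together with $|A|=\sup_{\xi\in\Sd}|A\xi\cdot\xi|$ for symmetric $A$. Everything else is a direct application of Proposition \ref{representation}, Lemma \ref{h0 slices}, and the elementary fact that the Cantor part of a one-dimensional $BV$ derivative is non-atomic — the same ingredients already used in Lemma \ref{lemma salvami}.
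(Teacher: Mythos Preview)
Your proof is correct and follows essentially the same approach as the paper's: reduce to $\hd(B)<+\infty$, use \eqref{representation cantor} together with Lemma \ref{h0 slices} to see that for every $\xi\in\Sd$ the slices $B^\xi_y$ are finite and hence $D^cu^\xi_y(B^\xi_y)=0$ for $\hd$-a.e.\ $y$, and conclude $\mu^c_u(B')\xi\cdot\xi=0$ for every Borel $B'\subset B$ and every $\xi$, which forces $|\mu^c_u|(B)=0$. The only difference is presentational: the paper first deduces $\mu^c_u(B)=0$ as a matrix and then observes that the same holds for all Borel subsets, whereas you write out the supremum over partitions and directions explicitly.
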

\begin{proof}
    It is not restrictive to assume that $\hd(B)<+\infty$. Let us fix $\xi\in\Sd$.  Thanks to Lemma \ref{h0 slices}, we have $\hzero(B^\xi_y)<+\infty$ for $\hd$-a.e.\ $y\in \Pi^\xi$. By the properites of one-dimensional $BV$ functions this implies that $D^cu^\xi_y(B^\xi_y)=0$ for $\hd$-a.e.\ $y\in \Pi^\xi$. Hence, by \eqref{representation cantor} we have $\mu^c_u(B)\xi\cdot\xi=0$. Since this is true for every $\xi\in\Sd$, we obtain $\mu^c_u(B)=0$. As this property holds  also for every Borel subset of $B$, we deduce that $|\mu^c_u|(B)=0$.
\end{proof}

The definition and properties of $\mu_u$ allow us to give a new characterisation of the space $GSBD(\Omega)$, originally defined by slicing. We recall that $GSBD(\Omega)$ (see \cite[Definition 4.2]{DalMasoJems}) is the space of all $u\in GBD(\Omega)$ such that for every $\xi\in\Sd$ and for $\hd$-a.e.\ $y\in\Pi^\xi$  we have
\begin{equation}\label{def GSBD}
u^\xi_y\in SBV_{\rm loc}(\Omega^\xi_y).\end{equation}  
\begin{theorem}\label{cantor sbd}
    Let $u\in GBD(\Omega)$. Then $u\in GSBD(\Omega)$ if and only if $\mu_u^c=0$.
\end{theorem}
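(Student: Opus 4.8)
The plan is to characterise $GSBD(\Omega)$ through the slicing representation \eqref{representation cantor} of $\mu^c_u$, by showing that $\mu^c_u=0$ is equivalent to the vanishing of the one-dimensional Cantor parts $D^cu^\xi_y$ for every direction $\xi$ and $\hd$-a.e.\ $y\in\Pi^\xi$. Once this is done, the definition \eqref{def GSBD} of $GSBD(\Omega)$ is exactly the condition $u^\xi_y\in SBV_{\mathrm{loc}}(\Omega^\xi_y)$, which, given that we already know $u^\xi_y\in BV_{\mathrm{loc}}(\Omega^\xi_y)$ with finite total variation, amounts precisely to $D^cu^\xi_y=0$.

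First I would prove the easy implication: if $\mu^c_u=0$, then $|\mu^c_u|=0$, and in particular $\mu^c_u(B)\xi\cdot\xi=0$ for every Borel set $B\subset\Omega$ and every $\xi\in\Sd$. Applying \eqref{representation cantor}, this gives $|\xi|\int_{\Pi^\xi}D^cu^\xi_y(B^\xi_y)\,{\rm d}\hd(y)=0$ for every Borel $B$. Taking a countable family of Borel sets generating the Borel $\sigma$-algebra of $\Omega$ (e.g.\ open rectangles with rational vertices), and using that $B\mapsto B^\xi_y$ commutes with countable set operations, one deduces by a monotone-class argument that for $\hd$-a.e.\ $y\in\Pi^\xi$ the measure $D^cu^\xi_y$ is the zero measure on $\Omega^\xi_y$. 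Hence $Du^\xi_y=D^au^\xi_y+D^ju^\xi_y$, which means $u^\xi_y\in SBV_{\mathrm{loc}}(\Omega^\xi_y)$, so $u\in GSBD(\Omega)$.

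For the converse, suppose $u\in GSBD(\Omega)$, so that for every $\xi\in\Sd$ and $\hd$-a.e.\ $y\in\Pi^\xi$ we have $u^\xi_y\in SBV_{\mathrm{loc}}(\Omega^\xi_y)$, i.e.\ $D^cu^\xi_y=0$. Fix $\xi\in\Sd$. By \eqref{representation cantor}, for every Borel set $B\subset\Omega$,
\begin{equation*}
\mu^c_u(B)\xi\cdot\xi=|\xi|\int_{\Pi^\xi}D^cu^\xi_y(B^\xi_y)\,{\rm d}\hd(y)=0.
\end{equation*}
Since this holds for every $\xi\in\Sd$, polarisation (via the polarisation identity for the symmetric matrix $\mu^c_u(B)$) gives $\mu^c_u(B)=0$ in $\Rdsym$ for every Borel set $B\subset\Omega$. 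Therefore $\mu^c_u$ is the zero measure, concluding the proof.

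\textbf{Main obstacle.} The only delicate point is the measure-theoretic passage from ``$\int_{\Pi^\xi}D^cu^\xi_y(B^\xi_y)\,{\rm d}\hd(y)=0$ for all Borel $B$'' to ``$D^cu^\xi_y=0$ for $\hd$-a.e.\ $y$''. One must be careful that the exceptional $\hd$-null set in $\Pi^\xi$ can be chosen independently of $B$; this is handled by restricting to a fixed countable generating family of Borel sets and invoking that signed measures agreeing on a $\pi$-system generating the $\sigma$-algebra coincide. The rest is bookkeeping with the representation formulas \eqref{representation cantor} and Definition \ref{def GSBD}, both already available in the excerpt.
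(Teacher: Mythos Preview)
Your proposal is correct and follows the same route as the paper: both directions rest on the slicing representation \eqref{representation cantor}, the implication $u\in GSBD(\Omega)\Rightarrow\mu^c_u=0$ is immediate from it, and the reverse implication requires passing from the vanishing of the integrated measure to the vanishing of $D^cu^\xi_y$ for $\hd$-a.e.\ $y$.

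One small imprecision in your handling of the ``main obstacle'': your $\pi$-system remark explains why, \emph{once you know} $D^cu^\xi_y(B^\xi_y)=0$ for every $B$ in a countable generating family and for a.e.\ $y$, the measure $D^cu^\xi_y$ vanishes. It does not, however, justify the prior step: from $\int_{\Pi^\xi}D^cu^\xi_y(B^\xi_y)\,{\rm d}\hd(y)=0$ you cannot directly conclude that the integrand vanishes a.e., since $D^cu^\xi_y$ is signed. The easy fix is to choose your generating sets of product type, say $B=\{y+t\xi:y\in A,\ t\in I\}$ with $A\subset\Pi^\xi$ Borel and $I\subset\R$ an interval; then $B^\xi_y=I$ for $y\in A$ and $\emptyset$ otherwise, so varying $A$ gives $D^cu^\xi_y(I)=0$ for $\hd$-a.e.\ $y$, and a countable family of intervals $I$ finishes the job. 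The paper bypasses this by citing the uniqueness of disintegration of measures, which encapsulates exactly this argument in one line.
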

\begin{proof}
 Assume that $\mu^c_u=0$ as a Borel measure on $\Omega$. Recalling the uniqueness of the disintegration of measures (see   \cite[Theorem 2.28]{AmbrosioFuscoPallara}),  it follows from \eqref{representation cantor}   that 
 for  every $\xi\in\Sd$  and for $\hd$-a.e.\ $y\in\Pi^\zeta $ we have $D^cu^{\xi}_y=0$ as a Borel measure on $\Omega^\xi_y$, i.e., \eqref{def GSBD} holds. By definition this implies that  $u\in GSBD(\Omega)$.

Conversely, if $u\in GSBD(\Omega)$ it follows from \eqref{representation cantor} that $\mu^c_u(B)\xi\cdot\xi=0$ for every $\xi\in\Sd$ and every Borel set $B\subset \Omega$. This implies that $\mu^c_u=0$. 
   \end{proof}

Combining this result with recent results of \cite{ChambolleCrismaleSad}, where new characterisations of the spaces $GBD(\Omega)$ and  $GSBD^p(\Omega)$, for  $p>1$, are obtained, we can give an analogous characterisation for $GSBD(\Omega)$. More precisely, we show that  an $\Lb^d$-measurable function $u\colon \Omega\to\Rd$ belongs to $GSBD(\Omega)$ if and only if \eqref{eq:def GBD} and   \eqref{def GSBD} hold for a suitable finite number of  directions $\xi\in\Sd$. 
\begin{theorem}\label{chambolleCrismale}
    Let $u\colon\Omega\to\Rd$ be an $\Ld$-measurable function. Assume that there exists an  orthonormal basis $\{\xi_i:i=1,...,d\}$ such that for every $\xi\in \Xi:=\{\xi_i:i=1,...,d\}\cup\{\xi_i+\xi_j:1\leq i\leq j\leq d\}$  the two following conditions hold:
    \begin{gather}
  \label{condition gsbd}
    u^{\xi}_y\in SBV_{\rm loc
    }(\Omega^\xi_y) \quad \text{for } \hd\text{-}a.e.\,\, y\in\Pi^{\xi},\\\notag
    \Lambda^{\xi}_u:=  \int_{\Pi^{\xi}}|Du^{\xi}_y|(\Omega^{\xi}_y\setminus J^1_{u^{\xi}_y})+\mathcal{H}^0(J^1_{u^{\xi}_y})\, {\rm d}\mathcal{H}^{d-1}(y)<+\infty.
    \end{gather}
   Then $u\in GSBD(\Omega)$ and, setting  $\Lambda:=\sum_{\xi\in \Xi}\Lambda^{\xi}_u$, there exists a constant $C_d>0$, depending only on $d$, such that
   \begin{equation}\label{eq:bound chacris}
       \lambda_u(\Omega)\leq C_d\Lambda,
   \end{equation}
   where $\lambda_u$ is the measure defined by \eqref{def lambda u}.
\end{theorem}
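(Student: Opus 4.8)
The plan is to combine the recent characterisation of $GBD(\Omega)$ due to Chambolle and Crismale \cite{ChambolleCrismaleSad} with the slicing characterisation of $GSBD(\Omega)$ obtained in Theorem \ref{cantor sbd}. First I would invoke \cite{ChambolleCrismaleSad} to deduce from the finiteness of $\Lambda^\xi_u$ for $\xi\in\Xi$ that $u\in GBD(\Omega)$ together with the quantitative bound \eqref{eq:bound chacris}: since that result asserts that membership in $GBD(\Omega)$ can be tested on the finitely many directions of $\Xi$, with a control of $\lambda_u(\Omega)$ by $\sum_{\xi\in\Xi}\Lambda^\xi_u$ up to a dimensional constant $C_d$, this step is essentially a citation. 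Once $u\in GBD(\Omega)$ is established, all the machinery of the paper applies; in particular the measures $\sigma^\xi_u$ of \eqref{def sigma}, the matrix-valued measure $\mu_u$ of Theorem \ref{corollary}, and its Cantor part $\mu^c_u$ of Definition \ref{def: Cantor Measure} are well-defined, and Proposition \ref{representation} gives the slicing representation \eqref{representation cantor} of $\mu^c_u$.

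The core of the argument is then to show $\mu^c_u=0$, which by Theorem \ref{cantor sbd} is equivalent to $u\in GSBD(\Omega)$. The hypothesis \eqref{condition gsbd} gives $u^\xi_y\in SBV_{\rm loc}(\Omega^\xi_y)$ for $\hd$-a.e.\ $y\in\Pi^\xi$, hence $D^cu^\xi_y=0$, for every $\xi\in\Xi$; by \eqref{representation cantor} this yields $\mu^c_u(B)\xi\cdot\xi=0$ for all Borel $B\subset\Omega$ and all $\xi\in\Xi$. The point is that $\Xi$ contains the $d$ basis vectors $\xi_i$ and the $\binom{d+1}{2}$ sums $\xi_i+\xi_j$ with $1\le i\le j\le d$ (the case $i=j$ giving $2\xi_i$), and these are more than enough to determine a symmetric matrix: by the polarisation identity,
\begin{equation*}
\mu^c_u(B)\xi_i\cdot\xi_j=\tfrac14\bigl(\mu^c_u(B)(\xi_i+\xi_j)\cdot(\xi_i+\xi_j)-\mu^c_u(B)(\xi_i-\xi_j)\cdot(\xi_i-\xi_j)\bigr),
\end{equation*}
and since $\mu^c_u(B)(\xi_i+\xi_j)\cdot(\xi_i+\xi_j)=0$ while $\mu^c_u(B)(\xi_i-\xi_j)\cdot(\xi_i-\xi_j)=0$ follows from $\sigma^{\xi_i-\xi_j}_u$ being quadratic in its direction argument together with the parallelogram identity applied to the fixed Borel set $B$ (equivalently, the $2$-homogeneous quadratic form $\zeta\mapsto\mu^c_u(B)\zeta\cdot\zeta$ vanishes on $\xi_i,\xi_j$ and $\xi_i+\xi_j$, hence on $\xi_i-\xi_j$ by \eqref{parallelogram identity}), we conclude that every entry $\mu^c_u(B)\xi_i\cdot\xi_j$ vanishes in the orthonormal basis $\{\xi_i\}$, so $\mu^c_u(B)=0$. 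As $B$ was arbitrary, $\mu^c_u=0$.

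Two small verifications remain. One must check that the quadraticity established in Theorems \ref{prop:mainingredient} and \ref{main prop d>2} indeed transfers to $\mu^c_u$, i.e.\ that $\zeta\mapsto\mu^c_u(B)\zeta\cdot\zeta$ is a genuine quadratic form for each fixed $B$; this is immediate from Definition \ref{def: Cantor Measure}, since $\mu^c_u(B)=\mu^s_u(B\setminus J_u)$ is a symmetric matrix and $\mu^c_u(B)\zeta\cdot\zeta=(\sigma^\zeta_u)^s(B\setminus J_u)$, which inherits $2$-homogeneity and the parallelogram identity from $\sigma^\zeta_u$ by taking singular parts. Second, once $\mu^c_u=0$ one applies Theorem \ref{cantor sbd} directly to conclude $u\in GSBD(\Omega)$, and \eqref{eq:bound chacris} has already been recorded from \cite{ChambolleCrismaleSad}. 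I expect the main obstacle to be purely bibliographic rather than mathematical: it lies in correctly quoting the statement of \cite{ChambolleCrismaleSad} so that the set $\Xi$ of directions used there matches the one in the present statement and so that the dimensional constant $C_d$ is the one produced by that reference; the deduction $\mu^c_u=0\Rightarrow u\in GSBD(\Omega)$ is then routine given Theorem \ref{cantor sbd}.
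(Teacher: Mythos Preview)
The proposal is correct and follows essentially the same approach as the paper: cite \cite{ChambolleCrismaleSad} for $u\in GBD(\Omega)$ and the bound \eqref{eq:bound chacris}, then use \eqref{representation cantor} together with the hypothesis \eqref{condition gsbd} to obtain $\mu^c_u(B)\xi\cdot\xi=0$ for all $\xi\in\Xi$, deduce $\mu^c_u(B)=0$ by polarisation in the basis $\{\xi_i\}$, and conclude via Theorem \ref{cantor sbd}. Your extra ``small verifications'' are harmless but unnecessary: since $\mu^c_u(B)\in\Rdsym$ by definition, $\zeta\mapsto\mu^c_u(B)\zeta\cdot\zeta$ is automatically quadratic, and the simpler polarisation $A\xi_i\cdot\xi_j=\tfrac12\bigl(A(\xi_i+\xi_j)\cdot(\xi_i+\xi_j)-A\xi_i\cdot\xi_i-A\xi_j\cdot\xi_j\bigr)$ avoids any detour through $\xi_i-\xi_j$.
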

\begin{proof}
Since the inclusion  $u\in GBD(\Omega)$ and inequality \eqref{eq:bound chacris} follow directly from \cite[Theorem 1, Corollary 1]{ChambolleCrismaleSad},   to conclude we only need to show that $u\in GSBD(\Omega)$.

To prove this, we observe that from \eqref{representation cantor} it follows that given a Borel set $B\subset \Omega$
 we have that  
   \begin{align}\notag 
\mu_u^c(B)\xi\cdot\xi=|\xi|\int_{\Pi^\xi}D^cu^{\xi}_{y}(B^\xi_y)\, {\rm d}\hd(y)=0\label{improvement principal}
   \end{align}
    for  every $\xi\in\Xi$.
By \eqref{condition gsbd} this equality implies that $
     \mu^c_u(B)\xi\cdot\xi=0$ for every $\xi\in\Xi$. From  the polarisation identity  we obtain  $\mu^c_u(B)\xi_i\cdot\xi_j=0$ for every $i,j=1,...,d$ . Recalling that  $\{\xi_i\}_i$ is a basis of $\Rd$, we deduce that $\mu^c_u(B)=0$. Since this property holds for every Borel set $B\subset \Omega$, from Theorem \ref{cantor sbd}   we obtain that $u\in GSBD(\Omega)$, concluding the proof.
\end{proof}

\textbf{Acknowledgements.}
 This paper is based on work supported by the National Research Project PRIN 2022J4FYNJ “Variational methods for stationary and evolution problems
with singularities and interfaces” funded by the Italian Ministry of University and Research. 
The authors are members of GNAMPA of INdAM.

\noindent 
\begin{appendices}
    \section{Auxiliary results}\label{appendix}
    The purpose of this section is to show that the functions  $g^{k,\zeta}_{m}$ and $h^{k,\zeta}_{m,\e}$  defined in \eqref{def funzioni ausiliare in proof lemma} and \eqref{def hkm aux}   are Borel  measurable, and prove some general properties of measures defined by integration.
    Results similar to those we present here are already well-established in the existing literature. However, given the specific form of the functions we  study, it is not easy to  apply them directly to our case. For this reason, we give here a precise statement and a complete proof of the results we need.

    \subsection{Lebesgue decomposition of measures defined by integration} 

In this subsection  we consider measures defined on the slices of a set, depending on  a parameter $\omega\in\R^k$, and the measures that can be obtained by  integrating with respect to the parameters corresponding to the slices. We are  interested in   a formula for the Lebesgue decomposition of these measures.

We begin by a lemma concerning measurability conditions with respect to these parameters. Given $h, k\in\N$, a Borel set $B\subset \R^h\times \R^k$, and $\omega\in\R^k$ we set 
\begin{equation}\label{A.NUM}
B(\omega):=\{x\in\R^h: (x,\omega)\in B\}.
\end{equation}
    \begin{lemma}\label{appendix 2}
        Let $k\in\N$ and let $\zeta\in\R^d\setminus \{0\}$. For every $y\in\Pi^\zeta$ and $\omega\in\R^k$ let $\mu_y^{\omega}\in\mathcal{M}_b(\R)$ be a signed measure. The following three measurability conditions are equivalent:
        \begin{enumerate}
        \item [(a)] for every  $\psi\in C^0_c(\R\times \R^k)$ 
        \begin{equation*}
    \text{ the function }(y,\omega)\mapsto\int_{-\infty}^{+\infty}\psi(t,\omega)\, {\rm d}\mu_y^{\omega}(t) \text{  is Borel measurable on $\Pi^\zeta\times \R^k$;}
         \end{equation*}
         
            \item [(b)] for every  $\varphi\in C^0_c(\R^d\times \R^k)$ 
            \begin{equation*}
\text{ the function }(y,\omega)\mapsto\int_{-\infty}^{+\infty}\varphi(y+t\zeta,\omega)\, {\rm d}\mu_y^{\omega}(t) \text{ is Borel measurable on $\Pi^\zeta\times \R^k$; }
            \end{equation*}

            \item [(c)] for every Borel set $B\subset \R^d\times \R^k$ the function 
           $(y,\omega)\mapsto \mu^{\omega}_{y}(B(\omega)^\zeta_y)$
     is Borel measurable on  $\Pi^\zeta\times \R^k$.
        \end{enumerate}
    Moreover, if the previous conditions are satisfied, then for every Borel set $B\subset \R^d\times \R^k$ 
     the function  $(y,\omega)\mapsto |\mu^{\omega}_y|(B(\omega)^\zeta_y)$
     is Borel measurable on  $\Pi^\zeta\times \R^k$.
    \end{lemma}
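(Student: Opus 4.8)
\textbf{Proof plan for Lemma \ref{appendix 2}.}

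The plan is to prove the equivalence of (a), (b), (c) by a cycle of implications, and then deduce the final measurability statement for the total variation from (c). Throughout, the key principle is the standard functional-analytic machinery: approximate measures by their action on continuous functions, pass from continuous functions to indicator functions of open sets by monotone limits, and then invoke a Dynkin-type ($\pi$-$\lambda$) or monotone class argument to reach all Borel sets.

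First I would show (a) $\Leftrightarrow$ (b). Given $\varphi\in C^0_c(\R^d\times\R^k)$, the function $\psi(t,\omega):=\varphi(\pi^\zeta(x)+t\zeta,\omega)$ does not quite make sense since we need $y$ as a variable; instead one observes that the map $(y,t,\omega)\mapsto(y+t\zeta,\omega)$ is continuous, so composing with $\varphi$ and restricting to a suitable compact set shows that for each fixed $y$ the integrand $t\mapsto\varphi(y+t\zeta,\omega)$ is continuous with compact support, and the dependence on $(y,\omega)$ of $\int\varphi(y+t\zeta,\omega)\,{\rm d}\mu^\omega_y(t)$ can be obtained from (a) by a partition-of-unity/approximation argument writing $\varphi(y+t\zeta,\omega)$ as a uniform limit of finite sums $\sum_i \alpha_i(y)\psi_i(t,\omega)$ with $\alpha_i$ continuous and $\psi_i\in C^0_c(\R\times\R^k)$ on compacta. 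The converse is similar and in fact easier, since functions of the form $\psi(t,\omega)=\varphi(y_0+t\zeta,\omega)$ are not directly available, but one takes $\varphi(x,\omega):=\psi(x\cdot\zeta/|\zeta|^2,\omega)\chi(x)$ with $\chi$ a cutoff, observing that $\varphi(y+t\zeta,\omega)=\psi(t,\omega)\chi(y+t\zeta)$; choosing $\chi$ appropriately and using that $\mu^\omega_y$ has bounded total variation, one recovers (a) from (b).

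Next I would prove (b) $\Rightarrow$ (c). Consider the family $\mathcal{D}$ of Borel sets $B\subset\R^d\times\R^k$ for which $(y,\omega)\mapsto\mu^\omega_y(B(\omega)^\zeta_y)$ is Borel measurable. One first treats open sets: if $A\subset\R^d\times\R^k$ is open, pick an increasing sequence of functions $\varphi_n\in C^0_c(\R^d\times\R^k)$ with $0\le\varphi_n\uparrow\chi_A$; then by (b) and dominated convergence (using $|\mu^\omega_y|(\R)<\infty$, though here one must be careful because without a uniform bound dominated convergence on the measure side needs the positive and negative parts treated separately — so first reduce to the case of a positive measure by applying everything to $\mu^\omega_y{}^+$ and $\mu^\omega_y{}^-$, whose measurability follows from a Hahn-decomposition argument combined with (a)) one gets that $\int\varphi_n(y+t\zeta,\omega)\,{\rm d}\mu^\omega_y(t)\to\mu^\omega_y(A(\omega)^\zeta_y)$ pointwise, hence the limit is measurable. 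Then $\mathcal{D}$ contains all open sets, is closed under complements relative to any fixed bounded box (using finiteness of the measures) and under countable disjoint unions and increasing/decreasing limits, so a $\pi$-$\lambda$ argument gives $\mathcal{D}=$ all Borel sets. The implication (c) $\Rightarrow$ (a) is routine: approximate $\psi\in C^0_c(\R\times\R^k)$ uniformly by simple functions $\sum_i c_i\chi_{B_i}$ with $B_i$ Borel boxes in $\R\times\R^k$, translate into sets in $\R^d\times\R^k$ via $x=y+t\zeta$, and use linearity together with (c).

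The main obstacle I anticipate is the absence of a uniform bound on $|\mu^\omega_y|(\R)$, which makes the interchange of limits in the monotone-class step delicate: dominated convergence on the measure side is not immediate, so the careful move is to first establish the measurability of $(y,\omega)\mapsto\mu^\omega_y{}^+(\cdot)$ and $(y,\omega)\mapsto\mu^\omega_y{}^-(\cdot)$ (equivalently of $|\mu^\omega_y|$) separately, reducing everything to positive measures where monotone convergence applies cleanly; for positive measures, $\mu^\omega_y(A(\omega)^\zeta_y)=\sup_n\int\varphi_n(y+t\zeta,\omega)\,{\rm d}\mu^\omega_y(t)$ is a countable supremum of measurable functions and hence measurable, with no integrability concern. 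Once positivity is handled, the final claim about $|\mu^\omega_y|(B(\omega)^\zeta_y)$ being Borel measurable is immediate, since $|\mu^\omega_y|=\mu^\omega_y{}^++\mu^\omega_y{}^-$ and each summand has already been shown measurable as a function of $(y,\omega)$ on Borel sets. To extract the Hahn decomposition measurably, I would use that $\mu^\omega_y{}^+(E)=\sup\{\mu^\omega_y(E')\colon E'\subset E \text{ Borel}\}$ and that this supremum can be realised along a fixed countable generating algebra, so measurability in $(y,\omega)$ is preserved.
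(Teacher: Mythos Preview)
Your overall architecture---cycle (a)$\Rightarrow$(b)$\Rightarrow$(c)$\Rightarrow$(a), then deduce the total variation statement---matches the paper. Two points deserve comment.

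First, your worry about the absence of a uniform bound in (b)$\Rightarrow$(c) is misplaced. Each $\mu^\omega_y$ lies in $\mathcal{M}_b(\R)$, so for $0\le\varphi_n\uparrow\chi_A$ one has $\int\varphi_n\,{\rm d}\mu^\omega_y\to\mu^\omega_y(A(\omega)^\zeta_y)$ by dominated convergence with dominating function $1\in L^1(|\mu^\omega_y|)$, \emph{for each fixed} $(y,\omega)$. A pointwise limit of Borel functions is Borel; no uniformity in $(y,\omega)$ is needed. The paper runs this cleanly by applying the Monotone Class Theorem directly to the class of bounded Borel functions $f$ on $\R^d\times\R^k$ for which $(y,\omega)\mapsto\int f(y+t\zeta,\omega)\,{\rm d}\mu^\omega_y(t)$ is measurable, which avoids splitting into positive and negative parts altogether.

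Second, and more seriously, your route to the total variation statement has a gap. You propose to obtain measurability of $(y,\omega)\mapsto(\mu^\omega_y)^+(E)$ from the formula $(\mu^\omega_y)^+(E)=\sup\{\mu^\omega_y(E'):E'\subset E\ \text{Borel}\}$ by restricting the supremum to a fixed countable generating algebra. This restriction is not justified: the optimal $E'$ is $E\cap P^\omega_y$ with $P^\omega_y$ the Hahn positive set, and there is no reason a \emph{fixed} countable algebra contains subsets of $E$ approximating $E\cap P^\omega_y$ in $|\mu^\omega_y|$-measure uniformly over all $(y,\omega)$. The paper sidesteps this entirely: for $\psi\ge0$ in $C^0_c(\R\times\R^k)$ one has
\[
\int\psi(t,\omega)\,{\rm d}|\mu^\omega_y|(t)=\sup\Big\{\int\varphi(t,\omega)\,{\rm d}\mu^\omega_y(t):\varphi\in C^0_c(\R\times\R^k),\ |\varphi|\le\psi\Big\},
\]
and since $C^0_c(\R\times\R^k)$ is separable the supremum reduces to a countable one, giving (a) for $|\mu^\omega_y|$; then the equivalence (a)$\Leftrightarrow$(c) already established yields the claim. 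This is both simpler and avoids the measurable-selection issue in your Hahn approach.
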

    \begin{proof}  For simplicity of notation we consider only the case $\zeta=e_d$, the $d$-th vector of the canonical basis, the proof in the other cases being analogous.  For $x\in\Rd$ we set $x=(x^\prime,x_d)$, where $x^\prime:=(x_1,...,x_{d-1})$, and we  identify $\Pi^{e_d}=\R^{d-1}\times \{0\}$ with $\R^{d-1}$. Therefore the measure $\mu^{\omega}_{(x^\prime,0)}$ is denoted simply by $\mu^{\omega}_{x^{\prime}}$ and for the slicing of sets we use the notation $B_{x^\prime}$ instead of $B^{e_d}_{(x^\prime,0)}$.  
    
        Assume (a). For every pair of functions $w\in C^0_c(\R^{d-1}\times \R^k)$ and  $\psi\in C^0_c(\R\times \R^k)$ it follows from (a) that the function 
        \begin{equation*}
     (x^\prime,\omega)\mapsto \int_{-\infty}^{+\infty}w(x^\prime,\omega)\psi(x_d,\omega)\, {\rm d}\mu_{x^\prime}^{\omega}(x_d) \
        \end{equation*}
        is Borel measurable  on $\R^{d-1}\times \R^k$. 
        
        By an argument based on partition of unities corresponding to coverings by open sets with small diameter we can prove that         the class of linear combinations of functions $\varphi\in C^0_c(\R^d\times \R^k)$ of the form $\varphi(x,\omega)=w(x^\prime,\omega)\psi(x_d,\omega)$ is dense in $C^0_c(\R^d\times \R^k)$ with respect to the uniform convergence. This implies that 
        the function 
        \begin{equation*}
     (x^\prime,\omega)\mapsto \int_{-\infty}^{+\infty}\varphi((x^\prime,x_d),\omega)\, {\rm d}\mu_{x^\prime}^{\omega}(x_d) \
        \end{equation*}
        is Borel measurable  on $\R^{d-1}\times \R^k$ for every $\varphi\in C^0_c(\R^d\times \R^k)$, which is condition (b) in the case $\zeta=e_d$. 
        
       Assume (b) and consider the class $\mathcal{F}$ of bounded Borel functions $f\colon \R^d\times \R^k\to \R$ such that 
        \begin{gather*}
        (x^\prime,\omega)\mapsto\int_{-\infty}^{+\infty}f((x^\prime,x_d),\omega)\, {\rm d}\mu_{x^\prime}^{\omega}(x_d) \text{ is Borel measurable on $\R^{d-1}\times \R^k$}.
        \end{gather*}
        It is easy to check that $\mathcal{F}$ is a monotone class, that is,
        \begin{enumerate} 
            \item [(i)] if $(f_n)_n\subset\mathcal{F}$, with $f_n\leq g$, for some $g\in\mathcal{F}$, and  $f_n\nearrow f$, then $f\in\mathcal{F}$;
            \item [(ii)]if $(f_n)_n\subset\mathcal{F}$, with $f_n\geq g$, for some $g\in\mathcal{F}$, and  $f_n\searrow f$, then $f\in\mathcal{F}$.
        \end{enumerate}
        Moreover, thanks to (b), we have that $C^0_c(\R^d\times \R^k)\subset \mathcal{F}$. Hence, from the Monotone Class Theorem (see \cite[Section 3.14]{Williams})
         we deduce that for every bounded Borel function $f\colon\R^d\times\R^k\to \R$ the function 
        \begin{equation*}
            (x^\prime,\omega)\mapsto\int_{-\infty}^{+\infty}f((x^\prime,x_d),\omega)\, {\rm d}\mu_{x^\prime}^{\omega}(x_d)\end{equation*} is Borel measurable on $\R^{d-1}\times \R^k$. By taking as   $f$ the characteristic function of  a Borel set  $B\subset \R^d\times \R^k$ we obtain that the function
            $$
            (x',\omega)\mapsto \mu_{x^\prime}^{\omega}(B(\omega)_{x'}) 
            $$
  is Borel measurable  on $\R^{d-1}\times \R^k$, which is condition (c) in the case $\zeta=e_d$.

            Assume now (c). Let $E\subset  \R\times\R^k$ be a Borel set and let $B:=\R^{d-1}\times E\subset \R^d\times \R^k$. Clearly, $B(\omega)_{x^\prime}=E(\omega)$   for every $x^\prime\in \R^{d-1}$, so that by (c)  the function $(x^\prime,\omega)\mapsto \mu^{\omega}_{x^\prime}(E(\omega))$ is $\R^{d-1}\times \R^k$ measurable. Hence, 
            \begin{equation}\notag
                (x^\prime,\omega)\mapsto\int_{-\infty}^{+\infty}\chi_{E}(x_d,\omega)\,{\rm d}\mu^\omega_{x^\prime}(x_d)            \end{equation}
                is Borel measurable on $\R^{d-1}\times \R^k$.
           By linearity we obtain that 
           \begin{equation*}
              (x^\prime,\omega)\mapsto\int_{-\infty}^{+\infty} g(x_d,\omega)\,{\rm d}\mu^\omega_{x^\prime}(x_d)            \end{equation*}
              is Borel measurable on $\R^{d-1}\times \R^k$ for every simple function $g\colon\R \times \R^k\to \R$.
            Since every function $\psi\in C^0_c(\R\times \R^k)$ can be approximated by a uniformly bounded sequence of simple functions, an application of the Dominated Convergence Theorem yields (a).

           We now show that, if (a)-(c) hold, then the last part of the statement holds. By the equivalence of (a)-(c) for $|\mu^{\omega}_{x^\prime}|$, to conclude the proof it is enough to show that  for every function $\psi\in C^0_c(\R\times \R^k)$ the function
         \begin{equation} \label{measure app 1}
            (x^\prime,\omega)\mapsto\int_{-\infty}^{+\infty}
            \psi(x_d,\omega)\, {\rm d}|\mu_{x^\prime}^{\omega}|(x_d)
         \end{equation}
           is Borel measurable on $\R^{d-1}\times \R^k$. Assume for a moment that $\psi\geq 0$. By definition of total variation of a measure we have
           \begin{equation*}
               \int_{-\infty}^{+\infty}\psi(x_d,\omega)\, {\rm d}|\mu_{x^\prime}^{\omega}|(x_d)=\underset{|\varphi|\leq \psi}{\sup_{\varphi\in C^0_c(\R\times \R^k)}} \int_{-\infty}^{+\infty}\varphi(x_d,\omega)\, {\rm d}\mu_{x^\prime}^{\omega}(x_d).
           \end{equation*}
           Since the supremum above can be reduced to a countable dense subset of $C^0_c(\R\times \R^k)$, this equality, together with (a) for $\mu^{\omega}_{x^\prime}$, implies that the function in \eqref{measure app 1} is Borel measurable on $\R^{d-1}\times \R^k$ when $\psi\geq 0$. In the general case, one can split $\psi$ into its positive and negative part.
    \end{proof}

Let $\zeta\in\Rd\setminus \{0\}$ and for every $y\in \Pi^\zeta$  let $\mu_y\in \mathcal{M}_b(\Omega^\zeta_y)$. For every $y\in\Pi^\zeta$ let $\mu^a_y$ and $\mu^s_y$  be the absolutely continuous part and the singular part of $\mu_y$ 
with respect to the one dimensional Lebesgue measure. In the following lemma we consider some general conditions on $(\mu_y)_{y\in\Pi^\zeta}$ which allow us to define a  measure $\mu$ on $\Omega$  by integrating $\mu_y$ with respect to $y$.  We then show that the absolutely continuous part $\mu^a$ and its singular part $\mu^s$ of $\mu$ with respect to $\Ld$ can be obtained by integrating $\mu^a_y$ and $\mu^s_y$ with respect to $y$.   
  \begin{lemma}\label{lemma A.4} Let $\zeta\in\Rd\setminus \{0\}$ and for every $y\in \Pi^\zeta$  let $\mu_y\in \mathcal{M}_b(\Omega^\zeta_y)$.
  Assume that for every Borel set $B\subset \Omega$
  \begin{gather}\label{measurability condition}
     \text{  the function  $y\mapsto \mu_y(B^\zeta_y)$ is Borel measurable on $\Pi^\zeta$}
     \end{gather}
     and that there exists $g\in L^1(\Pi^\zeta,\hd)$ such that \begin{gather}\label{g boundante}
     \text{ $|\mu_y|(\Omega^\zeta_y)\leq g(y)$ for $\hd$-a.e.\ $y\in\Pi^\zeta$.} 
 \end{gather}
  Consider the measure defined for every Borel set $B\subset \Omega$ by
  \begin{equation}\label{def mu slice}
      \mu(B):=\int_{\Pi^\zeta}\mu_y(B^\zeta_y)\,{\rm d}\hd(y).
  \end{equation}
  Le $\mu^a$  and $\mu^s$  be the absolutely continuous part and the singular part of $\mu$  with respect to the Lebesgue measure $\Lb^d$. Then for every Borel set $B\subset \Omega$ the functions
  \begin{equation}\label{borel measurablity}
      \text{   $y\mapsto \mu^a_y(B^\zeta_y)$ and $y\mapsto \mu^s_y(B^\zeta_y)$ are Borel measurable  and $\hd$-integrable on $\Pi^\zeta$}
  \end{equation} and we have
  \begin{equation}
\label{absolute continuity}\mu^a(B)=\int_{\Pi^\zeta}\mu^a_y(B^\zeta_y)\,{\rm d}\hd(y) \quad \text{ and }\quad 
\mu^s(B)=\int_{\Pi^\zeta}\mu^s_y(B^\zeta_y)\,{\rm d}\hd(y)
  \end{equation}
      for every Borel set $B\subset \Omega$.
  \end{lemma}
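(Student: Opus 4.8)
The plan is to prove Lemma \ref{lemma A.4} by reducing everything to a one-parameter ($k=1$) instance of Lemma \ref{appendix 2} and then using the uniqueness of the Lebesgue decomposition together with a Fubini-type argument. First I would observe that the measurability hypothesis \eqref{measurability condition}, together with \eqref{g boundante}, is exactly condition (c) of Lemma \ref{appendix 2} in the trivial case where there is no extra parameter $\omega$ (or, formally, $k=1$ and the measures $\mu^\omega_y=\mu_y$ do not depend on $\omega$); hence by that lemma the function $y\mapsto|\mu_y|(B^\zeta_y)$ is Borel measurable on $\Pi^\zeta$ for every Borel set $B\subset\Omega$, and in particular, by \eqref{g boundante}, the total variation $|\mu|(\Omega)=\int_{\Pi^\zeta}|\mu_y|(\Omega^\zeta_y)\,{\rm d}\hd(y)\le\|g\|_{L^1}<+\infty$, so that $\mu\in\mathcal M_b(\Omega;\R)$ and the definition \eqref{def mu slice} makes sense.

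Next I would set up the decomposition at the level of slices. For $\hd$-a.e.\ $y\in\Pi^\zeta$ the measure $\mu_y$ is a bounded signed Radon measure on $\Omega^\zeta_y$, so it has a Lebesgue decomposition $\mu_y=\mu^a_y+\mu^s_y$ with $\mu^a_y\ll\Lb^1$ and $\mu^s_y\perp\Lb^1$; there is a Borel set $N_y\subset\Omega^\zeta_y$ with $\Lb^1(N_y)=0$ carrying $\mu^s_y$, i.e.\ $\mu^s_y=\mu_y\mres N_y$ and $\mu^a_y=\mu_y\mres(\Omega^\zeta_y\setminus N_y)$. The first key technical point is the measurability statement \eqref{borel measurablity}: I would realise $N:=\{(y+t\zeta):y\in\Pi^\zeta,\ t\in N_y\}$ (more precisely, work in coordinates $\Pi^\zeta\times\R$) as a Borel set. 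This can be done, as in the proof of Lemma \ref{appendix 2}, by expressing $N_y$ through a countable limiting procedure on difference quotients — e.g.\ $t\in N_y$ iff $\limsup_{\rho\to0}\frac{|\mu_y|([t-\rho,t+\rho])}{2\rho}=+\infty$, which for fixed rational $\rho$ gives a function of $(y,t)$ that is Borel by (c) of Lemma \ref{appendix 2} (applied to the balls, which are sections of a fixed Borel set) — so that $\{(y,t):|\mu_y|([t-\rho,t+\rho])>M\rho\}$ is Borel for each rational $\rho,M$, and $N$ is obtained by countable unions and intersections. Once $N$ is Borel, $\mu^a_y(B^\zeta_y)=\mu_y\big((B\setminus N)^\zeta_y\big)$ and $\mu^s_y(B^\zeta_y)=\mu_y\big((B\cap N)^\zeta_y\big)$ are Borel measurable and $\hd$-integrable in $y$ by \eqref{measurability condition} and \eqref{g boundante}, giving \eqref{borel measurablity}; moreover the two integrals in \eqref{absolute continuity} then define bounded signed measures on $\Omega$ whose sum is $\mu$.

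To finish I would identify these two measures with $\mu^a$ and $\mu^s$ using uniqueness of the Lebesgue decomposition. Set $\nu^s(B):=\int_{\Pi^\zeta}\mu^s_y(B^\zeta_y)\,{\rm d}\hd(y)=\mu(B\cap N)$ and $\nu^a(B):=\mu(B\setminus N)=\int_{\Pi^\zeta}\mu^a_y(B^\zeta_y)\,{\rm d}\hd(y)$; then $\mu=\nu^a+\nu^s$. For $\nu^a\ll\Lb^d$: if $\Lb^d(B)=0$, then by Fubini $\Lb^1(B^\zeta_y)=0$ for $\hd$-a.e.\ $y$, hence $\mu^a_y(B^\zeta_y)=0$ for $\hd$-a.e.\ $y$ since $\mu^a_y\ll\Lb^1$, so $\nu^a(B)=0$. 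For $\nu^s\perp\Lb^d$: by construction $|\nu^s|$ is concentrated on $N$, and $\Lb^d(N)=\int_{\Pi^\zeta}\Lb^1(N_y)\,{\rm d}\hd(y)=0$ by Fubini, so $\nu^s$ is singular with respect to $\Lb^d$. By the uniqueness of the Lebesgue decomposition of the bounded signed measure $\mu$, we conclude $\mu^a=\nu^a$ and $\mu^s=\nu^s$, which is precisely \eqref{absolute continuity}. The main obstacle I anticipate is the Borel-measurability of the singular-support set $N$ in the product $\Pi^\zeta\times\R$ (equivalently, the measurability of $y\mapsto\mu^s_y(B^\zeta_y)$); everything else is a routine combination of Fubini's theorem and uniqueness of the Lebesgue decomposition, with the measurability of slice-integrals already supplied by Lemma \ref{appendix 2}.
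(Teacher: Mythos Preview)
Your proposal is correct and follows essentially the same route as the paper's own proof: both identify the slice-wise singular set $N$ (the paper calls it $S$) via the Besicovitch upper-density criterion, establish its Borel measurability in the product $\Pi^\zeta\times\R$ by reducing the $\limsup$ to countably many radii and invoking the measurability machinery of Lemma~\ref{appendix 2}, and then conclude by Fubini together with the uniqueness of the Lebesgue decomposition. The only cosmetic differences are that the paper works with open intervals (using left-continuity in $\rho$) and argues joint measurability of $(y,t)\mapsto|\mu_y|((t-\rho,t+\rho))$ via a Carath\'eodory-type step (measurable in $y$, continuous in $t$ after smoothing), whereas you phrase the same step as an application of Lemma~\ref{appendix 2} with an auxiliary parameter.
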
  
  \begin{proof}
       As in the previous lemma we consider only the case $\zeta=e_d$ and use the notation of the proof of the previous lemma.  We also drop the hypothesis that $\Omega$ is bounded and assume that $\Omega=\R^d$, as the result for a general $\Omega$ then easily follows.

    We now prove \eqref{borel measurablity}. To this aim let 
       \begin{gather}\label{def Sics}
\hspace{-0.2 cm}S:=\Big\{x\in\R^d: x=(x^\prime,x_d)\,\text{ and }\, \limsup_{\rho\to 0^+}\frac{|\mu_{x^\prime}|((x_d-\rho,x_d+\rho))}{2\rho}=+\infty\Big\}
       \end{gather}
       and,  for every $x^\prime \in\R^{d-1}$, let $S_{x^\prime}$ be the corresponding slice. 
 By the Besicovitch Derivation Theorem (see \cite[Theorem 2.22]{AmbrosioFuscoPallara}) for every $x^\prime\in\R^{d-1}$ we have
       \begin{equation}\label{singular absolute restrictions}
           \mu^s_{x^\prime}(B_{x^\prime})=\mu_{x^\prime}(B_{x^\prime}\cap S_{x^{\prime}})\quad \text{and }\quad \mu^a_{x^\prime }(B_{x^\prime})=\mu_{x^\prime}(B_{x^\prime}\setminus S_{x^\prime})
       \end{equation}
       for every Borel set $B\subset \R^d$.
       
       Therefore, by \eqref{measurability condition} to prove \eqref{borel measurablity} it is enough to show that the set $S$ is Borel measurable. To this end, we note that, as the function $\rho\mapsto|\mu_{x^\prime}|((x_d-\rho,x_d+\rho))$ is left-continuous, in the $\limsup$ in \eqref{def Sics} we can reduce to considering $\rho$ varying in a countable dense set. Hence, to conclude that $S$ is Borel measurable we only need to prove that for every $\rho>0$ the function \begin{equation}\label{def to be measurable}(x^\prime,x_d)\mapsto |\mu_{x^{\prime}}|((x_d-\rho,x_d+\rho))=\int_{\R}\chi_{(-\rho,\rho)}(x_d-t)\, {\rm d}|\mu_{x^\prime}|(t)\end{equation} is Borel measurable on $\R^{d-1}\times \R$, where $\chi_{(-\rho,\rho)}$ denotes the characteristic function of $(-\rho,\rho).$ Let $(\psi_n)_{n}\subset C^0_c(\R)$ be a sequence of functions with $\psi_n\leq \psi_{n+1}$ for every $n\in\N$ and converging pointwise to $\chi_{(-\rho,\rho)}$ as $n\to+\infty$. By the Monotone Convergence Theorem for every $x^{\prime}\in\R^{d-1}$ we have  
 \begin{equation}\label{monotone}
     \int_{\R}\chi_{(-\rho,\rho)}(x_d-t)\,{\rm d}|\mu_{x^\prime}|(t)=\lim_{n\to+\infty}\int_{\R}\psi_n(x_d-t)\,{\rm d}|\mu_{x^\prime}|(t).
 \end{equation}
For every $n\in\N$ the function $$(x^\prime,x_d)\mapsto\int_{\R}\psi_n(x_d-t)\,{\rm d}|\mu_{x^\prime}|(t)$$ is Borel measurable in $x^\prime$ for $x_d$ fixed,  thanks to \eqref{measurability condition} and Lemma \ref{appendix 2}, and continuous in $x_d$ for $x^\prime$ fixed. Thus, it is  Borel measurable in the product space $\R^{d-1}\times \R$. Thanks to \eqref{monotone}, this implies that the function \eqref{def to be measurable} is Borel measurable, which proves that the set $S$ is Borel measurable and concludes the proof of the measurability property in \eqref{borel measurablity}. The integrability follows from  \eqref{g boundante}.

Thanks to  \eqref{borel measurablity}  we can define two bounded Radon measures on $\R^d$ by setting 
\begin{gather*}
\nu_1(B):=\int_{\R^{d-1}}\mu^a_{x^\prime}(B_{x^\prime})\,{\rm d}\hd(x^\prime) \quad \text{ and }\quad 
\nu_2(B):=\int_{\R^{d-1}}\mu^s_{x^\prime}(B_{x^\prime})\,{\rm d}\hd(x^\prime)
\end{gather*} for every Borel set  $B\subset \R^d$. It follows  from the Fubini Theorem that $\nu_1$ is absolutely continuous with respect to $\Lb^d$ and that   $\Lb^d(S)=0$.  By \eqref{singular absolute restrictions}  we also deduce that $\mu^s_{x^\prime}(B_{x^\prime})=\mu^s_{x^\prime}(B_{x^\prime}\cap S_{x^\prime})$, hence $\nu_2(B)=\nu_2(B\cap S)$ 
 for every Borel set $B\subset \Omega$. This shows that $\nu_2$ is singular with respect to the Lebesgue measure. Since $\mu=\nu_1+\nu_2$, the equalities in \eqref{absolute continuity}  follow from the uniqueness of the Lebesgue decomposition. 
  \end{proof}
    
  \subsection{Measurability of the auxiliary functions used in Section \ref{section conclusion}}  
  In this subsection we prove the the measurability of the functions $g^{k,\zeta}_{m}$ and $h^{k,\zeta}_{m,\e}$ defined in \eqref{def funzioni ausiliare in proof lemma} and \eqref{def hkm aux}. 
    
    As in the proof of Theorem \ref{prop:mainingredient}, $\Omega$ is a bounded open set of $\R^2$, $u$ is a function in $GBD(\Omega)$ with $J^1_u=J_u$, $\xi$ and $\eta$ are  two linearly independent vectors in $ \R^2$ and $U$ is the parallelogram defined by \eqref{parallelogramU}. We keep $u$, $\xi,\eta$, and $U$ fixed throughout the rest of the subsection. 
     
    We also recall that $ J\subset U$ is the set  defined in \eqref{def Jhat}, and  that the sets $E^{k,\zeta}$, $\hat E^{k,\zeta}_m$, and $\check E^{k,\zeta}_m$ are  defined in \eqref{def Akzeta} and \eqref{def hatE checkE}. Since it will be important to keep track of the dependence of such sets on $\omega$, in the following we underline their dependence on $\omega$ by writing $E^{k,\zeta,\omega}$, $\hat{E}^{k,\zeta,\omega}_{m}$, and $\check{E}^{k,\zeta,\omega}_{m}$.

We introduce some sets which will play a crucial role in our arguments.  For every $k\in\N$, $m\in\N$, and $\zeta\in\{\xi,\eta,\xi+\eta,\xi-\eta\}$ we set 
    \begin{equation}\label{A3}
    \begin{gathered} 
        {\Eom}^{k,\zeta}:=\{(x,\omega)\in \R^2\times \R^2: x\in E^{k,\zeta,\omega}\}\subset \R^4,\\
        \hat{\Eom}^{k,\zeta}_{m}:=\{(x,\omega)\in \R^2\times \R^2: x\in \hat{E}^{k,\zeta,\omega}_{ m}\}\subset \R^4,\\
         \check{\Eom}^{k,\zeta}_{m}:=\{(x,\omega)\in \R^2\times \R^2: x\in \check{E}^{k,\zeta,\omega}_{m}\}\subset \R^4,\\
         \mathfrak{J}:=\{(x,\omega)\in\R^2\times \R^2:x\in  J\}\subset \R^4.
    \end{gathered}
    \end{equation}
    The following lemma addresses the Borel measurability of these sets. The proof is very similar to that of Lemma \ref{lemma measurable R2}, but for the sake of completeness we give here all details.
    \begin{lemma}\label{appendix 1}
        The sets ${\Eom}^{k,\zeta}$,  $\hat{\Eom}^{k,\zeta}_{m}$,  $\check{\Eom}^{k,\zeta}_{m}$, and $\mathfrak{J}$ are Borel measurable subsets of $\R^4$. 
    \end{lemma}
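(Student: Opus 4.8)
The plan is to reduce the Borel measurability of each of the four sets in \eqref{A3} to the Borel measurability of $\mathfrak{J}$ and of an auxiliary ``line-intersection'' function on $\R^4$, proceeding exactly in the spirit of the proof of Lemma \ref{lemma measurable R2}. First, $\mathfrak J=\{(x,\omega):x\in J\}=J\times\R^2$, and since $ J=\bigcup_{n\in\N}K_n$ with each $K_n$ compact by \eqref{def Jhat}, $\mathfrak J$ is an $F_\sigma$ subset of $\R^4$, hence Borel. The key preliminary step is to check that the map
\begin{equation}\notag
(x,\omega)\longmapsto z^{k,\zeta,\omega}(x),
\end{equation}
obtained by making explicit the $\omega$-dependence in \eqref{true def z}, is a Borel function from $\R^4$ to $\R^2$. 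This follows from the geometric characterisation of $z^{k,\zeta}$ given after \eqref{star}: $z^{k,\zeta,\omega}(x)=x-t\zeta$ where $t\ge0$ is the smallest number for which $x-t\zeta$ lies on one of the countably many lines $\{\omega+\tfrac jk\zeta+s\bar\zeta:s\in\R\}$, $j\in\Z$; writing $t$ explicitly in terms of the coordinates of $x$ and $\omega$ (it is a fractional-part-type expression, piecewise affine and continuous from one side in each variable) shows $z^{k,\zeta,\omega}(x)$ is Borel in $(x,\omega)$ jointly.

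Next, for a compact set $K\subset\R^2$ I would set
\begin{equation}\notag
\mathfrak G_K:=\{(z,\omega)\in\R^2\times\R^2:(z+\tfrac1kS)\cap K\ne\emptyset\},
\end{equation}
where $S$ is the union of segments in \eqref{union of segments}; since $S$ is compact, the map $(z,\omega)\mapsto z+\tfrac1kS$ has closed graph in an appropriate sense and $\mathfrak G_K$ is a closed subset of $\R^4$ (it is the preimage of $K$ under the upper-semicontinuous set-valued map $(z,\omega)\mapsto z+\tfrac1kS$, intersected with the diagonal-type condition). Composing with the Borel map from the previous paragraph, the set $\{(x,\omega):(z^{k,\zeta,\omega}(x)+\tfrac1kS)\cap K\ne\emptyset\}$ is Borel. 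Taking $K=K_n$ and using $ J=\bigcup_n K_n$ gives that ${\Eom}^{k,\zeta}=\bigcup_{n\in\N}\{(x,\omega):(z^{k,\zeta,\omega}(x)+\tfrac1kS)\cap K_n\ne\emptyset\}$ is Borel, by \eqref{def Akzeta}.

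Finally, for $\hat{\Eom}^{k,\zeta}_m$ I would argue as in Lemma \ref{lemma measurable R2}: by \eqref{new Nkzeta}, a point $x$ lies in $\hat E^{k,\zeta,\omega}_m$ iff it lies in $E^{k,\zeta,\omega}$ and the number of $i\in\Z$ with $x+\tfrac ik\zeta\in E^{k,\zeta,\omega}$ is $\le m$; that is, setting $\mathfrak E^{k,\zeta}_i:=\{(x,\omega):(x+\tfrac ik\zeta,\omega)\in{\Eom}^{k,\zeta}\}$ (a Borel set, being the preimage of the Borel set ${\Eom}^{k,\zeta}$ under the affine map $(x,\omega)\mapsto(x+\tfrac ik\zeta,\omega)$), we have
\begin{equation}\notag
\hat{\Eom}^{k,\zeta}_m={\Eom}^{k,\zeta}\cap\Big\{(x,\omega):\sum_{i\in\Z}\chi_{\mathfrak E^{k,\zeta}_i}(x,\omega)\le m\Big\},
\end{equation}
which is Borel since the countable sum of Borel characteristic functions is Borel measurable. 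Then $\check{\Eom}^{k,\zeta}_m={\Eom}^{k,\zeta}\setminus\hat{\Eom}^{k,\zeta}_m$ is Borel as well. The main obstacle in this argument is the first step — carefully establishing joint Borel (indeed, one-sided continuity / Borel) measurability in $(x,\omega)$ of $z^{k,\zeta,\omega}(x)$, since $\omega$ enters through the defining grid of lines; once that is in hand, everything else is routine manipulation with countable unions, intersections, and sums of Borel sets and functions.
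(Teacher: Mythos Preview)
Your proposal is correct and follows essentially the same approach as the paper's proof: both establish joint Borel measurability of $(x,\omega)\mapsto z^{k,\zeta,\omega}(x)$, use the closedness of $F_K=\{z:(z+\tfrac1kS)\cap K\ne\emptyset\}$ for compact $K$ together with $J=\bigcup_n K_n$ to get $\mathfrak{E}^{k,\zeta}$ Borel, and then treat $\hat{\mathfrak{E}}^{k,\zeta}_m$ and $\check{\mathfrak{E}}^{k,\zeta}_m$ via the countable sum $\sum_i\chi_{\mathfrak{E}^{k,\zeta}_i}$ exactly as in Lemma~\ref{lemma measurable R2}. Note that your auxiliary set $\mathfrak{G}_K$ does not actually depend on $\omega$ (it is just $F_K\times\R^2$), so the closedness is immediate without any set-valued semicontinuity argument.
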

    \begin{proof}
    The property for $\mathfrak{J}$  is trivial. To prove the result for the other sets we
         consider the map $z^{k,\zeta}\colon \R^2\times\R^2\to\R^2$  defined by \eqref{def zeta} (see Figure \ref{figure parallelogram}), where the dependence on the variable $\omega$ is made clear by \eqref{def xkij}. By elementary geometrical arguments, it follows that $(x,\omega)\mapsto z^{k,\zeta}(x,\omega)$ is Borel measurable.
          
          For every set $B\subset \R^2$ we define
    \begin{gather*}
            {\Eom}^{k,\zeta}_{B}:=\{(x,\omega)\in \R^2\times \R^2:( z^{k,\zeta}(x,\omega)+\tfrac{1}{k}S)\cap B \neq \emptyset\},\\
         F_B:=\{z\in \R^2: (z+\tfrac{1}{k}S)\cap B\neq \emptyset\},
    \end{gather*}
    where $S$ is defined by \eqref{union of segments}.
   For a compact set $K\subset \R^2$ the set $F_K$ is closed, so that, observing that $\Eom_{K}^{k,\zeta}=\{(x,\omega)\in \R^2\times \R^2:z^{k,\zeta}(x,\omega)\in F_K\}$ and recalling that $z^{k,\zeta}$ is Borel measurable, we conclude that  $\Eom^{k,\zeta}_K$ is Borel measurable.
   By definition, ${\Eom}^{k,\zeta}=\Eom^{k,\zeta}_{ J}$. Since  $ J=\bigcup_{n\in\N}K_n$ with $K_n$ compact, this gives that $\Eom^{k,\zeta}=\bigcup_{n\in N}\Eom^{k,\zeta}_{K_n}$. Since the sets $\Eom^{k,\zeta}_{K_n}$ are Borel measurable, so is $\Eom^{k,\zeta}$. 

   To prove that $\hat{\Eom}^{k,\zeta}_{m}$ is Borel measurable, we observe that by \eqref{new Nkzeta} a pair $(x,\omega)$ belongs to $\hat{\Eom}^{k,\zeta}_{ m}$ if and only if the number of indices $i\in\Z$ such that $x\in E^{k,\zeta,\omega}-\tfrac{i}{k}\zeta$ is less than or equal to $m$. Hence, setting $\Eom_i^{k,\zeta}:=\{(x,\omega)\in \R^2\times \R^2:x\in E^{k,\zeta,\omega}-\tfrac{i}{k}\zeta\}$,  we have that 
     \begin{equation*}
     \textstyle \hat\Eom^{k,\zeta}_m=\big\{(x,\omega)\in \R^2\times \R^2: \sum_{i}\chi_{{\Eom}^{k,\zeta}_i}(x,\omega)\leq m\big\},
     \end{equation*}
     where $\chi_{{\Eom}^{k,\zeta}_i}$ is the characteristic function of ${\Eom}_i^{k,\zeta}$.
     Since the sets  $\Eom_i^{k,\zeta}$ are Borel measurable, we deduce that $\hat \Eom^{k,\zeta}_m$ is a Borel set as well. From the equality $\Check{\Eom}^{k,\zeta}_m= \Eom^{k,\zeta}\setminus \hat\Eom^{k,\zeta}_m$, we deduce that $\Check{\Eom}^{k,\zeta}_m$ is Borel measurable too, concluding the proof.
    \end{proof}

    We are now ready to state and prove the final result of this subsection.
    \begin{lemma}\label{appendix principal}
    Let $k,m\in\N$, let  $\zeta\in\{\xi,\eta,\xi+\eta,\xi-\eta\}$, and let  $N_\zeta\subset \Pi^\zeta$ be a Borel set such that $\hd(N_\zeta)=0$ and  $u^\zeta_y\in BV(U^\zeta_y)$ for every $y\in \Pi^\zeta\setminus N_\zeta$. Then the functions $g^{k,\zeta}_{ m}\colon \Pi^\zeta\times \R^2\to [0,+\infty)$ and   $  h^{k,\zeta}_{m}\colon \Pi^\zeta\times \R^2\to [0,+\infty)$ defined by
    \begin{gather} \label{ def g app}g^{k,\zeta}_{ m}(y,\omega):=
            \begin{cases}
                |Du^\zeta_y|(\hat{E}^{k,\zeta,\omega}_{m}(y)\cap U^{\zeta}_y\setminus  J^\zeta_y)&\text{ if $y\in \Pi^\zeta\setminus N_\zeta$},\\
             0 &\text{ if $y\in\ N_\zeta$},
             \end{cases}\\\label{def h app}
     h^{k,\zeta}_{m}(y,\omega):=\begin{cases}
             |Du^\zeta_y|(\Check{E}^{k,\zeta,\omega}_{m}(y)\cap U^{\zeta}_y\setminus J^{\zeta}_y)&\text{ if $y\in \Pi^\zeta\setminus N_\zeta$},\\
             0 &\text{ if $y\in N_\zeta$},
            \end{cases}    
    \end{gather}
  are Borel measurable on $\Pi^\zeta\times \R^2$.
    \end{lemma}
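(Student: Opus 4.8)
The plan is to reduce the statement to the slicing–measurability machinery of Lemma \ref{appendix 2}, applied with a family of measures that does not depend on the extra parameter. First I would fix $\zeta$ and $N_\zeta$ as in the statement and, for every $y\in\Pi^\zeta$, introduce the bounded signed measure $\mu_y:=Du^\zeta_y\mres(U^\zeta_y\setminus J^\zeta_y)$ if $y\in\Pi^\zeta\setminus N_\zeta$ and $\mu_y:=0$ if $y\in N_\zeta$, viewing $Du^\zeta_y$ as a measure on $\R$ carried by the open interval $U^\zeta_y$. Since $u^\zeta_y\in BV(U^\zeta_y)$ for $y\notin N_\zeta$, one has $|\mu_y|(\R)\le|Du^\zeta_y|(U^\zeta_y)<+\infty$ and $|\mu_y|=|Du^\zeta_y|\mres(U^\zeta_y\setminus J^\zeta_y)$. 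Using \eqref{slice of hats} together with the description of the sections of $\hat{\Eom}^{k,\zeta}_m$ in \eqref{A3}, I would rewrite the defining formula for $g^{k,\zeta}_m$ as $g^{k,\zeta}_m(y,\omega)=|\mu_y|\big((\hat{\Eom}^{k,\zeta}_m)(\omega)^\zeta_y\big)$ for all $(y,\omega)\in\Pi^\zeta\times\R^2$, with the notation $B(\omega)$ of \eqref{A.NUM}, and analogously $h^{k,\zeta}_m(y,\omega)=|\mu_y|\big((\check{\Eom}^{k,\zeta}_m)(\omega)^\zeta_y\big)$. By Lemma \ref{appendix 1} the sets $\hat{\Eom}^{k,\zeta}_m$ and $\check{\Eom}^{k,\zeta}_m$ are Borel subsets of $\R^4$, so it suffices to treat one of the two, say $g^{k,\zeta}_m$.

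Next I would invoke Lemma \ref{appendix 2} with parameter space $\R^2$ and the constant-in-$\omega$ choice $\mu^\omega_y:=\mu_y$: its final assertion, applied with the Borel set $B=\hat{\Eom}^{k,\zeta}_m$, gives exactly the Borel measurability of $(y,\omega)\mapsto|\mu_y|\big(B(\omega)^\zeta_y\big)=g^{k,\zeta}_m(y,\omega)$ on $\Pi^\zeta\times\R^2$, provided one of the equivalent conditions (a)--(c) of that lemma holds. I would verify condition (b): for every $\varphi\in C^0_c(\R^2\times\R^2)$ the map $(y,\omega)\mapsto\int_\R\varphi(y+t\zeta,\omega)\,{\rm d}\mu_y(t)$ should be Borel measurable on $\Pi^\zeta\times\R^2$. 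By the partition-of-unity density argument already used inside the proof of Lemma \ref{appendix 2}, linear combinations of products $\varphi(x,\omega)=\psi(x)\chi(\omega)$ with $\psi,\chi\in C^0_c(\R^2)$ are uniformly dense in $C^0_c(\R^2\times\R^2)$; since $|\mu_y|(\R)<+\infty$ forces pointwise convergence of the associated integrals and pointwise limits of Borel functions are Borel, it is enough to handle $\varphi=\psi\otimes\chi$, for which $\int_\R\varphi(y+t\zeta,\omega)\,{\rm d}\mu_y(t)=\chi(\omega)\int_\R\psi(y+t\zeta)\,{\rm d}\mu_y(t)$. As $\chi$ is continuous, this leaves only the Borel measurability in $y$ alone of $y\mapsto\int_\R\psi(y+t\zeta)\,{\rm d}\mu_y(t)$.

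For that last point I would first record that $y\mapsto\mu_y(B^\zeta_y)$ is Borel measurable on $\Pi^\zeta$ for every Borel $B\subset\R^2$. Indeed, by the one-dimensional slicing of measures (cf. \cite[Proposition 3.1]{DalMasoJems}, see also \cite[Theorems 3.107 and 3.108]{AmbrosioFuscoPallara}), the function $y\mapsto Du^\zeta_y(E^\zeta_y)$ is $\hd$-measurable on $\Pi^\zeta$ for every Borel $E\subset\Omega$; taking $E=B\cap U\setminus J$ (which is Borel, since $U$ and $J$ are) and recalling that $N_\zeta$ is a Borel $\hd$-null set, the claim follows from the identity $\mu_y(B^\zeta_y)=Du^\zeta_y((B\cap U\setminus J)^\zeta_y)$, valid for $y\notin N_\zeta$. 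Then, exactly as in the proof of the implication (c)$\Rightarrow$(a) of Lemma \ref{appendix 2} — approximating $\psi$ pointwise by uniformly bounded simple functions on $\R^2$ and applying the Dominated Convergence Theorem, again via $|\mu_y|(\R)<+\infty$ — I would obtain the desired measurability of $y\mapsto\int_\R\psi(y+t\zeta)\,{\rm d}\mu_y(t)$, which completes the verification of (b) and hence the proof for $g^{k,\zeta}_m$; the argument for $h^{k,\zeta}_m$ is identical, with $\hat E^{k,\zeta,\omega}_m$ replaced by $\check E^{k,\zeta,\omega}_m$ and $\hat{\Eom}^{k,\zeta}_m$ by $\check{\Eom}^{k,\zeta}_m$.

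I expect no conceptual difficulty: the one-dimensional slicing input is standard, and Lemmas \ref{appendix 1} and \ref{appendix 2} are designed precisely for this use. The only delicate step is the bookkeeping of the reduction — in particular correctly identifying the $\omega$-sections of the Borel sets $\hat{\Eom}^{k,\zeta}_m$, $\check{\Eom}^{k,\zeta}_m$ with the sets $\hat E^{k,\zeta,\omega}_m$, $\check E^{k,\zeta,\omega}_m$ (whose measurability in the joint variables is the content of Lemma \ref{appendix 1}) and feeding the constant-in-$\omega$ family $(\mu_y)_y$ into Lemma \ref{appendix 2} in the right way, so that its conclusion reads off literally as the measurability of $g^{k,\zeta}_m$ and $h^{k,\zeta}_m$.
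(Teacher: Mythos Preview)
Your proposal is correct and follows the same overall strategy as the paper: identify $g^{k,\zeta}_m$ and $h^{k,\zeta}_m$ with $(y,\omega)\mapsto|\mu^\omega_y|(B(\omega)^\zeta_y)$ for a suitable Borel set $B\subset\R^4$ (furnished by Lemma~\ref{appendix 1}), and then invoke the final assertion of Lemma~\ref{appendix 2} after checking condition (b). There are two minor implementation differences. First, the paper keeps the measure as $\mu^\omega_y=Du^\zeta_y\mres U^\zeta_y$ and removes $J$ at the level of the Borel set, using $B=\hat{\Eom}^{k,\zeta}_m\setminus\mathfrak{J}$ (resp.\ $\check{\Eom}^{k,\zeta}_m\setminus\mathfrak{J}$) rather than absorbing $\setminus J^\zeta_y$ into $\mu_y$ as you do. Second, and more substantively, the paper verifies condition (b) not by citing the slicing-measurability results of \cite{DalMasoJems,AmbrosioFuscoPallara} but by a direct self-contained computation: approximating $\varphi$ by $\varphi_n=\rho_n\varphi$ with $\rho_n\in C^\infty_c(U)$, integrating by parts to write $\int_{U^\zeta_y}\varphi_n(y+t\zeta,\omega)\,{\rm d}Du^\zeta_y(t)=-\int_{U^\zeta_y}u^\zeta_y(t)\,\nabla_x\varphi_n(y+t\zeta,\omega)\cdot\zeta\,{\rm d}t$, and reading off joint measurability from Fubini. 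This is why the paper chooses not to put $\setminus J^\zeta_y$ into the measure: the integration-by-parts identity is available for the full derivative $Du^\zeta_y$ on $U^\zeta_y$ but not for its restriction away from $J^\zeta_y$. Your route via the external slicing references is equally valid; the paper's route is simply more self-contained and avoids having to cite the (somewhat delicate) measurability of $y\mapsto Du^\zeta_y(E^\zeta_y)$ for $GBD$ functions.
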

    \begin{proof}
        We begin by observing that by \eqref{A.NUM} and \eqref{A3} for every $y\in\Pi^\zeta$ and $\omega\in \R^2$ we have\begin{gather}\label{fin app}
         \hat{E}^{k,\zeta,\omega}_{m}(y)\cap U^\zeta_y\setminus J^\zeta_y=\hat{\mathfrak{E}}(\omega)^\zeta_y\cap U^\zeta_y\setminus \mathfrak{J}(\omega)^\zeta_y,\\\label{fin app 2}
          \Check{E}^{k,\zeta,\omega}_{m}(y)\cap U^\zeta_y\setminus  J^\zeta_y=\check{\mathfrak{E}}(\omega)^\zeta_y\cap U^\zeta_y\setminus \mathfrak{J}(\omega)^\zeta_y.
        \end{gather}
        By Lemma \ref{appendix 1}, the sets $\hat{\Eom}^{k,\zeta}_{ m}$, $\check{\Eom}^{k,\zeta}_{ m}$, and $\mathfrak{J}$ are Borel measurable subsets of $\R^2\times \R^2$, so that the same property holds for $\hat{\Eom}^{k,\zeta}_{ m}\setminus \mathfrak{J}$ and $\check{\Eom}^{k,\zeta}_{ m}\setminus \mathfrak{J}$.
        
        We want to apply Lemma \ref{appendix 2} with the measure $\mu^\omega_y\in\mathcal{M}_b(\R)$ defined for every Borel set $B\subset \R$ by 
        \begin{equation}\label{def muomega nellappendice}\mu^\omega_y(B):=\begin{cases}Du^\zeta_y(B\cap U^\zeta_y) \quad &\text{ if $y\in \Pi^\zeta\setminus N_\zeta$}, \\
            0 & \text{ $y\in N_\zeta$}.
            \end{cases}
        \end{equation}
        To show that condition (b) of Lemma \ref{appendix 2} holds we fix a function $\varphi\in C^\infty_c(\R^2\times \R^2)$ and observe that 
        \begin{equation*}
            \int_{-\infty}^{+\infty}\varphi(y+t\zeta,\omega){\,\rm d}\mu^{\omega}_y(t)=\int_{U^\zeta_y}\varphi(y+t\zeta,\omega){\,\rm d}Du^{\zeta}_y(t)
        \end{equation*}
        for every $y\in\Pi^\zeta\setminus N_\zeta$.
         Let us consider a bounded sequence $(\rho_n )\subset C^\infty_c(U)$ converging to $1$ on $\Omega$ as $n\to+\infty$ and  set $\varphi_n:=\rho_n\varphi\in C^\infty_c(U\times \R^2)$. For every $y\in\Pi^\zeta\setminus N_\zeta$  by the Dominated Convergence Theorem we have  
         \begin{equation}\label{dominated conv}
          \int_{-\infty}^{+\infty}\varphi(y+t\zeta,\omega){\,\rm d}\mu_y^\omega(t)=\lim_{n\to+\infty}\int_{U^\zeta_y}\varphi_n(y+t\zeta,\omega){\,\rm d}Du^{\zeta}_y(t).
         \end{equation}
        Moreover, integrating by parts, we have
        \begin{equation}\label{measur app 3}
    \int_{U^\zeta_y}\varphi_n(y+t\zeta,\omega)\,{\rm d}Du^{\zeta}_y(t)=-\int_{U^\zeta_y}u^\zeta_y(t)\nabla_x\varphi_n(y+t\zeta,\omega)\cdot\zeta\,{\rm d}t,
        \end{equation}
        where for a given $\omega\in \R^2$, we denote  by $\nabla_x\varphi_n(y+t\zeta,\omega)\in\R^2$ the vector whose two components are the partial derivatives of $\varphi_n$ with respect to $x$  at the point $(y+t\zeta,\omega)$. By the Fubini Theorem, it follows from \eqref{measur app 3}  that 
        \begin{equation*}
(y,\omega)\mapsto\int_{U^\zeta_y}\varphi_n(y+t\zeta,\omega)\,{\rm d}Du^{\zeta}_y(t) 
        \end{equation*} 
     is Borel measurable on $(\Pi^\zeta\setminus N_\zeta)\times \R^2$.
        It follows from \eqref{dominated conv} that the same property holds for
        \begin{equation}\label{sing}
            (y,\omega)\mapsto \int_{-\infty}^{+\infty}\varphi(y+t\zeta,\omega){\,\rm d}\mu_y^\omega(t).
        \end{equation}
        Since by definition $\mu^\omega_y(B)=0$ for $y\in N_\zeta$, it follows that the function \eqref{sing} is Borel measurable on $\Pi^\zeta\times \R^2$.
        As every function  in $ C^0_c(\R^2\times \R^2)$ can be approximated uniformly by a sequence of functions in $C^\infty_c( \R^2\times \R^2)$, the function \eqref{sing} is Borel measurable on $\Pi^\zeta\times \R^2$ for every $\varphi\in C^0_c(\R^2\times \R^2)$. Hence, $\mu^\omega_y$ satisfies condition (b) of Lemma \ref{appendix 2}.
        
        By Lemma \ref{appendix 2} applied to the measure $\mu^\omega_y$ defined by \eqref{def muomega nellappendice} and  with $B=\hat{\Eom}^{k,\zeta}_{ m}\setminus \mathfrak{J}$ and $B=\check{\Eom}^{k,\zeta}_{ m}\setminus \mathfrak{J}$ we obtain that the functions  $(y,\omega)\mapsto |Du^\zeta_y|(\hat{\mathfrak{E}}(\omega)^\zeta_y\cap U^\zeta_y\setminus \mathfrak{J}(\omega)^\zeta_y)$ and  $(y,\omega)\mapsto |Du^\zeta_y|(\check{\mathfrak{E}}(\omega)^\zeta_y\cap U^\zeta_y\setminus \mathfrak{J}(\omega)^\zeta_y)$ are Borel measurable on $(\Pi^\zeta\setminus N_\zeta)\times \R^2$. 
        By \eqref{ def g app}-\eqref{fin app 2} this implies that the functions $g^{k,\zeta}_{m}$ and $h^{k,\zeta}_m$ are Borel measurable on $\Pi^\zeta\times \R^2.$
    \end{proof}

\end{appendices}

\frenchspacing
\bibliographystyle{siam}
\bibliography{Sources}
\end{document}